\DeclareMathOperator{\Supp }{supp}
\DeclareMathOperator{\dist}{dist}
\newtheorem{theorem}{Theorem}[section]
\newtheorem{lemma}[theorem]{Lemma}
\newtheorem{proposition}[theorem]{Proposition}
\newtheorem{definition}[theorem]{Definition}
\newtheorem{corollary}[theorem]{Corollary}
\newtheorem{remark}[theorem]{Remark}
\setlist[enumerate]{itemsep=3pt}
\setlist[itemize]{itemsep=3pt}
\def \TT  {\mathbb{T}} %circle
\def \RR {\mathbb{R}}  %real numbers
\def \NN {\mathbb{N}}  %natural numbers
\def \ZZ {\mathbb{Z}}  %integer numbers
\def \p {\partial}
\def \g {\gamma}
\def \k {\kappa}
\def \l {\lambda}
\def \ep {\epsilon}
\def \om {\omega}
\def \Om {\Omega}
\def \T {\mathbf{T}}
\def \N {\mathbf{N}}
\def \oT {\overline{\mathbf{T}}}
\def \oN {\overline{\mathbf{N}}}
\def \ok {\overline{\kappa}}
\def \oga {\overline{\gamma}}
\numberwithin{equation}{section}
\begin{document}

\title[Illposednes of $\alpha$-patches]{The $\alpha$-SQG patch problem is illposed in  $C^{2,\beta}$ and  $W^{2,p}$}

\author{Alexander Kiselev}

\address{Department of Mathematics, Duke University, Durham, NC 27708, USA}
\email{kiselev@math.duke.edu}

\author{Xiaoyutao Luo}

\address{Morningside Center of Mathematics, Academy of Mathematics and Systems Science, Chinese Academy of Sciences, Beijing  100190, China}

\email{xiaoyutao.luo@amss.ac.cn}

\subjclass[2020]{35Q35, 35Q86}

\keywords{$\alpha$ SQG equation, vortex patches, illposedness}
\date{\today}

\begin{abstract}
We consider the patch problem for the $\alpha$-SQG system with the values $\alpha=0$ and $\alpha= \frac{1}{2}$ being the 2D Euler and the SQG equations respectively. It is well-known that the Euler patches are globally wellposed in non-endpoint $C^{k,\beta}$ H\"older spaces,
as well as in $W^{2,p},$ $1<p<\infty$ spaces. In stark contrast to the Euler case, we prove that for $0<\alpha< \frac{1}{2}$, the $\alpha$-SQG patch problem is strongly illposed in \emph{every} $C^{2,\beta} $ H\"older space with $\beta<1$. Moreover, in a suitable range of regularity, the same strong illposedness holds for \emph{every} $W^{2,p}$ Sobolev space unless $p=2$.
\end{abstract}

\date{\today}

\maketitle

%%%%%%%%%%%%%%%%%%%%%%%%%%%%%%%%%%%%%%%%%%%%%%%%%%%%%%%
\section{Introduction}\label{sec:intro}
%%%%%%%%%%%%%%%%%%%%%%%%%%%%%%%%%%%%%%%%%%%%%%%%%%%%%%%
\subsection{The \texorpdfstring{$\alpha$}{a}-SQG equations}

The $\alpha$-SQG equations are a family of inviscid models in 2D incompressible fluid dynamics. These equations interpolate two of the most important 2D models of incompressible fluids: the 2D Euler equation and the surface quasi-geostrophic (SQG) equation.

The 2D Euler equation describes the motion of inviscid fluids. Its global regularity in sufficiently regular spaces has been known since  Wolibner \cite{Wolibner1933} and H\"older \cite{Holder1933}; the double exponential upper
bound on the growth of the derivatives of vorticity that essentially goes back to these works (see also \cite{MR0163529}) was recently shown to be sharp~\cite{MR3245016}. The surface quasi-geostrophic (SQG) equation models the rotating, stratified fluid in the atmosphere and ocean and is used in geophysical contexts, see for instance~\cite{pedlosky1992geophysical,Held,Majda}. Some aspects of the SQG equation bear resemblance to the 3D Euler equation~\cite{MR1304437} and the equation attracted much of attention over the years. Unlike the 2D Euler equation, even though the existence of global weak solutions for the SQG equation is known \cite{MR2716577,MR2357424}  (not expected to be unique~\cite{MR3987721}), the global regularity vs finite time blow-up question remains a major open problem; only infinite-in-time growth of derivatives has been shown in some examples~\cite{10.1215/00127094-2020-0064}.

Both the 2D Euler and the SQG equations are active scalar equations where a scalar $\om$ is advected by the velocity field $v$ generated by $\om.$ The $\alpha$-SQG equations, %first proposed in \cite{Held} and analyzed in \cite{MR2141918},
are a natural family of active scalars interpolating between the 2D Euler and SQG equations (see e.g. \cite{Held,CW,MR2141918}). This family has also been called modified or generalized SQG equations in the literature. As in the case of the SQG equation, the global regularity vs finite time blow-up question is still open for all $\alpha> 0.$

Mathematically, $\alpha$-SQG equations take form of a transport equation
\begin{equation}\label{eq:aSQG}
\p_t \omega + (v \cdot \nabla ) \omega =0.
\end{equation}
The relation between the scalar $\om :\RR^2 \times [0,T] \to \RR$ and the velocity $v :\RR^2 \times [0,T] \to \RR^2$ at each time $t$ is determined by the (modified) Biot-Savart law
\begin{equation}\label{eq:aSQG_BS}
v(x, t) =  \nabla^\perp(-\Delta)^{-1+\alpha}\om = K_\alpha  *  \om (x,t) : =  c_\alpha  \int_{\RR^2} \frac{(x-y)^\perp }{|x - y|^{2+2\alpha}} \omega(y, t) \, dy,
\end{equation}
where $x^\perp = (-x_2, x_1)$ for any $x\in \RR^2$. In this paper, for simplicity, we adjust the Biot-Savart law to have $c_\alpha=1;$ this is equivalent to simple time rescaling.
The key parameter $\alpha \geq 0$ in \eqref{eq:aSQG_BS} dictates the regularity of the velocity field compared to the scalar $\om$. In our setting, $\alpha =0$ corresponds to the 2D Euler equation while $\alpha = \frac{1}{2}$ to the SQG case.

\subsection{The \texorpdfstring{$\alpha$}{a}-SQG patch problem}
Another interesting set of solutions for the $\alpha$-SQG equation are patches. These are weak solutions where the scalar $\om$ takes the form of a characteristic function of evolving domain.

The question of wellposedness in the patch setting refers to the conservation of regularity of patch boundaries and the absence of patch collisions. More precisely, the single patch problem is well-posed if the initial smoothness of the patch boundary is preserved in time and no self-intersection of the patch
appears. In the 2D Euler case, the celebrated Yudovich theory~\cite{Yudth} guarantees the existence and uniqueness of the vortex patch. Due to the log-Lipschitz estimates on the flow map, different branches of the patch boundaries can not intersect. However, the Yudovich theory does not provide any information on the smoothness of the patch boundary, and it was a question of debate whether the smoothness of the vortex patch boundary can break down in finite time~\cite{doi:10.1063/1.857353,MR1050012}. Later, Chemin~\cite{MR1235440} proved that the vortex patches preserve their initial smoothness for all times. To date, several different proofs of global regularity for Euler patches are available:  Serfati \cite{MR1270072} and Bertozzi and Constantin \cite{MR1207667} (see also more recent \cite{verdera2021regularity,MR4509831}).  Let us also mention the works   \cite{MR1452164,MR1809342,MR4537304} concerning Euler patches with  cusps or corners.

However, the wellposedness theory for $\alpha$-patches is much less understood. This is due to the lack of the uniqueness theory at the level of bounded weak solutions and more severe loss of derivative in the Biot-Savart law.
We mainly discuss the general Cauchy problem of $\alpha$-patches in this paper, but also refer to ~\cite{MR3054601,MR3466846,MR3462104,MR3466846,MR3880855,MR4312192,2110.08615} for interesting constructions of special classes of $\alpha$-patches.

Compared to the 2D Euler case, the $\alpha$-patch problem is more difficult to set up as the velocity field is no longer log-Lipschitz. Perhaps the most fundamental question is
\begin{center}
\emph{What is an $\alpha$-patch?}
\end{center}
Historically, different notions of $\alpha$-patches have been proposed and studied by many people. For instance, the following is a list of possible definitions.
\begin{itemize}
\item \textbf{(Contour dynamics~\cite{MR2397460,MR2928091,MR3748149,MR4235799,2105.10982})} parametric curves $\g: \TT\times[0,T] \to \RR^2$ such that
\begin{equation}\label{eq:general_gamma}
\p_t \g (x,t) \cdot \g^{\perp}(x,t) = v(\gamma(x,t),t) \cdot \g^{\perp}(x,t);
\end{equation}

\item  \textbf{(Geometric~\cite{MR3666567,MR3549626,asqg_nosplash,2112.00191})}  time-dependent domains $\Omega_t$ whose boundary ``moves'' with velocity field $v$;

\item \textbf{(Eulerian~\cite{MR2142632})} characteristic functions $\chi_{\Omega_t}$ that solve \eqref{eq:aSQG} in the sense of distributions.
\end{itemize}

Starting from Gancedo's work \cite{MR2397460} on $H^3$ wellposedness, the contour dynamics approach has seen great success over the years. This formulation of the $\alpha$-SQG problem allows the use of powerful tools from functional and harmonic analysis. The principal idea builds on the observation rooted in \cite{MR2716577} that the tangential velocity does not change the shape of the patch, and involves suitably modifying the tangential velocity so that the contour equation can satisfy favorable energy estimates. Since~\cite{MR2397460}, the wellposedness of contour equations has been established for the range of $ 0< \alpha < 2$ including the velocity more singular than the SQG equation~\cite{MR2928091}. Related to our study here is the recent work~\cite{MR4235799}, where the authors successfully proved, among other things, the wellposedness of the $\alpha$-patch problem to $H^2$ when $0 <  \alpha <\frac{1}{2}$. See also \cite{2105.10982} for $H^{2+}$ when $\alpha = \frac{1}{2}$ the SQG case and \cite{2310.15963,2402.06364} for recent progress on long time wellposedness.

On the other hand, the geometric definition, introduced in \cite{MR3666567} by the second author, Yao, and  Zlato\v{s}, is intrinsic to the Biot-Savart law. This point of view allows the authors to prove $H^3$ uniqueness in a large class of patch solutions independent of contour equations. And it provides a framework of the singularity formulation for the $\alpha$-patch problem in a half-plane setting with small $\alpha< 1/24$ in \cite{MR3549626}, where boundaries of patches remain in touch with the boundary of the half-plane for all times before the singularity occurs. The same scenario was revisited in \cite{MR4235799}, showing the singularity formulation for $\alpha <1/6$.  This geometric definition was also used very recently in \cite{asqg_nosplash,2112.00191} to rule out various splash singularities under smoothness assumptions, extending the work~\cite{MR3181769}.
It remains open whether the original $\alpha$-patch problem in the whole space can develop a singularity.

Finally, the Eulerian notion for $\alpha$-patches seems less studied compared to the previous two approaches. Part of the reason is the lack of control on the patch boundary through the weak formulation. In the 2D Euler equation case, the celebrated Yudovich theory~\cite{MR0163529} provides the uniqueness framework in the Eulerian setting, and hence all notions of a patch solution automatically coincide. However for $\alpha>0$, the velocity is no longer log-Lipschitz, and it is not clear a priori that the Eulerian notion of $\alpha$-patches corresponds to the Lagrangian picture. For general transport-type problems, it is known that when the velocity is not Lipschitz (or log-Lipschitz), the usual link between the Lagrangian and Eulerian world may no longer be valid, for instance  the almost everywhere uniqueness of the trajectories \cite{1812.06817,2003.00539} does not imply the Eulerian uniqueness of weak solutions, \cite{2004.09538,2204.08950}. Nevertheless, in \cite{MR2142632}, for the SQG case, Rodrigo showed the wellposedness for a class of patch solutions $\om =\chi_{x_2 \leq \varphi(x_1 ,t)}$ with smooth $ \varphi$ periodic in $x_1$ by a Nash-Moser iteration procedure - this was in fact the first result on local wellposedness for the SQG patches.

Not surprisingly, the aforementioned difficulty of the Eulerian notion goes along with its great generality --- it is not hard to show~\cite[Proposition 3.2]{MR2397460} and \cite[Definition 1.2]{MR3666567} that at the level of $C^1$ regularity, the previous two notions of patch solutions are Eulerian ones. The Eulerian version is the definition of $\alpha$-patches we will use in this paper, see Definition \ref{def:asqg_patch} below. In fact,   we will show in Section \ref{sec:Eu_to_Lagran} that the Eulerian notion is equivalent to the geometric definition used in \cite{MR3666567,MR3549626} at $C^1$ regularity and thus provides a more solid foundation for future analysis of the  $\alpha$-patch problem.

\subsection{Motivation}
Part of our motivation for the analysis of the patch problem for $\alpha$-SQG equation came from the beautiful numerical simulations~\cite{MR2141918,MANCHO2015152,SD1,SD2} that provide numerical evidence for the existence of instabilities/finite time singularity formation in the SQG patch dynamics. In earlier numerical simulations~\cite{MR2141918,MANCHO2015152} two patches appear to touch each other and simultaneously develop corners at the point of touch. In the Dritschel-Scott scenarios~\cite{SD1,SD2}, the patch exhibits a cascade of rapid collapse and filamentation. They also found a simultaneous blowup of the maximum curvature and the inverse of filament width. While rigorously justifying such numerical simulation remains out of reach, our analysis here reveals some fine structures of the  $\alpha$-patch dynamics and could be of further interest in other settings.
Indeed, our key finding is the special structure of the curvature equation, which features a leading linear dispersion term and drives the ill-posedness results.
Although we discovered this special structure first in the case of the vortex patches \cite{asqg_nosplash}, in the $\alpha$-patch case this linear dispersion term leads to much stronger ill-posedness as will be described below.

We also recall that recent interest in ill-posedness results for equations of fluid mechanics started with the breakthrough of Bourgain and Li~\cite{MR3359050},
and it has been shown that the Euler equations
(in 2D and 3D) are ill-posed for vorticity in critical Sobolev spaces and integer Hölder spaces~(see also \cite{MR4065655,MR3320889,MR3625192,KIM2022109673}). Very recently, results of a similar flavor have been extended to $\alpha>0$. In \cite{2107.07739,2107.07463}, illposedness in the critical $H^2$ for the SQG equation was proved, see also \cite{2107.07463} for illposedness in supercritical Sobolev spaces $H^s$. The strategy of \cite{2107.07463} was extended to \cite{2207.14385} to obtain illposednesss in $C^{k,\beta}$ of $\alpha$-SQG in the more singular case $ \frac{1}{2}<\alpha <1$.

On the other hand, for the non-patch initial data in the SQG setting, there are results on local wellposedness for the H\"older classes \cite{MR2835862,2206.05861}. Moreover, the Euler patches are well-known to be globally wellposed in any H\"older spaces $C^{k,\beta}$
provided $k\in \NN$ and $ 0< \beta <1$~\cite{MR2686124}; wellposedness of vortex patches in $W^{2,p},$ $1<p<\infty$ was also proved more recently in \cite{asqg_nosplash}.
On the other hand, as mentioned above, known wellposedness results for the $\alpha$-path problem only apply in $C^\infty$ (Rodrigo \cite{MR2142632}) or $L^2$ based spaces (initiated by Gancedo's work \cite{MR2397460}).
This observation prompts the question:
\begin{center}
\emph{Is the $\alpha$-patches problem wellposed in $C^{k,\beta} $ H\"older spaces for $\alpha>0$?}
\emph{How about Sobolev spaces $W^{k,p}$ with $p\ne 2$?}
\end{center}

\subsection{Main results}
We now introduce the definition of a patch solution and get ready to state our main results. In this paper, a patch refers to a bounded domain (connected open set) $\Omega \subset \RR^2$  whose boundary is a simple closed curve. We also work with moving/time-dependent bounded domain $\Omega_t$ which is just a map $t\mapsto \Omega_t \subset \RR^2$ on some interval $ t\in [0,T]$.

Recall that a bounded domain $\Omega$ is said to be $C^1$  if its boundary $\p \Omega $ is locally a graph of a $C^1$ function. For a Banach space $X \subset C^1 $, we say $\Omega$ is an $X$ domain if the arc-length parameterization of $\p \Omega$ is in the regularity class $X$. In this paper, we mainly use the Sobolev spaces $X = W^{k,p}$ and H\"older spaces $X = C^{k,\beta}$, and we define
\begin{equation}\label{eq:intro_domain_norm}
\| \Om \|_{X} : = \| \g \|_{ X( [-L/2,L/2] )}  \quad \text{ for $X \in \{ W^{k,p}, C^{k,\beta} \}$ }
\end{equation}
where $\g: [-L/2,L/2] \to \RR^2$ is an arc-length parametrization of the boundary $\p \Om$ and $L$ is the length of $\p \Om $. Note \eqref{eq:intro_domain_norm} does not depend on the particular choice of arc-length parameterizations as they are equivalent up to translations. As in \cite{MR3666567,MR3549626}, the measured regularity is intrinsic to the $\alpha$-patch.

The definition stated below generalizes easily to the case of a solution with multiple patches, i.e. $\omega = \sum_{1 \leq k \leq N } \theta_k \chi_{\Omega_{k , t }}$ where $\theta_k$ are coupling constants and $ \Omega_{k , t }$, $1\leq k\leq N$ are time-dependent bounded domains with disjoint boundaries (allowing for nested $\Om_{k,t}$'s). We focus on the case of a single patch in this paper.
\begin{definition}\label{def:asqg_patch}
Let $\Omega_t \subset \RR^2$ be a time-dependent bounded domain  on $ t\in [0,T]$. We say $\Omega_t$ is an $\alpha$-patch on $[0,T]$ if the function $\omega = \chi_{\Omega_t}$ is a weak solution to \eqref{eq:aSQG}--\eqref{eq:aSQG_BS}, i.e. for any $ \varphi \in C^\infty_c(\RR^2 \times [ 0,T ) )$,
\begin{equation}\label{eq:def:asqg_patch}
 \int_{\Omega_0} \varphi(\cdot,0)  \, dx = -\int_{0}^T \int_{\RR^2} \chi_{\Omega_t} ( \p_t \varphi + v \cdot \nabla \varphi ) \,dx dt,
\end{equation}
where $v = K_\alpha * \omega$ is given by the Biot-Savart law \eqref{eq:aSQG_BS}.

In addition, for a regularity class $X \in \{ W^{k,p}, C^{k,\beta} \}$ we say $\Omega_t$ is a $X$  $\alpha$-patch if  $\sup_{t \in [0,T]} \| \Om_t \|_{X }<\infty$.
\end{definition}

 We now state the main theorems of the paper.

\begin{theorem}\label{thm:main_Holder}
Let $0<\alpha < \frac{1}{2}$.  The $\alpha$-patch problem is strongly illposed in $C^{2, \beta }$ for any  $0 \leq \beta < 1$ in the following sense.

For any $0<\alpha < \frac{1}{2}$ and   $0 \leq \beta < 1$, there exists a $C^{2, \beta }$ bounded domain $\Om_0$ such that  any $\alpha$-patch $\Omega_t$  with initial data $\Om_0$ (in the sense of Definition \ref{def:asqg_patch}) satisfies
\begin{equation}
\sup_{t \in [ 0, \delta ]  } \| \Om_t \|_{C^{2, \beta }} = \infty \quad \text{for any $\delta>0$.}
\end{equation}
\end{theorem}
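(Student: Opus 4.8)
\medskip
\noindent\emph{Proof strategy.}
The plan is to reduce Theorem \ref{thm:main_Holder} to a sharp statement about the evolution equation for the \emph{curvature} of the patch boundary, and then to exploit the fact that the principal part of that equation is a linear \emph{dispersive} operator of order $2\alpha$, which does not preserve $C^{2,\beta}$. For a hypothetical $C^{2,\beta}$ $\alpha$-patch $\Om_t$ I would fix a renormalized arc-length parametrization $\g(\cdot,t):\TT\to\RR^2$ of $\p\Om_t$ --- the existence of such a Lagrangian description, and its equivalence with Definition \ref{def:asqg_patch} at $C^1$ regularity, being the content of Section \ref{sec:Eu_to_Lagran}. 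Differentiating the contour equation twice and expanding the singular integral $K_\alpha*\chi_{\Om_t}$ along $\p\Om_t$, in the spirit of the Bertozzi--Constantin commutator analysis \cite{MR1207667} but with a kernel more singular by $2\alpha$ (as in \cite{asqg_nosplash}), I would show that the signed curvature $\k=\k(s,t)$ solves an equation of the schematic form
\[
\p_t \k + W[\g]\,\p_s \k \;=\; c_\alpha\, b[\g](s,t)\;\p_s \Lambda_s^{2\alpha-1}\,\k \;+\; \mathcal R[\g,\k],
\]
where $\Lambda_s=|\p_s|$ acts in the arc-length variable, $W[\g]$ is the (modified) tangential speed, $b[\g]$ is a geometric coefficient built from $\g$ that can be kept bounded away from $0$ near a prescribed boundary point, and $\mathcal R$ gathers terms of order strictly below $2\alpha$, controlled in $C^{0,\beta}$ by lower norms of $\g$ (one may need to pass to an equivalent quantity, such as the tangent angle, to see this cleanly). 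Since $\p_s\Lambda_s^{2\alpha-1}$ has purely imaginary odd symbol $i\,\sign(\xi)|\xi|^{2\alpha}$, frozen-coefficient propagation by the principal term is governed by the fractional Schr\"odinger group $e^{it\,b\,|D_s|^{2\alpha}}$. This identification of the ``leading linear dispersion term in the curvature equation'' is the crux of the argument.

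Next I would choose the initial datum. Take $\Om_0$ to be a fixed reference domain for which $b[\g_0]$ does not vanish near some boundary point $p_0$, perturbed by a small bump supported near $p_0$ whose curvature is an explicit lacunary series, e.g.\ $\k_0 \sim \sum_{j\ge1} 2^{-j\beta}\cos\!\big(2^j(s-s_0)+\varphi_j\big)$ near $s_0$, so that $\g_0\in C^{2,\beta}$. With the phases $\varphi_j$ chosen adversarially, the sharpness of the Miyachi--Peral fixed-time bounds for $e^{it|D|^{2\alpha}}$ --- a loss of $\alpha$ derivatives on $B^{\beta}_{\infty,\infty}=C^{0,\beta}$ for every $0<\alpha<\tfrac12$, and a loss of $2\alpha|\tfrac1p-\tfrac12|$ derivatives on $L^p$, positive unless $p=2$ --- guarantees that $e^{it\,b(p_0)|D_s|^{2\alpha}}\k_0$ has infinite $C^{0,\beta}$ norm (resp.\ infinite $L^p$ norm) for every $t\ne0$, via a lower bound obtained by evaluating at a point where infinitely many dyadic modes realign in phase. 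Running this construction with $L^p$ in place of $B^\beta_{\infty,\infty}$ is exactly what isolates $p=2$ and yields the $W^{2,p}$ statement.

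Finally I would argue by contradiction. Suppose some $\alpha$-patch $\Om_t$ with initial data $\Om_0$ satisfied $\sup_{t\in[0,\delta]}\|\Om_t\|_{C^{2,\beta}}=M<\infty$ for some $\delta>0$. Then $\g_t$ is bounded in $C^{2,\beta}$ along $[0,\delta]$, so $W[\g_t]$, $b[\g_t]$ and $\mathcal R[\g_t,\k_t]$ are uniformly controlled (with $b[\g_t]$ still bounded away from $0$ near the transported point for short times), and a wave-packet / paradifferential localization reduces the curvature flow near $p_0$, dyadic block by dyadic block, to the model group $e^{it\,b(p_0)|D_s|^{2\alpha}}$: the transport term $W\p_s$ contributes only a uniformly bi-Lipschitz reparametrization (harmless for $C^{0,\beta}$ up to a factor depending on $M$), while $\mathcal R$ contributes only an $O_M(1)$ perturbation in $C^{0,\beta}$. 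Hence $\|\k_t\|_{C^{0,\beta}}$ inherits the blow-up of $e^{it\,b(p_0)|D_s|^{2\alpha}}\k_0$ and exceeds $M$ for every $t\in(0,\delta]$ --- a contradiction. Therefore $\sup_{t\in[0,\delta]}\|\Om_t\|_{C^{2,\beta}}=\infty$ for every $\delta>0$, which is the assertion of Theorem \ref{thm:main_Holder}.

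The step I expect to be the main obstacle is the derivation and bookkeeping of the curvature equation at the \emph{borderline} regularity $C^{2,\beta}$: this is precisely the threshold at which $\p_t\k$ is barely meaningful, so all the singular integrals on the bare $C^{2,\beta}$ curve must be regularized with care, and --- most importantly --- one must prove that the remainder $\mathcal R$, including the genuinely nonlinear self-interaction of the patch, is strictly of lower order and cannot conspire to cancel the dispersive growth on arbitrarily short time intervals. A secondary difficulty is the variable-coefficient dispersive analysis in the last step: upgrading ``$e^{it|D|^{2\alpha}}$ is unbounded on $C^{0,\beta}$'' to a quantitative lower bound for the true transported, variable-coefficient flow requires a phase-space localization robust under both the drift $W\p_s$ and the $O_M(1)$ error.
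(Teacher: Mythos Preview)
Your high-level strategy matches the paper's: isolate the leading dispersive operator $\mathcal L_\alpha=\p_s\Lambda^{2\alpha-1}$ in the curvature evolution, show the rest is lower order, and import the $C^{0,\beta}$-unboundedness of $e^{it\mathcal L_\alpha}$. But your execution differs from the paper's in two places that are exactly the obstacles you flagged, and the paper's choices there are both simpler and essential.

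\textbf{Frame and variable coefficient.} You work in a renormalized arc-length frame, which produces a transport term $W[\g]\p_s\k$ and a spatially varying coefficient $b[\g](s,t)$ that you then propose to freeze at a point via paradifferential localization. The paper avoids both. It works in the \emph{Lagrangian} label given by the flow of Theorem~\ref{thm:flow_existence_Cbeta}: each particle is fixed, so there is no transport, and the arc-length metric $g=|\p_x\g|$ carries all the drift. Since $\g_0$ is chosen arc-length, $g(\cdot,0)\equiv 1$, and by Lemma~\ref{lemma:estimate_v_Cbeta} one has $|g^{1-2\alpha}-1|_{C^{0,\beta}}\lesssim t$. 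Thus the paper freezes \emph{in time}, not in space: setting $f=g\k$ and $\overline f=g^{1-2\alpha}\k$, the variable coefficient is encoded entirely in the smallness $|f-\overline f|_{C^{0,\beta}}\lesssim t^{\delta}$, which is assumption~\ref{assu:f_continuity_Cbeta}. This completely sidesteps the variable-coefficient dispersive analysis.

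\textbf{Weak formulation and the contradiction mechanism.} At $C^{2,\beta}$ the curvature is only $C^{0,\beta}$, so none of $\p_t\k$, $W\p_s\k$, $\mathcal L_\alpha\k$ exist as functions; your equation is not well-defined pointwise, and the paper never writes it. Instead the paper derives a \emph{distributional} curvature equation (Definition~\ref{def:kappa_flow}, Proposition~\ref{prop:dispersive_term_c2}) by integrating the velocity formula against smooth test functions. The contradiction is then obtained not by constructing a propagator for the full flow, but by plugging in test functions of the form $\varphi(x,t)=e^{-(T-t)\mathcal L_\alpha}\Delta_q\phi$ that solve the \emph{backward} linear equation: the $f\p_t\varphi$ and $\overline f\,\mathcal L_\alpha\varphi$ terms combine into $(f-\overline f)\mathcal L_\alpha\varphi$, and one shows (Proposition~\ref{prop:model_Cbeta}) that for a carefully designed sequence $(\phi,\tau_q)$ the two sides of the resulting identity have incompatible growth in $q$. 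This is a distributional Duhamel argument; no wave-packet localization or approximate propagator for the true equation is needed. The initial data (Lemma~\ref{lemma:bulding_f_pless2} and the duality construction in Proposition~\ref{prop:model_Cbeta}) is Wainger's example rather than a lacunary cosine series with adversarial phases, and Lemma~\ref{lemma:curve_bending} then realizes it as the curvature of a genuine simple closed curve.

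In short, the plan is right, but the curvature equation must be weak from the outset, and the perturbative step should use backward-propagator test functions together with $g|_{t=0}=1$ rather than frozen-in-space paradifferential analysis.
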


\begin{remark}
\hfill

\begin{enumerate}

\item The classical Euler patch problem is globally wellposed in $C^{k,\beta}$ when $0< \beta <1$. Our results show such wellposedness fails as soon as $\alpha>0$.
We expect that illposedness holds for all integer $k \geq 2,$ but do not handle larger $k$ in this paper.

\item The omission of the endpoint cast $\beta =1$ is purely due to  technical issues, and one should be able to show the $C^{2,1}$ or $C^3$ illposedness of $\alpha$-patch problem by following the framework developed here.

\item The $C^{2,\beta}$ $\alpha$-patches are expected to be unique when $0<\alpha < \frac{1}{2}$, as noted in \cite{MR3748149}.

\end{enumerate}

\end{remark}

More comments will follow shortly after we state the next main theorem concerning the $\alpha$-patches in the Sobolev case $W^{2,p}$.

\begin{theorem}\label{thm:main_sobolev}
Let $0<\alpha < \frac{1}{2}$ and $p> \frac{1}{1-2\alpha }$. Unless $p =2$, the $\alpha$-patch problem is strongly illposed in $W^{2,p}$ in the following sense.

For any $p\neq 2$, if $ \alpha < \frac{1}{2} -\frac{1}{2p} $, then there exists a $W^{2,p}$ bounded domain $\Om_0$ such that  any $\alpha$-patch $\Omega_t$  with initial data $\Om_0$ (in the sense of Definition \ref{def:asqg_patch}) satisfies
\begin{equation}
\sup_{t \in [ 0, \delta ]  } \| \Om_t \|_{W^{2,p }} = \infty \quad \text{for any $\delta>0$.}
\end{equation}
\end{theorem}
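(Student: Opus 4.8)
The plan is to reduce Theorem~\ref{thm:main_sobolev} to an instantaneous norm inflation for the curvature of the patch boundary, in close parallel with the proof of Theorem~\ref{thm:main_Holder}. Assume, towards a contradiction, that $\Om_0$ is the $W^{2,p}$ domain to be constructed below and that $\Om_t$ is \emph{any} $\alpha$-patch on $[0,\delta]$ with initial datum $\Om_0$ in the sense of Definition~\ref{def:asqg_patch}, with $M:=\sup_{t\in[0,\delta]}\|\Om_t\|_{W^{2,p}}<\infty$. Let $\g(\cdot,t)$ be an arc-length parametrization of $\p\Om_t$; since $|\p_s\g|\equiv1$ we have $\p_s^2\g=\kappa\,(\p_s\g)^{\perp}$, so $\sup_t\|\kappa(\cdot,t)\|_{L^p}\le M$, and $W^{2,p}\hookrightarrow C^{1,1-1/p}$ in one dimension. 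The hypothesis $\alpha<\tfrac12-\tfrac1{2p}$ is exactly $1-\tfrac1p>2\alpha$, equivalently $p>\tfrac1{1-2\alpha}$; this single inequality serves several purposes below --- it makes the tangent direction H\"older continuous with exponent exceeding the order $2\alpha$ of the Biot--Savart kernel \eqref{eq:aSQG_BS}, and it renders the tangential velocity of the contour Lipschitz. In this regularity range, and using the equivalence between the Eulerian and the geometric notion of a patch at $C^1$ regularity established in Section~\ref{sec:Eu_to_Lagran}, the boundary of \emph{any} such weak solution moves with the velocity field, hence $\g$ satisfies the contour equation \eqref{eq:general_gamma}, and consequently $\kappa$ obeys a closed evolution equation.

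The heart of the matter is the structure of that curvature equation. Converting the area integral in \eqref{eq:aSQG_BS} to a boundary integral shows that on $\p\Om_t$ the normal speed is $V= n\cdot\frac{1}{2\alpha}\int|\g(s)-\g(\eta)|^{-2\alpha}\p_\eta\g(\eta)\,d\eta$, convergent since $2\alpha<1$, and the geometric identity for a curve moving with normal speed $V$ and tangential speed $W$, namely $\p_t\kappa=(\p_s^2+\kappa^2)V+W\,\p_s\kappa$, together with $\p_s^3\g\cdot n=\p_s\kappa$, yields
\begin{equation}\label{eq:plan-curvature}
\p_t\kappa + W\,\p_s\kappa = c_\alpha\, |\p_s|^{2\alpha-1}\p_s\kappa + \mathcal R[\kappa],
\end{equation}
where the leading operator $|\p_s|^{2\alpha-1}\p_s$ has the purely imaginary, antisymmetric symbol $i\,\sign(\xi)|\xi|^{2\alpha}$ --- this is the linear dispersion term --- and $\mathcal R[\kappa]$ collects the curvature corrections to the kernel, commutators, and genuinely lower-order and nonlinear contributions, all bounded in terms of $\|\kappa\|_{L^p}$ and the $C^{1,1-1/p}$ norm of $\g$, hence in terms of $M$. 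The corresponding linear propagator $e^{t c_\alpha|\p_s|^{2\alpha-1}\p_s}$ has unit-modulus symbol: it is \emph{unitary on $L^2$}, which is exactly why $p=2$, i.e. $H^2=W^{2,2}$, is genuinely excluded and why this is consistent with the $H^2$ wellposedness of \cite{MR4235799,asqg_nosplash}; but as an oscillatory Fourier multiplier of order $2\alpha\in(0,1)$ it is \emph{unbounded on $L^p$ for every $p\ne2$}, with a sharp derivative loss proportional to $|\tfrac12-\tfrac1p|$.

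We then design $\Om_0$ so that $\kappa_0$ is a lacunary superposition of chirped wave packets at frequencies $\lambda_j\to\infty$, with amplitudes tuned at the borderline of $\ell^p$ summability: $\kappa_0\in L^p$, hence $\Om_0\in W^{2,p}$, while under the dispersive propagator the $j$-th packet is focused (when $p>2$) or spread (when $p<2$) at the time $t_j\sim\lambda_j^{-2\alpha}\to0$ so that $\big\|e^{t_j c_\alpha|\p_s|^{2\alpha-1}\p_s}\kappa_0\big\|_{L^p}\to\infty$. A Duhamel/bootstrap argument based on \eqref{eq:plan-curvature} and the bound $M$ --- using that the flow of $W$ is bi-Lipschitz with constants depending only on $M$ (this is where $p>\tfrac1{1-2\alpha}$ enters, via $\p_sW\in L^\infty$) and that $\mathcal R[\kappa]$ is of lower order --- shows that the true curvature tracks this linear prediction on the short intervals where focusing occurs, so $\|\kappa(\cdot,t_j)\|_{L^p}\to\infty$ with $t_j\to0$. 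This contradicts $\sup_{t\in[0,\delta]}\|\Om_t\|_{W^{2,p}}\le M$ for every $\delta>0$. The same mechanism is invisible to $L^2$ and to the Euler case $\alpha=0$, where the dispersive symbol degenerates.

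The main obstacle is \eqref{eq:plan-curvature} itself, together with the uniform control of $\mathcal R[\kappa]$: one must (i) justify the boundary-integral form of $V$ and the curvature identity for a merely $W^{2,p}$, possibly non-unique weak solution --- this is where the Eulerian-to-geometric reduction of Section~\ref{sec:Eu_to_Lagran} and the borderline condition $\alpha<\tfrac12-\tfrac1{2p}$ are indispensable --- and (ii) carry out a frequency-localized analysis showing that transport by $W\,\p_s$ and the commutator terms do not, on the scales $t_j\sim\lambda_j^{-2\alpha}$, disrupt the focusing or spreading produced by the dispersive term. The delicate point is that the only a priori bound available is precisely the one we wish to contradict, so every estimate must close in terms of $M$ alone. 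Philosophically this is the patch-problem analogue of the norm-inflation arguments initiated by Bourgain and Li \cite{MR3359050}, with the essential new feature that the instability is driven by a genuine linear dispersion of fractional order $2\alpha$ rather than by a transport or shear mechanism.
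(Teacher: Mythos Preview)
Your high-level strategy is the same as the paper's: the curvature equation has leading linear term $c_\alpha\mathcal{L}_\alpha\kappa$ with $\mathcal{L}_\alpha=|\p_s|^{2\alpha-1}\p_s$, the group $e^{t\mathcal{L}_\alpha}$ is unbounded on $L^p$ for $p\ne2$, and Wainger-type initial data produce instantaneous norm inflation that survives the lower-order terms. However, the concrete route you sketch differs from the paper's in a way that creates a real difficulty.

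You parametrize $\p\Om_t$ by \emph{time-dependent arc-length}, which forces a transport term $W\,\p_s\kappa$ in \eqref{eq:plan-curvature}. But $\kappa$ is only $L^p$, so $\p_s\kappa$ is not a function, and your proposed Duhamel/bootstrap would need to propagate $L^p$ bounds on $\kappa$ through this transport, which is exactly the regularity you do not have. The paper sidesteps this by working instead in the \emph{Lagrangian} frame given by the flow $\g$ of the full velocity (Theorem~\ref{thm:flow_existence}): there is then no transport term at all, and the curvature equation becomes $\p_t(g\kappa)=c_\alpha\mathcal{L}_\alpha(g^{1-2\alpha}\kappa)+\text{error}$ with $g=|\p_x\g|$. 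The nonlinear perturbation is now the multiplicative factor $g^{-2\alpha}$, which equals $1$ at $t=0$ and satisfies $|g^{-2\alpha}-1|_{L^\infty}\lesssim t$, giving exactly the smallness \ref{assu:f_continuity} needed. Crucially, even this equation is only given a \emph{distributional} meaning (Definition~\ref{def:kappa_flow}, Theorem~\ref{prop:weak_k_flow}), and the ``Duhamel'' step is implemented by testing the weak formulation against $\varphi(x,t)=e^{-(T-t)\mathcal{L}_\alpha}\phi(x)$ with $\phi=\Delta_q\psi$ localized at frequency $\l_q$ (Propositions~\ref{prop:model_p<2}--\ref{prop:model_p>2}); this moves all derivatives onto the smooth test function and lets the error be controlled as a distribution in $B^{2\alpha-\sigma}_{p',1}$ rather than pointwise.

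Two further points. First, the error $\mathcal{R}$ is not merely ``lower order'' in a naive sense: Proposition~\ref{prop:dispersive_term} shows it pairs against $|\phi|_{B^{2\alpha-\sigma}_{p',1}}$ with $\sigma=1-\tfrac1p-2\alpha$, and this precise gain of $\sigma$ derivatives relative to $\mathcal{L}_\alpha$ is what makes the contradiction close after choosing the times $\tau_q$ with $\tau_q\l_q^{2\alpha}=\l_q^{\delta\alpha/4}$. Second, you must actually realize the Wainger data as the curvature of a \emph{closed simple} curve; $\int\kappa=2\pi$ alone is not enough since the endpoints may mismatch. The paper handles this with a separate curve-bending argument (Lemma~\ref{lemma:curve_bending}) that adds two smooth symmetric bumps to $\epsilon\kappa_\sharp$, which is why the illposedness results for the model equation are stated for initial data of the form $\epsilon\kappa_\sharp+\kappa_g$ with $\kappa_g\in C^\infty$ (Corollary~\ref{cor:model_p_neq_2}).
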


\begin{remark}
\hfill

\begin{enumerate}
	\item We stated both theorems in a way that does not rely on any previous wellposedness results as the existence of $\alpha$-patches for initial data in $W^{2,p}$ for $p \ne 2$ is not known (and, as follows from the theorem, cannot be true).

	\item There is a drastic contrast between $\alpha=0 $ and $\alpha>0$--- The Euler vortex patch is globally wellposed in $W^{2,p}$ for $1 < p< \infty$~\cite{c2euler}.
	
	\item In~\cite{MR4235799}, Gancedo and Patel proved existence of $\alpha$-patches for $H^2$ initial data when $0< \alpha < \frac{1}{2}$. Our results here show that for such solutions, any higher Sobolev regularity $W^{2,p }$, $p>2$ may be lost instantaneously.
	
	\item The threshold $p> \frac{1}{1 - 2\alpha}$ is related to the existence of the Lagrangian flow of the patch boundary according to the Biot-Savart law. When $p < \frac{1}{1 - 2\alpha}$, the velocity field ceases to be Lipschitz on the patch boundary as we show in Lemma \ref{lemma:non-Lipschitz velocity} below.

\item In fact, our proof yields a slightly stronger statement: for any such $p,\beta$ and $\alpha$, there exists a small $\ep  >0$ such that  $\Om_t $  is not ${W^{2,p-\ep }} $ or respectively ${C^{2,\beta-\ep }}$.
In short, these $\alpha$-patches must exhibit a small instantaneous loss of integrability/derivative, but we do not discuss the optimal value of $\ep$ here.

\end{enumerate}
\end{remark}

\subsection{Outline of the proof}

We now explain the basic outline of the proof of main theorems. The proof is by contradiction as our main theorem includes regimes where even local existence was not known before (for instance $\alpha>0$ small and $p<2$ in $W^{2,p}$).  Since the proof of H\"older $C^{2,\beta}$ is essentially a special case in the proof Sobolev $W^{2,p}$ case, we just explain how we prove Theorem \ref{thm:main_sobolev}.

The most essential ingredient is the presence of a  leading linear dispersive term in the equation for the evolution of the curvature of the patch boundary. In this paper, since we are focusing on the Sobolev spaces $W^{2,p}$, we will consider the curvature evolution of the patch boundary as in~\cite{c2euler}. Compared to the Euler case~\cite{c2euler}, our proofs for the $\alpha$-patches are significantly more involved for many reasons that we explain below. To harvest the dispersion and obtain illposedness in this low regularity setting, there are roughly four steps in the proof.
\begin{enumerate}
\item Establish the wellposedness of the Lagrangian flow of the patch boundary from the Eulerian dynamics of the patch;

\item Derive the curvature evolution of the patch boundary and its leading order dynamics;

\item Prove illposedness for a model dispersive equation of the curvature evolution. This is done by a perturbative argument based on its distributional formulation;

\item Show illposedness for the curvature equation of $\alpha$-patches.

\end{enumerate}

The first step is to go from Eulerian to Lagrangian and show the existence and uniqueness of the flow of the $\alpha$-patch boundary. Given a $W^{2,p}$ $\alpha$-patch on $[0,T]$, we establish a suitable Lagrangian flow of its boundary. This step is necessary for tracking any fine regularity of the patch boundary. Due to the very weak definition we adopt here, we first show that the patch boundary $\p \Om_t$ flows with the velocity that the patch $\Omega_t$ generates in the sense of Definition \ref{def:boundary_flow_with}. This is a local statement on the patch boundary (in contrast to \cite[Definition 1.2]{MR3666567}) and allows us to take advantage of the transport nature of the equation. This works as long as the regularity is $C^1$ and the velocity field is continuous. As a byproduct, we obtain the equivalence of our Eulerian definition and the geometric definition~\cite[Definition 1.2]{MR3666567} of $\alpha$-patches.

Once we showed that the patch boundary $\p \Om_t$ flows with the velocity of the patch $\Omega_t$, we can use this local statement to recover the wellposedness of the particle trajectories when $ \Om_t$ is $W^{2,p}$ for $p> \frac{1}{1-2\alpha }$. Here the proof relies on intrinsic estimates of the velocity field. More precisely, even though the velocity field is not Lipschitz on $\RR^2$, we show that it is $C^{1,\sigma}$ for some $\sigma>0$ along the patch boundary. This higher H\"older regularity of the velocity on the boundary in turn allows us to uniquely define a $C^{1,\sigma}$ flow map of the patch boundary.

The flow that we obtained inevitably loses derivatives when compared with the given regularity $W^{2,p}$. This creates an issue for showing illposedness since a parameterization may lose regularity but that does not necessarily imply the same loss for the intrinsic regularity of the patch boundary. We get around this issue by considering the arc-length renormalization. More precisely, since the flow is $C^{1,\sigma}$, we can consider its reparameterization in the arc-length variable which must be $W^{2,p}$ or $C^{2, \beta}$ by assumptions. This reparametrization allows us to track the curvature of the patch boundary in the Lagrangian variable, even though the Lagrangian flow is only $C^{1,\sigma}$. In other words, the irregularity must only appear in the arc-length evolution by our standing assumptions.

The next step is then to analyze in detail the curvature flow of the $\alpha$-patches. A formal computation following~~\cite{c2euler} shows the leading order dynamics of the curvature equation given by
\begin{equation}\label{eq:intro_curvature}
	\p_t \k = c_\alpha \mathcal{L}_\alpha (\overline{ \k} ) + \text{error terms}
\end{equation}
where $\mathcal{L}_\alpha$ is a Fourier multiplier with symbol $ i \xi |\xi|^{2\alpha -1}  $ and $ \overline{\k}$ is a variant of the curvature $ \k$ that  satisfies $ \overline{\k} \to \k$ as $t\to 0$ in a suitable sense.

It is well-known~\cite{MR104973,MR182838} that the linear equation $ \p_t \k = \mathcal{L}_\alpha   { \k}  $ is illposed in non-$L^2$ spaces when $0< \alpha < \frac{1}{2}$. However, to extend such illposedness result to \eqref{eq:intro_curvature} we face two substantial difficulties:
\begin{enumerate}
\item First, the formal derivation of \eqref{eq:intro_curvature} does not make sense at such a low regularity $W^{2,p}$; the error terms are not well-defined functions.

\item Second, even the main term in \eqref{eq:intro_curvature} is a nonlinear perturbation of the linear equation $ \p_t \k = \mathcal{L}_\alpha   { \k}  $. Computing the new evolution group seems to be out of reach.
\end{enumerate}

To resolve these issues, we first derive a suitable distributional variant of \eqref{eq:intro_curvature} that remains meaningful in the $W^{2,p}$ setting whenever $0< \alpha<  \frac{1}{2}$ and $p> \frac{1}{1-2\alpha}$ and then use a perturbative argument at the level of distributional solutions to show illposedness.

More precisely, by using the Lagrange flow established before, we can derive a suitable weak formulation of \eqref{eq:intro_curvature} that makes sense when the velocity field is at least Lipschitz on the patch boundary, precisely when $p> \frac{1}{1-2\alpha}$.

In this process, a new error term of the order $ |\k - \overline{\k}|$ appears. To show the illposedness, we have to carefully design the initial data together with a sequence of test functions and time scales to show the instant blowup in the high frequencies of $\k$. Both the initial data and the test functions are related to Wainger's example~\cite{MR182838} for $e^{t \mathcal{L}_\alpha } $. The method we developed is reminiscent of the usual Duhamel's principle in the distributional setting. In fact, we can prove that under quite general conditions on $\k$, the new error term is under control and we are able to transfer all the existing illposedness results for the group $e^{t \mathcal{L}_\alpha } $ to the curvature equation \eqref{eq:intro_curvature}.

To finish the proof, we need to find initial data whose boundary has curvature matching the initial data in the illposedness part. In general, a $L^p$ or $C^\beta$ function $\k $ is not necessarily the curvature of a simple closed curve even if $\int \k  = 2\pi $ as the endpoints of the arc could mismatch. We rectify this situation by another perturbation argument exploiting the symmetry in the special initial data.

Finally, to prove the illposedness in the H\"older $C^{2,\beta}$ setting, we need to strengthen a few estimates as more regularity is required. Most of the argument is based on the framework developed in the Sobolev setting, and we refer to Section \ref{sec:proof_holder} for more details.

\subsection{Final remarks}
We close the introduction with a few concluding remarks.

The $\alpha$-patch problem can also be thought of as an interface dynamics. In other interface problems such as water waves, a linearization leads to an explicit linear operator whose linear equation has a dispersion relation that generally does not allow for non-$L^2$ wellposedness, see for instance~\cite[Section 3]{MR3409894}. In  \cite{MR3961297},  C\'ordoba, G\'omez-Serrano, and  Ionescu used this ansatz  for $\alpha$-SQG patches\footnote{This result has been extended to $0< \alpha\leq \frac{1}{2}$ in \cite{MR4379142,MR4762611}.} with $\alpha> \frac{1}{2}$ to obtain global patch solutions that are small perturbations of the half-plane stationary solution.  In their analysis, the linearization leads to the equation $\p_t - \mathcal{L}_\alpha$  using sophisticated para-differential calculus, which however requires significantly more regularity than our $W^{2,p} $ and $C^{2,\beta}$ setting.

One can notice that the $W^{2,p}$ illposedness result here does not cover the entire range $0 < \alpha < \frac{1}{2}$ of the $H^2$ wellposedness by Gancedo-Patel~\cite{MR4235799}. For those $H^2$ $\alpha$-patches, the Lagrangian flow of the patch boundary seems to be ill-defined and hence the techniques developed in this paper are not sufficient to analyze their fine dynamics.

Surprisingly, the dispersion in the curvature dynamics, which is the driving mechanism of the Sobolev and H\"older illposedness, is absent when $\alpha =0$ or $\alpha = \frac{1}{2 }$. Specifically, when $\alpha  = \frac{1}{2 }$, the dispersive relation leads to the evolution of a traveling wave on the physical side, and hence one could conjecture that the original SQG patch problem may be well-posed in non-$L^2$ spaces. The possible illposedness of  patches in the SQG case is particularly challenging as the velocity may no longer be continuous. Nevertheless, the framework we develop in this paper might be useful in other evolutionary free-boundary problems in a low-regularity setting.

\subsection{Plan of the paper}

The rest of the paper is organized as follows.
\begin{itemize}
\item We collect necessary notations, conventions, and basic technical tools used in the paper in Section \ref{sec:prelim}.

\item In Section \ref{sec:Eu_to_Lagran}, we prove that each $\alpha$-patch (according to Definition \ref{def:asqg_patch}) induces a unique Lagrangian flow of the patch boundary with certain regularity.

\item In Section \ref{sec:curvature_flow}, we derive and analyze the curvature equation of the $\alpha$-patch problem in the Lagrangian variable that we obtained from Section \ref{sec:Eu_to_Lagran}.

\item Section \ref{sec:disper_estimates} and Section \ref{sec:disper_eq_illposed} are devoted to the illposedness of a class of dispersive equations, inspired by the curvature dynamics of the $\alpha$-patch. We prove dispersive estimates and revisit Wainger's sharp counterexample for $e^{t \mathcal{L}_\alpha }$ in Section \ref{sec:disper_estimates}, then show related illposedness in Section \ref{sec:disper_eq_illposed}.

\item Finally, in Section \ref{sec:proof_illposed} and Section \ref{sec:proof_holder} we complete the proof of the main theorems by showing that the illposedness results in Section \ref{sec:disper_eq_illposed} carry over to the curvature equation of the $\alpha$-patch problem.

\end{itemize}

\subsection*{Acknowledgments}

AK acknowledges partial support of the NSF-DMS grants 2006372 and 2306726. XL has been partially supported by the NSF-DMS grant 1926686 while visiting IAS where a part of this work was done.

%%%%%%%%%%%%%%%%%%%%%%%%%%%%%%%%%%%%%%%%%%%%%%%%%%%%%%%
\section{Preliminaries}\label{sec:prelim}
%%%%%%%%%%%%%%%%%%%%%%%%%%%%%%%%%%%%%%%%%%%%%%%%%%%%%%%

\subsection{Notation}

Given $ x \in \RR^2$, $x^\perp$ denotes the counter-clockwise rotation by $\frac{\pi}{2}$ of $x$, i.e. $x^\perp = (-x_2 , x_1)$.

For any set $ E \subset \RR^2$ and $x \in \RR^2$, the distance of $x$ from  $E$ is denoted
$$
\dist(x , E): = \inf_{y\in E} \{ |x -y| \}.
$$
In this paper, the set $E$ will always be compact, so the infimum is always attained for some $y\in E$ (might not be unique).

Functions on the torus $\TT = \RR / 2\pi  \ZZ $ are identified with $2\pi$-periodic functions on $\RR$. Similarly, if $f$ is $L$-periodic for some $L>0$, we consider it a function on the rescaled torus $\TT = \RR / L\ZZ $.

For any $1 \leq p \leq \infty $ we write $|f|_{L^p(X)}$ to denote various Lebesgue norms on domains such as $X = \TT, \RR$ or a bounded domain $\Omega$. When there is no confusion, we simply write $|f|_{L^p}$.

Given any $L$-periodic function $f:\RR \to \RR^n  $,  we denote by $\mathcal{M}f: \RR \to \RR$ the ($L$-periodic) maximal function of $f$,
\begin{equation}\label{eq:def_maximal_function}
\mathcal{M}f(x ) = \sup_{0< \ep < 2 L }\frac{1}{2\ep}\int_{ x-\ep}^{x + \ep} |f ( y )| \, d y .
\end{equation}
The restriction $\ep < 2 L $ is non-essential and the boundedness of $\mathcal{M}$ for periodic function on $L^p$ for $1<p\leq \infty$ follows from the standard $\RR^d$ results \cite{MR0290095}.

Throughout the paper, $X \lesssim Y$ means $X \leq CY$ for some constant $C>0$ that may change from line to line. Similarly, $ X \gtrsim Y$ means $X \geq C Y$ and $X \sim Y$ means $ X \lesssim Y$, and $X \gtrsim Y$ at the same time.

We also use the following big-O and small-o notations: $X = O(Y)$ for a quantity $X$ such that $|X| \leq C Y$ for some absolute constant $C>0$ while $X = o(Y)$ means the ratio $  |X| |Y|^{-1}   \to 0$ as $Y \to 0$ or $Y \to \infty$.

\subsection{Classical H\"older spaces and Sobolev spaces}

For any integer $ k\in \NN\cup\{0\} $ and $0 \leq \beta \leq 1$, the classical H\"older spaces $C^{k,\beta}(\TT)$ consist  of $C^k$ continuous functions $f:\TT \to \RR$ such that the following norm is finite
\begin{equation}
|f|_{C^{k,\beta}}: = |f|_{C^k(\TT)} + \sup_{x\neq y} \frac{|f^{(k)}(x) - f^{(k)}(y)|}{|x-y|^\beta} < \infty.
\end{equation}

For any integer  $k \in \NN\cup\{0\} $ and $p \in [1,\infty] $, the Sobolev space $W^{k,p}(\TT)$ consists of functions whose weak derivatives of order up to $k$ belongs $L^p(\TT)$. The Sobolev norm is defined by
\begin{equation}
|f|_{W^{k,p}} : = \sum_{ 0\leq i \leq k} |\nabla^i f|_{L^p(\TT)}.
\end{equation}

Throughout the paper, we will also consider these spaces over various intervals, such as $[-L/2, L/2 ]$ for the arc-length parameterizations, and we keep the same notations $|\cdot |_{C^{k,\beta}}$ and $|\cdot |_{W^{k,p}}$ for their norms without spelling out the specific spatial domain.

\subsection{Planar curves and domains}

A (closed) curve is the image of a continuous map  $\gamma : \TT \to \RR^2$. We require $\g$ to be of class $C^1$, so the curve is locally the graph of a $C^1$ function. In this paper, all curves considered are simple and closed.

A parametrization of a curve is a $C^1$ periodic map  $\gamma : \RR \to \RR^2$ such that $ |\g'|>0$. If $ |\g'| = 1 $, we say it is an arc-length parametrization. We always assume the parameterization is counterclockwise oriented. In this paper, most of the non-arc-length parameterizations will be $2\pi$-periodic, i.e. on the standard torus $\TT$.

For a Banach space $X \subset C^1 $, we say a curve is of class $X$ if its arc-length parameterizations are of class $X$. Given a parametrization $\g$, we write $\T$ as the unit tangent vector and $\N = - \T^\perp$ the outer unit normal vector of $ \g$. In this paper, we mainly use the Sobolev and H\"older spaces, $X = W^{k,p}$ or $C^{k,\beta}$, to measure the regularity of curves. If $ \g$ is $W^{2,p}$, then its curvature $\k$  defined by
\begin{equation}\label{eq:def_kappa}
\begin{cases}
 \p_s \T   & = - \k \N\\
 \p_s \N    &=   \k \T
\end{cases}
\end{equation}
is a $L^p$ function on $ \g $. Note that counterclockwise orientation of $\g$ is assumed in \eqref{eq:def_kappa}.

Throughout the paper, we consider curves and domains that are time-dependent. We use the notion of a moving domain which is just a map $t \mapsto \Omega_t \subset \RR^2$ on some interval $[0,T] $ that assigns a bounded domain to every time $t\in [0,T]$. In this case, $\p \Omega_t$ refers to the boundary curve of $\Omega_t$ at each time $t$.

\subsection{Rough parametrizations of  regular domains}\label{subsec:rough_gamma_regular_Omega}

In what follows, we often consider parameterizations that are less regular than the domain itself. In this case, the metric induced by the parameterization is less regular. We explain here how the geometric quantities, particularly the curvature, are still well-defined even when the parameterization is only $C^{1,\sigma}$ for some $\sigma>0$.

\begin{lemma}\label{lemma:geo_quant_rough_gamma}
Let $\Omega$ be a $W^{2,p}$ bounded domain and let $\g \in C^{1,\sigma}(\TT)$ be a parametrization of $\p \Omega$.

Define tangent $\T(x)$, normal $\N(x)$, curvature $\k(x)$ in the parameterization $\g$ by the corresponding objects at point $\g(x) $ for any $x \in \TT$. Then the geometric quantities in the parameterization $\g$ satisfy the regularity
\begin{itemize}
    \item $\T, \N \in W^{1,p}(\TT)$.

    \item $\k \in L^p(\TT) $.
\end{itemize}

\end{lemma}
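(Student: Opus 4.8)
The plan is to exploit the fact that, although $\g$ is only $C^{1,\sigma}$, the image curve $\p\Omega$ is genuinely $W^{2,p}$, so the geometric quantities are intrinsic and their regularity can be transferred back along $\g$ once we control the change of parametrization. Let $\oga:[-L/2,L/2]\to\RR^2$ be an arc-length parametrization of $\p\Omega$, which by hypothesis is $W^{2,p}$, so its unit tangent $\oT$ and normal $\oN$ are $W^{1,p}$ and its curvature $\ok$ is $L^p$. Since $\g$ has $|\g'|>0$ and is $C^{1,\sigma}$, there is a reparametrization $\phi:\TT\to[-L/2,L/2]$ (up to a choice of base point) with $\g(x)=\oga(\phi(x))$, where $\phi(x)=\int_{x_0}^x|\g'(y)|\,dy$ modulo $L$. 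The key observations are: (i) $\phi'=|\g'|$ is $C^{\sigma}$ and bounded below away from $0$, hence $\phi$ is a bi-Lipschitz $C^{1,\sigma}$ diffeomorphism of the torus with $C^{0,\sigma}$ inverse derivative; and (ii) by definition the pointwise objects satisfy $\T(x)=\oT(\phi(x))$, $\N(x)=\oN(\phi(x))$, $\k(x)=\ok(\phi(x))$.

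First I would record the elementary composition facts needed: if $g\in W^{1,p}(\TT)$ (an $L$-periodic function) and $\phi$ is a bi-Lipschitz $C^1$ diffeomorphism with $\phi'$ bounded above and below, then $g\circ\phi\in W^{1,p}$ with $(g\circ\phi)'=(g'\circ\phi)\,\phi'$, the chain rule holding in the weak sense; and if $h\in L^p(\TT)$ then $h\circ\phi\in L^p$ with $|h\circ\phi|_{L^p}^p=\int |h(\phi(x))|^p\,dx=\int|h(u)|^p\,(\phi^{-1})'(u)\,du\le \|(\phi^{-1})'\|_\infty\,|h|_{L^p}^p$ by the change of variables $u=\phi(x)$, which is legitimate because $\phi$ is bi-Lipschitz. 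Applying the $L^p$ statement to $h=\ok$ immediately gives $\k=\ok\circ\phi\in L^p(\TT)$. Applying the $W^{1,p}$ statement to $g=\oT$ and $g=\oN$ gives $\T=\oT\circ\phi\in W^{1,p}(\TT)$ and $\N=\oN\circ\phi\in W^{1,p}(\TT)$; here we also use that $\phi'=|\g'|\in C^{0,\sigma}\subset L^\infty$ multiplies an $L^p$ function into $L^p$, so the product $(\oT'\circ\phi)\,\phi'$ lies in $L^p$ as required. This yields both bullet points.

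The one point deserving care—and the main obstacle—is justifying the weak chain rule and the change-of-variables formula when $\phi$ is merely $C^{1,\sigma}$ (so $\phi'$ is only Hölder, not Lipschitz or absolutely continuous in a stronger sense). This is standard: $\phi$ is a Lipschitz homeomorphism with Lipschitz inverse (since $0<c\le\phi'\le C$), absolutely continuous together with $\phi^{-1}$, so the area formula / change of variables holds for Borel integrands, and the chain rule $(g\circ\phi)'=(g'\circ\phi)\phi'$ for $g\in W^{1,p}$ and bi-Lipschitz $\phi$ is a classical fact (approximate $g$ by smooth functions and pass to the limit, using $\phi^{-1}\in\Lip$ to control the error). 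A complementary subtlety is that "$\k(x)$ defined as the curvature at the point $\g(x)$" presupposes $\p\Omega$ has a well-defined curvature at a.e.\ point, which is exactly the content of $\oga\in W^{2,p}$ via \eqref{eq:def_kappa}; so the statement is really that this intrinsic, a.e.-defined object pulls back to an $L^p$ function under the rough parametrization, and similarly for $\oT,\oN$. I would also note for completeness that $\T,\N$ are automatically continuous (being compositions of the continuous $\oT,\oN$—continuous because $W^{1,p}\hookrightarrow C^0$ in one dimension for $p\ge1$, with the $p=1$ case handled since $\oT=\oga'$ is continuous by $\oga\in C^1$—with the continuous $\phi$), consistent with $W^{1,p}(\TT)\hookrightarrow C(\TT)$, so the claimed regularity is consistent and sharp given the input data.
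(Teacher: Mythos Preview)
Your proof is correct and follows essentially the same approach as the paper: both construct the $C^{1,\sigma}$ change-of-variable $\phi(x)=\int_{x_0}^x|\g'(y)|\,dy$ to the arc-length parametrization $\oga$ (the paper calls this map $\ell$), and then pull back the intrinsic $W^{1,p}$/$L^p$ regularity of $\oT,\oN,\ok$ along $\phi$. Your treatment is in fact more careful than the paper's in justifying the weak chain rule and change-of-variables for compositions with a bi-Lipschitz $C^{1,\sigma}$ diffeomorphism, which the paper simply asserts.
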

\begin{proof}
Let  $ g(x) = |\p_x {\g}(x)|  $ be the arc-length metric of $\g$, and consider the change of variable
$$
s: =\ell(x)= \int_0^x g(y) \, dy .
$$
Since $x \mapsto \g(x)$ is a   $C^{1,\sigma }$ parametrization, $ x \mapsto \ell(x) $ is  $C^{1,\sigma }$ bijection. Now we define a parameterization of $\p \Omega $ by $s \mapsto \overline{\g}(s)$ by setting $\overline{\g}(s) = \g(\ell^{-1}(s))$. It follows that $\overline{\g}$ is an arc-length parametrization of $\p \Omega $. Since $ \p \Omega$ is a $W^{2,p}$ domain, we have that $\overline{\g}$ is of class $W^{2,p}$(on some scaled torus). Let us denote $\overline{\T},\overline{\N}$, and $\overline{\k}$ the geometric quantities for this arc-length parametrization. Obviously by assumption $\overline{\T}, \overline{\N} \in W^{1,p} $ while $\overline{\k}  \in L^p$.

Due to the pullback formulas $\T (x) = \overline{\T}(\ell(x))$, $\N (x) = \overline{\N}(\ell(x))$, and $\k  (x) = \overline{\k}(\ell(x))$, we have that $ \T,\N$ and $\k$ have the same regularity as their arc-length counterparts (with norms depending also on $\g$).

\end{proof}

Let us remark that these quantities can also be obtained by arc-length differentiation. Due to the $C^{1,\sigma} $ diffeomorphism $x \mapsto \ell(x) $, for any function on $\TT$ we can define its arc-length differentiation with respect to the metric $g= |\p_x {\g}|$ as follows.

For any $f:\TT \to \RR$ we consider $f$ defined on $\p \Omega$ and $\p_s f$ its arc-length differentiation (with respect to $g= |\p_x {\g}|$),
\begin{align}\label{eq:def_arc_derivative}
\p_s f(x) := \lim_{y \to x} \frac{f(x) -f(y) }{ \int_y^x g(\zeta) \, d\zeta }.
\end{align}

The next simple lemma explains various equivalent ways to compute the arc-length derivative.

\begin{lemma}\label{lemma:arc_diff}
Let $ \g \in C^{1,\sigma}(\TT)$ for some $\sigma >0$ such that $g= |\p_x {\g}|>0$. Define the $C^{1,\sigma }$ diffeomorphism by $ x \mapsto \ell(x) = \int_0^x g(y) \, dy$.

Then for any $f \in C^1(\TT)$, its arc-length differentiation with respect to $\g $ defined by \eqref{eq:def_arc_derivative} satisfies
\begin{align*}
\p_s f(x) =  \frac{f'(x)}{|g(x)|} = \lim_{s' \to s} \frac{\overline{f}(s) -\overline{f}(s') }{ s-s'     },
\end{align*}
where $\overline{f} ( \cdot ): = f ( \ell^{-1}(\cdot) )$.
\end{lemma}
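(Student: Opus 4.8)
The plan is to prove Lemma \ref{lemma:arc_diff} by a direct change-of-variables computation, exploiting the fact that $\ell$ is a $C^{1,\sigma}$ diffeomorphism with $\ell'(x) = g(x) > 0$, so everything reduces to the chain rule and the ordinary one-dimensional derivative of $\overline{f}$.

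First I would establish the rightmost identity. Write $s = \ell(x)$ and $s' = \ell(y)$; since $\ell$ is a continuous bijection, $y \to x$ is equivalent to $s' \to s$, and $\int_y^x g(\zeta)\,d\zeta = \ell(x) - \ell(y) = s - s'$. Moreover $f(x) = \overline{f}(\ell(x)) = \overline{f}(s)$ and likewise $f(y) = \overline{f}(s')$. Substituting these into the defining formula \eqref{eq:def_arc_derivative} gives
\begin{align*}
\p_s f(x) = \lim_{y\to x} \frac{f(x) - f(y)}{\int_y^x g(\zeta)\,d\zeta} = \lim_{s'\to s} \frac{\overline{f}(s) - \overline{f}(s')}{s - s'},
\end{align*}
which is exactly the ordinary derivative $\overline{f}'(s)$ at $s = \ell(x)$, provided this limit exists. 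Since $f \in C^1(\TT)$ and $\ell^{-1}$ is $C^1$ (as $\ell' = g > 0$ is continuous, $\ell^{-1}$ is differentiable with $(\ell^{-1})'(s) = 1/g(\ell^{-1}(s))$), the composition $\overline{f} = f \circ \ell^{-1}$ is $C^1$, so the limit does exist and the rightmost equality holds.

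Next I would identify this with $f'(x)/|g(x)|$. By the chain rule applied to $f(x) = \overline{f}(\ell(x))$, we have $f'(x) = \overline{f}'(\ell(x))\,\ell'(x) = \overline{f}'(s)\,g(x)$, and since $g(x) = |g(x)| > 0$, dividing gives $\overline{f}'(s) = f'(x)/|g(x)|$. Combining the two computations yields all three expressions in the statement. The only mild subtlety — hardly an obstacle — is making sure the difference-quotient limit in \eqref{eq:def_arc_derivative} is interpreted as a genuine (two-sided) limit and that it exists; this is guaranteed by the $C^1$ regularity of $f$ together with the $C^{1,\sigma}$ regularity of $\g$ (in fact $C^1$ of $\ell$ suffices for this particular lemma, with the Hölder exponent $\sigma$ only needed to guarantee $\ell^{-1}$ is itself $C^{1,\sigma}$, which is used elsewhere). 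I do not expect any real difficulty here; the lemma is essentially a bookkeeping statement that legitimizes passing freely between arc-length differentiation, parameter differentiation divided by the metric, and differentiation of the arc-length reparametrization $\overline{f}$.
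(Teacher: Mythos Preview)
Your proposal is correct and follows essentially the same approach as the paper: both observe that the definition \eqref{eq:def_arc_derivative} is literally the difference quotient for $\overline{f}'(s)$ after the change of variables $s=\ell(x)$, $s'=\ell(y)$. For the identification with $f'(x)/g(x)$, the paper argues directly by expanding the denominator $\int_y^x g(\zeta)\,d\zeta = g(x)(x-y) + O(|x-y|^{1+\sigma})$ and passing to the limit in the quotient, whereas you go via the chain rule on $f = \overline{f}\circ\ell$; both are one-line computations, and your remark that only $C^1$ regularity of $\ell$ is needed here (with the H\"older exponent used elsewhere) is accurate.
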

\begin{proof}
Note that \eqref{eq:def_arc_derivative} is just $  \lim_{s' \to s} \frac{\overline{f}(s) -\overline{f}(s') }{ s-s'     } $. It suffices to prove the first part of the identity.

Since $\g \in C^{1, \sigma } $ we have $\int_y^x g(\zeta) \, d\zeta  = g(x)(y-x ) + O(|y-x|^{1+\sigma})$. Then \eqref{eq:def_arc_derivative} follows.
\end{proof}

\subsection{Estimates for \texorpdfstring{$W^{2, p }$}{W2p} curves}\label{subsec:curves_estimates}
In this subsection, we collect some estimates for the $W^{2, p }$ curves.  These estimates will rely on $L^p$-boundedness of the maximal function.

When the parameterization is arc-length, we have  improved estimates for  some remainders. See \cite[Lemma 2.2]{c2euler} for the proof of Lemmas \ref{lemma:arc_length_estimates} and \ref{lemma:curve_estimates} below.

\begin{lemma}\label{lemma:arc_length_estimates}
Let  $\Om $ be a $W^{2,p}$ domain for some $1< p \leq \infty$ and $\beta = 1 -\frac{1}{p}$. Let $\g $ be an arc-length parameterization of $ \p\Om$. For  any $s,  s' \in \RR$, we have
\begin{subequations}
\begin{align}
\N(s) \cdot \T(s') & = O(|s-s'|^\beta) \label{NT1} \\
\T(s)  \cdot \T( s' ) &= 1+ O(  |s - s'  |^{2\beta }) \label{eq:arc_length_a}\\
(\g(s) - \g( s' )) \cdot \N(s) &= O(  |s - s'  |^{1+\beta })  \label{eq:arc_length_d} \\
(\T(s) - \T( s' )) \cdot \T(s) &= O(  |s - s'  |^{2\beta }) \label{eq:arc_length_e} \\
\frac{ |s -s'  | }{ |\g(s) - \g( s' )| }  &= 1+ O(  |s - s' |^{2\beta }) \label{eq:arc_length_f} \\
(\g(s) - \g( s' )) \cdot \T(s) &= (s - s'  )+ O(  |s - s'  |^{1+ 2 \beta }) \label{eq:arc_length_linear}
\end{align}
\end{subequations}
and the maximal estimates
\begin{subequations}
\begin{align}
\T( s' ) \cdot \N(s) & =  O(  \mathcal{M}\k ( s ) |s - s'  | ) \label{eq:arc_length_M_a} \\
\T( s' ) \cdot \T(s) & = 1+ O(  \mathcal{M}\k ( s ) |s - s'  |^{1+\beta } ) \label{eq:arc_length_M_b} \\
(\g(s) - \g( s' )) \cdot \N(s) &= O(  \mathcal{M}\k ( s ) |s - s'  |^{2})  \label{eq:arc_length_M_c} \\
  \T(s)  \cdot \left[\T(s) -\T( s' )  \right] &= O(  \mathcal{M}\k ( s ) |s - s'  |^{1+\beta}) \label{eq:arc_length_M_d} \\
\left[(\g(s) - \g( s' ) \right]\cdot \left[\T(s) -\T( s' )  \right] &= O(   \mathcal{M}\k (s ) |s - s'  |^{2+\beta}) \label{eq:arc_length_M_e}.
\end{align}

\end{subequations}

\end{lemma}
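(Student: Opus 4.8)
The plan is to reduce everything to Taylor-type expansions of the arc-length parametrization $\g$ together with the fundamental theorem of calculus for the unit tangent $\T=\p_s\g$, whose derivative is $\p_s\T=-\k\N$ with $\k\in L^p$. Throughout I work with $s,s'\in\RR$ and write $h=s-s'$; by periodicity and compactness it suffices to treat $|h|$ small. The basic building block is the identity $\g(s)-\g(s')=\int_{s'}^{s}\T(\zeta)\,d\zeta$ and, one order further, $\T(s)-\T(s')=-\int_{s'}^{s}\k(\zeta)\N(\zeta)\,d\zeta$. From the latter and H\"older's inequality with $\beta=1-\tfrac1p$ one immediately gets $|\T(s)-\T(s')|\le |h|^{\beta}\,\|\k\|_{L^p}$, which is the engine behind \eqref{NT1}--\eqref{eq:arc_length_linear}: since $\N(s)\cdot\T(s)=0$, we have $\N(s)\cdot\T(s')=\N(s)\cdot(\T(s')-\T(s))=O(|h|^{\beta})$, giving \eqref{NT1}; then $\T(s)\cdot\T(s')=1-\tfrac12|\T(s)-\T(s')|^2 = 1+O(|h|^{2\beta})$ gives \eqref{eq:arc_length_a} and \eqref{eq:arc_length_e}; integrating \eqref{NT1} in the first variable yields $(\g(s)-\g(s'))\cdot\N(s)=O(|h|^{1+\beta})$ for \eqref{eq:arc_length_d}; and $(\g(s)-\g(s'))\cdot\T(s)=\int_{s'}^s\T(\zeta)\cdot\T(s)\,d\zeta = h + \int_{s'}^s O(|\zeta-s|^{2\beta})\,d\zeta = h+O(|h|^{1+2\beta})$ gives \eqref{eq:arc_length_linear}. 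Estimate \eqref{eq:arc_length_f} then follows from \eqref{eq:arc_length_d} and \eqref{eq:arc_length_linear} via $|\g(s)-\g(s')|^2 = |h|^2\big(1+O(|h|^{2\beta})\big)$ and a Taylor expansion of $(1+x)^{-1/2}$, using that $|\g(s)-\g(s')|\ge c|h|$ for $|h|$ small (and is bounded below globally by a geometric argument on the simple closed curve).

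For the maximal-function estimates \eqref{eq:arc_length_M_a}--\eqref{eq:arc_length_M_e} the idea is the same but one replaces the crude H\"older bound $\int_{s'}^{s}|\k|\le |h|^{\beta}\|\k\|_{L^p}$ by the pointwise bound $\big|\int_{s'}^{s}\k(\zeta)\,d\zeta\big|\le |h|\,\mathcal{M}\k(s)$, which is exactly the defining property of the maximal function in \eqref{eq:def_maximal_function} (one may need a harmless absolute constant and the restriction $\ep<2L$, which is why the statement carries $\mathcal{M}\k(s)$ rather than $\|\k\|_{L^p}$). Thus $|\T(s)-\T(s')|\le |h|\,\mathcal{M}\k(s)$ when we only integrate up to $s$, giving \eqref{eq:arc_length_M_a} directly and \eqref{eq:arc_length_M_b} by combining it with the already-proved $|\T(s)-\T(s')|\lesssim|h|^{\beta}$ so that $|\T(s)-\T(s')|^2\lesssim \mathcal{M}\k(s)|h|^{1+\beta}$; estimate \eqref{eq:arc_length_M_d} is the same squared-length computation. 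Then \eqref{eq:arc_length_M_c} comes from integrating \eqref{eq:arc_length_M_a} once in $s$, and \eqref{eq:arc_length_M_e} from writing $(\g(s)-\g(s'))\cdot(\T(s)-\T(s'))=\int_{s'}^{s}(\g(\zeta)-\g(s'))\cdot\p_\zeta\T(\zeta)\,d\zeta$ after an integration by parts, then inserting $|\g(\zeta)-\g(s')|\le|\zeta-s'|\le|h|$ and $\big|\int|\k|\big|\le|h|\,\mathcal{M}\k(s)$; some care is needed so that the maximal function is evaluated at $s$ rather than at the running point, which is handled by the elementary monotonicity $\int_{s'}^{s}|\k|\le 2|h|\,\mathcal{M}\k(s)$ whenever $|\zeta-s|\le 2|h|$.

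The one genuinely delicate point — the main obstacle — is the uniform lower bound $|\g(s)-\g(s')|\ge c\,|s-s'|$, equivalently the bi-Lipschitz (chord-arc) property of the curve, which is used implicitly in \eqref{eq:arc_length_f} and in asserting that all error terms are genuinely lower order rather than merely formal. Locally this is automatic from \eqref{eq:arc_length_linear}, since $(\g(s)-\g(s'))\cdot\T(s)=h+o(h)$ forces $|\g(s)-\g(s')|\gtrsim|h|$ for $|h|\le h_0$ with $h_0$ depending only on $\|\k\|_{L^p}$; the global statement for $|h|\ge h_0$ then follows from the assumption that $\p\Om$ is a simple closed $C^1$ curve (so $\g$ is injective on a period) plus compactness, giving a strictly positive minimum of $|\g(s)-\g(s')|/|s-s'|$ over the compact set $h_0\le|h|\le L/2$. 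With that in hand, every estimate above is a finite string of applications of the fundamental theorem of calculus, H\"older's inequality, and the maximal function bound, and the constants depend only on $p$, the length $L$, and $\|\k\|_{L^p}$.
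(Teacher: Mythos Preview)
Your plan is correct and follows the standard route; the paper in fact states this lemma without proof, treating these estimates as routine consequences of the Frenet relations, H\"older's inequality, and the maximal function bound---exactly the ingredients you identify.

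One small simplification for \eqref{eq:arc_length_M_e}: rather than integrating by parts, it is cleaner to decompose in the frame at $s$,
\begin{align*}
(\g(s)-\g(s'))\cdot(\T(s)-\T(s'))
&= \big[(\g(s)-\g(s'))\cdot\T(s)\big]\big[\T(s)\cdot(\T(s)-\T(s'))\big] \\
&\quad + \big[(\g(s)-\g(s'))\cdot\N(s)\big]\big[\N(s)\cdot(\T(s)-\T(s'))\big],
\end{align*}
and then combine \eqref{eq:arc_length_M_d} with $|(\g(s)-\g(s'))\cdot\T(s)|\lesssim|h|$ for the first term, and \eqref{eq:arc_length_M_c} with \eqref{NT1} for the second. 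This avoids tracking the running point in the maximal function.
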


We also need a variant when the parameterization is not arc-length. As before, parameterizations are considered  as  periodic functions on $\RR$.

\begin{lemma}\label{lemma:curve_estimates}
Let  $\Om $ be a $W^{2,p}$ domain for some $1< p \leq \infty$. Let $\g $ be a  $C^{1,\sigma}(\TT)$ parameterization of $ \p\Om$ for some $ 0 < \sigma \leq   1 -\frac{1}{p} $.  For    any $ x, y \in \RR$, we have
\begin{subequations}
\begin{align}
\label{eq:curve_estimates_linear}
\T(x ) \cdot \T(y ) & = 1 + O( |x -y |^{2 (1 -\frac{1}{p}) })\\
(\g(x ) - \g(y )) \cdot \T(x ) &= g(x )(x  -y )+ O( |x -y |^{1+ \sigma })\\
\frac{ | x  - y  |}{|\g(x ) - \g(y )|}  &= [ g(x )]^{-1 } + O( |x  -y |^{\sigma }) \label{eq:curve_estimates_f}
\end{align}
\end{subequations}
where $g(x) =  |\p_x \g(x)| >0$ is the arc-length metric of $\g$.

\end{lemma}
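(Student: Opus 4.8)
The plan is to deduce all three estimates from their arc-length counterparts in Lemma~\ref{lemma:arc_length_estimates} by pulling back along the $C^{1,\sigma}$ change of variables $s = \ell(x) = \int_0^x g(y)\,dy$ introduced in Section~\ref{subsec:rough_gamma_regular_Omega}. Write $\overline{\g}(s) = \g(\ell^{-1}(s))$ for the induced arc-length parameterization of $\p\Om$; since $\Om$ is a $W^{2,p}$ domain, $\overline{\g}$ is of class $W^{2,p}$ on the appropriate scaled torus, and Lemma~\ref{lemma:geo_quant_rough_gamma} gives the pullback identities $\T(x) = \overline{\T}(\ell(x))$, $\N(x) = \overline{\N}(\ell(x))$, $\g(x) = \overline{\g}(\ell(x))$. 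Thus, after the substitution $s = \ell(x)$, $s' = \ell(y)$, each left-hand side in the statement becomes exactly the corresponding quantity for $\overline{\g}$ at the arc-length points $s, s'$, to which \eqref{eq:arc_length_a}, \eqref{eq:arc_length_linear} and \eqref{eq:arc_length_f} apply with $\beta = 1 - \frac1p$.

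The only extra ingredient needed is precise control of the Jacobian $\ell' = g$. Since $\g \in C^{1,\sigma}(\TT)$ we have $\g' \in C^\sigma$, hence $g = |\g'| \in C^\sigma$ with $g$ bounded above and below (the lower bound $g>0$ is part of the hypothesis, and $\min g$ is attained by compactness). This yields $|\ell(x) - \ell(y)| \sim |x - y|$, which lets me replace powers of $|\ell(x)-\ell(y)|$ in error terms by the same powers of $|x-y|$, and, more sharply, the expansion
\[
\ell(x) - \ell(y) = \int_y^x g(\zeta)\,d\zeta = g(x)(x-y) + O\!\left(|x-y|^{1+\sigma}\right),
\]
which is precisely what produces the factor $g(x)$ and the remainder exponent $\sigma$ in parts (b) and (c).

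With this in hand the three estimates follow by composition. For (a), \eqref{eq:arc_length_a} gives $\T(x)\cdot\T(y) = 1 + O(|\ell(x)-\ell(y)|^{2\beta}) = 1 + O(|x-y|^{2(1-1/p)})$. For (b), \eqref{eq:arc_length_linear} gives $(\g(x)-\g(y))\cdot\T(x) = (\ell(x)-\ell(y)) + O(|\ell(x)-\ell(y)|^{1+2\beta})$; inserting the expansion of $\ell(x)-\ell(y)$ and using $2\beta \ge \sigma$ to absorb $|x-y|^{1+2\beta}$ into $|x-y|^{1+\sigma}$ gives the claim. For (c), since $\ell$ is increasing, $|\ell(x)-\ell(y)| = g(x)|x-y|\bigl(1 + O(|x-y|^\sigma)\bigr)$, so the lower bound on $g$ gives $|x-y|/|\ell(x)-\ell(y)| = [g(x)]^{-1} + O(|x-y|^\sigma)$; combining this with \eqref{eq:arc_length_f} in the form $|\ell(x)-\ell(y)|/|\g(x)-\g(y)| = 1 + O(|x-y|^{2\beta})$ and once more using $2\beta \ge \sigma$ yields $|x-y|/|\g(x)-\g(y)| = [g(x)]^{-1} + O(|x-y|^\sigma)$. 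All of this is for $|x-y|$ small; for $|x-y|$ bounded below on the torus the estimates are immediate from continuity, simplicity of the curve and compactness, which make $|\g(x)-\g(y)|$ bounded below.

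I expect no genuine obstacle here beyond careful bookkeeping of exponents: the single point to watch is that the error exponents $2\beta$ and $1+2\beta$ coming from the arc-length lemma dominate $\sigma$ and $1+\sigma$ respectively, which holds exactly because of the hypothesis $\sigma \le 1 - \frac1p = \beta$. This is the structural reason the final remainders are governed by the Hölder exponent $\sigma$ of the parameterization rather than by the (generally better) $W^{2,p}$ regularity of the domain.
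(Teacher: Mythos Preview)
Your proposal is correct. The paper states Lemma~\ref{lemma:curve_estimates} without proof (it is presented in Section~\ref{subsec:curves_estimates} as one of several estimates ``collected'' for later use), so there is no argument in the paper to compare against; your reduction to the arc-length estimates of Lemma~\ref{lemma:arc_length_estimates} via the $C^{1,\sigma}$ change of variable $s=\ell(x)$, together with the expansion $\ell(x)-\ell(y)=g(x)(x-y)+O(|x-y|^{1+\sigma})$ and the observation that the hypothesis $\sigma\le 1-\tfrac1p=\beta$ ensures the arc-length error exponents $2\beta$ and $1+2\beta$ dominate $\sigma$ and $1+\sigma$, is exactly the natural way to supply the missing details.
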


\subsection{The Biot-Savart law for \texorpdfstring{$\alpha$}{a}-patches}

Let  $\Om$ be $C^{1}$ domain and $\g$ be a $C^1(\TT)$ parameterization of  $\p \Om$. The velocity field $v$ on $\p \Omega$ given by the Biot-Savart \eqref{eq:aSQG_BS} satisfies
\begin{equation}\label{eq:aux_BS_gamma}
v (x) =    -\frac{1}{2\alpha } \int_{\TT} \frac{\T(y)}{|\g(x) - \g(y)|^{2\alpha }} g (y)\, dy ,
\end{equation}
where $ g = |\p_x \g |$ is the arc-length of $\g$.

It is clear that when $0<\alpha < \frac{1}{2}$, above \eqref{eq:aux_BS_gamma} defines a continuous function on $\p \Om$. If the parameterization is arc-length, we instead use the labels $s,s'$ and
\begin{equation}\label{eq:aux_BS_arc-length_gamma}
v (s) =     -\frac{1}{2\alpha } \int_{\g} \frac{\T(s')}{|\g(s) - \g(s')|^{2\alpha }}  \, ds' .
\end{equation}

In this paper, the velocity $ v$ is always considered  as  a function on the patch boundary $\p \Om$, and hence we may consider its arc-length differentiation on $\p \Om $ defined by \eqref{eq:def_arc_derivative}. In Section \ref{subsec:Eu_to_Lagran_2} we will show that $\p_s v$ exists and is H\"older continuous when $\Om$ is $W^{2,p}$ for some $p > \frac{1}{1-2\alpha}$.

%%%%%%%%%%%%%%%%%%%%%%%%%%%%%%%%%%%%%%%%%%%%%%%%%%%%%%%
\section{From Eulerian to Lagrangian}\label{sec:Eu_to_Lagran}
%%%%%%%%%%%%%%%%%%%%%%%%%%%%%%%%%%%%%%%%%%%%%%%%%%%%%%%

In this section, we will show that every $\alpha$-patch in the sense of Definition \ref{def:asqg_patch} will generate a unique Lagrangian flow of its boundary parametrization. The main result of this section is Theorem \ref{thm:flow_existence}.

\subsection{Domains flowing with velocity}\label{subsec:Eu_to_Lagran_1}
We first introduce a definition to capture the local evolution of the boundary of a moving domain that is transported by a continuous vector field. Recall that $\p \Omega_t$ denotes the boundary of a moving domain $\Omega_t$ at time $t \in [0,T]$.

\begin{definition}\label{def:boundary_flow_with}
Let $\Omega_t $ be a   moving bounded domain on $t\in [0,T]$ and $v:\RR^2 \times [0,T] \to \RR^2$ be a continuous vector field. We say $\Omega_t$ flows with the velocity $v$ if
\begin{equation}\label{eq:Omega_move}
\lim_{h\to 0 }  \frac{\dist(   x + h  v  (x,t), \p \Omega_{t+h }) }{ h } =0
\end{equation}
for all $t\in [0,T]$ and $x\in  \p \Omega_t$ (with obvious one-sided modifications for $t= 0,T$).
\end{definition}

\begin{remark}
	\hfill
\begin{itemize}

\item We assume that $\Omega_t$ remains a bounded domain -- this is necessary as non-Lipschitz velocity fields can change the topological properties of $\Omega_t$, such as connectedness~\cite{MR3904158}.

\item The definition \eqref{eq:Omega_move} can also be compared with the ``compatibility'' condition in \cite[Section 4.4]{MR3904158} in terms of the normal component of the velocity. One might need a similar idea for the $SQG$ case $\alpha = \frac{1}{2}$ when the velocity is not bounded anymore.

\end{itemize}

\end{remark}

The next result shows that the patch flows with the velocity that it generates. We prove the lemma in a more general setting as it might be of independent interest in other free-boundary problems.
\begin{lemma}\label{lemma:Eulerian_to_geoflow}
Let $v:\RR^2 \times [0,T] \to \RR^2$ be a divergence-free vector field and $\Omega_t$ be an evolving  $C^1$ bounded domain.

If $x\mapsto v(x, t)$ is continuous on $\RR^2$  uniformly on $\RR^2 \times [0,T]$, then $ \theta = \chi_{\Omega_t } $ is a weak solution (as in \eqref{eq:def:asqg_patch}) of the transport equation
\begin{equation}\label{eq:Eulerian_to_geoflow}
\p_t \theta + v \cdot \nabla \theta =0
\end{equation}
if and only if $\Omega_t$ flows with $v$ on $[0,T]$.
\end{lemma}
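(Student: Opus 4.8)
**The plan is to prove a two-sided (iff) statement by testing the weak formulation against well-chosen families of test functions.** The heart of the matter is the identity
\[
\int_{\Omega_0}\varphi(\cdot,0)\,dx + \int_0^T\!\!\int_{\RR^2}\chi_{\Omega_t}(\p_t\varphi + v\cdot\nabla\varphi)\,dx\,dt = 0,
\]
which, since $v$ is divergence-free, can be rewritten (after integrating $v\cdot\nabla\varphi = \D(\varphi v)$ by parts) in a form where the time-derivative of $t\mapsto \int_{\Omega_t}\varphi(\cdot,t)\,dx$ appears. Concretely, I would first show that for $\varphi\in C^\infty_c(\RR^2\times[0,T))$ the weak solution property \eqref{eq:def:asqg_patch} is equivalent to: the map $t\mapsto \int_{\Omega_t}\varphi(x,t)\,dx$ is absolutely continuous with
\[
\frac{d}{dt}\int_{\Omega_t}\varphi(x,t)\,dx = \int_{\Omega_t}\bigl(\p_t\varphi + v\cdot\nabla\varphi\bigr)(x,t)\,dx
\quad\text{for a.e. }t.
\]
Then, because $\Omega_t$ is a $C^1$ domain moving continuously and $v$ is continuous, standard arguments (a Reynolds-transport-type computation, using the divergence theorem to write the boundary contribution) show that the right-hand side equals $\int_{\p\Omega_t}\varphi\,(V_n - v\cdot\N)\,d\mathcal{H}^1 + \int_{\Omega_t}\p_t\varphi\,dx + (\text{terms that cancel})$, where $V_n$ is the normal speed of $\p\Omega_t$. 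So the weak equation is equivalent to saying the normal speed of the boundary equals $v\cdot\N$ in a suitable distributional/integrated sense, tested against all $\varphi$.

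\textbf{The key localization step} is to pass from this integrated statement to the pointwise condition \eqref{eq:Omega_move} at every $x\in\p\Omega_t$ and every $t$. For the ``only if'' direction, I would fix $t$ and $x_0\in\p\Omega_t$, choose test functions $\varphi$ concentrated near $(x_0,t)$ adapted to the local $C^1$ graph representation of $\p\Omega_t$, and deduce that the normal velocity of the boundary at $x_0$ is $v(x_0,t)\cdot\N(x_0,t)$; since the tangential component of displacement does not move the boundary set (to first order), this yields exactly \eqref{eq:Omega_move}. For the ``if'' direction, assume $\Omega_t$ flows with $v$; then \eqref{eq:Omega_move} says that for each $x\in\p\Omega_t$, $x+hv(x,t)$ lies within $o(h)$ of $\p\Omega_{t+h}$, which lets me build, for each test function, a near-identity transport of the level set $\{\varphi = c\}$ and verify the differentiated identity above directly, using uniform continuity of $v$ to control error terms uniformly in $x$.

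\textbf{The main obstacle} I expect is the regularity bookkeeping in the Reynolds-transport computation: at only $C^1$ regularity of $\p\Omega_t$ (and merely continuous, possibly non-Lipschitz $v$), one cannot naively differentiate under the integral sign or assume a smooth flow map exists, so the transport identity must be justified through a careful limiting/difference-quotient argument that leans on the definition \eqref{eq:Omega_move} and the uniform continuity of $v$ rather than on classical ODE theory. A secondary subtlety is ensuring the boundary $\p\Omega_t$ stays a ``nice'' curve (connected, simple) so that the distance function in \eqref{eq:Omega_move} genuinely measures normal displacement; the paper flags this via the standing assumption that $\Omega_t$ remains a bounded domain, which I would invoke. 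Once the integrated normal-speed identity and its localization are in hand, both implications follow by choosing test functions supported in small space-time cylinders around boundary points and passing to the limit.
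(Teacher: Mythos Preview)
Your overall strategy matches the paper's: both directions hinge on the differentiated identity $\frac{d}{dt}\int_{\Omega_t}\varphi = \int_{\Omega_t}(\p_t\varphi + v\cdot\nabla\varphi)$, the divergence theorem converting the bulk term to a boundary integral, and localized test functions near boundary points. For the ``if'' direction the paper implements your ``careful limiting/difference-quotient argument'' concretely via a polygonal approximation of $\p\Omega_t$ and $\p\Omega_{t+h}$ (partitioning the boundary into $n$ arcs, using the flow condition \eqref{eq:Omega_move} to approximate the displaced polygon, and summing the contributions of the resulting small quadrilaterals), which is exactly the kind of argument you anticipated would be needed in lieu of a classical flow map.

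There is one point worth sharpening in your ``only if'' sketch. You frame the argument as a Reynolds-transport computation producing $\int_{\p\Omega_t}\varphi(V_n - v\cdot\N)\,d\mathcal{H}^1$, but at this regularity there is no a priori normal speed $V_n$ to speak of---establishing that the boundary has \emph{some} first-order normal displacement is essentially the content of \eqref{eq:Omega_move} itself, so invoking $V_n$ upfront is circular. The paper avoids this entirely by arguing by contradiction: assume $\dist(x_0 + h_n v(x_0,t_0),\p\Omega_{t_0+h_n}) \geq \epsilon_0|h_n|$ along a sequence $h_n\to 0$, take test functions $\varphi_n(x,t)=\eta_n(x-(t-t_0)v_0)$ that are bumps of radius $\sim\epsilon_0 h_n$ transported by the \emph{constant} vector $v_0=v(x_0,t_0)$, and compare the two sides of the weak formulation. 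The left side is $O(|v-v_0|_{L^\infty(B_n)}|B_n|/h_n)$, while the right side is bounded below by a fixed fraction of $|B_n|/h_n$ (since at time $t_0+h_n$ the bump's support is entirely inside or outside $\Omega_{t_0+h_n}$, but at $t_0$ it straddles the boundary). Uniform spatial continuity of $v$ then forces a contradiction. This bypasses any need to define $V_n$ and is the cleanest way to close the gap you correctly flagged as the main obstacle.
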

\begin{proof}
\hfill

\noindent
\textbf{\underline{The ``if'' part:}}

Suppose $ \Omega_t$ flows with $v$, we first prove that
\begin{equation}\label{eq:aux_Eulerian_to_geoflow}
\frac{d}{dt } \int_{\Om_t} \varphi(x) \, dx = \int_{\Omega_t} v \cdot \nabla \varphi(x) \, dx  \quad \text{for all $\varphi \in C^\infty(\RR^2  )$}.
\end{equation}
The standard  density  and approximation arguments   would give  the desired weak formulation
\begin{equation}
\left.
\begin{aligned}
&  \int_{\Omega_T} \varphi(\cdot,T)  \, dx -\int_{\Omega_0} \varphi(\cdot,0)  \, dx \\
& = \int_{0}^T \int_{\RR^2} \chi_{\Omega_t} ( \p_t \varphi + v \cdot \nabla \varphi ) \,dx dt
\end{aligned} \right\}
\quad \text{for all $\varphi \in C^\infty(\RR^2 \times \RR )$.}
\end{equation}
Now we focus on \eqref{eq:aux_Eulerian_to_geoflow}.

Since $\p \Omega_t$ is $C^1$, by the divergence theorem, for each $t \in [0,T]$ we have
\begin{equation}\label{eq:aux_Eulerian_to_geoflow_2}
\int_{ \Omega_t }     v \cdot  \nabla \varphi (x ) \, dx   =    \int_{ \p \Omega_t } v \cdot \N(x,t)   \varphi (x ) \, dx
\end{equation}
where  $ \N(x,t)  $ denotes the outer normal vector of $ \p \Omega_t $.

From \eqref{eq:aux_Eulerian_to_geoflow} and \eqref{eq:aux_Eulerian_to_geoflow_2}, it suffices to show
\begin{equation}\label{eq:aux_Eulerian_to_geoflow_22}
 \int_{\Om_{t+h}} \varphi(x) \, dx  - \int_{\Om_t} \varphi(x) \, dx  = h \int_{ \p \Omega_t } v \cdot \N(x,t)   \varphi (x ) \, dx  + o(h).
\end{equation}
If $\p \Om_t$ admits a $C^{1}$ Lagrangian contour $\g:\TT\times [0,T] \to \RR^2$ according to $v$, namely $ \p_t \g =v(\g)$ with $\g \in C_t C^1$ being a parameterization of $\p \Om_t$, then \eqref{eq:aux_Eulerian_to_geoflow_22} would follow immediately. Due to the possible lack of such a $C^1$ parameterization,  we use an approximation argument as in \cite{MR3666567}.

Let $n$ be sufficiently large and partition $\p \Om_t $ into arcs of length $L/n$ where $L$ is the total length of $\p \Om_t$. Denote by $x_i \in \p \Om_t$, $1\leq i \leq n$, the partition points on $\p \Om_t $.

 We first take $n$ large so that the polygon with vertices $x_i$ approximates $\p \Om_{t }$ with error  $ h^2$ and also $|x_i -x_{i+1}| = \frac{L}{n} + o(n^{-1})$ by the $C^1$ regularity of $\p \Om_t$. Due to the continuity of $v$ and the flow condition \eqref{eq:Omega_move},  for any small $h>0$ we can take sufficiently large $n$ so that the polygon with vertices $x_i + h v(x_i, t )$ approximates $\p \Om_{t+h}$ with error  $o(h ).  $
So the left-hand side of \eqref{eq:aux_Eulerian_to_geoflow_22} can be approximated by summing the contribution from small polygons with vertices $x_i$, $x_{i+1}$, $x_i + h v(x_i, t ) $, and $x_{i+1}+ h v(x_{i+1}, t ) $ with error $o(h)  $. Notice by taking $n$ large, the contribution for each small polygon   is $ \frac{L}{n}  hv(x_i, t ) \cdot N(x_i ,t )    + ho(n^{-1})$.
Therefore,  in the limit $n \rightarrow \infty,$ the relationship \eqref{eq:aux_Eulerian_to_geoflow_22} follows.

\noindent
\textbf{\underline{The ``only if'' part:}}

Now given a weak solution $ \theta = \chi_{\Omega_t } $ to \eqref{eq:Eulerian_to_geoflow}, we prove \eqref{eq:Omega_move} by contradiction: suppose there exist $\ep_0 >0$, $t_0 \in [0,T]$, $x_0 \in \p \Om_t \subset \RR^2$, and a sequence $h_n \to 0$ such that
\begin{equation}\label{eq:aux_h_n_contradiction}
\dist(x_0 + h_n v(x_0, t_0 ) , \p \Omega_{t_0 + h_n}) \geq \ep_0 \big| h_n \big|.
\end{equation}
We assume $h_n >0$ and is decreasing by possibly passing to a subsequence and also assume $ t_0 \in (0,T)$ without loss of generality.

We construct a sequence of test functions $\varphi_{n} \in C^\infty(\RR^2 \times [0,T] )$, $n\in \NN$, that are compactly supported in space as follows. In short, we have
\begin{equation}
\varphi_{n} (x, t)= \eta_{ n }  \big( x   -  (t -t_0) v_0     \big) .
\end{equation}
where $v_0  = v(x_0, t_0 ) \in \RR^2 $ and $ \eta_{n } \in C^\infty_c(\RR^2)$ are cutoff functions that are smooth approximations (at scale $\sim {h_n} $) to the characteristic function $\chi_{ B_n }$ of the open balls  $B_n $ centered at $x_0$ with radius $r_n = \frac{1}{2} \ep_0 h_n$ such that
\begin{subequations}
\begin{align}
&   \eta_n =1  \quad \text{  if $|x - x_0 | \leq \frac{3}{4} r_n $} \label{eq:aux_chi_n_a}\\
&\Supp  \eta_n \subset B_n \label{eq:aux_chi_n_b} \\
&|\nabla \eta_n | \leq C  h_n^{ -1 }  \label{eq:aux_chi_n_c}
\end{align}
\end{subequations}

Note that   by construction
\begin{equation} \label{eq:aux_varphi_n_bound_E_n}
| \nabla \varphi_{n} |_{L^\infty_t L^1_x  } \leq C  {h_n}^{-1} |  {B}_{n} |
\end{equation}
for some $C>0$ independent of ${n}$ where $|  {B}_{n} |$ denotes the measure of the ball $B_n$.

We now consider the weak formulation of \eqref{eq:Eulerian_to_geoflow} with test functions $\varphi_{n}$. For all $n \in \NN$, we must have
\begin{equation}
\begin{aligned}\label{eq:aux_varphi_n_weak_formulation}
& \frac{1}{h_n}\int_{t_0}^{t_0 + h_n }\int_{\RR^2} \theta ( \p_t \varphi_{n} +  v \cdot \nabla  \varphi_{n} )  \,dx dt  \\
& = \frac{1}{h_n} \int_{\RR^2}  \left[  \theta(x,t_0 + h_n)\varphi_{n}(x,t_0 + h_n)  -\theta(x,t_0  ) \varphi_{n}(x,t_0)  \right]   \, dx  .
\end{aligned}
\end{equation}

We will use \eqref{eq:aux_varphi_n_weak_formulation} to derive a contradiction. Let us first consider the left-hand side of \eqref{eq:aux_varphi_n_weak_formulation}. Since each $\varphi_n$ is transported by the constant vector $v_0  = v(x_0, t_0 )$, splitting the velocity $v =v_0 + v- v_0$, we have
\begin{align}\label{eq:aux_varphi_n_split}
\frac{1}{h_n} \int_{t_0}^{t_0 + h_n }\int_{\RR^2 } \theta ( \p_t \varphi_{n} +  v \cdot \nabla  \varphi_{n} )  \,dx dt  & =   \frac{1}{h_n}\int_{t_0}^{t_0 + h_n }\int_{\RR^2 } \theta   (v - v_0) \cdot \nabla  \varphi_{n}    \,dx dt.
\end{align}
and hence by H\"older and \eqref{eq:aux_varphi_n_bound_E_n}
it follows that
\begin{equation}\label{eq:aux_varphi_n_split_1}
\frac{1}{h_n} \Big| \int_{t_0}^{t_0 + h_n }\int_{\RR} \theta   (v - v_0) \cdot \nabla  \varphi_{n}   \,dx dt \Big| \lesssim  | v - v_0|_{L^\infty( {B}_n \times [t_0 , t_0 + h_n ])}  {h_n }^{-1} | {B}_{n }  |
\end{equation}

Next, we consider the right-hand side of \eqref{eq:aux_varphi_n_weak_formulation}. By the assumption \eqref{eq:aux_h_n_contradiction} and the definition of the ball $B_n$, at time $t =t_0 + h_n$, we have (see Figure \ref{fig:domain_flows})
\begin{equation}\label{eq:aux_varphi_n_support}
\begin{cases}
 \text{either}\,& \Supp \varphi_{n} (\cdot, t_0 + h_n ) \subset \Omega_{t_0+ h_n} \\
\text{or } \quad &\Supp \varphi_{n}  (\cdot, t_0 + h_n ) \subset   \Omega_{t_0+ h_n}^c.
\end{cases}
\end{equation}
Furthermore, by \eqref{eq:aux_chi_n_a} and \eqref{eq:aux_chi_n_b}, we have
\begin{equation}\label{eq:aux_varphi_n_support_1}
\begin{cases}
 \text{either}\, &\int     \theta  \varphi_{n } (\cdot,t_0 + h_n)  \,dx = 0  \\
 \text{or} \,\, &\frac{9}{16}|B_n| \leq \int     \theta    \varphi_{n }  (\cdot,t_0 + h_n) \,dx \leq  |B_n| .
\end{cases}
\end{equation}
On the other hand, by the $C^1$ regularity of $ \p \Omega_t$, the intersection $\Supp \varphi_n \cap \p \Om_t $ is approaching a half-disk. Considering the set $  \{x\in \RR: \varphi_n(\cdot, t) =1 \} $ and its transition region $0 \leq \varphi_n(\cdot, t) \leq 1$, similar to the second case of \eqref{eq:aux_varphi_n_support_1} we also have
\begin{equation}\label{eq:aux_varphi_n_support_2}
\frac{8}{32}|B_n| \leq \int     \theta \varphi_{n } (x,t_0  )    \, dx \leq \frac{17}{32}|B_n| \quad \text{for all sufficiently large $n$}.
\end{equation}
From \eqref{eq:aux_varphi_n_support_1} and \eqref{eq:aux_varphi_n_support_2}, it follows that for $n$ sufficiently large,
\begin{equation}\label{eq:aux_varphi_n_split_2}
\Big| \int    \theta \varphi_{n } (x,t_0 + h_n)  -\theta \varphi_{n } (x,t_0  )   \, dx  \Big| \geq \frac{1}{32}  |B_{n}|  .
\end{equation}

Combining \eqref{eq:aux_varphi_n_split}, \eqref{eq:aux_varphi_n_split_1}, and \eqref{eq:aux_varphi_n_split_2},  we have for all sufficiently large $n$
\begin{equation}
 |B_n |   \leq C | v - v_0|_{L^\infty( {B}_{n} \times [t_0 , t_0 + h_n ])}    | {B}_{n}  |,
\end{equation}
This is a contradiction if $n $ is chosen large enough due to the uniform spatial continuity of $v$.

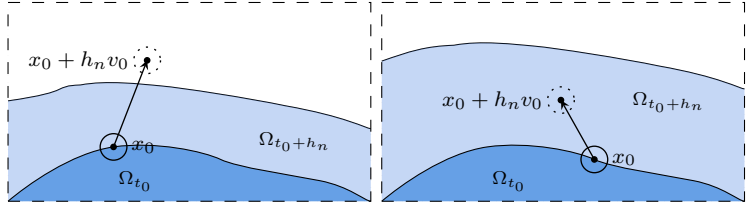
\begin{figure}
\tikzset{every picture/.style={line width=0.5pt}} %set default line width to 0.75pt
\centering
\begin{tikzpicture}[x=0.75pt,y=0.75pt,yscale=-1,xscale=1]

%Top Curve
\draw [line width=0.4pt]    (0,29.5+20) .. controls (13.91 +20 ,24.61 +20 ) and (21.55,22.02 +20 ) .. (36.05,21.52 +20 ) .. controls (49.05,19.02 +20 ) and (66.05,21.02 +20 ) .. (76.5,22 +20 ) .. controls (87.55,23.02 +20 ) and (119.55,28.02 +20 ) .. (138.5,32 +20 ) .. controls (168.36,37.34 +20 ) and (178.47,43.3 +20 ) .. (181,43.5 +20 );

%Top Curve coloring
\draw [dash pattern={on 0.0pt off 2.5pt}][fill={rgb, 255:red, 74; green, 130; blue, 226 }  ,fill opacity=0.32 ]   (0,29.5+20) .. controls (13.91 +20 ,24.61 +20 ) and (21.55,22.02 +20 ) .. (36.05,21.52 +20 ) .. controls (49.05,19.02 +20 ) and (66.05,21.02 +20 ) .. (76.5,22 +20 ) .. controls (87.55,23.02 +20 ) and (119.55,28.02 +20 ) .. (138.5,32 +20 ) .. controls (168.36,37.34 +20 ) and (178.47,43.3 +20 ) .. (181,43.5 +20 ) -- (181,100)--(0,100) --(0,0) ;

%Bottom Curve
\draw [line width=0.4pt][fill={rgb, 255:red, 74; green, 144; blue, 226 }  ,fill opacity=0.77 ]   (0,100) .. controls (9.1,92.2) and (31.64,75.66) .. (52.64,72.66) .. controls (73.64,69.66) and (97.62,74.93) .. (106.08,78.97) .. controls (114.55,83.02) and (140.91,86.75) .. (155.64,89.66) .. controls (170.36,92.57) and (177.97,99.8) .. (181,100) ;

%Straight Lines [id:da5480615950618457]
\draw [-stealth]   (52.64,72.66)  -- (69.41 ,49.11-20) ;

%point labels
\filldraw
		(69.41 ,49.11-20) circle (1pt) node[left=1mm] {\scriptsize $x_0 + h_n v_0 $};

\filldraw
		(52.64,72.66) circle (1pt) node[right=1mm] {\scriptsize $x_0 $};

%Circle [id:dp8540267470768612]
\draw  [dash pattern={on 0.8pt off 2.5pt}] (69.41 ,49.11-20) circle (5pt) ;
\draw   (52.64,72.66)  circle (5pt) ;

%Shape: Rectangle [id:dp3627872989114098]
\draw  [dash pattern={on 4.5pt off 4.5pt}][line width=0.4]  (0,0) -- (181,0) -- (181,100) -- (0,100) -- cycle ;

% Patch labels
\draw 	(120,70) node[anchor=west] {\tiny $\Omega_{t_0+h_n}$} ;
\draw 	(50,90) node[anchor=west] {\tiny $\Omega_{t_0}$} ;

\end{tikzpicture}
\begin{tikzpicture}[x=0.75pt,y=0.75pt,yscale=-1,xscale=1]

%Top Curve
\draw [line width=0.4pt]    (0,29.5) .. controls (13.91,24.61) and (21.55,22.02) .. (36.05,21.52) .. controls (49.05,19.02) and (66.05,21.02) .. (76.5,22) .. controls (87.55,23.02) and (119.55,28.02) .. (138.5,32) .. controls (168.36,37.34) and (178.47,43.3) .. (181,43.5);

%Top Curve coloring
\draw [dash pattern={on 0.0pt off 2.5pt}][fill={rgb, 255:red, 74; green, 130; blue, 226 }  ,fill opacity=0.32 ]   (0,29.5) .. controls (13.91,24.61) and (21.55,22.02) .. (36.05,21.52) .. controls (49.05,19.02) and (66.05,21.02) .. (76.5,22) .. controls (87.55,23.02) and (119.55,28.02) .. (138.5,32) .. controls (168.36,37.34) and (178.47,43.3) .. (181,43.5) -- (181,100)--(0,100) --(0,0) ;

%Bottom Curve
\draw [line width=0.4pt][fill={rgb, 255:red, 74; green, 144; blue, 226 }  ,fill opacity=0.77 ]   (0,100) .. controls (9.1,92.2) and (31.64,75.66) .. (52.64,72.66) .. controls (73.64,69.66) and (97.62,74.93) .. (106.08,78.97) .. controls (114.55,83.02) and (140.91,86.75) .. (155.64,89.66) .. controls (170.36,92.57) and (177.97,99.8) .. (181,100) ;

%Straight Lines [id:da5480615950618457]
\draw [-stealth]   (106.08,78.97)  -- (89.41,49.11) ;

%point labels
\filldraw
		(89.41,49.11) circle (1pt) node[left=1mm] {\scriptsize $x_0 + h_n v_0 $};

\filldraw
		(106.08,78.97) circle (1pt) node[right=1mm] {\scriptsize $x_0 $};

%Circle [id:dp8540267470768612]
\draw  [dash pattern={on 0.8pt off 2.5pt}] (89.41,49.11) circle (5pt) ;
\draw   (106.08,78.97)  circle (5pt) ;

%Shape: Rectangle [id:dp3627872989114098]
\draw  [dash pattern={on 4.5pt off 4.5pt}][line width=0.4]  (0,0) -- (181,0) -- (181,100) -- (0,100) -- cycle ;

% Patch labels
\draw 	(120,50) node[anchor=west] {\tiny $\Omega_{t_0+h_n}$} ;
\draw 	(50,90) node[anchor=west] {\tiny $\Omega_{t_0}$} ;

\end{tikzpicture}
\caption{Two possible scenarios for $x + h_n v_0$ when $v_0$ points outward from $\Omega_t$.}
\label{fig:domain_flows}
\end{figure}

\end{proof}

\begin{remark}
The $C^1$ assumption on the domain $\Om_t$ can be relaxed to Lipschitz, which appears to be necessary for the only if part.
For example, if we consider a closing cusp on the boundary $\p \Om_t$ as in Figure \ref{fig:inward_cusp},   informal calculations suggest that the point on the tip of the cusp does not need to flow with the velocity field.
\end{remark}

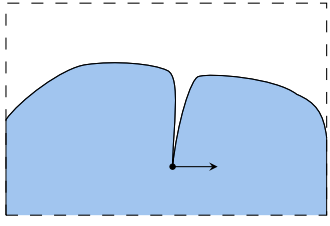
\begin{figure}

\begin{tikzpicture}[x=1pt,y=1pt,yscale=-1,xscale=1]

%Curve Lines [id:da506008475154925]
\draw [line width=0.4]    (0.0,43.98) .. controls (4.86,36.88) and (21.21,24.27) .. (30.01,23.2) .. controls (40.81,21.6) and (55.21,22.84) .. (60.35,25.41) .. controls (65.78,27.69) and (62.35,48.09) .. (62.35,61.69) .. controls (64.06,45.12) and (69.21,28.27) .. (72.06,27.69) .. controls (76.44,26.2) and (100.92,28.21) .. (108.81,34.4) .. controls (115.78,37.36) and (118.92,41.36) .. (120,50.84) ;

%Curve Lines [id:da506008475154925]
\draw [line width=0.0]  [fill={rgb, 255:red, 74; green, 144; blue, 226 }  ,fill opacity=0.52 ] (0,80)--(0.0,43.98) .. controls (4.86,36.88) and (21.21,24.27) .. (30.01,23.2) .. controls (40.81,21.6) and (55.21,22.84) .. (60.35,25.41) .. controls (65.78,27.69) and (62.35,48.09) .. (62.35,61.69) .. controls (64.06,45.12) and (69.21,28.27) .. (72.06,27.69) .. controls (76.44,26.2) and (100.92,28.21) .. (108.81,34.4) .. controls (115.78,37.36) and (118.92,41.36) .. (120,50.84) --(120,80);

%Shape: Rectangle [id:dp3627872989114098]
\draw  [dash pattern={on 4.5pt off 4.5pt}][line width=0.4]  (0,0) -- (120,0) -- (120,80) -- (0,80) -- cycle ;
%Straight Lines [id:da31873787136698684]
\draw  [-stealth]   (62.35,61.69)   -- (79.21,61.69) ;

\filldraw (62.35,61.69) circle (1pt) ;

\end{tikzpicture}
\caption{A moving domain with an inward cusp.}
\label{fig:inward_cusp}
\end{figure}

If $\Om_t$ is a $W^{2,p}$ $\alpha$-patch, to show $ \Om$ flows with the velocity that it generates, by Lemma \ref{lemma:Eulerian_to_geoflow} we just need to show the velocity $ v $ computed according to \eqref{eq:aSQG_BS} is uniformly continuous on $\RR^2  $. This is well-known and we refer to  \cite{MR3666567} for its proof.

\begin{lemma}[{\cite[Lemma 3.1]{MR3666567}}]
Let $ 0 < \alpha < \frac{1}{2}$. If $\om \in  L^1 \cap L^\infty(\RR^2)$, then $ v = K_\alpha *  \om (x,t)$ according to \eqref{eq:aSQG_BS} satisfies
\begin{equation}
| v|_{C^{0,1-2\alpha} (\RR^2)} \lesssim |\om|_{ L^1 \cap L^\infty}.
\end{equation}

\end{lemma}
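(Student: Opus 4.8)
The statement to prove is the Hölder regularity bound $|v|_{C^{0,1-2\alpha}(\RR^2)} \lesssim |\om|_{L^1 \cap L^\infty}$ for $v = K_\alpha * \om$ with $0<\alpha<\frac{1}{2}$, where the kernel $K_\alpha(z) = \frac{z^\perp}{|z|^{2+2\alpha}}$ has size $|K_\alpha(z)| \sim |z|^{-1-2\alpha}$. The key point is that the exponent $-1-2\alpha$ of the singularity satisfies $-1-2\alpha > -2$ precisely when $\alpha < \frac{1}{2}$, so the kernel is locally integrable in $\RR^2$ and the convolution is well-defined and bounded; the Hölder gain then comes from a standard kernel-difference estimate with the dimensionless exponent counting. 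I would split this into the $L^\infty$ bound and the Hölder seminorm bound.

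\textbf{Step 1: boundedness of $v$.} First I would show $|v|_{L^\infty} \lesssim |\om|_{L^1 \cap L^\infty}$ by splitting the convolution integral at radius $1$: on $|x-y|\le 1$ bound $|\om(y)| \le |\om|_{L^\infty}$ and integrate $|x-y|^{-1-2\alpha}$ over the unit ball, which converges since $1+2\alpha < 2$, giving a constant times $|\om|_{L^\infty}$; on $|x-y|>1$ bound $|K_\alpha(x-y)| \le 1$ and use $|\om|_{L^1}$. This yields the $C^0$ part of the norm.

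\textbf{Step 2: the Hölder seminorm.} For the seminorm, fix $x_1, x_2$ with $r := |x_1-x_2| \le 1$ (the case $r>1$ being controlled by Step 1 up to a constant). Write $v(x_1) - v(x_2) = \int (K_\alpha(x_1-y) - K_\alpha(x_2-y))\om(y)\,dy$ and split into the near region $|y - x_1| \le 2r$ and the far region $|y-x_1| > 2r$. In the near region, estimate each kernel term separately by $|\om|_{L^\infty}$ times $\int_{|z|\lesssim r} |z|^{-1-2\alpha}\,dz \sim r^{1-2\alpha}$, which is exactly the desired power of $r$. In the far region, use the mean value / gradient bound $|\nabla K_\alpha(z)| \lesssim |z|^{-2-2\alpha}$ to get $|K_\alpha(x_1-y)-K_\alpha(x_2-y)| \lesssim r |y-x_1|^{-2-2\alpha}$, and integrate against $|\om|_{L^\infty}$ over $2r \le |y-x_1| \le 1$ — the integral $\int_{2r}^{1} \rho^{-2-2\alpha}\rho\,d\rho = \int_{2r}^1 \rho^{-1-2\alpha}\,d\rho \sim r^{-2\alpha}$, so the far contribution is $\lesssim r \cdot r^{-2\alpha} = r^{1-2\alpha}$ as needed — and over $|y-x_1|>1$ the factor $|y-x_1|^{-2-2\alpha}$ is integrable and one uses $|\om|_{L^1}$, contributing $\lesssim r \le r^{1-2\alpha}$.

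\textbf{Main obstacle.} There is no real obstacle here — the estimate is classical (it is essentially the statement that convolution with a kernel homogeneous of degree $-(d-\gamma)$ on $\RR^d$ maps $L^1 \cap L^\infty$ into $C^{0,\gamma}$, here $d=2$, $\gamma = 1-2\alpha$). The only care needed is bookkeeping the two regions and checking that every exponent in the radial integrals stays on the correct side of the integrability threshold, which is guaranteed by $0 < \alpha < \frac12$. Since the paper explicitly cites \cite[Lemma 3.1]{MR3666567} for the proof, I would simply reference that and perhaps remark that it follows from the kernel-splitting argument just sketched.
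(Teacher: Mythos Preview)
Your proposal is correct; the paper itself gives no proof at all but simply states ``This is well-known and we refer to \cite{MR3666567} for its proof,'' so your kernel-splitting sketch is already more than what the paper provides. The argument you outline is the standard one and is exactly what lies behind the cited reference.
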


\subsection{Intrinsic regularity of the velocity}\label{subsec:Eu_to_Lagran_2}
The next result concerns the intrinsic $C^{1,\ep}$ H\"older estimate of the Biot-Savart law.
\begin{lemma}\label{lemma:estimate_v_on_boundary}
Let $0< \alpha< \frac{1}{2}$ and $p>  \frac{1}{ 1 -2 \alpha } $. Suppose  $\Omega $ is a $W^{2,p}$ domain and $v$ is the velocity field of $\Omega $ according to \eqref{eq:aSQG_BS}.

For any arc-length parameterization of $\p \Omega $, the velocity field satisfies
\begin{equation}\label{eq:estimate_v_on_boundary}
		\p_s v (s)    =     P.V.  \int_{ \g    } \T (s' )   \frac{   (\g(s  )-\g( s' )) \cdot \T(s ) }{  |\g(s )-\g(s' )|^{2  +2\alpha }}  \,ds',
\end{equation}
and we have
\begin{equation}\label{eq:estimate_v_on_boundary2}
\big|   \p_s v   \big|_{C^{0,\sigma} } \leq C(\alpha,p, \Omega)
\end{equation}
with $\sigma= 1-\frac{1}{p} - 2\alpha >0.$
\end{lemma}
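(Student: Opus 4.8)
The goal is to establish the pointwise formula \eqref{eq:estimate_v_on_boundary} for $\p_s v$ and then the uniform H\"older bound \eqref{eq:estimate_v_on_boundary2}. The plan is to start from the boundary Biot-Savart representation \eqref{eq:aux_BS_arc-length_gamma} in the arc-length parametrization, $v(s) = -\tfrac{1}{2\alpha}\int_{\g}\T(s')|\g(s)-\g(s')|^{-2\alpha}\,ds'$, and differentiate in $s$. Formally, $\p_s\big(|\g(s)-\g(s')|^{-2\alpha}\big) = -2\alpha (\g(s)-\g(s'))\cdot\T(s)\,|\g(s)-\g(s')|^{-2-2\alpha}$, which produces the claimed kernel $\T(s')\,(\g(s)-\g(s'))\cdot\T(s)\,|\g(s)-\g(s')|^{-2-2\alpha}$. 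First I would justify that the differentiation can be carried out and that the resulting integral converges as a principal value. Near the diagonal, by the arc-length estimates of Lemma \ref{lemma:arc_length_estimates} --- specifically \eqref{eq:arc_length_linear}, $(\g(s)-\g(s'))\cdot\T(s) = (s-s') + O(|s-s'|^{1+2\beta})$, and \eqref{eq:arc_length_f}, $|\g(s)-\g(s')|^{-1} = |s-s'|^{-1}(1+O(|s-s'|^{2\beta}))$ with $\beta = 1-\tfrac1p$ --- the kernel behaves like $(s-s')|s-s'|^{-2-2\alpha} \sim \sign(s-s')|s-s'|^{-1-2\alpha}$, which is an odd singularity of order $1+2\alpha$. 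Since $2\alpha < 1$, this is better than Calder\'on--Zygmund critical, and the principal value exists even without oddness; in fact I would use the maximal-function version \eqref{eq:arc_length_M_e}, $(\g(s)-\g(s'))\cdot(\T(s)-\T(s')) = O(\mathcal M\k(s)|s-s'|^{2+\beta})$, to absorb the variation of $\T(s')$ against the odd part, so no genuine cancellation is needed. A Lebesgue-dominated-convergence / difference-quotient argument over the region $|s-s'|>\e$, combined with an $L^1$ bound on the near-diagonal contribution uniform in $\e$, yields \eqref{eq:estimate_v_on_boundary}.

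For the H\"older bound \eqref{eq:estimate_v_on_boundary2} I would split the kernel the standard way. Write $\p_s v(s) = I(s) + II(s)$ where $I(s)$ uses the "curvature part'' $\T(s')(\g(s)-\g(s'))\cdot\T(s)|\g(s)-\g(s')|^{-2-2\alpha}$ rewritten, via \eqref{eq:arc_length_linear} and \eqref{eq:arc_length_f}, as $(s-s')|s-s'|^{-2-2\alpha}\T(s')$ plus an error with kernel of size $O(|s-s'|^{-1-2\alpha+2\beta})$ which, since $2\beta > 2\alpha$ is what $p > \tfrac{1}{1-2\alpha}$ buys (indeed $\sigma = \beta - 2\alpha = 1-\tfrac1p-2\alpha > 0$), is already an $L^\infty$, in fact $C^{0,\sigma}$, integral operator by direct estimation. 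The leading odd piece $\text{P.V.}\int (s-s')|s-s'|^{-2-2\alpha}\T(s')\,ds'$ is a (periodic) fractional-derivative-type operator of order $2\alpha$ acting on $\T$; since $\T \in W^{1,p}$ on the arc-length torus, standard Hölder--Sobolev embedding ($W^{1,p}\hookrightarrow C^{0,1-1/p}$) plus boundedness of the order-$2\alpha$ operator from $C^{0,1-1/p}$ into $C^{0,1-1/p-2\alpha} = C^{0,\sigma}$ gives the claim; alternatively one estimates directly that $\text{P.V.}\int (\T(s')-\T(s))(s-s')|s-s'|^{-2-2\alpha}ds'$ is controlled by $\mathcal M\p_s\T$ via \eqref{eq:arc_length_M_d}-type bounds, and then a second difference in $s$ is estimated by splitting the domain at scale $|s-\tilde s|$. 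The far-field part $|s-s'|\gtrsim 1$ (on the scaled torus) is harmless since the kernel and its $s$-difference are bounded there and $\T\in L^\infty$.

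I would organize the H\"older continuity estimate by the usual dichotomy: to bound $|\p_s v(s) - \p_s v(\tilde s)|$ for $r := |s-\tilde s|$ small, split each principal-value integral into the near region $\{|s'-s| \le 2r\}$ and the far region $\{|s'-s|>2r\}$. On the near region one bounds each term separately (no differencing) using $\int_{|u|\le 2r}|u|^{-1-2\alpha+2\beta}du \lesssim r^{2\beta - 2\alpha} = r^{2\sigma + \cdots}$ and, for the leading piece, $\int_{|u|\le 2r}|\T(s')-\T(s)|\,|u|^{-1-2\alpha}du \lesssim \mathcal M\p_s\T(s)\,r^{\beta-2\alpha}$; this is where one needs $\beta - 2\alpha > 0$, i.e. the hypothesis $p > \tfrac{1}{1-2\alpha}$, and it yields an $r^{\sigma}$ gain after absorbing $\mathcal M\p_s\T \in L^p \subset L^\infty$... more carefully, $\mathcal M \p_s \T$ need not be $L^\infty$, so I would instead carry the $\mathcal M\p_s\T$ weights through and take the final sup using that the full kernel estimates produce a genuinely bounded quantity; the clean route is the Sobolev-embedding argument which sidesteps this. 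On the far region one differences the kernel in $s$, using the smoothness of $u\mapsto u|u|^{-2-2\alpha}$ and of $s\mapsto \g(s),\T(s)$ away from the diagonal to get a kernel difference of size $\lesssim r\,|u|^{-2-2\alpha}$, whose integral over $|u|>2r$ is $\lesssim r\cdot r^{-1-2\alpha} = r^{-2\alpha}$; this must be balanced against a smallness factor, and the correct bookkeeping (pairing with $\T(s')-\T(s)$, as always in these P.V. arguments, so the kernel difference is really $\lesssim r|u|^{-2-2\alpha}|\T(s')-\T(s)| \lesssim r|u|^{-2-2\alpha}\,\mathcal M\p_s\T\,|u|$ and integrates to $r^{\,1-2\alpha}\mathcal M\p_s\T = r^{\sigma+1/p}\mathcal M\p_s\T$) gives the remaining $r^\sigma$ decay. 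Collecting the near and far estimates gives $|\p_s v(s)-\p_s v(\tilde s)| \lesssim r^\sigma\,C(\alpha,p,\Om)$, which is \eqref{eq:estimate_v_on_boundary2}.

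**Main obstacle.** The delicate point is not the existence of the principal value (which is non-critical since $2\alpha<1$) but obtaining the \emph{uniform} H\"older exponent $\sigma = 1-\tfrac1p-2\alpha$ with the correct bookkeeping of the maximal-function weights $\mathcal M\k$, $\mathcal M\p_s\T$ appearing in Lemma \ref{lemma:arc_length_estimates}: these are only $L^p$, not $L^\infty$, so one cannot naively pull them out of the sup over $s$. The resolution --- and the part I expect to require the most care --- is to keep the principal-value structure intact (always differencing the tangent field $\T(s')-\T(s)$ against the odd kernel rather than estimating term by term) so that every near-diagonal bound carries a genuinely convergent kernel and the final constant depends on $\|\k\|_{L^p}$, $\|\p_s\T\|_{L^p} = \|\k\|_{L^p}$ and the $C^1$ geometry of $\p\Om$ only through norms, yielding $C(\alpha,p,\Om)$; equivalently, recognizing the leading operator as a periodic Fourier multiplier of order $2\alpha$ and invoking its boundedness $C^{0,\beta}\to C^{0,\beta-2\alpha}$ together with $\T\in W^{1,p}\hookrightarrow C^{0,\beta}$.
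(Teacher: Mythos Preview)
Your proposal is correct and follows essentially the same route as the paper: derive \eqref{eq:estimate_v_on_boundary} by differentiating the boundary Biot--Savart formula, then prove \eqref{eq:estimate_v_on_boundary2} by the near/far split of $\p_s v(s+\delta)-\p_s v(s)$ at scale $|s'|\sim\delta$, using oddness in the near region and a mean-value/FTC argument in the far region.

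Two points where the paper's execution is cleaner than your ``main obstacle'' suggests. First, in the near region $I_1$ the paper never touches the maximal function: it uses the \emph{uniform} H\"older expansions $\T(s')=\T(s)+O(|s-s'|^{1-1/p})$, $(\g(s)-\g(s'))\cdot\T(s)=(s-s')+O(|s-s'|^{3-2/p})$, $|s-s'|^{2+2\alpha}/|\g(s)-\g(s')|^{2+2\alpha}=1+O(|s-s'|^{2(1-1/p)})$ (these are \eqref{eq:arc_length_a}, \eqref{eq:arc_length_f}, \eqref{eq:arc_length_linear}, all with $s$-independent constants), so the leading odd term cancels and the remainder integrates to $O(\delta^{1-1/p-2\alpha})$ with no $\mathcal M\k$ appearing. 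Your alternative via the Fourier-multiplier/Sobolev-embedding route achieves the same thing and is fine. Second, in the far region $I_2$ the paper does compute $\p_s K_{s'}(s)$ and gets terms like $|\k(s'+s)|\,|s'|^{-1-2\alpha}$ and $\mathcal M\k(s'+s)\,|s'|^{-1/p-2\alpha}$, but then uses FTC and H\"older \emph{over the short interval} $[s,s+\delta]$: $\int_s^{s+\delta}|\k(s'+\tau)|\,d\tau\lesssim \delta^{1-1/p}\|\k\|_{L^p}$, so the $L^p$-only quantities are integrated rather than sup'd, yielding $|K_{s'}(s+\delta)-K_{s'}(s)|\lesssim |s'|^{-1-2\alpha}\delta^{1-1/p}$ uniformly. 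Your worry about $\mathcal M\k\notin L^\infty$ is thus a non-issue once the bookkeeping is done this way.
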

\begin{proof}
Let us consider an arc-length parameterization $\g  : \RR \to \p \Omega \subset \RR^2 $, where $\g$ is $L$-periodic with $L$   the length of $\p \Omega $. To show \eqref{eq:estimate_v_on_boundary}, we can start with \eqref{eq:aux_BS_arc-length_gamma} and use a distributional argument as in~\cite[Proposition 2.4]{c2euler}. We omit further details.

We now prove \eqref{eq:estimate_v_on_boundary2}. By a translation in \eqref{eq:estimate_v_on_boundary}, we have
\begin{align*}
\p_s v (s)  & =     P.V.  \int_{ \g    } \T (s'+s )   \frac{   (\g(s  )-\g( s'+s )) \cdot \T(s ) }{  |\g(s )-\g(s'+s )|^{2  +2\alpha }}  \,ds',
\end{align*}
where $P.V.$ denotes $ \lim_{\ep \to 0+} \int_{ \ep \leq |s'| \leq \frac{L}{2} }$.
It is not hard to show $|\p_s v| \leq C(\alpha, p, \Omega) $, so we focus on the H\"older continuity below.

For any $\delta >0$, we denote by $\Delta_\delta f(s) =f(s+\delta) -f(s)  $  (not to be confused with the Littlewood-Paley projections) and consider the split
\begin{align*}
 \Delta_\delta [ \p_s v   ]  & = \p_s v  (s  + \delta) - \p_s v    (s) = I_1 + I_2
\end{align*}
where
\begin{align*}
I_1 =  \lim_{\ep\to 0+}  & \left( \int_{ \ep< |  s' |    \leq 2\delta } \T ( s+  \delta+  s'  )    \frac{   (\g( s +\delta  )-\g(   s+  \delta+  s' )) \cdot \T( s +\delta ) }{  |\g( s+\delta )-\g(   s+  \delta+  s' )|^{2  +2\alpha }}    \,d   s' \right. \\
&\quad  \left. -\int_{ \ep< |  s' | \leq 2\delta   }\T (s'+s )   \frac{   (\g(s  )-\g( s'+s )) \cdot \T(s ) }{  |\g(s )-\g(s'+s )|^{2  +2\alpha }}  \,ds' \right)
\end{align*}
and
\begin{align*}
I_2 =    \int_{   |  s'| \geq 2\delta   }  \Delta_\delta \left[ \T (s'+s )   \frac{   (\g(s  )-\g( s'+s )) \cdot \T(s ) }{  |\g(s )-\g(s'+s )|^{2  +2\alpha }}    \right] \,ds'.
\end{align*}
We will show the estimates $|I_1|,|I_2|  \lesssim \delta^{ \sigma } =  \delta^{1  - \frac{1}{p}  -2\alpha  }    $.

\noindent
\textbf{\underline{Estimate of $I_1$:}}

By the estimates   \eqref{eq:arc_length_f}  and \eqref{eq:arc_length_linear} in Section \ref{subsec:curves_estimates}, for any $s,s \in \RR$
\begin{equation}\label{eq:aux_estimate_v_on_boundary}
\begin{aligned}
 \T(s' ) &= \T (s)   + O(| s  - s' |^{1 - \frac{1}{p}  } ) \\
 (\g(s      )-\g(  s' )) \cdot \T(s ) &= (s  - s' ) + O(|s  - s'|^{ 3- \frac{2}{p} } ) \\
\frac{|s  - s' |^{2  +2\alpha } }{|\g(s)-\g(  s')|^{2  +2\alpha }}   &  =1 + O(|s  - s'|^{  2(1 - \frac{1}{p}) }  ) .
\end{aligned}
\end{equation}

Here we note the constants in the big-O terms are independent of $s,s'$.
Using \eqref{eq:aux_estimate_v_on_boundary}, we have
\begin{align*}
I_1 = \lim_{\ep\to 0+} &\int_{ \ep< |  s' |    \leq 2\delta } \T ( s+  \delta  )    \frac{ -   s'   }{  |  s'  |^{2  +2\alpha }} \,ds'  + \int_{ \ep< |  s' |    \leq 2\delta }  O(| s'|^{- 2\alpha - \frac{1}{p} })  \,d   s' \\
&\quad -\int_{ \ep< |  s' | \leq 2\delta   }\T ( s )   \frac{   - s'   }{  |  s'  |^{2  +2\alpha }}\,ds'   + \int_{ \ep< |  s' |    \leq 2\delta }  O(| s'|^{ - 2\alpha - \frac{1}{p}})  \,ds'    ,
\end{align*}

The first terms on the right-hand side of the above two lines vanish by the oddness, and we have
\begin{align*}
|I_1| & \lesssim     \int_{  | s'| \leq 2\delta   }   |    s' |^{   - 2\alpha  - \frac{1}{p}  }      \,ds'.
\end{align*}
Since $ p> \frac{1}{ 1- 2\alpha }$, the exponent $ -2\alpha - \frac{1}{p} > -1 $, and integrating we obtain
\begin{align*}
I_1 &= O(\delta^{1  - \frac{1}{p}  -2\alpha  }   ).
\end{align*}

\noindent
\textbf{\underline{Estimate of $I_2$:}}

In this regime, we first consider the finite difference of the integrand
\begin{align}\label{eq:aux_proofholder}
 \Delta_\delta \left[ \T (s'+s )   \frac{   (\g(s  )-\g( s'+s )) \cdot \T(s ) }{  |\g(s )-\g(s'+s )|^{2  +2\alpha }}    \right] .
\end{align}
To reduce notations, for each fixed $|s'| \geq \ep$, let us denote by $K_{s'}(s)$, the function
$$
s \mapsto  K_{s'}(s):=  \T (s'+s )   \frac{   (\g(s  )-\g( s'+s )) \cdot \T(s ) }{  |\g(s )-\g(s'+s )|^{2  +2\alpha }} .
$$
From the regularity $\T, \N \in W^{1,p}$, it follows that $ K_{s'}$ is weakly differentiable with $L^p$ derivative:
\begin{equation}
\begin{aligned}
\p_s K_{s'} (s) = & - \k(s'+s )\N (s'+s )   \frac{   (\g(s  )-\g( s'+s )) \cdot \T(s ) }{  |\g(s )-\g(s'+s )|^{2  +2\alpha }}  \\
& +\T (s'+s )   \frac{   (\T(s  )-\T( s'+s )) \cdot \T(s ) }{  |\g(s )-\g(s'+s )|^{2  +2\alpha }} \\
& -\k(s) \T (s'+s )   \frac{   (\g(s  )-\g( s'+s )) \cdot \N(s ) }{  |\g(s )-\g(s'+s )|^{2  +2\alpha }}\\
& -(2+2\alpha )\T (s'+s )   \frac{   (\g(s  )-\g( s'+s )) \cdot \T(s )     }{  |\g(s )-\g(s'+s )|^{4  +2\alpha }}\\
& \qquad \times (\g(s  )-\g( s'+s )) \cdot (\T(s )  -\T( s'+s  ) )
\end{aligned}
\end{equation}
Using standard arguments and $W^{2,p}$ estimates of $\g$ in Section \ref{subsec:curves_estimates}, it is straightforward to show that $K_{s'}' $ satisfies the point-wise estimate
\begin{equation}\label{eq:aux_K'_mean_value_0}
|\p_s K_{s'} (s)| \lesssim      |\k(s'+s )| |s'|^{-1-2\alpha }  + \mathcal{M}\k(s'+ s)  |s'|^{-\frac{1}{p}-2\alpha }  +|\k( s )| |s'|^{-\frac{1}{p}-2\alpha } ,
\end{equation}
where as before $\mathcal{M}\k$ is the periodic maximal function of $\k$. So by the fundamental theorem of calculus for $W^{1,p} $ functions and the bound \eqref{eq:aux_K'_mean_value_0}, we have
\begin{equation}\label{eq:aux_K'_mean_value}
\begin{aligned}
|K_{s'}(s+\delta) - K_{s'}(s ) |  & \leq \int_{s}^{s+\delta}  |\p_s K_{s'} (\tau )|\, d\tau \\
& \lesssim  |s'|^{-1-2 \alpha } \delta^{ 1-\frac{1}{p}}
\end{aligned}
\end{equation}
where in the last inequality we have used $\mathcal{M}\k, \k \in L^p$ and $p>1$.

Now we apply the estimate \eqref{eq:aux_K'_mean_value} for $K_{s'}(s)$ to $I_2$, obtaining
\begin{align*}
| I_2| \leq     \int_{ \frac{L}{2}  \geq |  s'| \geq 2\delta   }  \Delta_\delta \left[  K_{s'}(s) \right] \,ds' \lesssim \int_{    \frac{L}{2}  \geq |  s'| \geq 2\delta   }   |s'|^{-1-2 \alpha } \delta^{ 1-\frac{1}{p}} \lesssim \delta^{ 1- \frac{1}{p}  -2\alpha } .
\end{align*}
where in the last step we have assumed $ \delta < \frac{L}{4}$.

\end{proof}

\subsection{Counterexamples with non-Lipschitz velocity}\label{subsec:Eu_to_Lagran_3}

We present a simple example of non-Lipschitz velocity for domains below the regularity threshold $W^{2 , \frac{1}{1  -2\alpha }}$.

\begin{lemma}\label{lemma:non-Lipschitz velocity}
Let $ 0 < \alpha < \frac{1}{2}$. For any $1\leq p < \frac{1}{1- 2 \alpha}$, there exist a $W^{2, p}$ bounded domain $\Omega$ such that the velocity $v$ given by \eqref{eq:aSQG_BS} is not Lipschitz on $\p \Omega$.
\end{lemma}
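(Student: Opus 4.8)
The plan is to construct an explicit domain whose boundary is a graph $x_2 = \varphi(x_1)$ near the origin, where $\varphi$ is a compactly supported modification of a model profile chosen so that $\varphi \in W^{2,p}$ but $\varphi'' \notin L^q$ for $q$ close to $\frac{1}{1-2\alpha}$, and then show that the arc-length derivative $\partial_s v$ of the velocity blows up as one approaches the origin along $\p\Omega$. The natural candidate profile is $\varphi(x_1) = |x_1|^{\theta}$ (suitably smoothed away from $0$ and cut off to be compactly supported) with the exponent $\theta$ tuned so that $\varphi'' \sim |x_1|^{\theta - 2}$ lies in $L^p$ but just barely: this requires $(\theta-2)p > -1$, i.e. $\theta > 2 - \frac{1}{p}$, and since $1 \le p < \frac{1}{1-2\alpha}$ we may pick $\theta$ in the interval $\bigl(2 - \tfrac1p,\, 2\alpha + 1\bigr)$, which is nonempty precisely because $p < \frac{1}{1-2\alpha}$. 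One then verifies that such an $\Omega$ is a genuine $W^{2,p}$ bounded domain (the arc-length parametrization inherits the $W^{2,p}$ regularity from the graph via the change of variables in Section \ref{subsec:rough_gamma_regular_Omega}).

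The core of the argument is the pointwise lower bound on $\partial_s v$ near the singular point. Starting from the formula \eqref{eq:estimate_v_on_boundary},
\begin{equation*}
\partial_s v(s) = P.V. \int_{\g} \T(s')\, \frac{(\g(s) - \g(s')) \cdot \T(s)}{|\g(s) - \g(s')|^{2+2\alpha}}\, ds',
\end{equation*}
I would evaluate this (or rather, estimate its size) at points $\g(s)$ on the boundary with $s_1$-coordinate $\sim \rho \to 0^+$ on one side of the singularity, where the tangent $\T(s)$ makes an angle $\sim \rho^{\theta-1}$ with the horizontal. Writing $\g(s) - \g(s') \approx (s_1 - s_1', \varphi(s_1) - \varphi(s_1'))$ and extracting the component along $\T(s)$, the contribution from the region $|s_1 - s_1'| \lesssim \rho$ (where the curvature $\sim |s_1'|^{\theta-2}$ is large and the curve bends away from the tangent line at $\g(s)$) produces a term of size comparable to $\int_{|\tau| \lesssim \rho} |\tau|^{\theta - 2} |\tau|^{-2\alpha}\, d\tau \sim \rho^{\theta - 1 - 2\alpha}$, and since $\theta < 1 + 2\alpha$ this diverges as $\rho \to 0$. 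One must check that the remaining contributions (the far region $|s_1 - s_1'| \gtrsim \rho$, and the part of the near region on the smooth side of the origin) do not cancel this blow-up; by the symmetry/sign of the profile $|x_1|^\theta$ — which is convex — the curvature contribution has a definite sign, so the singular piece cannot be cancelled. Since $\partial_s v$ is, up to bounded factors, a reparametrization of $\partial_{x_1}(v \cdot \T)$, unboundedness of $\partial_s v$ along $\p\Omega$ means $v$ cannot be Lipschitz on $\p\Omega$.

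The main obstacle I expect is the sign/cancellation bookkeeping in the principal value integral: one needs to be careful that after subtracting the odd part (which is what makes the $P.V.$ finite for genuinely $W^{2,p}$, $p > \frac{1}{1-2\alpha}$ curves) there is still a net divergent contribution here — the point being that for $p$ below the threshold the subtraction no longer suffices. Choosing a convex, even model profile $\varphi$ makes the dominant curvature term sign-definite and handles this cleanly, but one still has to estimate the cross terms (e.g. the interaction of the large curvature near $0$ with the $O(|x_1|^{\theta-1})$ tilt of $\T(s)$ in \eqref{eq:aux_estimate_v_on_boundary}-type expansions) and confirm they are of the same order rather than larger. A secondary, purely technical point is ensuring the smoothing and compact-support cutoff of $|x_1|^\theta$ do not destroy either the $W^{2,p}$ membership or the lower bound; this is routine since both properties are local at the origin.
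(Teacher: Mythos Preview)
Your proposal is correct and follows essentially the same route as the paper: the paper prescribes curvature $\kappa(s) = -|s|^{-(1-2\alpha)-\epsilon}$ near $s=0$ in arc-length (equivalent to your graph profile with $\theta = 1+2\alpha - \epsilon$), then shows $\partial_s v \cdot \N(s) \to \infty$ as $s \to 0$ by using the definite sign of $\kappa$ to make the near-field integrand non-negative and extracting a divergent lower bound from a subinterval. One small correction to your heuristic: the curvature factor should sit at the fixed point $\rho$, not the integration variable $\tau$ --- as written, $\int_{|\tau|\lesssim\rho}|\tau|^{\theta-2-2\alpha}\,d\tau$ diverges at $\tau=0$ since $\theta - 2 - 2\alpha < -1$ in the regime $p<\frac{1}{1-2\alpha}$; the paper handles this by restricting to the range $s' \in [s,2s]$ and applying the mean value theorem to $\T(s')\cdot\N(s) \sim -\int_s^{s'}\kappa$, which produces the factor $s^{\theta-2}\,|s-s'|$ and gives the convergent integral $\rho^{\theta-2}\int_0^\rho \tau^{-2\alpha}\,d\tau \sim \rho^{\theta-1-2\alpha}$.
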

\begin{proof}

Let $ \ep>0 $ be sufficiently small such that
\begin{equation}\label{eq:aux_non-Lipschitz velocity_1}
\ep < \min\{  2 \alpha,  \frac{1}{p} - (1 -2\alpha ) \}.
\end{equation}
From \eqref{eq:aux_non-Lipschitz velocity_1}, we have that the function $  |s|^{- (1 -2 \alpha)  -\ep } \in L^p([ -1,1]) $. It follows that there are $W^{2 , p }$ bounded domains $\Omega$ whose curvature in the arc-length parameterization satisfies
$$
 \k(s) = - |s|^{- (1 -2 \alpha)  - \ep }
$$ near $s =0$, and is otherwise smooth.

Fix such a $W^{2 , p }$ domain $\Omega$ and let $\delta>0$ be such that
\begin{equation}\label{eq:aux_non-Lipschitz velocity_2}
  \N (s' ) \cdot \N ( s ) \in [\frac{1}{2} , 1]  \quad \text{ for any $s, s'  \in [-\delta , \delta ]$}.
\end{equation}

We need to show the velocity $
v (s) =  -\frac{1}{2\alpha } \int_{\g} \frac{\T(s')}{|\g(s) - \g(s')|^{2\alpha }}  \, ds'  $ is not a Lipschitz function.

Since $v$ is smooth away from $s=0$ by the construction of $\Om$, we will show $\p_s v \cdot \N(s) \to \infty $ as $s \to 0$. Similarly to Lemma \ref{lemma:estimate_v_on_boundary}, we introduce the split
\begin{align*}
|\p_s v (s)  \cdot \N(s)| & =     \Big| \int_{ 0 \leq |s'| \leq \delta   } \T (s' +s ) \cdot \N ( s )  \frac{   (\g(s  )-\g( s' +s )) \cdot \T(s ) }{  |\g(s )-\g(s' +s )|^{2  +2\alpha }}  \,ds' \\
&\qquad  +  \int_{ \delta  \leq |s'| \leq \frac{L}{2}  } \T (s' +s ) \cdot \N ( s )  \frac{   (\g(s  )-\g( s' +s )) \cdot \T(s ) }{  |\g(s )-\g(s' +s )|^{2  +2\alpha }}  \,ds'  \Big|\\
& \geq \Big| \int_{ 0 \leq |s'| \leq \delta    } \T (s' +s ) \cdot \N ( s )  \frac{   (\g(s  )-\g( s' +s )) \cdot \T(s ) }{  |\g(s )-\g(s' +s )|^{2  +2\alpha }}  \,ds'\Big| - M_\delta
\end{align*}
where $M_\delta < \infty$ is independent of the label $s$. Now it suffices to show
\begin{equation}\label{eq:aux_non-Lipschitz velocity_3}
\int_{ 0 \leq |s'| \leq \delta    } \T (s' +s ) \cdot \N ( s )  \frac{   (\g(s  )-\g( s' +s )) \cdot \T(s ) }{  |\g(s )-\g(s' +s )|^{2  +2\alpha }}  \,ds'
\end{equation}
goes to $\infty$  as $s \to 0$.

Since $\k(s) < 0$, by \eqref{eq:aux_non-Lipschitz velocity_2} and the fundamental theorem of calculus, we have that $\T (s' +s ) \cdot \N ( s )  $ always has the sign opposite to $s' $. The same holds for $(\g(s  )-\g( s' +s )) \cdot \T(s )$.

It follows that the integrand in \eqref{eq:aux_non-Lipschitz velocity_3} is always non-negative:
\begin{align*}
 \T (s' +s ) \cdot \N ( s )    (\g(s  )-\g( s' +s )) \cdot \T(s ) \geq 0 \quad \text{for any $ s'\in [ -\delta ,\delta]$} .
\end{align*}
By this non-negativity, we can reduce the domain of the integral to $ [0,s]$:
\begin{align}\label{eq:aux_non-Lipschitz velocity_33}
|\p_s v (s)  \cdot \N(s)| & \geq  \int_{  0\leq  s'  \leq s } \T (s' +s ) \cdot \N ( s )  \frac{   (\g(s  )-\g( s' +s )) \cdot \T(s ) }{  |\g(s )-\g(s' +s )|^{2  +2\alpha }}  \,ds'   - M,
\end{align}
where $M$ is a constant that does not depend on $s$ (and may only depend on $\delta$, $\epsilon$, $\alpha$ and choice of $\Omega$).

For $s' \in [0,s]$, we compute that
\begin{equation}\label{eq:aux_non-Lipschitz velocity_4}
 \T (s' +s ) \cdot \N ( s ) \geq   - \frac{1}{2} \int_0^{  s'  }  \k(s +\tau )   \, d\tau  \geq - C (|s+ s'|^{   2\alpha  -\ep }  -|s |^{   2\alpha  -\ep })
\end{equation}
if $\delta$ is sufficiently small.
By the mean value theorem and $s' \in [0,s]$ we have
\begin{equation}\label{eq:aux_non-Lipschitz velocity_5}
 \T (s' +s ) \cdot \N ( s ) \geq   - C (|s+ s'|^{   2\alpha  -\ep }  -|s |^{   2\alpha  -\ep })  \geq -C  s^{-1 + 2\alpha   -\ep} s'  .
\end{equation}
Similar computation also shows
\begin{equation}\label{eq:aux_non-Lipschitz velocity_6}
(\g(s  )-\g( s' +s )) \cdot \T(s ) \geq -C s' .
\end{equation}
Putting together \eqref{eq:aux_non-Lipschitz velocity_33}, \eqref{eq:aux_non-Lipschitz velocity_5}, and  \eqref{eq:aux_non-Lipschitz velocity_6} we have
\begin{align*}
|\p_s v (s)  \cdot \N(s)| & \geq  C s^{-1 + 2\alpha -\ep }\int_{  0\leq  s'  \leq s }  |s'|^{-2\alpha}  \,ds'   - M \\
&\geq C s^{-\ep} -M \to \infty \quad \text{as $s \to 0$.}
\end{align*}

\end{proof}

\subsection{A Lipschitz estimate of the normal velocity}\label{subsec:Eu_to_Lagran_4}

We present a lemma regarding the normal velocity, similar to the ones in \cite{asqg_nosplash} and \cite{MR3181769}; hence we only present a sketch of the argument. Note that here the bounded domain $\Omega$ does not need to be a patch solution and $p$ can be equal to $\frac{1}{ 1 -2 \alpha } $.

\begin{lemma}\label{lemma:normal_lipschitz}
Let $0< \alpha< \frac{1}{2}$ and $p \geq  \frac{1}{ 1 -2 \alpha } $. For any   $W^{2,p}$  bounded domain $\Omega$, there exist constants $\ep >0$ and $C(\ep)>0$ such that the following holds.

Let  $v:\RR^2 \to \RR^2$ be the velocity field generated by $\Om$ according to \eqref{eq:aSQG_BS}. Then  for any $x\in \RR^2 $ with $\dist(x, \p \Om) \leq \ep$, there is a unique $y=y(x) \in \p \Omega   $   such that  $\dist (x, \p\Omega )  = |x-y|$ and there holds
\begin{equation}\label{eq:lemma_normal_lipschitz}
|(v(x) - v(y))\cdot n(y)| \leq
 C(\ep) |x-y|  ,
\end{equation}
where $n(y) $ denote the unit normal vector of $\p \Om$ at $y$.
\end{lemma}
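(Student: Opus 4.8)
The plan is to reduce the Lipschitz bound on the normal component of $v(x)-v(y)$ to a computation entirely on the patch boundary, using the intrinsic $C^{1,\sigma}$ regularity of $v$ along $\p\Omega$ established in Lemma \ref{lemma:estimate_v_on_boundary}, together with the decomposition of the Biot-Savart kernel into a ``near'' and a ``far'' part relative to the nearest boundary point $y$. First I would fix $\ep>0$ small enough (depending on $\Omega$) that for every $x$ with $\dist(x,\p\Omega)\le\ep$ the nearest point $y=y(x)\in\p\Omega$ is unique and $x-y$ is parallel to $n(y)$; this is standard for $C^{1,\beta}$ (indeed $C^{1,1}$ after arc-length reparametrization, but $W^{2,p}\hookrightarrow C^{1,\sigma}$ suffices for uniqueness on a tubular neighborhood, shrinking $\ep$ if needed). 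Writing $d=|x-y|$ and $n=n(y)$, so that $x=y+dn$, I would use \eqref{eq:aux_BS_arc-length_gamma}:
\[
(v(x)-v(y))\cdot n = -\frac{1}{2\alpha}\int_{\g}\T(s')\cdot n\left(\frac{1}{|x-\g(s')|^{2\alpha}}-\frac{1}{|y-\g(s')|^{2\alpha}}\right)ds'.
\]

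\textbf{Near/far split.} I would split the integral at $|s'-s_y|\sim d$, where $\g(s_y)=y$. On the far part $|s'-s_y|\ge Kd$, both $|x-\g(s')|$ and $|y-\g(s')|\sim|s'-s_y|$ are bounded below by a multiple of $d$, and $\big||x-\g(s')|^{-2\alpha}-|y-\g(s')|^{-2\alpha}\big|\lesssim d\,|s'-s_y|^{-1-2\alpha}$ by the mean value theorem (using $|x-y|=d$); integrating $|s'-s_y|^{-1-2\alpha}$ over $Kd\le|s'-s_y|\le L/2$ gives $\lesssim d\cdot d^{-2\alpha}=d^{1-2\alpha}$, which is \emph{not} $O(d)$ unless one gains. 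The gain must come from the factor $\T(s')\cdot n$: since $y$ is the nearest point, $n$ is the normal at $y$, and $\T(s')\cdot n = \T(s')\cdot\N(s_y)=O(|s'-s_y|^{\,1-1/p})$ by \eqref{eq:arc_length_M_a}/\eqref{NT1} (or the maximal-function version), so the far integrand is $\lesssim d\,|s'-s_y|^{-1-2\alpha+1-1/p}=d\,|s'-s_y|^{-2\alpha-1/p}$, and since $p\ge\frac1{1-2\alpha}$ gives $-2\alpha-1/p\ge -1$, the integral over $Kd\le|s'-s_y|\le L/2$ is $\lesssim d\cdot\max(1,\log(1/d))$ at the borderline $p=\frac1{1-2\alpha}$ — one should check the endpoint case carefully, as here a logarithm may appear; for $p>\frac1{1-2\alpha}$ one gets cleanly $O(d)$, and the lemma only claims a constant $C(\ep)$, so I would be willing to absorb a harmless $\log$ or, following the cited references \cite{asqg_nosplash,MR3181769}, note the endpoint is handled by a slightly more careful bound using that $\T(s')\cdot n$ integrates against an odd-in-$(s'-s_y)$ leading part of the kernel difference, killing the borderline logarithm.

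\textbf{Near part.} On $|s'-s_y|\le Kd$ I would bound each term separately rather than differencing. The term with $|y-\g(s')|^{-2\alpha}$: here $|y-\g(s')|\sim|s'-s_y|$ and $|\T(s')\cdot n|\lesssim|s'-s_y|^{1-1/p}$ (or $\mathcal M\k(s_y)|s'-s_y|$), giving an integrand $\lesssim |s'-s_y|^{-2\alpha-1/p}$, integrable over $|s'-s_y|\le Kd$ with value $\lesssim d^{1-2\alpha-1/p}$; multiplying is not needed since this is already the full term, and $1-2\alpha-1/p\ge 0$, so this contributes $O(d^{1-2\alpha-1/p})$ — again $O(d)$ only if $1-2\alpha-1/p\ge 1$, which fails. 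The resolution is the same as in Lemma \ref{lemma:estimate_v_on_boundary}: one does not estimate $v(x)-v(y)$ termwise but uses that $\p_s v$ exists and is bounded on $\p\Omega$ (Lemma \ref{lemma:estimate_v_on_boundary}) to write, for the portion of the difference coming from moving \emph{along} the boundary, a clean $O(d)$ bound; the genuinely new contribution is the normal displacement $x=y+dn$, and for \emph{that} one uses the geometric fact that the kernel $\frac{z\cdot n}{|z|^{2+2\alpha}}$-type quantities, when integrated against the nearly-flat boundary near $y$ with the weight $\T(s')\cdot n$ that itself vanishes to order $|s'-s_y|^{1-1/p}$ at $s_y$, produce a bound linear in the distance $d$ of $x$ off the boundary. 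Concretely I would Taylor-expand $|y+dn-\g(s')|^{2}=|y-\g(s')|^2+2d\,n\cdot(y-\g(s'))+d^2$ and use $n\cdot(y-\g(s'))=O(|s'-s_y|^{1+1-1/p})$ (the boundary is tangent to $\{n^\perp\}$ at $y$, improved by $W^{2,p}$, cf.\ \eqref{eq:arc_length_d}, \eqref{eq:arc_length_M_c}) so that $|x-\g(s')|^2=|y-\g(s')|^2+O(d|s'-s_y|^{2-1/p})+d^2$; substituting into the difference of powers and combining with the vanishing of $\T(s')\cdot n$ yields, after the elementary integration, the bound $\lesssim d$.

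\textbf{Main obstacle.} The main difficulty is the borderline exponent $p=\frac1{1-2\alpha}$ allowed in the statement, where naive estimates produce $d^{1-2\alpha-1/p}=d^0$ or a logarithm rather than $d$; the resolution is to exploit the oddness/cancellation of the leading term of the kernel difference against the boundary (exactly the mechanism already used for the principal-value definition of $\p_s v$ in Lemma \ref{lemma:estimate_v_on_boundary}), so that the surviving part is genuinely one order better. Since the statement explicitly says the argument parallels \cite{asqg_nosplash} and \cite{MR3181769} and only a sketch is expected, I would present the near/far split, record the key pointwise bounds $|\T(s')\cdot n|\lesssim|s'-s_y|^{1-1/p}$ and $|n\cdot(y-\g(s'))|\lesssim|s'-s_y|^{2-1/p}$ from Lemma \ref{lemma:arc_length_estimates}, carry out the far estimate to get $O(d)$ (noting the cancellation at the endpoint), and for the near part perform the Taylor expansion above, leaving the routine integration to the reader.
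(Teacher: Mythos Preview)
Your overall plan --- arc-length parametrize centered at $y$, split at $|s'-s_y|\sim d$, bound termwise near and via the mean value theorem far --- is precisely what the paper does. Two corrections make it go through without any appeal to oddness or to Lemma~\ref{lemma:estimate_v_on_boundary}.

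\textbf{Near part.} You dropped a $+1$ in the exponent: $|\T(s')\cdot n|\cdot|y-\g(s')|^{-2\alpha}\lesssim|s'-s_y|^{1-1/p}\cdot|s'-s_y|^{-2\alpha}=|s'-s_y|^{1-1/p-2\alpha}$, not $|s'-s_y|^{-1/p-2\alpha}$. Since $1-\tfrac{1}{p}-2\alpha\ge0$, integrating over $|s'-s_y|\le Kd$ gives $O(d^{\,2-1/p-2\alpha})\le O(d)$. The $|x-\g(s')|^{-2\alpha}$ term obeys the same bound because $|x-\g(s')|\ge|\g_1(s')|\sim|s'-s_y|$ in coordinates where $y=0$, $n=(0,1)$. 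So the near part is already $O(d)$; the detour through $\p_s v$ is unnecessary and could not help anyway, since $x\notin\p\Omega$ and $x-y$ has no tangential component.

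\textbf{Far part.} The endpoint logarithm disappears not by cancellation but by the sharper mean value bound you yourself wrote down (in the wrong paragraph). The $n$-directional derivative of $t\mapsto|y+tn-\g(s')|^{-2\alpha}$ carries the factor $(y+tn-\g(s'))\cdot n=t-\g(s')\cdot n$, and by \eqref{eq:arc_length_d} this is $O(d)+O(|s'-s_y|^{2-1/p})$, not merely $O(|s'-s_y|)$. Hence on $d\le|s'-s_y|\le\ep$ the kernel difference is $\lesssim d\,\big(d+|s'-s_y|^{2-1/p}\big)\,|s'-s_y|^{-2-2\alpha}$. Multiplying by $|\T(s')\cdot n|\lesssim|s'-s_y|^{1-1/p}$ and integrating, the $d^2$-piece gives $d^{\,2-1/p-2\alpha}\le d$ and the remaining piece gives $d\cdot O(\ep^{\,2-2/p-2\alpha})\lesssim d$ (the exponent $1-\tfrac{2}{p}-2\alpha>-1$ holds since $\tfrac{1}{p}\le1-2\alpha<1-\alpha$). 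No logarithm appears, even at $p=\tfrac{1}{1-2\alpha}$.
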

\begin{proof}
Given a $W^{2,p}$ bounded domain $\Omega$, we first take $0<\ep<1$ sufficiently small such that
\begin{enumerate}
	\item if $ \dist(x, \p \Omega) \leq \ep$ there is only one unique $y\in \p\Omega$ with $\dist(x, \p \Omega) = |y-x|$, and
	 \item  $\ep < L/100$ where $L$ is the length of $\p \Om$, and
	\item the tangent vector on $\p \Omega$ only changes by at most $1/100 $ if arc-length changes by $\ep$.
\end{enumerate}

Denote $d = |x-y|  \leq \ep $ the distance from $x$ to $\p \Omega$. Without loss of generality we assume $y =(0,0)$, $x = (0, d)$ and $n(y) = (0,1)$.

We first arc-length parameterize $\p \Om$ by $\g : [-\frac{L}{2} , \frac{L}{2}] \to \RR$ so that $\g (0) = y =(0,0)$ and $ \N(0) = n(y) = (0,1) $. Using the Biot-Savart law \eqref{eq:aux_BS_arc-length_gamma}, we have
\begin{align*}
(v(x) - v(y))\cdot n(y) = \int_{- \frac{L}{2} }^{\frac{L}{2} } \T(s) \cdot \N(0) \Big[ \frac{1 }{| x- \g (s )|^{2\alpha }} - \frac{1}{| \g (s )|^{2\alpha }} \Big] d s.
\end{align*}
Following \cite{asqg_nosplash}, we introduce a split in the domain of integration: $| s|\leq d$, $ d \leq| s |\leq \ep $, and $\ep \leq |s |\leq \frac{L}{2}$.

In the region $|s |\leq d$, the tangent of the boundary barely changes its value and we have the bounds
\begin{align*}
|x -\g(s)|^{-1}  & \lesssim |s|^{-1} ,\\
|\g(s)|^{-1} & \lesssim |s|^{-1} .
\end{align*}
These together with the estimate $| \T(s) \cdot \N(0) | \leq |s|^{1 -\frac{1}{p} } $ from the $W^{2,p}$ regularity of $\Omega $ allows to estimate the integrand by its absolute value,
$$
 \int_{   |s| \leq d } \T(s) \cdot \N(0) \Big[ \frac{1 }{| x- \g (s )|^{2\alpha }} - \frac{1 }{|   \g (s )|^{2\alpha }} \Big] d s \leq C \int_{s \leq d} |s|^{1-\frac{1}{p} -2\alpha } \leq \int_{s \leq d} |s|^{0} \leq  d.
$$

Next, in the region $ d \leq| s |\leq \ep $, we use the mean value theorem to obtain
$$
\Big| \frac{1 }{| x- \g (s )|^{2\alpha }} - \frac{1}{| \g (s )|^{2\alpha }} \Big| \leq C_\alpha   d\frac{ z_2  }{|   z  -\g (s )|^{2\alpha +2 }}  \leq C  \frac{d^2 }{|   \g (s )|^{2\alpha +2 }}
$$
where $z =(0,z_2) $ is a point between $x =  (0 , d ) $ and the origin. Since $  |s| \leq \ep$, we have $|   z  -\g (s )|^{-1} \lesssim |\g_1(s)|^{-1} \lesssim |\g(s)|^{-1}$ in this regime. Hence
\begin{align*}
  \int_{ d \leq |s| \leq \ep } \T(s) \cdot \N(0) \Big[ \frac{1 }{| x- \g (s )|^{2\alpha }} - \frac{1 }{|   \g (s )|^{2\alpha }} \Big] \, d s & \leq C d^2 \int _{ d \leq  |s| \leq \ep} s^{ - \frac{1}{p}  -1-2\alpha }\\
&  \leq C d  .
\end{align*}

Finally, the last integral
$$
  \int_{     \ep \leq  |s| \leq \frac{L}{2} } \T(s) \cdot \N(0) \Big[ \frac{1 }{| x- \g (s )|^{2\alpha }} - \frac{1 }{|   \g (s )|^{2\alpha }} \Big] d s
$$
can be estimated easily since the kernel is away from the singularity.

\end{proof}

\subsection{Existence and uniqueness of the trajectories}\label{subsec:Eu_to_Lagran_5}

We now use the estimates in Lemma \ref{lemma:estimate_v_on_boundary} and Lemma \ref{lemma:normal_lipschitz} to show the existence and uniqueness of trajectories for the patch boundary.

\begin{lemma}\label{lemma:unique_path_on_boundary}
Let $0< \alpha< \frac{1}{2}$ and $\Omega_t$ be a $W^{2,p}$ $\alpha$-patch solution for some $p \geq \frac{1}{1- 2\alpha}  $ with the initial data $\Omega_0$. Let $v :\RR^2 \times [0,T]$ be the velocity of $\Omega_t$. Then for every $X_0 \in \p \Omega_0 \subset \RR^2 $, the Cauchy problem for $X_t :[0,T] \to \RR^2$
\begin{equation}\label{eq:trajectory}
    \begin{cases}
     \frac{d X_t }{dt} = v(X_t ,t) & \\
     X_t |_{t=0} = X_0
    \end{cases}
\end{equation}
has a unique solution $X_t \in C^{1}$, and in addition $X_t \in \p \Omega_t$ for all $t \in [0,T]$.
\end{lemma}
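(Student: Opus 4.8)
The plan is: construct $X_t$ by a soft compactness argument, show \emph{a posteriori} that it never leaves the moving boundary, and only then prove uniqueness — the last step carried out on $\p\Om_t$, where $v$ is far more regular than on all of $\RR^2$. For existence, I would use that $v$ is bounded and uniformly continuous on $\RR^2\times[0,T]$ (recorded above via $|v(\cdot,t)|_{C^{0,1-2\alpha}(\RR^2)}\lesssim|\chi_{\Om_t}|_{L^1\cap L^\infty}$, uniform in $t$) and continuous in $t$ (a consequence of the weak formulation in Definition \ref{def:asqg_patch}): Peano's theorem then produces a $C^1$ solution of \eqref{eq:trajectory} on a maximal time interval which, since $|v|$ is bounded, is all of $[0,T]$.

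To see that a $C^1$ solution with $X_0\in\p\Om_0$ stays on $\p\Om_t$, recall that by Lemma \ref{lemma:Eulerian_to_geoflow} (applicable since $v$ is divergence free and uniformly continuous) the patch $\Om_t$ flows with $v$ in the sense of Definition \ref{def:boundary_flow_with}; in particular $t\mapsto\p\Om_t$ is Hausdorff-continuous with modulus $\lesssim|v|_\infty|t-t'|$, so $\psi(t):=\dist(X_t,\p\Om_t)$ is continuous, nonnegative, and $\psi(0)=0$. Fix $\ep,C(\ep)$ as in Lemma \ref{lemma:normal_lipschitz}, uniform over $\{\Om_t\}$ since $\sup_t\|\Om_t\|_{W^{2,p}}<\infty$. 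On the open set of times where $0<\psi(t)<\ep$ the nearest point $y_t\in\p\Om_t$ to $X_t$ is unique, $X_t=y_t\pm\psi(t)n(y_t)$, and expanding $|X_{t+h}-y'_{t+h}|^2$ for a point $y'_{t+h}\in\p\Om_{t+h}$ close to $y_t+hv(y_t,t)$ (flow property), while controlling the normal component of $v(X_t,t)-v(y_t,t)$ by $O(\psi(t))$ via Lemma \ref{lemma:normal_lipschitz} and its full size by the crude bound $|v(X_t,t)-v(y_t,t)|\lesssim\psi(t)^{1-2\alpha}$, one obtains after letting $h\to0^+$ the differential inequality $D^+\psi(t)\le C\psi(t)$ for the upper right Dini derivative; where $\psi(t)=0$ the flow property gives $D^+\psi(t)=0$ directly. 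A Grönwall inequality for continuous functions with bounded upper Dini derivative, combined with a continuation argument keeping $\psi<\ep$, forces $\psi\equiv0$, so $X_t\in\p\Om_t$ on $[0,T]$.

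For uniqueness, take two $C^1$ solutions $X_t,\widetilde X_t$ with $X_0=\widetilde X_0\in\p\Om_0$; by the previous step both stay on $\p\Om_t$. When $p>\frac1{1-2\alpha}$, Lemma \ref{lemma:estimate_v_on_boundary} gives $\p_s v\in C^{0,\sigma}$ along $\p\Om_t$ with norm uniform in $t$, so $v$ is Lipschitz along $\p\Om_t$ for the arc-length distance, which is comparable to the chordal distance at small scales ($\p\Om_t\in C^{1,1-1/p}$ uniformly); hence $|v(X_t,t)-v(\widetilde X_t,t)|\lesssim|X_t-\widetilde X_t|$ while this quantity is small, and $\frac d{dt}|X_t-\widetilde X_t|^2\lesssim|X_t-\widetilde X_t|^2$ yields $X_t\equiv\widetilde X_t$ by Grönwall (the smallness propagating by the same estimate). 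At the endpoint $p=\frac1{1-2\alpha}$ only an Osgood-type (essentially log-Lipschitz) modulus for $v$ along $\p\Om_t$ is available, but the same scheme closes via Osgood's uniqueness criterion.

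I expect the boundary-confinement step to be the main obstacle: since $v$ is merely $C^{0,1-2\alpha}$ on $\RR^2$, a naive ODE comparison loses too much in the tangential direction, and one must instead exploit the Lipschitz control of the \emph{normal} component of $v$ near $\p\Om_t$ (Lemma \ref{lemma:normal_lipschitz}), fed into a one-sided Grönwall that only uses the weak ``flows with $v$'' information about $\p\Om_t$ supplied by Definition \ref{def:boundary_flow_with}.
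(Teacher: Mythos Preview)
Your proposal is correct and follows essentially the same three-step architecture as the paper: Peano existence from the uniform continuity and boundedness of $v$; confinement to $\p\Om_t$ via a differential inequality for the distance function, driven by the normal-Lipschitz estimate of Lemma~\ref{lemma:normal_lipschitz} together with the ``flows with $v$'' property from Lemma~\ref{lemma:Eulerian_to_geoflow}; and uniqueness on the boundary via the tangential regularity Lemma~\ref{lemma:estimate_v_on_boundary} and Gr\"onwall. The paper's confinement argument works with $m(t)^2$ rather than the Dini derivative of $m(t)$, but this is cosmetic; your observation that $X_t-y_t$ is normal so only the normal component of $v(X_t,t)-v(y_t,t)$ enters (making the crude $\psi^{1-2\alpha}$ bound unnecessary) is exactly what the paper uses.

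One caveat: your endpoint remark for $p=\frac{1}{1-2\alpha}$ goes beyond what the paper actually proves. Lemma~\ref{lemma:estimate_v_on_boundary} is stated only for $p>\frac{1}{1-2\alpha}$, and its proof genuinely needs the strict inequality (the near-diagonal integral $\int_{|s'|\le 2\delta}|s'|^{-2\alpha-1/p}\,ds'$ diverges logarithmically at the endpoint). So the claim that ``an Osgood-type modulus is available'' would require a separate argument---plausibly a log-Lipschitz bound for $v$ along $\p\Om_t$---that neither you nor the paper supplies. The paper's uniqueness step, as written, also tacitly uses the strict inequality, so this is a shared loose end rather than a defect specific to your proposal.
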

\begin{proof}
We first prove the  existence of trajectories  and then that the solutions remain on the patch boundary and finally their uniqueness.

\textbf{\underline{Step 1: Existence of $C^1$ trajectories.}}

Observe that the vector field $v$ is uniformly continuous and bounded on $\RR^2\times [0,T]$. We can apply the  Peano existence theorem and this gives a solution $X_t :[0,T] \to \RR^2 $ to the ODE \eqref{eq:trajectory} on $ t\in [0,T]$.

\textbf{\underline{Step 2: No exit from $\p \Omega_t $.}}

We now prove that when $\Omega_t$ is a $W^{2,p}$ patch solution with $p \geq \frac{1}{1- 2\alpha }$, all $C^1$ solutions $X_t$ on $[0,T]$ with $X_0  \in \p \Omega_0$ satisfy $X_t  \in \p \Omega_t$ for all time $t \in [0,T]$. This is the only place in the proof that we need $\Omega_t$ to be a patch solution.

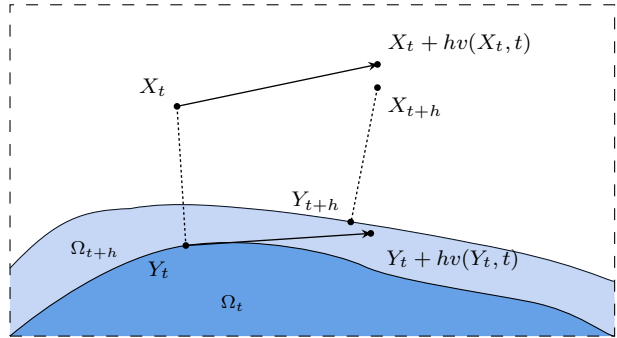
\begin{figure}[h]
\tikzset{every picture/.style={line width=0.5pt}} %set default line width to 0.75pt
\centering
\begin{tikzpicture}[x=1.25pt,y=1.25pt,yscale=-1,xscale=1]

%Top Curve
\draw [line width=0.4pt]    (0,29.5+ 50) .. controls (13.91  ,24.61 + 40 ) and (21.55,22.02 + 40 ) .. (36.05,21.52 + 40  ) .. controls (49.05,19.02 + 40 ) and (66.05,21.02 + 40 ) .. (76.5,22 + 40 ) .. controls (87.55,23.02 + 40 ) and (119.55,28.02 + 40 ) .. (138.5,32 + 40 ) .. controls (168.36,37.34 + 40 ) and (178.47,43.3 + 40  ) .. (181,43.5 + 40 );

%Top Curve coloring
\draw [dash pattern={on 0.0pt off 2.5pt}][fill={rgb, 255:red, 74; green, 130; blue, 226 }  ,fill opacity=0.32 ]   (0,29.5+ 50) .. controls (13.91  ,24.61 + 40 ) and (21.55,22.02 + 40 ) .. (36.05,21.52 + 40  ) .. controls (49.05,19.02 + 40 ) and (66.05,21.02 + 40 ) .. (76.5,22 + 40 ) .. controls (87.55,23.02 + 40 ) and (119.55,28.02 + 40 ) .. (138.5,32 + 40 ) .. controls (168.36,37.34 + 40 ) and (178.47,43.3 + 40  ) .. (181,43.5 + 40 ) -- (181,100)--(0,100) --(0,0) ;

%Bottom Curve
\draw [line width=0.4pt][fill={rgb, 255:red, 74; green, 144; blue, 226 }  ,fill opacity=0.77 ]   (0,100) .. controls (9.1,92.2) and (31.64,75.66) .. (52.64,72.66) .. controls (73.64,69.66) and (97.62,74.93) .. (106.08,78.97) .. controls (114.55,83.02) and (140.91,86.75) .. (155.64,89.66) .. controls (170.36,92.57) and (177.97,99.8) .. (181,100) ;

%Y_t to Y_t + h v
\draw [-stealth]   (52.64,72.66)  -- (108 ,69) ;

% Y_t + h v
\filldraw
		(108 ,69 ) circle (1pt) node[below right=1mm] {\scriptsize $Y_t + h  v(Y_t, t) $};

% Y_t
\filldraw		(52.64,72.66) circle (1pt) node[below left=1mm] {\scriptsize $Y_t $};

% X_t to Y_t
\draw [dash pattern={on 1pt off 1pt}] (50,30.66) --(52.64,72.66) ;

% X_t+h to Y_t+h
\draw [dash pattern={on 1pt off 1pt}] (110,25) --(102,66) ;

% Y_t+h
\filldraw		(102,65.55) circle (1pt) node[above left=0mm] {\scriptsize $Y_{t+h} $};

% X_t
\filldraw		(50,30.66) circle (1pt) node[above left =0mm] {\scriptsize $X_t $};
% X_t+h
\filldraw		(110,25) circle (1pt) node[below right=0mm] {\scriptsize $X_{t+h} $};

%X_t arrow
\draw [-stealth]   (50,30.66)  -- (110 ,18);

% Y_t + h v
\filldraw   (110 ,18) circle (1pt) node[above right=0mm] {\scriptsize $X_t + h v(X_t , t ) $} ;

%Shape: Rectangle [id:dp3627872989114098]
\draw  [dash pattern={on 4.5pt off 4.5pt}][line width=0.4]  (0,0) -- (181,0) -- (181,100) -- (0,100) -- cycle ;

% Patch labels
\draw 	(15,74) node[anchor=west] {\tiny $\Omega_{t + h }$} ;
\draw 	(60,90) node[anchor=west] {\tiny $\Omega_{t }$} ;

\end{tikzpicture}
\caption{Relative positions for $X_t, Y_t$ and their approximate evolution $X_t + h  v(X_t, t ), Y_t + h  v(Y_t, t )$.}
\label{fig:no_exit_proof}
\end{figure}

Consider the distance $ m(t) : = \dist(X_t  , \p \Omega_t )$. We first claim that $m(t)$ is Lipschitz on $[0,T]$. Indeed, for any $t,t+h \in [0,T]$, without loss of generality assume $ m(t+h ) \geq  m(t)$ and $h>0$. Let $Y_t  \in \p \Omega_t $ be such that $|X_t  - Y_t| = \dist (X_t, \p \Omega_t)$, then
\begin{align}\label{eq:aux_no_exit_distance_0}
  m(t +h ) - m(t )  & =  \dist(X_{t + h }, \p \Omega_{t+h}  ) -|X_t - Y_t|.
\end{align}
Since $\Omega_t $ is an $\alpha$-patch, by Lemma \ref{lemma:Eulerian_to_geoflow}, it flows with $v$. Therefore, by $Y_t \in \p \Om_t$ we have \begin{align}\label{eq:aux_no_exit_distance_0b}
\dist(Y_t + h v(Y_t , t) , \p \Omega_{t+h}) = o(h)  .
\end{align}
It follows from \eqref{eq:aux_no_exit_distance_0b} that
\begin{equation}\label{eq:aux_no_exit_distance_0c}
\dist(X_{t+ h }, \p \Omega_{t+h}  ) \leq \Big | X_ { t+h } -  \big(Y_{t} + h v(Y_t , t)\big) \Big| + o(h) ,
\end{equation}
and hence by \eqref{eq:aux_no_exit_distance_0} and the triangle inequality
\begin{equation}
\begin{aligned}\label{eq:aux_no_exit_distance_1}
 m(t +h ) - m(t )& \leq \Big | X_{t+h} -  \big(Y_t + h v(Y_t , t)\big)   - \big(  X_t - Y_t  \big)\Big| +o(h) \\
& \leq \big | X_{ t+h } -  X_t \big |  + \big |   h v(Y_t , t)   \big| +o(h) \leq C h,
\end{aligned}
\end{equation}
where the constant $C$ depends on the patch $\Omega$ and $\alpha$. So $m$ is a.e. differentiable on $[0,T]$ by the Rademacher theorem.

Finally, we claim that $\frac{d m}{dt} \leq C m(t)   $ on $[0,T']$ provided $T'>0$ is sufficiently small. It would follow that  $m(t) = 0$ for all $t\in [0,T]$ by repeated applications of Gronwall's inequality. So any $C^1$ solution $X_t$ of \eqref{eq:trajectory} with $X_0  \in \p \Omega_0$ remains on $\p \Omega_t$ for all time $t \in [0,T]$.

Now it remains to prove the claim $\frac{d m}{dt} \leq C m(t)   $ to close the argument. Without loss of generality, we assume $T'>0 $ is sufficiently small so that $m(t)\leq \ep $ with $\ep>0$ from Lemma \ref{lemma:normal_lipschitz}. Again, it suffices to show that for any fixed $t \in [0,T')$ and for all   sufficiently small $h>0$,
 \begin{equation}\label{eq:aux_distance_lip}
|m(t +h )|^2 - |m(t )|^2 \leq  C  h |m(t )|^2 + o(h) .
\end{equation}

To show \eqref{eq:aux_distance_lip}, we just need to strengthen the previous augment for the Lipschitz continuity. Since $X_t$ solves \eqref{eq:trajectory} and $v$ is continuous on $\RR^2 \times [0,T]$, we have $X_{ t+h} -X_t = h v(X_t , t) + o(h) $. As in Figure \ref{fig:no_exit_proof}, it follows from \eqref{eq:aux_no_exit_distance_0c} that
\begin{align*}
|m(t +h )|^2  \leq  | X_t - Y_t + h( v(X_t , t) - v(Y_t , t)) |^2 + o(h).
\end{align*}
So the difference of the squares gives
\begin{equation}
\begin{aligned}\label{eq:aux_distance_lip_2}
& |m(t +h )|^2 - |m(t   )|^2 \\
&=  2h ( X_t - Y_t) \cdot   ( v(X_t , t) - v(Y_t , t))  + O(h^2)+ o(h)
\end{aligned}
\end{equation}
where $  \cdot  $ denotes the standard inner product on $\RR^2$.

Now $\sup_{[0,T']} m(t) \leq C T'  $ can be as small as we want by taking $T'>0$ small. Also since $Y_t$ is the closest point to $X_t$, we have $  X_t - Y_t  $ is parallel with the normal at $Y_t$. Thus we can apply Lemma \ref{lemma:normal_lipschitz} to the first term on the right-hand side of \eqref{eq:aux_distance_lip_2} and obtain that
\begin{align*}
|m(t +h )|^2 - |m(t   )|^2 & \leq   C |m(t)|^2 + o(h)
\end{align*}
for all sufficiently small $h>0$.

\textbf{\underline{Step 3: Uniqueness }}

Suppose $X_t$ and $Y_t$ are two solutions of \eqref{eq:trajectory} with $X_0 = Y_0 $. We aim to prove $X_t = Y_t$.

First of all, $X_t, Y_t \in \p \Omega_t$ for $t \in [0,T]$ from Step 2. Consider the quantity $|X_t - Y_t|^2$ which is Lipschitz and satisfies
\begin{equation}\label{eq:aux_distance_lip_3}
\frac{d|X_t -Y_t|^2}{dt} \leq 2 |X_t-Y_t| |v(X_t,t) -v(Y_t,t)|.
\end{equation}
The fundamental theorem of calculus applied to $v $ restricted to $\p\Om$ yields
\begin{align*}
|v(X_t ,t) - v(Y_t,t)| \leq  \int_{\gamma_{X_t,Y_t}} | \p_s v | \, d s
\end{align*}
where $\g_{X_t,Y_t}$ is the distance arc from $X_t$ to $Y_t$ on $\p \Omega_t$. Since the moving domain $\Omega_t$ is uniformly of class $W^{2,p}$, there exists a constant $ 0< C <\infty $ independent of time such that
$$
|\gamma_{X_t,Y_t}| \leq C |X_t- Y_t| \quad \text{for any $ X_t,Y_t \in \p \Omega_t $} .
$$
By Lemma \ref{lemma:estimate_v_on_boundary}, $\p_s v$ is uniformly bounded on $\cup_{t\in [0,T]} \p \Omega_t \times \{t\}$, and therefore we have for all $t\in [0,T]$ and all $X_t , Y_t \in \p \Omega_t$,
\begin{align}\label{eq:aux_distance_lip_4}
|v(X_t ,t) - v(Y_t ,t)| \leq  C  |\gamma_{X_t, Y_t}| \lesssim  |X_t- Y_t |  .
\end{align}
It follows from \eqref{eq:aux_distance_lip_3} and \eqref{eq:aux_distance_lip_4} that
$$
\frac{d|X_t -Y_t |^2}{dt} \leq C |X_t-Y_t|^2 ,
$$
and the Gronwall inequality implies $|X_t - Y_t| =0$ on $[0,T]$.

\end{proof}

\subsection{The flow of \texorpdfstring{$\alpha$}{a}-patches}\label{subsec:flow_asqg_patches}\label{subsec:Eu_to_Lagran_6}

We have thus far established the wellposedness of trajectories on the patch boundary for a given $W^{2,p}$ $\alpha$-patch. We now ``bundle together'' the trajectories into a flow map. To reduce the notation, let us introduce the flow of a patch solution.

\begin{definition}
Let $ 0< \alpha < \frac{1}{2}$ and $\Omega_t$ be a $\alpha$-patch on $[0,T]$. We say $\g:\TT\times [0,T] \to \RR^2$ is a flow of the patch $\Omega_t$ if
\begin{itemize}
    \item The map $ x  \mapsto \g( x ,t)$ is a  parameterization (i.e. $|\g'|>0$) of $\p \Om_t$ for every $t\in [0,T]$;
    \item The flow equation $\p_t \g = v(\g,t )$ holds on $\TT \times [0,T]$, where $v(x,t) :\RR^2 \times [0,T] \to \RR^2$ is the velocity field generated by the patch $\Omega_t$.
\end{itemize}

If in addition $\g(\cdot, t) $ is of class $C^{1,\sigma}$ for some $\sigma>0$, we say $\g$ is a $C^{1,\sigma}$ flow of $\Omega_t$.
\end{definition}

The main result of this section is the following.
\begin{theorem}\label{thm:flow_existence}
Let $0< \alpha< \frac{1}{2}$ and    $p > \frac{1}{1-2 \alpha}$. Suppose  $\Omega_t$ is a $W^{2,p}$ $\alpha$-patch with the initial data $\Omega_0$ and $v =K_\alpha  *  \chi_{\Omega_t}:\RR^2\times [0,T] \to \RR^2$ is the velocity field of $\Omega_t$.

For any parametrization of $ \p \Omega_0$, $\g_0\in W^{2,p}(\TT) $,  the Cauchy problem for the flow $\gamma: \TT \times [0,T] \to \RR^2$
\begin{equation}
\begin{cases}
\p_t \gamma(x,t) = v(\gamma(x,t) , t) &\\
\gamma |_{t=0} = \gamma_0&
\end{cases}
\end{equation}
has a unique solution $\gamma \in  C([0,T]; C^{1,\sigma }(\TT))$ for  $\sigma = 1 -\frac{1}{p} -2\alpha >0 $.
\end{theorem}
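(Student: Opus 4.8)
The plan is to construct the flow pointwise from the trajectories provided by Lemma~\ref{lemma:unique_path_on_boundary}, and then to upgrade its regularity in the space variable using the intrinsic $C^{1,\sigma}$ bound on the velocity along the boundary from Lemma~\ref{lemma:estimate_v_on_boundary}. First I would fix, for each $x\in\TT$, the initial point $\g_0(x)\in\p\Om_0$ and let $\g(x,t):=X_t$ be the unique $C^1$ solution of the trajectory problem~\eqref{eq:trajectory}; Lemma~\ref{lemma:unique_path_on_boundary} guarantees this exists, is unique, depends $C^1$ on $t$, and satisfies $\g(x,t)\in\p\Om_t$ for every $t$. Uniqueness of the flow is then immediate: any flow $\tilde\g$ restricts, for fixed $x$, to a solution of~\eqref{eq:trajectory} with the same initial datum, hence agrees with $\g(x,\cdot)$. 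What remains is to prove that $x\mapsto\g(x,t)$ is a $C^{1,\sigma}$ parametrization of $\p\Om_t$ (in particular that $|\p_x\g|>0$) and that $t\mapsto\g(\cdot,t)$ is continuous into $C^{1,\sigma}(\TT)$.

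The first preliminary step is bi-Lipschitz control in $x$. Since $\g(x,t)$ and $\g(y,t)$ both lie on $\p\Om_t$, and $\Om_t$ is uniformly $W^{2,p}$ so that arc length along $\p\Om_t$ is comparable to chord length with a time-independent constant, Lemma~\ref{lemma:estimate_v_on_boundary} (with constants uniform in $t$ by the standing $W^{2,p}$ bound) yields $|v(\g(x,t),t)-v(\g(y,t),t)|\lesssim|\g(x,t)-\g(y,t)|$. Feeding this into $\tfrac{d}{dt}|\g(x,t)-\g(y,t)|^2$ and running Gronwall forward and backward in time gives $e^{-CT}|\g_0(x)-\g_0(y)|\le|\g(x,t)-\g(y,t)|\le e^{CT}|\g_0(x)-\g_0(y)|$. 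Thus $\g(\cdot,t)$ is a bi-Lipschitz injection of $\TT$ into $\p\Om_t$ with constants uniform in $t$; solving~\eqref{eq:trajectory} backward from an arbitrary point of $\p\Om_t$ and re-running the ``no-exit'' argument of Lemma~\ref{lemma:unique_path_on_boundary} for the time-reversed equation shows it is onto, so $\g(\cdot,t)\colon\TT\to\p\Om_t$ is a bi-Lipschitz homeomorphism.

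The crux is differentiability in $x$: since $v$ is not differentiable off $\p\Om_t$, one cannot differentiate~\eqref{eq:trajectory} naively, and the idea is to exploit that every $x$-difference of $\g$ is tangent to $\p\Om_t$, where $v$ \emph{is} $C^{1,\sigma}$. For $\delta\neq0$ set $D_\delta(x,t):=\delta^{-1}(\g(x+\delta,t)-\g(x,t))$. Writing $v(\g(x+\delta,t),t)-v(\g(x,t),t)$ as the arc-length integral of $\p_sv$ along the boundary arc joining the two points, and combining the $C^{0,\sigma}$ estimate for $\p_sv$ from Lemma~\ref{lemma:estimate_v_on_boundary} with the chord--arc comparison~\eqref{eq:arc_length_f}, I expect
\[
\p_tD_\delta(x,t)=|D_\delta(x,t)|\,(\p_sv)(\g(x,t),t)+E_\delta(x,t),\qquad |E_\delta|_{L^\infty}\to0 \ \text{ as }\ \delta\to0,
\]
together with $D_\delta(\cdot,0)\to\g_0'$ in $C^0(\TT)$ (here $\g_0\in W^{2,p}\subset C^1$). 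The nonlinearity $D\mapsto|D|\,(\p_sv)(\g(\cdot,t),t)$ is globally Lipschitz in $D$ with constant $|\p_sv|_{L^\infty}$, so a Gronwall comparison of $D_\delta$ and $D_{\delta'}$ shows $\{D_\delta\}$ is Cauchy in $C([0,T];C^0(\TT))$; its limit $w:=\p_x\g$ exists, whence $\g\in C([0,T];C^1(\TT))$, and the bi-Lipschitz lower bound gives $|\p_x\g|=|w|\ge e^{-CT}|\g_0'|>0$.

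It then remains to recover the H\"older regularity and continuity in $t$. The limit satisfies $\p_tw=|w|\,h$ with $h(\cdot,t):=(\p_sv)(\g(\cdot,t),t)$ and $w(\cdot,0)=\g_0'$. Since $\p_sv\in C^{0,\sigma}(\p\Om_t)$ with time-independent norm (Lemma~\ref{lemma:estimate_v_on_boundary}) and $\g(\cdot,t)$ is uniformly Lipschitz, $h(\cdot,t)\in C^{0,\sigma}(\TT)$ uniformly, and $\g_0'\in C^{0,1-1/p}\subset C^{0,\sigma}$. As $a\mapsto|a|$ does not increase a H\"older seminorm, the integral identity $w(x,t)=\g_0'(x)+\int_0^t|w(x,s)|h(x,s)\,ds$ and Gronwall applied to $t\mapsto[w(\cdot,t)]_{C^{0,\sigma}}$ give $\sup_{[0,T]}|\g(\cdot,t)|_{C^{1,\sigma}}<\infty$; the same identity, with the continuity of $t\mapsto\p\Om_t$ (a byproduct of the $C^1$-in-$t$ continuity just obtained) hence of $t\mapsto h$, then yields $t\mapsto w(\cdot,t)$ continuous into $C^{0,\sigma}(\TT)$, i.e.\ $\g\in C([0,T];C^{1,\sigma}(\TT))$. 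I expect the differentiability step to be the main obstacle: the missing full gradient of $v$ forces one to close the self-referential ODE $\p_tw=|w|h$ for $w=\p_x\g$, and it is precisely the intrinsic regularity $\p_sv\in C^{0,\sigma}$ and the uniform bound $|w|\ge c>0$ that make its right-hand side manageable.
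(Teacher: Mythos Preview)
Your proposal is correct and follows the same overall strategy as the paper: build $\gamma$ pointwise from the trajectories of Lemma~\ref{lemma:unique_path_on_boundary}, obtain bi-Lipschitz control in $x$ via Gronwall and the Lipschitz bound on $v$ along $\p\Om_t$, then upgrade to $C^{1,\sigma}$ using $\p_sv\in C^{0,\sigma}$ from Lemma~\ref{lemma:estimate_v_on_boundary}, and read off continuity in $t$ from the integral identity.

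The one place where your route differs is how differentiability in $x$ is established. You show the difference quotients $D_\delta$ are Cauchy in $C([0,T];C^0(\TT))$ by closing a Gronwall argument on $\p_tD_\delta=|D_\delta|\,h+E_\delta$, and then run a second Gronwall directly on $[\p_x\gamma]_{C^{0,\sigma}}$. The paper instead passes through Rademacher: from Lipschitzness it takes the a.e.\ derivative and the metric $g=|\p_x\gamma|$, justifies $\p_x\int_0^t v(\gamma(x,t'),t')\,dt'=\int_0^t(\p_sv)\,g\,dt'$ by a distributional/Fubini argument (carefully tracking the null sets $\mathcal{N}_t$), and then runs Gronwall on $|g(x,t)-g(y,t)|$ pointwise. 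With $g\in C^{0,\sigma}$ in hand, it recovers $\gamma\in C^{1,\sigma}$ by composing the intrinsic $C^{1,1-1/p}$ arc-length parametrization of $\p\Om_t$ with the $C^{1,\sigma}$ diffeomorphism $x\mapsto\ell(x)=\int_0^xg$. Your approach is a bit more elementary and sidesteps the null-set bookkeeping; the paper's separation of the regularity of $g$ from the intrinsic regularity of $\p\Om_t$ is the modular piece it later reuses (with the sharper Lemma~\ref{lemma:estimate_v_Cbeta}) to get the improved $C^{1,1}$ flow in Theorem~\ref{thm:flow_existence_Cbeta}.
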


\begin{proof}[Proof of Theorem \ref{thm:flow_existence}]
\hfill

We define the parameterization $\g  $ as follows. For each $x  \in \TT$ define $\g ( x  ,t)  $ to be the unique $C^1$ solution of \eqref{eq:trajectory} with the initial data $ \g_0(
 x ) \in \p \Omega_0 $. Then $\g( x , t) $ is well-defined by Lemma~\ref{lemma:unique_path_on_boundary}. Now let us show that $ x  \mapsto \g( x , t)$ is Lipschitz in $x$.

\noindent
\textbf{\underline{Step 1: Lipschitz regularity of the flow}}

We first show that $x \to \gamma(x, t)$ is Lipschitz for each $t\in [0,T]$. Let $\Delta_\delta$ be the difference operator in $x$ with spacing $\delta>0$.
\begin{align*}
\Delta_\delta \gamma(x, t) = \Delta_\delta  \gamma_0( x ) + \int_0^t \Delta_\delta  v(\gamma(x, t'),t'  ) \, dt' .
\end{align*}
By the fundamental theorem of calculus and Lemma \ref{lemma:estimate_v_on_boundary},
\begin{align*}
\big|  \Delta_\delta \gamma(x, t)  \big| \leq C \delta  + \int_0^t     \big|  \p_s   v\big|_{L^\infty } \dist_{\p \Omega(t')}(\gamma(x +\delta, t'),\gamma(x, t') ) \, dt' .
\end{align*}
Since $ \Omega_t$ is a $W^{2,p}$ $\alpha$-patch, we have   $ \dist_{\p \Omega(t')}(\gamma(x +\delta, t'),\gamma(x, t') ) \leq C(\Omega,\alpha) | \Delta_\delta \gamma(x, t') |$. It follows that
\begin{align*}
\big|  \Delta_\delta \gamma(x, t)  \big| \leq C \delta  + C(\Omega,\alpha) \int_0^t   | \Delta_\delta \gamma(x, t') | \, dt' .
\end{align*}
Then the Gronwall inequality implies that
$$
\big|  \Delta_\delta \gamma(x, t)  \big| \leq C \delta \quad \text{for all $ t\in[0,T]$} .
$$
Thus $x \mapsto \gamma(x,t)$ is Lipschitz for each fixed $t \in [0,T]$. Moreover, repeating the argument above shows that $ c \leq |\p_x {\g}(x )| \leq C$ for a.e. $x \in\TT$ at each $t \in [0,T]$.

\textbf{\underline{Step 2: Higher regularity of the flow}}

Next, we show the higher-order H\"older regularity. This is expected as the velocity satisfies $\p_s v \in C^\sigma$ in the arc-length. However, we need to be careful about the differentiability, as the map $x \mapsto \g(x,t)$ has only been shown to be Lipschitz for each fixed $t$ so far.

For each $t \in [0,T]$ let $\mathcal{N}_t \subset \TT $ be a null set such that $ x \mapsto \g(x,t)$ is differentiable  on $ x \in \TT\setminus \mathcal{N}_t $. Let us consider the (measurable and bounded) metric
\begin{equation}\label{eq:aux_g_for_lip_gamma}
g( x ,t )=
\begin{cases}
|\p_x \gamma ( x ,t ) | \quad \text{ if $  x \not\in \mathcal{N}_t $ } & \\
0 \quad \text{otherwise.} &
\end{cases}
\end{equation}
Let $\overline{v}(s,t)$ be the velocity in the arc-length $ s= \ell( x ,t) = \int_0^x   g( y,t )\, d  y   $, namely $ \overline{v}(\ell( x ,t ) ,t): = v(\g(x,t ), t )$.

Since $\g(x,t)$ is defined point-wise, by trajectories of the flow equation $\p_t \g (x, t) = v (\g (x,t) ,t)$, we have
\begin{equation}\label{eq:aux_g_integral}
\g (x,t) = \g_0(x) + \int_0^t \overline{v} (\ell(x, t'), t') \, dt' \qquad \text{for all $(x,t) \in \TT \times [0,T]$}.
\end{equation}
 We claim that for each fixed $t \in [0,T]$, the function $x \mapsto \int_0^t \overline{v} (\ell(x, t'), t') \, dt'$ is $a.e.$ differentiable on $\TT$ and
\begin{equation}\label{eq:aux_g_integral_derivative}
\p_x \int_0^t \overline{v} (\ell(x, t'), t') \, dt' = \int_0^t \p_s \overline{v} (\ell(x, t'), t') g(x,t') \, dt'.
\end{equation}
To show \eqref{eq:aux_g_integral_derivative}, consider for any smooth $\varphi \in C^\infty(\TT)$ the integral
$$
\int_{\TT}  \p_x \varphi(x)  \int_0^t \overline{v} (\ell(x, t'), t') \, dt' \, dx
$$
By Fubini, the regularity of $\overline{v} $, and Lipschitzness of $\ell$, we may integrate by parts in the $x$-variable and hence conclude that \eqref{eq:aux_g_integral_derivative} holds $a.e.$ on $\TT$.

From \eqref{eq:aux_g_integral} and \eqref{eq:aux_g_integral_derivative}, by redefining $ \mathcal{N}_t$ if necessary, it follows that for each   $ t\in [0,T]$ and all $  x\in \TT \setminus \mathcal{N}_t$
\begin{equation}\label{eq:aux_g'_integral}
\begin{aligned}
 \p_x \g (x, t ) & = \p_x \g_0  ( x) + \int_0^t \p_s \overline{ v } (\ell( x , t') , t')  g(x, t' )\, dt' .
\end{aligned}
\end{equation}
We first show that $\g \in C^{1,\sigma}(\TT)$ uniformly in time. We claim that it suffices to show that
\begin{equation}\label{eq:aux_g_holder_claim}%\tag{$\dagger$}
\text{for all}\,\,  x,y \in \TT \setminus \mathcal{N}_t \,\, \text{ there holds} \,\, |g(x ,t ) - g(y ,t )| \leq C| x -y|^\sigma  .
\end{equation}
Indeed, this claim would imply that the maps $x \mapsto \ell( x,t)$ are $C^{1,\sigma}$ diffeomorphisms from $ \TT$ to $ \frac{L \TT}{2\pi}  $. Since $s \mapsto \gamma( \ell^{-1}(s,t ),t )$ is an arc-length parametrization of $\p \Omega$ which is $W^{2,p} \subset C^{1,1-\frac{1}{p}}$  and $\sigma = 1 - \frac{1}{p} -2\alpha < 1- \frac{1}{p}$, we can conclude that $x \mapsto \g (x ,t)$ is $C^{1,\sigma}$ in this case.

The task then reduces to showing the claim \eqref{eq:aux_g_holder_claim}. By \eqref{eq:aux_g_for_lip_gamma}, \eqref{eq:aux_g'_integral}, and the triangle inequality, for any fixed $t$ and $x,y \in \TT\setminus \mathcal{N}_t$,  we obtain that
\begin{align*}
|g(x ,t ) - g(y ,t )| & \leq  | \g_0' ( x)- \g_0' ( y ) |\\
& \quad + \int_0^t | \p_s \overline{ v } (\ell( x , t') , t') -\p_s \overline{ v } (\ell( y, t') , t')  |  g(x, t' )  \, dt' \\
& \qquad + \int_0^t  | \p_s \overline{ v } (\ell( y , t') , t')    | | g(x, t' ) -g( y , t' )  |\, dt'.	
\end{align*}
Since the initial data $\g_0 \in W^{2,p}(\TT) \subset C^{1 , 1 - \frac{1}{p}}$, it follows that for any $x,y \in \TT\setminus \mathcal{N}_t$,
\begin{align*}
|g(x ,t ) - g(y ,t )|  & \lesssim |x-y|^\sigma  +   C\int_0^t | \ell( x , t')   - \ell( y, t')   |^{\sigma}   \, dt' \\
& \qquad + C \int_0^t   | g(x, t' ) -g( y , t' )  |\, dt' \\
& \lesssim |x-y|^\sigma    + C \int_0^t   | g(x, t' ) -g( y , t' )  |\, dt' .
\end{align*}
It follows from the Gronwall inequality that for any fixed $t$ and $x,y \in \TT \setminus \mathcal{N}_t $, we have $ |g(x ,t ) - g(y ,t )|    \lesssim |x-y|^\sigma $. This proves the claim and hence the uniform $C^{1,\sigma}$ regularity of $ \g$.

Finally, let us show $t\mapsto \g(\cdot,t)$ provides a continuous mapping from $[0,T]$ to $C^{1,\sigma}(\TT)$. By the integral equation \eqref{eq:aux_g_integral}, we have
$$
| \g(\cdot, t_1) - \g(\cdot, t_0) |_{C^{1,\sigma}} \leq |t_1 -t_0 | \sup | v (\g(\cdot, t ) ) |_{C^{1,\sigma}} .
$$
Therefore, the continuity in time $\g \in C( [0,T]; C^{1,\sigma}(\TT)) $ follows directly from the regularity $ \g \in C^{1,\sigma}$ and $\p_s  v \in C^{\sigma }$.

\end{proof}

Note that we have proven the flow of a given $W^{2,p}$ $\alpha$-patch is unique, but we do not prove the $W^{2,p}$ patch itself (in the sense of Definition \ref{def:asqg_patch}) is unique.

%%%%%%%%%%%%%%%%%%%%%%%%%%%%%%%%%%%%%%%%%%%%%%%%%%%%%%%
\section{The curvature flow of \texorpdfstring{$\alpha$}{a}-patches}\label{sec:curvature_flow}
%%%%%%%%%%%%%%%%%%%%%%%%%%%%%%%%%%%%%%%%%%%%%%%%%%%%%%%

In this section, we study the curvature dynamics of the $\alpha$-patches. These equations will be understood in a distributional sense using the flow that we obtained in the previous section.

 To rigorously derive the curvature dynamics, we organize this section in the following order:

\begin{enumerate}
\item The $C([0,T];C^{1,\sigma}(\TT))$ flow allows us to derive classically the evolution of arc-length, tangent, and angle.

\item Then we adopt a distributional formulation of the curvature equation \eqref{eq:curvature_eq} based on the quantities arc-length and tangent, both of which are function-valued.

\item Next, we show that the curvature of a given $W^{2,p}$ $\alpha$-patch, under the parameterization of the flow, satisfies the distributional formulation when $ p> \frac{1}{1 -2\alpha }$.

\item Lastly, we show that the   leading order dynamics of the curvature flow has certain dispersive characteristics, which will then be studied in detail in the next two sections.
This leading term is identified in \eqref{eq:aux_curvature_kernal_4} below. 

\end{enumerate}

\subsection{Difficulties}\label{subsec:curvature_idea}

Let us first explain the issues that necessitated the use of a distributional curvature dynamics.

In the smooth setting, when the parameterization $\g: \TT \times [0,T] \to \RR^2$ and the velocity field are smooth, the (Lagrangian) curvature equation reads
\begin{equation}\label{eq:curvature_eq}
\p_t \k = - \k \p_s  v \cdot \T -\p_s( \p_s v \cdot \N )
\end{equation}
(see, e.g. \cite{c2euler}). From our previous analysis of the Euler patch case~\cite{c2euler}, the main term in \eqref{eq:curvature_eq} should be the last term on the right-hand side (see \eqref{eq:aux_curvature_kernal_4} below). However, when dealing with \eqref{eq:curvature_eq} in the $\alpha$-patch problem, we face two substantial problems compared to the smooth setting or even the Euler case:
\begin{enumerate}
\item the flow $\g$ is less regular, only $C^{1,\sigma}(\TT) $, not even twice differentiable; and
\item the velocity field does not allow us to differentiate $\p_s v \cdot \N$, cf. Lemma \ref{lemma:estimate_v_on_boundary}.
\end{enumerate}

The first problem can be overcome by the strategy explained in Section \ref{subsec:rough_gamma_regular_Omega} by using the $C^{1,\sigma} $ diffeomorphism induced by the flow $\g$ and the intrinsic regularity of $\p \Omega_t$.

However, the second problem is more severe and one needs to make sense of \eqref{eq:curvature_eq} rigorously in the $\alpha$-patch dynamics.  Our strategy to overcome this problem is to use the classical idea of distributional formulation. One has to be careful, as the differentiation in \eqref{eq:curvature_eq} is in the arc-length variable.

\subsection{The evolution of arc-length, tangent, and angle of the flow} \label{subsec:curvature_flow_1}

Since the flow $\g$ is $C([0,T];C^{1,\sigma}(\TT))$, by Lemma \ref{lemma:geo_quant_rough_gamma} the arc-length metric $g= |\p_x \g| :\TT \times [0,T] \to \RR^+ $,  tangent vector $\T: \TT \times [0,T] \to \RR^2$, and normal vector $\N: \TT \times [0,T] \to \RR^2$ are well-defined in this parameterization.
In what follows, these letters always refer to the quantities according to a given flow of a $W^{2,p}$ $\alpha$-patch.

We now derive evolution equations for $g$ and $\T$, necessary for defining the curvature dynamics.

By the integral form of the flow equation
\begin{equation}\label{eq:aux_int_flow}
\g(x,t) = \g_0 (x) + \int_0^t v( \g(x,t') , t') \,dt'
\end{equation}
and the velocity field regularity $ \p_s v \in C^{ \sigma}(\TT)$, we can differentiate \eqref{eq:aux_int_flow} to find that $ \p_x \g \in C([0,T];C^{ \sigma}(\TT)) \cap C(\TT;C^{1}([0,T] ) ) $  satisfies  the equation
\begin{equation}\label{eq:aux_eq_d_gamma}
\p_t (\p_x \g) = \p_s v g.
\end{equation}

Since $ g \T = \p_x \g $ and $g$ is uniformly bounded from below, from \eqref{eq:aux_eq_d_gamma} it follows that the metric $g \in C([0,T];C^{ \sigma}(\TT))$ satisfies the equation
\begin{equation}\label{eq:aux_eq_g}
\begin{cases}
\p_t g = g \p_s v \cdot \T   &\\
g |_{t =0} =  |\p_x \g_0|  &
\end{cases}
\end{equation}
By the identity $ g \T = \p_x \g $, \eqref{eq:aux_eq_d_gamma}, and \eqref{eq:aux_eq_g} again, we have the evolution for the unit tangent vector $\T \in C([0,T];C^{ \sigma}(\TT))$,
\begin{equation}\label{eq:aux_eq_T}
\begin{cases}
	\p_t \T = (\p_s v \cdot \N) \N &\\
\T |_{t =0} = |\g_0|^{-1} \p_x \g_0. &
\end{cases}
\end{equation}

We remark that these equations \eqref{eq:aux_eq_d_gamma}, \eqref{eq:aux_eq_g}, and \eqref{eq:aux_eq_T} are satisfied classically.

In what follows, we need to consider the angle $\theta\in [-\pi,\pi ]$ between the tangent vector $\T$ and the $x_1$-axis, define via
\begin{equation}\label{eq:def_angle_theta}
\T (x, t) =( \cos ( \theta(x,t ))  , \sin ( \theta(x,t )) ) .
\end{equation}

In general, $ \theta $ is not a continuous function on $\TT$, however we still have that $x \mapsto \theta(x,t) - x $ is $ C^{ \sigma}(\TT) $. In particular,  since the flow $\g$ parameterizes $\p \Om$ that is $ W^{2,p} $, by Lemma \ref{lemma:geo_quant_rough_gamma} and \eqref{eq:def_angle_theta}, the curvature $\k(x,t)$ of $\p \Om_t$ at $ \g(x,t)$ satisfies
\begin{equation}\label{eq:theta_kappa}
\p_s \theta (x,t )=\k  (x,t ) \quad \text{for any $ (x, t)\in (-\pi , \pi ) \times [0,T ]$}
\end{equation}
and by \eqref{eq:aux_eq_T} the angle itself satisfies the evolution equation
\begin{equation}\label{eq:aux_theta}
\begin{cases}
\p_t \theta =  -\p_s v \cdot \N &\\
\theta |_{t =0} =  \theta_0 .  &
\end{cases}
\end{equation}

\subsection{Definition of curvature flow for \texorpdfstring{$\alpha$}{a}-patches}

We are now ready to consider the curvature dynamics of  $\alpha$-patches.

Given a time-dependent metric $g : \TT \times [0,T] \to (0,\infty)$, for any $ f : \TT \to \RR$ recall its arc-length derivative $\p_s f (x ): = \frac{1}{g} f'(x ) $ and its integration with $ds (x) : = g dx $ is $ \int_{\TT} f \,ds = \int_{\TT} f  g \,dx$.

In what follows, the letter $g$ is always referring to the metric given by the flow $\g$ of an $\alpha$-patch $\Omega_t$, namely
$$
g : = |\g'|.
$$

As a standard practice in PDE, we derive a suitable distributional formulation of the curvature equation \eqref{eq:curvature_eq} in the non-smooth setting.

\begin{definition}\label{def:kappa_flow}
Let $0<\alpha < \frac{1}{2}$. Let $\Omega_t$ be a $W^{2,p}$ $\alpha$-patch. Let $\g \in C([0,T];C^{1,\sigma}(\TT)) $ be a flow of $ \p \Omega_t$ for some $\sigma>0$ with the initial data $\g_0$.

We say $\k: \TT\times [0,T] \to \RR $ is a curvature flow (of $\g$) if
\begin{itemize}
\item   $\k(\cdot , t )   \in L^p(\TT) $ denotes the curvature of $\p \Omega_t$ at $ \g(\cdot , t )$ for all $t \in [0,T]$;

\item $\k$ satisfies
\begin{equation}\label{eq:def:kappa_flow}
 \int_0^T \int_{\TT} \k \p_t \phi \,g dx dt  = -\int_0^T \int_{\TT } \p_s v\cdot \N \p_s \phi g\, dx  dt
\end{equation}
for any $\phi \in C^\infty(\TT \times \RR)$ supported in $\TT \times (0,T)$.
\end{itemize}

\end{definition}
In the remaining part of Section \ref{sec:curvature_flow}, one can always take $\sigma = 1-\frac1p-2\alpha,$ since this is the regularity of the flow corresponding to $W^{2,p}$ patch that we proved in Section \ref{sec:Eu_to_Lagran}.

We will use a more convenient version of the weak formulation. A standard argument that we sketch below  gives
\begin{lemma}\label{lemma:kappa_better_formulation}
Let $\Omega_t$ be a $W^{2,p}$ $\alpha$-patch and $\g \in C([0,T]; C^{1,\sigma} (\TT) ) $ be a flow of $ \p \Omega_t$ for some $\sigma>0$.

If $\k$ is a curvature flow of $\g$ then $\k \in L^\infty([0,T] ; L^p(\TT))$  and $\k$   is continuous as a distribution, i.e.
\begin{equation}\label{eq:kappa_weak_continuty}
t\mapsto \int_{\TT} \k(x,t) \varphi(x,t) \,dx \quad \text{is continuous for every $\varphi \in  C^\infty(\TT \times \RR))$}
\end{equation}
and also
\begin{equation}\label{eq:kappa_better_formulation}
\begin{aligned}
\int_0^T \int_{\TT} \k \p_t \phi g\, dx dt  + \int_0^T \int_{\TT }     \p_s v\cdot \N \p_s \phi g\, dx  dt\\
 =    \int_{\TT} \k(x, T) \phi(x, T)g(x,T)\, dx -\int_{\TT} \k(x, 0) \phi(x,0)g(x,0)\, dx
\end{aligned}
\end{equation}
for all test functions $\phi \in C^\infty(\TT \times \RR).$
\end{lemma}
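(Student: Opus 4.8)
The plan is to promote the distributional identity \eqref{eq:def:kappa_flow} to the endpoint-inclusive identity \eqref{eq:kappa_better_formulation} by a standard mollification-in-time argument, after first establishing the a priori regularity $\k \in L^\infty([0,T];L^p(\TT))$ and the weak-in-time continuity \eqref{eq:kappa_weak_continuty}. For the first point I would observe that by Definition \ref{def:kappa_flow} the function $\k(\cdot,t)$ \emph{is} the curvature of $\p\Om_t$ at the point $\g(\cdot,t)$; since $\Om_t$ is a $W^{2,p}$ $\alpha$-patch, $\sup_{t}\|\Om_t\|_{W^{2,p}}<\infty$, and since the flow $\g(\cdot,t)\in C^{1,\sigma}(\TT)$ is a (non-arc-length) parametrization of $\p\Om_t$, Lemma \ref{lemma:geo_quant_rough_gamma} gives $\k(\cdot,t)\in L^p(\TT)$ with a norm bound depending only on $\|\Om_t\|_{W^{2,p}}$ and on the ellipticity constants $0<c\le g(\cdot,t)\le C$ of the flow metric established in Step 1 of the proof of Theorem \ref{thm:flow_existence}. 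Uniformity in $t$ then follows, so $\k\in L^\infty([0,T];L^p(\TT))$. Likewise $g\in C([0,T];C^\sigma(\TT))$ is bounded above and below, and $\p_s v\cdot\N\in C([0,T];C^\sigma(\TT))$ is uniformly bounded by Lemma \ref{lemma:estimate_v_on_boundary}, so every integrand appearing in \eqref{eq:def:kappa_flow} and \eqref{eq:kappa_better_formulation} is an honest integrable function.

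Next I would test \eqref{eq:def:kappa_flow} with $\phi(x,t)=\psi(x)\chi(t)$ for $\psi\in C^\infty(\TT)$ and $\chi\in C_c^\infty((0,T))$, which rewrites it as
\begin{equation}
\int_0^T \chi'(t) F_\psi(t)\,dt = -\int_0^T \chi(t) G_\psi(t)\,dt,
\end{equation}
where $F_\psi(t)=\int_\TT \k(x,t)\psi(x)g(x,t)\,dx$ and $G_\psi(t)=\int_\TT (\p_s v\cdot\N)(x,t)\psi'(x)/g(x,t)\cdot g(x,t)\,dx=\int_\TT(\p_s v\cdot\N)\,\p_s\psi\,g\,dx$. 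This says $F_\psi$ has weak derivative $G_\psi\in L^\infty(0,T)$, hence $F_\psi$ agrees a.e.\ with an absolutely continuous (in particular continuous) function of $t$, and $F_\psi(t_1)-F_\psi(t_0)=\int_{t_0}^{t_1}G_\psi(t)\,dt$ for all $t_0,t_1$. Since $\|g(\cdot,t)-g(\cdot,t_0)\|_{C^\sigma}\to0$ and $\k\in L^\infty_tL^p_x$, one upgrades weak continuity of $t\mapsto\int_\TT\k\psi g\,dx$ to weak continuity of $t\mapsto\int_\TT\k(x,t)\psi(x)\,dx$, and then by density of products $\psi(x)\chi(t)$ and the uniform bounds, to \eqref{eq:kappa_weak_continuty} for all $\varphi\in C^\infty(\TT\times\RR)$; this also pins down a canonical (weakly continuous) representative of $\k$ so that $\k(\cdot,0)$ and $\k(\cdot,T)$ in \eqref{eq:kappa_better_formulation} are meaningful, with $\k(\cdot,0)$ equal to the curvature of $\p\Om_0$.

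To obtain \eqref{eq:kappa_better_formulation} for a general $\phi\in C^\infty(\TT\times\RR)$, I would fix small $\tau>0$ and multiply $\phi$ by a cutoff $\eta_\tau(t)$ that is $1$ on $[\tau,T-\tau]$, vanishes near $0$ and $T$, and has $|\eta_\tau'|\lesssim\tau^{-1}$; then $\phi\eta_\tau$ is an admissible test function in \eqref{eq:def:kappa_flow}, producing an extra term $\int_0^T\int_\TT \k\,\phi\,\eta_\tau'\,g\,dx\,dt$ which, by the absolute-continuity statement above applied with $\psi=\phi(\cdot,t)$ frozen (more precisely, by Fubini and the identity $\int_0^T F_{\phi(\cdot,t)}\eta_\tau'\,dt=-\int_0^T\eta_\tau\partial_t F_{\phi(\cdot,t)}$ handled carefully), converges as $\tau\to0$ to $\int_\TT\k(x,T)\phi(x,T)g(x,T)\,dx-\int_\TT\k(x,0)\phi(x,0)g(x,0)\,dx$ using \eqref{eq:kappa_weak_continuty}; the remaining terms converge to those in \eqref{eq:kappa_better_formulation} by dominated convergence since all integrands are in $L^1(\TT\times[0,T])$. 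The only mild subtlety — and the step I expect to require the most care — is the commutation of the $t$-derivative with the $x$-dependence of $\phi$ when passing the cutoff $\eta_\tau$ through: one should first prove the scalar statement ``$F_\psi$ is absolutely continuous with $F_\psi'=G_\psi$'' for \emph{fixed} $\psi$, then justify that $t\mapsto F_{\phi(\cdot,t)}(t)$ is absolutely continuous with derivative $\partial_t F_{\phi(\cdot,t)}(t)+\int_\TT\k\,\partial_t\phi\,g\,dx$ by a standard difference-quotient argument using the uniform bounds on $g,\k,\p_s v\cdot\N$; everything else is routine once the integrands are known to be genuine $L^1$ functions.
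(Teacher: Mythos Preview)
Your overall strategy---multiply by a temporal cutoff $\eta_\tau$ supported in $(0,T)$, apply \eqref{eq:def:kappa_flow}, and pass $\tau\to0$---is exactly what the paper does. The difference lies in how you establish the weak-in-time continuity \eqref{eq:kappa_weak_continuty}, and here your argument has a small but genuine gap.

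You deduce from \eqref{eq:def:kappa_flow} that $F_\psi(t)=\int_\TT\k\psi g\,dx$ has weak derivative $G_\psi\in L^\infty$, hence agrees \emph{a.e.} with an absolutely continuous function. But by Definition~\ref{def:kappa_flow}, $\k(\cdot,t)$ is the actual curvature of $\p\Om_t$ at $\g(\cdot,t)$ for \emph{every} $t\in[0,T]$; the endpoint values $\k(\cdot,0)$, $\k(\cdot,T)$ in \eqref{eq:kappa_better_formulation} are these concrete curvatures, not the values of an abstract continuous representative. Your argument does not show that the pointwise-defined $F_\psi$ coincides with its continuous representative at $t=0$ and $t=T$ (or at every $t$, which is what \eqref{eq:kappa_weak_continuty} asserts). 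Saying ``this pins down a canonical representative with $\k(\cdot,0)$ equal to the curvature of $\p\Om_0$'' is precisely the claim that needs proof.

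The paper closes this gap by a direct route that bypasses the weak formulation entirely: since $\p_x\theta=g\k$ (with $\theta$ the tangent angle), one has $\int_\TT\k\varphi g\,dx=-\int_\TT\theta\,\p_x\varphi\,dx$, and $\theta\in C([0,T];C^\sigma(\TT))$ follows from $\g\in C([0,T];C^{1,\sigma}(\TT))$. This gives genuine continuity of $t\mapsto\int_\TT\k\varphi g\,dx$ at every $t$, and hence (dividing out the time-continuous positive metric $g$) the continuity \eqref{eq:kappa_weak_continuty}. Once that is in hand, your cutoff argument for \eqref{eq:kappa_better_formulation} goes through exactly as in the paper.
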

\begin{proof}

To show \eqref{eq:kappa_weak_continuty}, it suffices to show the continuity for
\begin{equation}\label{eq:aux_kappa_weak_continuty}
t\mapsto \int_{\TT} \k(x,t) \varphi(x,t) g(x,t) \,dx  .
\end{equation}
Indeed, \eqref{eq:aux_kappa_weak_continuty}, the time-continuity and positivity of $g$ will then imply the continuity of $ \int_{\TT} \k(x,t) \varphi(x,t)   \,dx $ for all smooth $\varphi.$

To show \eqref{eq:aux_kappa_weak_continuty}, we observe that the right-hand side of \eqref{eq:aux_kappa_weak_continuty}   is $ \int   \p_x \theta    \varphi  \,dx  $.
We can integrate by parts and obtain
\begin{equation}\label{eq:aux_kappa_weak_continuty_2}
\int_{\TT} \k(x,t) \varphi(x,t) g(x,t) \,dx  = -\int_{\TT}    \theta  (x,t)   \p_x \varphi(x,t)  \,dx .
\end{equation}

Since the flow $\g \in C([0,T]; C^{1,\sigma} (\TT) ) $,   we have $\theta \in  C([0,T]; C^{ \sigma} (\TT) )$ and hence the continuity of \eqref{eq:aux_kappa_weak_continuty_2} follows.

Next, we prove \eqref{eq:kappa_better_formulation}. For $\ep>0 $, we consider a sequence of cutoffs $h_\ep  \in C^\infty_c((0,T))$ such that $h_\ep=1$ on $[\ep,T- \ep ]$ and   $|h'(t) | \leq C\ep$. We use $h_\ep(t) \phi(x,t)$ as a test function in the weak formulation of $\k$ to obtain

\begin{equation}\label{eq:aux_weak_kappa_1}
\int_0^T \int_{\TT} h_\ep \k \p_t \phi g\, dx dt  + \int_0^T \int_{\TT } h_\ep  \p_s v\cdot \N \p_s \phi g\, dx  dt = -\int_0^T  \int_{\TT} \k  \phi \p_t h_\ep g\, dx dt.
\end{equation}

Since $h_\ep$ converges to $1$ a.e. on $[0,T]$, by the dominated convergence the left-hand side of \eqref{eq:aux_weak_kappa_1} converge to that of \eqref{eq:kappa_better_formulation} as $\ep \to 0^+$.

We now focus on the right-hand side of \eqref{eq:aux_weak_kappa_1}. Let us denote $\k \phi g\in L^\infty([0,T];L^p(\TT)) \cap C ([0,T]; \mathcal{D}'(\TT))$ by $f;$ then it suffices to show that
\begin{align}\label{eq:aux_time_continuity}
\int_{0}^{\ep} \int_{\TT} f(x,t) \p_t h_\ep(t) \,dx dt \to  \int_{\TT} f(\cdot,0) \,dx.
\end{align}
and a similar statement for an integral near $T$.

To show \eqref{eq:aux_time_continuity}, we first use $\int_{0}^{\ep} \p_t h =1$ to obtain
\begin{align*}
   \int_{ 0 }^{\ep} \int_{\TT} f(x,t) \p_t h_\ep(t) \,dx dt&  = \int_{0 }^{\ep} \int_{\TT} \big(f(x,t)-f(x,0) \big) \p_t h_\ep(t) \,dx dt \\
&\qquad +  \int_{\TT}  f(x,0)     \,dx .
\end{align*}
Then by H\"older's inequality, we have
\begin{align*}
  &   \Big| \int_{ 0 }^{\ep} \int_{\TT}  ( f(x,t) -  f(x,0) )     \p_t h_\ep(t) \,dx dt\Big|  \\
& \leq  \sup_{t \in [0,\ep]} \left| \int_{\TT}  (f(x,t)-f(x,0))\,dx \right| |\p_t h_\ep    |_{L^1(\RR)}.
\end{align*}
Since $\k$ is continuous as a distribution, $\sup_{t \in [0,\ep]} \big| \int_{\TT} f(\cdot,t)-f(\cdot,0) \big|  \to 0 $  as $\ep \to 0^+$. Then \eqref{eq:aux_time_continuity} follows from the above bound and $ |\p_t h_\ep    |_{L^1(\RR)}  \leq C$.

Repeating the same argument we can also conclude that
$$
\int_{T-\ep }^{T} \int_{\TT} f(x,t) \p_t h_\ep(t) \,dx dt \to  \int_{\TT} f(\cdot,T ) \,dx \quad \text{as $\ep \to 0^+$.}
$$
Hence the right-hand side of \eqref{eq:aux_weak_kappa_1} also converges to that of \eqref{eq:kappa_better_formulation}.
\end{proof}

\subsection{Existence of curvature flow for \texorpdfstring{$\alpha$}{a}-patches}

 We now show the existence of a unique curvature flow for $W^{2,p}$ $\alpha$-patches with $ p> \frac{1}{1 -2\alpha }$.

\begin{proposition}\label{prop:kappa_existence}
Let $0 < \alpha < \frac{1}{2}$ and $ p> \frac{1}{1 -2\alpha }$. Suppose $ \g \in C([0,T];C^{1,\sigma} (\TT)) $ is the flow of  a $W^{2,p}$ $\alpha$-patch $\Omega_t$ with the initial data $\g_0$ given by Theorem \ref{thm:flow_existence}.

Then there is  a (unique) curvature flow $\k$  and it satisfies the regularity $\k \in L^\infty([0,T]; L^p(\TT)) \cap C_w([0,T],L^p(\TT)).$
\end{proposition}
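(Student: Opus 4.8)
The plan is to let $\k(x,t)$ be, by definition, the curvature of $\p\Om_t$ at the point $\g(x,t)$ --- this is forced by the first bullet of Definition~\ref{def:kappa_flow}, so uniqueness is automatic, and the content of the proposition is that this $\k$ satisfies the weak identity \eqref{eq:def:kappa_flow} with the stated regularity. The $L^\infty_t L^p_x$ bound comes essentially for free: by Lemma~\ref{lemma:geo_quant_rough_gamma} at each fixed $t$ one has $\k(\cdot,t)\in L^p(\TT)$ via the pullback $\k(x,t)=\ok(\ell(x,t),t)$, and since $\sup_{t}\|\Om_t\|_{W^{2,p}}<\infty$ while the metric $g=|\p_x\g|$ is bounded above and below uniformly in $t$ (Theorem~\ref{thm:flow_existence}), the change of variables gives $\sup_{t}\|\k(\cdot,t)\|_{L^p(\TT)}<\infty$.

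The heart of the proof is verifying \eqref{eq:def:kappa_flow}, and here I would pass through the angle $\theta$ of \eqref{eq:def_angle_theta}. Although $\theta$ is not globally real valued, $\psi:=\theta-x$ is: by Theorem~\ref{thm:flow_existence} one has $\psi\in C([0,T];C^\sigma(\TT))$; by \eqref{eq:theta_kappa} and Lemma~\ref{lemma:geo_quant_rough_gamma} the distributional derivative satisfies $\p_x\psi=g\k-1\in L^p(\TT)$, so $\psi(\cdot,t)\in W^{1,p}(\TT)$; and by \eqref{eq:aux_theta} the map $\psi$ is $C^1$ in $t$ with $\p_t\psi=-\p_s v\cdot\N$, which is continuous on $\TT\times[0,T]$ (see Section~\ref{subsec:curvature_flow_1}). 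Given $\phi\in C^\infty(\TT\times\RR)$ supported in $\TT\times(0,T)$, I would substitute $g\k=\p_x\theta=1+\p_x\psi$, integrate by parts in $x$ (valid since $\psi(\cdot,t)\in W^{1,p}$), then in $t$ (valid since $\psi$ is $C^1$ in $t$ and $\phi$ vanishes near $t=0,T$), and use $\p_x\phi=g\,\p_s\phi$ together with $\p_t\psi=-\p_s v\cdot\N$:
\begin{align*}
\int_0^T\int_{\TT}\k\,\p_t\phi\,g\,dx\,dt
&=\int_0^T\int_{\TT}(1+\p_x\psi)\,\p_t\phi\,dx\,dt
=-\int_0^T\int_{\TT}\psi\,\p_t\p_x\phi\,dx\,dt\\
&=\int_0^T\int_{\TT}\p_t\psi\,\p_x\phi\,dx\,dt
=-\int_0^T\int_{\TT}\p_s v\cdot\N\,\p_s\phi\,g\,dx\,dt,
\end{align*}
where the contribution of the constant $1$ drops out because $\int_0^T\p_t\phi(x,t)\,dt=0$. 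This is exactly \eqref{eq:def:kappa_flow}, so $\k$ is a curvature flow of $\g$.

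For the regularity $\k\in C_w([0,T],L^p(\TT))$: once $\k$ is known to be a curvature flow, Lemma~\ref{lemma:kappa_better_formulation} applies and gives boundedness in $L^\infty_tL^p_x$ together with continuity of $\k$ as a distribution; upgrading this to weak-$L^p$ continuity is then standard --- test against smooth $\varphi$ (for which the identity $\int_{\TT}\k\varphi\,g\,dx=\int_{\TT}\varphi\,dx-\int_{\TT}\psi\,\p_x\varphi\,dx$ together with $\psi\in C([0,T];C^\sigma(\TT))$ and the time-continuity and positivity of $g$ yield continuity of $t\mapsto\int_{\TT}\k\varphi\,dx$), then invoke density of $C^\infty(\TT)$ in $L^{p'}(\TT)$ with the uniform $L^p$ bound. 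I expect the only delicate point to be bookkeeping rather than any genuine estimate: handling the non-global angle $\theta$ cleanly through $\psi$, and checking that the regularity assembled in Sections~\ref{sec:Eu_to_Lagran}--\ref{subsec:curvature_flow_1} ($\p_s v\cdot\N$ continuous in $(x,t)$, and $\psi$ of class $W^{1,p}$ in $x$ and $C^1$ in $t$) really suffices to legitimize the two integrations by parts above.
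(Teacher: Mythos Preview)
Your proposal is correct and follows essentially the same route as the paper: both arguments pass through the tangent angle, use \eqref{eq:aux_theta} ($\p_t\theta=-\p_s v\cdot\N$), and reach \eqref{eq:def:kappa_flow} via one integration by parts in $x$ (converting $\theta$ to $\p_x\theta=g\k$) and one in $t$. Your use of $\psi=\theta-x$ to sidestep the non-periodicity of $\theta$ is slightly cleaner than the paper's version, which works directly with $\theta$ and relies implicitly on the boundary term $2\pi\,\p_t\phi(\pi,t)$ integrating to zero over $[0,T]$; you also spell out the $C_w([0,T],L^p)$ continuity via the density argument, which the paper's proof leaves unstated.
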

\begin{proof}
Recall that we have defined $\k (x ,t)$ to be the curvature of $ \p \Omega_t$ at $\g(x ,t)$. Alternatively, let $\overline{\g}(s,t) : = \g(\ell^{-1}(s), t ) $ where $\ell^{-1}$ is the inverse of the $C^{1,\sigma}$ diffeomorphism $ x \mapsto \ell(x) = \int_0^x g(y) \,dy $. Then we also have $\k (x ,t) :=  \overline{\k}(\ell^{-1}(x) , t) $.

By \eqref{eq:aux_theta},  for any $\phi \in C^\infty(\TT \times \RR)$ supported in $(0,T)$, we have
\begin{align}\label{eq:aux_kappa_existenc_1}
\int_0^T \int_{\TT} \theta  \p_t \varphi' \,dxdt = \int_0^T \int_{\TT}  \p_s v \cdot \N  \varphi' \,dx dt.
\end{align}
Since $ \theta(x ,t) = \overline{\theta}( \ell(x) , t)$ and $ \ell$ is a $C^{1,\sigma}$ diffeomorphism for each $t \in [0,T]$, we have $ \p_x \theta (x,t)  = g (x,t) \overline{\k}( \ell(x) , t)  = g(x,t) \k(x,t) $. Integrating by parts on  the left-hand side of \eqref{eq:aux_kappa_existenc_1},  we have
\begin{align}
\int_0^T \int_{\TT}   \k  \p_t \varphi g \,dxdt = - \int_0^T \int_{\TT}  \p_s v \cdot \N  \p_s \varphi g \,dx dt
\end{align}
which is exactly
\begin{align}
\int_0^T \int_{\TT}   \k  \p_t \varphi  \,ds \, dt = - \int_0^T \int_{\TT}  \p_s v \cdot \N  \p_s \varphi   \,ds \,dt .
\end{align}

\end{proof}

\subsection{The dispersive operator \texorpdfstring{$\mathcal{L}_\alpha$}{La}}

Throughout the paper, the dispersive operator $\mathcal{L}_\alpha$ is defined through Fourier multiplier,
\begin{equation}\label{eq:def_L_a}
\mathcal{L}_\alpha f (x) := \sum_{k\in \ZZ}i k |k|^{2\alpha - 1} \hat{f}_k e^{i k x} .
\end{equation}

The following lemma provides a characterization of the convolution kernel of $\mathcal{L}_\alpha$. Recall that \begin{itemize}
\item Functions on $\TT$ are considered as $2\pi$-periodic functions on $\RR$;

\item on $\RR$ the principle value $P.V. \int_{\RR} f $ is understood as
$$
\lim_{\substack{\ep \to 0^+ \\
N\to \infty}}  \int_{\ep \leq y \leq N }  f(y) \, dy,
$$
whenever the limit exists.
\end{itemize}

We now state the characterization lemma that will be useful for analyzing the dispersion in the curvature dynamics.

\begin{lemma}\label{lemma:local_formula_La}
Let $ 0 < \alpha < \frac{1}{2}$. For any $f \in   C^\infty (\TT)$,
\begin{equation}
\mathcal{L}_\alpha f(x) : = c_\alpha^{ -1}  P.V.  \int_{  \RR } \frac{ (x-y) f(y)}{|x-y|^{2+2\alpha}} \, dy
\end{equation}
where $c_\alpha>0$ is a universal constant.

\end{lemma}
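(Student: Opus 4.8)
The plan is to establish the kernel representation by first computing the Fourier coefficients of the candidate right-hand side and matching them to $ik|k|^{2\alpha-1}$, and then justifying convergence of the principal-value integral. Concretely, let $K(x) := \mathrm{P.V.}\,\int_{\RR} \frac{z}{|z|^{2+2\alpha}}\,\cdot$ be viewed as a convolution against the odd, homogeneous-of-degree-$-(1+2\alpha)$ kernel $z/|z|^{2+2\alpha}$ on $\RR$. Since $0<\alpha<\frac12$, this exponent lies in $(-2,-1)$, so the kernel is locally integrable away from the origin and has enough decay at infinity that, against a smooth periodic (hence bounded) function $f$, the tail $\int_{|z|\geq 1}$ converges absolutely, while near the origin the oddness of the kernel combined with $f(x-z)-f(x) = O(|z|)$ makes the principal value converge. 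This is exactly the standing convention for $\mathrm{P.V.}\int_\RR$ recalled just before the statement, so the first task is to check the integral in the display is well-defined for $f\in C^\infty(\TT)$ — a short argument splitting into $|z|\le 1$ and $|z|\ge 1$.

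The core computation: for $f(x) = e^{ikx}$ with $k\neq 0$, one has (formally, then justified by the convergence above)
\begin{equation*}
\mathrm{P.V.}\int_{\RR} \frac{(x-y)\,e^{iky}}{|x-y|^{2+2\alpha}}\,dy
= e^{ikx}\,\mathrm{P.V.}\int_{\RR} \frac{z\,e^{-ikz}}{|z|^{2+2\alpha}}\,dz,
\end{equation*}
and by the substitution $z\mapsto z/|k|$ together with the oddness of the integrand, the remaining integral equals $-i\,\mathrm{sign}(k)\,|k|^{2\alpha}\int_{\RR}\frac{z\sin z}{|z|^{2+2\alpha}}\,dz = -i\,\mathrm{sign}(k)\,|k|^{2\alpha}\cdot c$ for a universal constant
\begin{equation*}
c := \int_0^\infty \frac{2\sin z}{z^{1+2\alpha}}\,dz = 2\Gamma(-2\alpha)\cos(\pi\alpha)\cdot(-1)\quad(\text{up to sign}),
\end{equation*}
which is a finite nonzero number precisely because $1+2\alpha\in(1,2)$. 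Thus the $k$-th Fourier coefficient of the right-hand side (after multiplying by the appropriate normalization $c_\alpha^{-1}$) is $c_\alpha^{-1}\cdot(-ic)\,\mathrm{sign}(k)|k|^{2\alpha} = ik|k|^{2\alpha-1}$ once we set $c_\alpha := -c$ (fixing sign so that $c_\alpha>0$); the $k=0$ mode contributes zero by oddness, matching $\mathcal{L}_\alpha$ which annihilates constants. By linearity and density of trigonometric polynomials this identifies the two operators on $C^\infty(\TT)$.

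The main obstacle — and the only genuinely delicate point — is interchanging the periodic sum (the Fourier expansion of $f$) with the non-absolutely-convergent principal-value integral over all of $\RR$: one cannot naively move $\sum_k \hat f_k$ inside $\mathrm{P.V.}\int_\RR$. I would handle this by instead working directly with $f\in C^\infty(\TT)$ (not its Fourier series): write $\mathrm{P.V.}\int_\RR \frac{(x-y)f(y)}{|x-y|^{2+2\alpha}}dy = \lim_{\ep\to0,N\to\infty}\int_{\ep\le|z|\le N}\frac{z\,f(x-z)}{|z|^{2+2\alpha}}dz$, symmetrize in $z\mapsto -z$ to replace $f(x-z)$ by $\tfrac12(f(x-z)-f(x+z))$, which is $O(|z|)$ near $0$ uniformly, securing the inner limit; for the outer limit, periodicity lets one fold $\int_{|z|\ge 1}$ into a sum over periods and use $\sum_{n}\big(\text{kernel differences}\big)$ telescoping/decaying like $|n|^{-(2+2\alpha)}$ — this produces a bona fide convergent expression. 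Having the integral defined as an honest tempered-distribution pairing, its Fourier coefficients are computed by testing against $e^{ikx}$, which reduces to the single-frequency computation above via Fubini on the (now absolutely convergent after symmetrization) double integral. This is the "standard but slightly technical" step that the lemma's proof will spell out; everything else is the elementary $\Gamma$-function evaluation of $\int_0^\infty z^{-1-2\alpha}\sin z\,dz$.
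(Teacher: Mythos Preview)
Your proposal is correct and lands on the same single-frequency computation as the paper: shift $y\mapsto x-z$, use oddness to reduce to $\int_\RR z|z|^{-2-2\alpha}\sin(kz)\,dz$, and evaluate by homogeneity. The only real difference is in how you justify the interchange of the Fourier sum with the principal-value limit. You flag this as the main obstacle and propose to avoid it by symmetrizing the integral against $f$ directly, folding the tail over periods, and only afterwards computing Fourier coefficients via Fubini. The paper's route is more direct and avoids this detour: it expands $f$ in its Fourier series inside the \emph{truncated} integral $\int_{\ep\le |x-y|\le N}$ (where everything is absolutely convergent), then observes that each truncated single-mode integral $I_k(x)=\int_{\ep\le|x-y|\le N}\frac{(x-y)e^{iky}}{|x-y|^{2+2\alpha}}\,dy$ is bounded uniformly in $\ep,N$ (with at most polynomial growth in $k$, as your own scaling computation shows), so the rapid decay of $\hat f_k$ lets one pass the limit $\ep\to0$, $N\to\infty$ inside the sum by dominated convergence. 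Your approach buys a self-contained treatment of the PV integral as an operator on $C^\infty(\TT)$ before touching Fourier; the paper's buys brevity by never having to discuss convergence of the full PV integral separately from the modewise limit.
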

\begin{proof}
We need to prove
\begin{equation}
\mathcal{L}_\alpha f(x) : = c_\alpha^{ -1}  \lim_{\substack{\ep \to 0^+ \\
N\to \infty}}\int_{ \ep \leq |x-y| \leq N} \frac{ (x-y) f(y)}{|x-y|^{2+2\alpha}} \, dy.
\end{equation}
By the Fourier inversion and smoothness of $f$, for any $\ep , N>0$, the truncated integral satisfies
\begin{align*}
\int_{ \ep \leq |x-y| \leq N} \frac{ (x-y) f(y)}{|x-y|^{2+2\alpha}} \, dy= \sum_{k} \hat{f}_k \int_{ \ep \leq |x-y| \leq N} \frac{ (x-y) e^{iky }}{|x-y|^{2+2\alpha}} \, dy,
\end{align*}
where $\hat{f}_k$ are the Fourier coefficients of $f$.
Since the Fourier coefficients $ \hat{f}_k$ decay faster than any polynomial and the integrals
$$
I_k(x) : = \int_{ \ep \leq |x-y| \leq N} \frac{ (x-y) e^{iky }}{|x-y|^{2+2\alpha}} \,dy
$$
are uniformly bounded in approximations parameters $\ep$ and $N$, the summation is absolutely convergent and hence it suffices to prove
\begin{equation}\label{aux56a}
 \lim_{\substack{\ep \to 0^+ \\
N\to \infty}}\int_{ \ep \leq |x-y| \leq N} \frac{ (x-y) e^{iky }}{|x-y|^{2+2\alpha}} \, dy. = c_\alpha^{ -1}  i  k |k|^{2\alpha -1} e^{ikx}\quad \text{for all $k \in \ZZ $}.
\end{equation}
By oddness of the kernel $\frac{x-y}{|x-y|^{2+2\alpha }}$ and a shift in the variable, we just need to show the principle value integral $ \int_{\RR} z |z|^{-2-2\alpha} \sin(k z) \, dz = c_\alpha^{ -1} k |k|^{2\alpha -1} $ for $k \in \ZZ$. Since $0 < \alpha < \frac{1}{2}$, the integral converges in the principle value sense, and its value $c_\alpha^{ -1} k |k|^{2\alpha -1}$ follows by a change of variable due to the homogeneity.

\end{proof}

\subsection{Dispersion in the curvature dynamics of \texorpdfstring{$\alpha$}{a}-patches}

Next, we analyze in detail the curvature dynamics in its weak formulation. The goal is to derive the leading order dynamics and reveal its dispersive characteristics.

Below we use the Besov space $B^{s}_{p , q }(\TT)$  on the torus, see Definition \ref{def:BesovLP}. More  on functional setups is introduced in Section \ref{sec:disper_estimates} below.

\begin{proposition}\label{prop:dispersive_term}
Let $0 < \alpha < \frac{1}{2}$ and $ p> \frac{1}{1 -2\alpha }$. Suppose $ \g \in C([0,T];C^{1,\sigma}),$ $\sigma=1-\frac{1}{p}-2\alpha,$ is the flow of  a $W^{2,p}$ $\alpha$-patch $\Omega_t$ with the initial data $\g_0$. Let $v$ be the velocity field generated by the patch $\Omega_t$ parameterized by $\g$ and  $g = |\p_x \g |$ be the metric associated with the flow $\g$.

Then for any $t \in [0,T]$ and any $\phi \in C^\infty(\TT)$,
\begin{equation}\label{eq:lemma_dispersive_term_0}
\int_{\TT} \p_s v \cdot \N \p_s\phi g\,dx = -c_\alpha \int_{\TT}g^{1-2\alpha}  \k \mathcal{L}_\alpha \phi \, dx + \mathcal{E}_t(\phi)
\end{equation}
where $\mathcal{E}_t:C^\infty(\TT) \to \RR$ is a time-dependent distribution
satisfying the bound
\begin{equation}\label{eq:prop_dispersive_error_term}
|\mathcal{E}_t (\phi)| \leq C_{\alpha,\gamma} |\k|_{L^p} |\phi |_{B^{2\alpha -\sigma}_{p', 1 }}  \quad \text{for any $\phi \in C^\infty(\TT) $}
\end{equation}
uniformly in time, where     $p'$ is the H\"older dual of $p$ and the constant $C_{\alpha,\gamma}$ depends on $\alpha$, $p$, $\Om$, and $|\g|_{C^{1,\sigma}(\TT)}$.
\end{proposition}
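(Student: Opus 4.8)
We take \eqref{eq:lemma_dispersive_term_0} as the \emph{definition} of the distribution $\mathcal E_t$, i.e.
\[
\mathcal E_t(\phi):=\int_{\TT}\p_s v\cdot\N\,\p_s\phi\,g\,dx+c_\alpha\int_{\TT}g^{1-2\alpha}\,\k\,\mathcal L_\alpha\phi\,dx ,
\]
so that the entire content is the bound \eqref{eq:prop_dispersive_error_term}. We work in the $x$-parametrisation, so $ds=g\,dx$, $\p_s\phi\,g=\phi'$, and $\p_s v$ is given by the $\g$-variable form of \eqref{eq:estimate_v_on_boundary} (change of variables in \eqref{eq:aux_BS_gamma}). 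Since $\p_s v$ is a genuine continuous function (Lemma~\ref{lemma:estimate_v_on_boundary}) and $\phi'$ is bounded, dominated convergence pulls the principal value outside the $x$-integral and Fubini applies to each truncation, giving
\[
\int_{\TT}\p_s v\cdot\N\,\p_s\phi\,g\,dx=\lim_{\ep\to0^+}\iint_{\ep<|x-y|<\pi}\phi'(x)\,\big(\T(y)\cdot\N(x)\big)\,\frac{(\g(x)-\g(y))\cdot\T(x)}{|\g(x)-\g(y)|^{2+2\alpha}}\,g(y)\,dy\,dx .
\]

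\emph{Extracting the dispersion.} In the scalar factor $(\g(x)-\g(y))\cdot\T(x)$ write $\T(x)=\T(y)+(\T(x)-\T(y))$. On the $\T(y)$-part use $g(y)\dfrac{(\g(x)-\g(y))\cdot\T(y)}{|\g(x)-\g(y)|^{2+2\alpha}}=\dfrac1{2\alpha}\,\p_y\big(|\g(x)-\g(y)|^{-2\alpha}\big)$ and integrate by parts in $y$; the endpoint contributions at $|x-y|=\ep$ are $O(\ep^{1-2\alpha})$ by the curve estimates of Section~\ref{subsec:curves_estimates}, hence vanish, and $\p_y\T(y)=-g(y)\k(y)\N(y)$ yields the absolutely convergent (recall $2\alpha<1$) main term
\[
\mathrm{Main}(\phi)=\frac1{2\alpha}\iint_{\TT\times\TT}\phi'(x)\,g(y)\,\k(y)\,\big(\N(x)\cdot\N(y)\big)\,|\g(x)-\g(y)|^{-2\alpha}\,dy\,dx ,
\]
while the $(\T(x)-\T(y))$-part gives a remainder $\mathcal E^{(1)}(\phi)$ carrying the extra factor $(\g(x)-\g(y))\cdot(\T(x)-\T(y))$ in the numerator. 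To identify the main term, use Lemma~\ref{lemma:local_formula_La} and one integration by parts to write $c_\alpha\mathcal L_\alpha\phi(y)=-\tfrac1{2\alpha}\int_{\TT}G_\alpha(y-z)\,\phi'(z)\,dz$, with $G_\alpha$ the periodised kernel equal to $|\cdot|^{-2\alpha}+O(1)$ near the origin and smooth elsewhere; then $\mathrm{Main}(\phi)+c_\alpha\int g^{1-2\alpha}\k\,\mathcal L_\alpha\phi\,dx=\mathcal E^{(0)}(\phi)$ whose kernel
\[
g(y)\big(\N(x)\cdot\N(y)\big)|\g(x)-\g(y)|^{-2\alpha}-g(y)^{1-2\alpha}G_\alpha(x-y)=O\big(|x-y|^{\sigma-2\alpha}\big)
\]
near the diagonal (and bounded away from it), by \eqref{eq:curve_estimates_f}, \eqref{eq:arc_length_a} and its $\N$-analogue, together with $g\in C^{\sigma}$. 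Since $p>\tfrac1{1-2\alpha}$ forces $4\alpha<1+2\alpha<2-\tfrac1p$, we have $\sigma-2\alpha>-1$, so this difference kernel is of admissible (Riesz-potential) type.

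\emph{Error bounds.} It remains to prove $|\mathcal E^{(0)}(\phi)|+|\mathcal E^{(1)}(\phi)|\le C_{\alpha,\g}\,|\k|_{L^p}\,|\phi|_{B^{2\alpha-\sigma}_{p',1}}$. For $\mathcal E^{(0)}$ the difference kernel is dominated by the Riesz kernel $|x-y|^{-(2\alpha-\sigma)}$ of smoothing order $1-(2\alpha-\sigma)>0$, so $\mathcal E^{(0)}(\phi)=\langle\k,\,T^{*}\p_x\phi\rangle$ with $T^{*}\p_x\phi$ of the regularity of $|D|^{2\alpha-\sigma}\phi$, and H\"older together with $B^{2\alpha-\sigma}_{p',1}\hookrightarrow W^{2\alpha-\sigma,p'}$ gives the bound. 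The term $\mathcal E^{(1)}$ is handled by first restoring linearity in $\k$ via the unfolding $\T(x)-\T(y)=\int g\k\N$ and Fubini, so that $\mathcal E^{(1)}(\phi)=\langle\k,\,\mathcal K\phi\rangle$; one then shows, exploiting the cancellation in the remaining principal value together with the high-order vanishing of $\T(y)\cdot\N(x)$ and of $(\g(x)-\g(y))\cdot\N(\tau)$ (estimates \eqref{eq:arc_length_M_a}, \eqref{eq:arc_length_M_c} and the maximal-function bounds of Lemma~\ref{lemma:arc_length_estimates}), that $\mathcal K$ is a smoothing operator of order strictly larger than $2\alpha-\sigma$, hence maps $B^{2\alpha-\sigma}_{p',1}$ boundedly into $L^{p'}$; this is carried out on Littlewood--Paley pieces of $\phi$, the $\ell^1$-summation over frequencies producing exactly the Besov norm of \eqref{eq:prop_dispersive_error_term}. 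Every constant above is built from $\alpha$, $p$, $\Om$ and $|\g(\cdot,t)|_{C^{1,\sigma}}$, which is finite and uniformly controlled in $t$ by Theorem~\ref{thm:flow_existence}; hence the estimate holds uniformly in $t\in[0,T]$.

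The expected obstacle is the treatment of $\mathcal E^{(1)}$. A bare absolute-value estimate only yields control that is quadratic in $|\k|_{L^p}$ and requires a \emph{full} derivative of $\phi$, which is useless here because $2\alpha-\sigma<1$ (indeed $2\alpha-\sigma$ may be negative). One must genuinely trade the derivative on $\phi$ against the regularity of the mildly singular, odd-dominated kernel produced by the vanishing factors, while retaining the correct \emph{linear} dependence on $|\k|_{L^p}$ and the sharp Besov index; this is the technical heart of the proposition.
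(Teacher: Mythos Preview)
Your decomposition differs from the paper's in an essential way, and the claimed bounds on the error terms contain a genuine gap. The paper does not integrate by parts in $y$; after passing to arc-length and shifting $s'\mapsto s'+s$, it integrates by parts in $s$ with $s'$ fixed. This removes the derivative from $\overline\phi$ \emph{entirely} and spreads it over the kernel, producing five pieces $K_1,\dots,K_5$. The key piece $K_1$ already carries $\ok(s')$ linearly, and the Besov norm then arises transparently: in $I_{11}$ one writes $\overline\phi(s)=(\overline\phi(s)-\overline\phi(s'))+\overline\phi(s')$, and the difference against the residual kernel $|s-s'|^{-1-2\alpha+\sigma}$ is precisely the $L^{p'}$ modulus of continuity integrated in the step size, which is the Besov norm by Lemma~\ref{lemma:modulus_Besov}. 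The remaining pieces $I_{12},I_2,\dots,I_5$ are bounded by $|\k|_{L^p}|\phi|_{L^{p'}}$ via oddness cancellation, Young, and the maximal estimates of Lemma~\ref{lemma:arc_length_estimates}.

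By integrating by parts in $y$ only, you keep $\phi'(x)$ in both $\mathcal E^{(0)}$ and $\mathcal E^{(1)}$, and the step that trades this derivative for the Besov index is not justified. For $\mathcal E^{(0)}$ you assert that ``$T^*\p_x\phi$ has the regularity of $|D|^{2\alpha-\sigma}\phi$'' from the kernel \emph{size} bound $|K(x,y)|\lesssim|x-y|^{\sigma-2\alpha}$. A size bound alone yields Hardy--Littlewood--Sobolev $L^p\to L^q$ mapping, not Besov smoothing; the latter needs derivative bounds on $K$. But $\p_x K$ contains $g(x)\k(x)(\T(x)\cdot\N(y))|\g(x)-\g(y)|^{-2\alpha}$ from differentiating $\N(x)$, which introduces an extra factor of $\k(x)$ you do not account for, and the remaining ``difference of $\mathcal L_\alpha$-type kernels'' is of order $|x-y|^{\sigma-1-2\alpha}$, which fails to be absolutely integrable once $\sigma\le 2\alpha$ (equivalently $p\le 1/(1-4\alpha)$, a non-empty regime for every $0<\alpha<\tfrac12$). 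To continue one is forced into the $\phi(x)-\phi(y)$ splitting plus an oddness argument---exactly the paper's mechanism---which your write-up does not supply. For $\mathcal E^{(1)}$ you give only a programme; after the unfolding $\T(x)-\T(y)=-\int g\k\N$ the operator $\mathcal K$ still contains $\phi'$, a crude bound gives at best $|\mathcal K\phi|_{L^{p'}}\lesssim|\phi'|_{L^1}$, and you do not exhibit the cancellation needed to reach $B^{2\alpha-\sigma}_{p',1}$. In the paper's route this term never appears in that form: the analogous factor sits in $K_5$, already paired with $\overline\phi$ rather than $\phi'$, and is dispatched by \eqref{eq:arc_length_M_e} and Young.
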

\begin{proof}

Define the distribution $\mathcal{E}_t : C^\infty(\TT) \to \RR$ by
\begin{equation}\label{eq:aux_dispersive_term}
\mathcal{E}_t (\phi) : =  \int_{\TT}\p_s v\cdot \N \p_s \phi  \,g dx  + c_\alpha \int_{\TT}g^{1-2\alpha}  \k \mathcal{L}_\alpha \phi \, dx.
\end{equation}
It is clear that at each time $t\in [0,T]$ \eqref{eq:aux_dispersive_term} defines a bounded linear functional on $C^\infty(\TT)$, and it suffices to prove the estimate \eqref{eq:prop_dispersive_error_term}.

Consider the (time-dependent) arc-length variable $s : = \ell(\xi) = \int_0^{\xi} g(\tau )\, d\tau $. We define the arc-length counterparts $ \overline{\g},\overline{\T}, \overline{\N}, \overline{\k}$ and $\overline{v} $ by the formula $\overline{f}(\ell(\xi)) = f(\xi) $, i.e. $\overline{f}(s) : = f (\ell^{-1}(s))$ for $f \in \{ \g,\T,\N,\k,v\}$. Immediately by Lemma \ref{lemma:arc_diff} we have
\[ \p_s \overline{f} = \p_s f(\ell^{-1}(s)) = \frac{1}{g(x)}\p_x  f(x).\]

We drop the time variable and domain of integration $[-L(t)/2,L(t)/2]$ for simplicity in the proof.

In the new variable, by Lemma \ref{lemma:estimate_v_on_boundary} the left-hand side of \eqref{eq:lemma_dispersive_term_0} reads
 $$
\int  \p_s \overline{v} \cdot \overline{\N} \p_s\overline{\phi} \,ds = \int  \p_s\overline{\phi} \int \overline{\T}(s') \overline{\N}(s) \frac{( \oga(s) -  \oga(s'))\cdot \oT(s)  }{ |\oga(s) -  \oga(s')|^{2+2\alpha } }\, ds'  \, ds  .
$$

By estimates of Lemma \ref{lemma:arc_length_estimates} and the assumption $p>\frac{1}{1-2\alpha},$ one can check that the expression under integral is absolutely integrable, with a fixed bound for every $s.$
Thus we can use the Fubini theorem, and we can also change variable to obtain
\begin{equation}\label{eq:aux_dispersive_term_0}
\begin{aligned}
& \int  \p_s \overline{v} \cdot \overline{\N} \p_s\overline{\phi} \,ds \\
& = \int  \int  \p_s\overline{\phi}(s  )\overline{\T}(s' + s) \overline{\N}(s  ) \frac{( \oga(s ) -  \oga(s' + s ))\cdot \oT(s   )  }{ |\oga(s   ) -  \oga(s' + s )|^{2+2\alpha } }\,  ds  \, ds' .
\end{aligned}
\end{equation}
In what follows we consider the approximations,
\begin{equation}\label{eq:aux_dispersive_term_1}
 \int_{|  s' | \geq \ep }  \int  \p_s\overline{\phi}(s  )  \overline{\T}(s' + s) \overline{\N}(s  ) \frac{( \oga(s ) -  \oga(s' + s ))\cdot \oT(s   )  }{ |\oga(s   ) -  \oga(s' + s )|^{2+2\alpha } }\,  ds  \, ds',
\end{equation}
and in the end will take the limit $\ep \rightarrow 0.$

With the approximation, we can integrate by parts in the $s $ variable.  Notice that when $s'  \neq 0$, the function $s  \mapsto  \overline{\T}(s' + s) \overline{\N}(s  ) \frac{( \oga(s ) -  \oga(s' + s ))\cdot \oT(s   )  }{ |\oga(s   ) -  \oga(s' + s )|^{2+2\alpha } }$ is at least $W^{1,p}$ whose  derivative in $s$ is equal to the sum of
\begin{align}
& \underbrace{- \ok(s'+ s) \oN(s'+ s ) \oN( s ) \frac{( \oga(s) -  \oga(s'  + s ))\cdot \oT(s)  }{ |\oga(s) -  \oga(s' + s )|^{2+2\alpha } } }_{:= K_1(s,s')} \label{eq:aux_def_K_1}\\
&  \underbrace{+ \ok(s  )  \overline{\T}( s'+ s  ) \oT( s ) \frac{ ( \oga(s ) -  \oga(s' + s ))\cdot \oT(s   )  }{ |\oga(s) -  \oga(s' + s  )|^{2+2\alpha }}}_{:= K_2(s,s')}  \label{eq:aux_def_K_2} \\
& \underbrace{ \overline{\T}(s' + s) \overline{\N}(s  ) \frac{( \oT(s ) -  \oT(s' + s ))\cdot \oT(s   )  }{ |\oga(s   ) -  \oga(s' + s )|^{2+2\alpha } }   }_{:= K_3(s,s')} \label{eq:aux_def_K_3} \\
&  \underbrace{ -  \ok(s  )  \overline{\T}( s'+ s  ) \oN( s ) \frac{ ( \oga(s ) -  \oga(s' + s ))\cdot \oN(s   )  }{ |\oga(s) -  \oga(s' + s  )|^{2+2\alpha }}}_{:= K_4(s,s')}  \label{eq:aux_def_K_4} \\
&  - (2+2\alpha)\overline{\T}( s'+ s  ) \oN( s )  \frac{( \oga(s) -  \oga(s'+s ))\cdot \oT(s)  }{ |\oga(s) -  \oga(s'+s )|^{4+2\alpha } }  \nonumber\\
& \underbrace{\qquad\qquad \times  ( \oga(s) -  \oga(s'+s ))\cdot (\oT(s) - \oT(s'+s ) ) .\,\, }_{:= K_5(s,s')} \label{eq:aux_def_K_5}
\end{align}
We then integrate by parts in \eqref{eq:aux_dispersive_term_1} using the derivatives \eqref{eq:aux_def_K_1}--\eqref{eq:aux_def_K_5} and then reparameterize back to obtain
\begin{equation}
\begin{aligned}\label{eq:aux_dispersive_term_2}
 & \int  \p_s \overline{v} \cdot \overline{\N} \p_s\overline{\phi} \,ds  \\
&  = \lim_{\ep \to 0^+} \int_{|s'| \geq \ep } \int_{}  \overline{\phi}(s)\p_s \left( \overline{\T}(s' + s) \overline{\N}(s  ) \frac{( \oga(s ) -  \oga(s' + s ))\cdot \oT(s   )  }{ |\oga(s   ) -  \oga(s' + s )|^{2+2\alpha } }  \right) \,ds \, ds' \\
& =  \lim_{\ep \to 0^+} \sum_{n} \int_{| s'| \geq \ep  } \int    \overline{\phi}(s)  K_n(s,s') \, ds  \, ds' \\
& = \lim_{\ep \to 0^+} \sum_{1 \leq n \leq 5} I_n(\ep)
\end{aligned}
\end{equation}
where each integral $I_n(\ep)$ corresponds to the kernels $K_n$ from \eqref{eq:aux_def_K_1}--\eqref{eq:aux_def_K_5}.
\begin{align}
I_1 (\ep) & =  -\int \int_{|   s -s' | \geq \ep  } \overline{\phi}(s) \ok(s') \oN(s  ') \cdot \oN( s ) \frac{( \oga(s) -  \oga(s'    ))\cdot \oT(s)  }{ |\oga(s) -  \oga(s'   )|^{2+2\alpha } }   \, ds  \, ds ' \\
I_2(\ep) & =  \int \int_{|   s -s' | \geq \ep  } \overline{\phi}(s)  \ok(s ) \oT(s  ') \cdot \oT( s ) \frac{( \oga(s) -  \oga(s'    ))\cdot \oT(s)  }{ |\oga(s) -  \oga(s'   )|^{2+2\alpha } }   \, ds  \, ds ' \\
I_3(\ep) &  =  \int \int_{|   s -s' | \geq \ep  } \overline{\phi}(s)  \oT(s  ') \cdot \oN( s ) \frac{( \oT(s) -  \oT(s'    ))\cdot \oT(s)  }{ |\oga(s) -  \oga(s'   )|^{2+2\alpha } }   \, ds  \, ds '  \\
I_4(\ep) &  =  \int \int_{|   s -s' | \geq \ep  } - \overline{\phi}(s) \ok(s    ) \oT(s  ') \oN( s ) \frac{( \oga(s) -  \oga(s'    ))\cdot \oN(s)  }{ |\oga(s) -  \oga(s'   )|^{2+2\alpha } }   \, ds  \, ds '  \\
I_5(\ep)  & =  - (2+2\alpha )\int \int_{|   s -s' | \geq \ep  }  \overline{\phi}(s)  \overline{\T}( s'  )\oN( s )  \frac{( \oga(s) -  \oga(s'  ))\cdot \oT(s)  }{ |\oga(s) -  \oga(s'  )|^{4+2\alpha } }  \nonumber \\
&\qquad  \qquad \qquad  \qquad  \qquad  \times ( \oga(s) -  \oga(s'  ))\cdot (\oT(s) - \oT(s' ) ) \, ds  \, ds ' .
\end{align}

In the steps below, we will show each term on the right-hand side of \eqref{eq:aux_dispersive_term_2} is compatible with the thesis \eqref{eq:lemma_dispersive_term_0}.

\noindent
\textbf{\underline{Analysis of $I_1$:}}

We further split $I_1 = I_{11} + I_{12}$ with
\begin{align}
I_{11}(\ep) :   & =  -\int \int_{|   s -s' | \geq \ep  } (\overline{\phi}(s)  - \overline{\phi}(s' )) \ok(s') \oN(s  ') \cdot \oN( s ) \nonumber \\
& \qquad \qquad \qquad \qquad\times \frac{( \oga(s) -  \oga(s'    ))\cdot \oT(s)  }{ |\oga(s) -  \oga(s'   )|^{2+2\alpha } }   \, ds  \, ds' \label{eq:aux_curvature_kernal_I_1_1} \\
 I_{12}(\ep) :   & =  - \int \int_{|   s -s' | \geq \ep  } \overline{\phi}(s ') \ok(s') \oN(s  ') \cdot \oN( s )  \nonumber \\
& \qquad \qquad \qquad \qquad\times \frac{( \oga(s) -  \oga(s'    ))\cdot \oT(s)  }{ |\oga(s) -  \oga(s'   )|^{2+2\alpha } }   \, ds  \, ds'  .\label{eq:aux_curvature_kernal_I_1_2}
\end{align}

\noindent
\textbf{\underline{Analysis of $I_{11}$:}}

Let us first analyze $I_{11}$. We switch back to the Lagrangian labels via $s \mapsto x =\ell^{-1}(s)$ and $s' \mapsto y = \ell^{-1}(s')$ in the integral, obtaining
\begin{align*}
I_{11}(\ep) :   & =  -\int_{\TT} \int_{|   \ell (x)  -\ell (y) | \geq \ep  } ( {\phi}(x )  -  {\phi}( y )) \k(y ) \N( y) \cdot \N( x )\\
& \qquad\qquad \qquad \times  \frac{( \g(x ) -  \g(y    ))\cdot \T(x)  }{ |\g(x) -  \g( y   )|^{2+2\alpha } } g(x) g(y)  \, dx  \, d y  .
\end{align*}
Let us first consider the variant $ \widetilde{I}_{11}(\ep)$
\begin{equation}\label{eq:aux_curvature_kernal_I_1_3}
\begin{aligned}
\widetilde{I}_{11}(\ep) :   & =  -\int_{\TT} \int_{ g(y)|    x   - y  | \geq \ep  } ( {\phi}(x )  -  {\phi}( y )) \k(y ) \N( y) \cdot \N( x )\\
& \qquad\qquad \qquad \times  \frac{( \g(x ) -  \g(y    ))\cdot \T(x)  }{ |\g(x) -  \g( y   )|^{2+2\alpha } } g(x) g(y)  \, dx  \, d y  .
\end{aligned}
\end{equation}
We claim $\lim_{\ep \to 0^+}  \widetilde{I}_{11}(\ep) = \lim_{\ep \to 0^+} {I}_{11}(\ep)  $ where both limit clearly exist due to $\phi \in C^\infty(\TT)$. Since $x \mapsto \ell  (x)$ is a $C^{1,\sigma}$ diffeomorphism, the fundamental theorem of calculus implies that for each $y\in \TT$, the set $S_\ep(y)$ of the symmetric difference
\begin{equation}
 S_\ep(y): =  \{x \in \TT : | \ell(x) -\ell(y) | \geq \ep \} \Delta \{x \in \TT : g(y) |   x  - y  | \geq \ep \}
\end{equation}
has Lebesgue measure $|S_\ep| \lesssim \ep^{\sigma} $ and hence
\begin{equation}
\left|  \widetilde{I}_{11}( \ep) -  {I}_{11}( \ep)  \right| \lesssim  \int_{\TT} \int_{S_\ep(y) } | {\phi}(x )  -  {\phi}( y )| |\k(y )|  |x-y|^{-1-2\alpha }  \, dx  \, d y
\end{equation}
Since $\phi $ is smooth, we have that  $\left|  \widetilde{I}_{11}( \ep) -  {I}_{11}( \ep)  \right| \lesssim |\nabla  \phi |_{L^\infty }  \int \int_{S_\ep(y) }   |\k(y )|  |x-y|^{ -2\alpha }  \, dx  \, d y $ and  hence by H\"older's inequality
(due to $p>\frac{1}{1-2\alpha},$ $\k \in L^p$)
together with the fact $|S_\ep| \to 0 $ implies $\left|  \widetilde{I}_{11}( \ep) -  {I}_{11}( \ep)  \right| \to 0 $   as $\ep \to 0$.

So now we focus on \eqref{eq:aux_curvature_kernal_I_1_3}. Since the flow $\g \in C_t C^{1,\sigma}$, by Lemma \ref{lemma:curve_estimates} we have
(here we use $\N(y) \cdot \N(x) = \T(y) \cdot \T(x)$)
\begin{equation}\label{eq:aux_curvature_kernal_2}
\begin{aligned}
&{\T}( y   ) \cdot {\T}(x ) \frac{( \g (x ) -  \g ( y   ))\cdot \T(x  )  }{ |\g ( x ) -  \g ( y   )|^{2+2\alpha } } g(x ) g( y  ) \\
&   = \frac{1 }{ |g( y )|^{1- 2\alpha}}  \frac{ x - y }{ | x -y |^{2+2\alpha}}    + O(C_\gamma  |x  - y |^{-1-2\alpha + \sigma }).
\end{aligned}
\end{equation}

It follows from \eqref{eq:aux_curvature_kernal_I_1_3} and \eqref{eq:aux_curvature_kernal_2} that
\begin{equation}
\begin{aligned}\label{eq:aux_curvature_kernal_3}
 \widetilde{I}_{11}( \ep)  & = -\int_{\TT} \k( y )   |g( y )|^{1-  2\alpha}  \int_{g(y) | x  - y| \geq \ep  }    ({\phi}(x)  - {\phi}(y)  )      \frac{ x  - y }{ | x  - y|^{2+2\alpha}}  \,  d x   \,  d y    \\
& \qquad + O(C_\gamma \int_{\TT} \int_{\TT} |  {\phi}( x ) -   {\phi}( y  ) | | \k( y  )| |x  - y  |^{-1-2\alpha + \sigma }    \,   d x  \, d y )   .
\end{aligned}
\end{equation}
The first term in \eqref{eq:aux_curvature_kernal_3} is the main term, corresponding to the first term on the right-hand side of \eqref{eq:lemma_dispersive_term_0}. Indeed, by oddness of the kernel and Lemma \ref{lemma:local_formula_La}, there holds
\begin{align}\label{eq:aux_curvature_kernal_4}
& \lim_{\ep \to 0^+ } \int_{g(y) | x  - y| \geq \ep  }    ({\phi}(x)  - {\phi}(y)  )      \frac{ x  - y }{ | x  - y|^{2+2\alpha}}  \,  d x  \nonumber  \\
&  = P.V. \int_{\TT }     {\phi}(x)       \frac{ x  - y }{ | x  - y|^{2+2\alpha}}  \,  d x  \nonumber \\
& = c_\alpha \mathcal{L}_\alpha \phi(x) + O(|\phi|_{L^1}  ),
\end{align}
where the error term comes from extending integration to $\RR$.
Therefore,  by \eqref{eq:aux_curvature_kernal_4} and \eqref{eq:aux_curvature_kernal_3} we have
\begin{equation}
\begin{aligned}\label{eq:aux_curvature_kernal_5}
&\lim_{\ep \to 0^+}-\int_{\TT} \k( y )   |g( y )|^{1-  2\alpha}  \int_{g(y) | x  - y| \geq \ep  }    ({\phi}(x)  - {\phi}(y)  )      \frac{ x  - y }{ | x  - y|^{2+2\alpha}}  \,  d x   \,  d y\\
& =  c_\alpha \int_{TT} \mathcal{L}_\alpha \phi( y )
  \k( y ) |g( y )|^{1-  2\alpha}       \, d y  +    O( |\k|_{L^p} |\phi|_{L^1}  ).
\end{aligned}
\end{equation}

Now we focus on the error term in \eqref{eq:aux_curvature_kernal_3}. We first rewrite it by a translation  and then use H\"older's inequality in $\eta$ to obtain that
\begin{equation}\label{eq:aux_curvature_kernal_6}
\begin{aligned}
& \int_{\TT} \int_{\TT} |  {\phi}( x   ) -   {\phi}(  y   ) | | \k( y )| | x  - y |^{-1-2\alpha + \sigma }    \,   dx  dy\\
& =  \int_{\TT} \int_{\TT}  \frac{|{\phi}( x +  y ) -   {\phi}( y ) |  }{ |x    |^{ 1+2\alpha -  \sigma }} | \k(  y )|     \,   d y  d x\\
& \leq  |\k |_{L^p }    \int_{\TT} \Big[ \int_{\TT} \Big[ \frac{| {\phi}( x + y  ) -   {\phi}( y ) |  }{ | x   |^{ 1+2\alpha -  \sigma }}    \Big]^{p'}   \,   d y \Big]^\frac{1}{p' }  d x  .
\end{aligned}
\end{equation}

Since $ \phi \in C^\infty(\TT)$, by Lemma \ref{lemma:modulus_Besov} that appears in the next section we have
\begin{equation}\label{eq:aux_S4_phi_sobolev}
\int \Big[ \int \Big[ \frac{| {\phi}( x + y  ) -   {\phi}( y ) |  }{ | x   |^{ 1+2\alpha -  \sigma }}    \Big]^{p'}   \,   d y \Big]^\frac{1}{p' }  d x \leq  C |\phi |_{B^{2\alpha -\sigma}_{ p', 1} }.
\end{equation}
So the error term in \eqref{eq:aux_curvature_kernal_3} satisfies
\begin{align}\label{eq:aux_curvature_kernal_7}
  \int \int |  {\phi}(x ) -   {\phi}( y ) | | \k( y )|   | x - y|^{-1-2\alpha + \sigma }    \,   dx   d y   \lesssim  |\k |_{L^p } |\phi |_{B^{2\alpha -\sigma}_{p', 1 }} .
\end{align}

Finally putting together \eqref{eq:aux_curvature_kernal_3}, \eqref{eq:aux_curvature_kernal_5}, and \eqref{eq:aux_curvature_kernal_7}, we have
\begin{align*}
\lim_{\ep \to 0^+}\widetilde{I}_{11}( \ep)   & = -c_\alpha \int \mathcal{L}_\alpha \phi( y )
  \k( y )   [g( y)]  ^{1-2\alpha}       \, d  y   \\
& \qquad \qquad + O(C_{\g,\alpha } |\k |_{L^p}  |\phi |_{B^{2\alpha -\sigma}_{p', 1 }}  )    .
\end{align*}

\noindent
\textbf{\underline{Analysis of $I_{12}$:}}

By the estimates \eqref{eq:arc_length_a}, \eqref{eq:arc_length_f}, and \eqref{eq:arc_length_linear} in Section \ref{subsec:curves_estimates}, for any $s,s \in \RR$
\begin{equation}
\begin{aligned}
 \oN (s) \cdot \oN(s' ) &= 1  + O(| s  - s' |^{2(1 - \frac{1}{p})  } ) \\
 (\oga(s      )-\oga(  s' )) \cdot \oT(s ) &= (s  - s' ) + O(|s  - s'|^{ 3- \frac{2}{p} } ) \\
\frac{|s  - s' |^{2  +2\alpha } }{|\oga(s)-\oga(  s')|^{2  +2\alpha }}   &  =1 + O(|s  - s'|^{ 2 (1 - \frac{1}{p})  }  ) .
\end{aligned}
\end{equation}
It follows that
\begin{equation}\label{eq:aux_curvature_kernal_I_12}
\begin{aligned}
 I_{12}(\ep)   =  - & \int \int_{|   s -s' | \geq \ep  } \overline{\phi}(s ') \ok(s')    \bigg( \frac{ s  -   s'     }{ | s  -   s'   |^{2+2\alpha } }   \\
& \qquad \qquad     + O(|s-s'|^{1-2\alpha -\frac{2}{p}})      \bigg)    \, ds  \, ds'
\end{aligned}
\end{equation}
By the oddness of the kernel, the first term in \eqref{eq:aux_curvature_kernal_I_12} vanishes, and it follows that
\begin{align}
| I_{12}(\ep)|  \lesssim  - \int \int  |\overline{\phi}(s ')| |\ok(s')|  |s-s'|^{1-2\alpha -\frac{2}{p}})          \, ds  \, ds'
\end{align}
Since $1-2\alpha - \frac{2}{p} > -1$ by our assumptions, by Young's inequality we have $
| I_{12}(\ep)|  \lesssim  |  \overline{\phi} |_{L^{p'}} |\ok |_{L^p}$. Since the flow is $C^{1,\sigma}$, we have $| \overline{\k}|_{L^p } \lesssim |  {\k}|_{L^p }$ and $| \overline{\phi} |_{L^{p'} }  \lesssim | {\phi} |_{L^{p'} }$. So in the Lagrangian label, we have
\begin{align*}
|I_{12} (\ep)| \lesssim |  {\k}|_{L^p } |  {\phi} |_{L^{p'} }  ,
\end{align*}
which is compatible with the claimed estimate.

\noindent
\textbf{\underline{Analysis of $I_2$:}}

Recall that we need to estimate
\begin{align}\label{eq:aux_curvature_kernal_8}
I_2(\ep) &  =  \int \int_{|   s  - s' | \geq \ep  }  \overline{\phi}(s) \ok(s    ) \oT(s ' ) \oT( s ) \frac{( \oga(s) -  \oga(s'    ))\cdot \oT(s)  }{ |\oga(s) -  \oga(s'   )|^{2+2\alpha } }   \, ds  \, ds' .
\end{align}
Observe that by Fubini and Lemma \ref{lemma:estimate_v_on_boundary}
\begin{equation}\label{eq:aux_curvature_kernal_9}
\begin{aligned}
& \lim_{\ep \to 0^+} \int  \overline{\phi}(s) \ok(s    )  \int_{| s- s'| \geq \ep  }  \oT(s ' ) \oT( s ) \frac{( \oga(s) -  \oga(s'    ))\cdot \oT(s)  }{ |\oga(s) -  \oga(s'   )|^{2+2\alpha } }   \,  ds' \, ds \\
&  = \int  \overline{\phi}(s)\ok(s    ) \p_s \overline{v} \cdot \oT (s ) \, ds  .
\end{aligned}
\end{equation}
Estimating in the arc-length variable by H\"older's inequality, we have
\begin{equation}\label{eq:aux_curvature_kernal_10}
|I_2(\ep)| \leq | \overline{\k}|_{L^p } | \overline{\phi} |_{L^{p'} } | \p_s \overline{v} |_{L^\infty }.
\end{equation}
It follows that  in the Lagrangian label
\begin{align*}
|I_2(\ep)| \lesssim |  {\k}|_{L^p } |  {\phi} |_{L^{p'} }  ,
\end{align*}
which is compatible with the claimed estimate.

\noindent
\textbf{\underline{Analysis of $I_3$:}}

Recall that we need to estimate
\begin{align}\label{eq:aux_curvature_kernal_11}
I_3(\ep) &  =  \int \int_{|  s  -    s' | \geq \ep  } \overline{\phi}(s)  \oT(s  ') \cdot \oN( s ) \frac{( \oT(s) -  \oT(s'    ))\cdot \oT(s)  }{ |\oga(s) -  \oga(s'   )|^{2+2\alpha } }   \, ds  \, ds ' .
\end{align}
We apply \eqref{eq:arc_length_e} and \eqref{eq:arc_length_M_a} to find
\begin{align}\label{eq:aux_curvature_kernal_12}
| I_3(\ep) |  &  \leq C  \int \int  |\overline{\phi}(s)|    \mathcal{M}\overline{\k} (s') |s-s'|^{1+2(1 - \frac{1}{p}) -2 -2\alpha}   \, ds  \, ds '  .
\end{align}
The assumption on $p$ implies that $ 1 - \frac{2}{p}) -2\alpha > -1 $, so we obtain by Young's inequality that
\begin{align}\label{eq:aux_curvature_kernal_13}
| I_3(\ep) |  &  \leq C    |\overline{\phi} |_{L^{p'} }    | \mathcal{M}\overline{\k}   |_{L^{p } }.
\end{align}
From the $L^p$ boundedness of the maximal function with $p>1$ it follows that $ | I_3(\ep) |\leq  C    |\overline{\phi} |_{L^{p'} }    |  \overline{\k}  |_{L^{p } } $. Once again, the $C^{1,
\sigma}$ regularity of the flow allows us to conclude
$$
 | I_3(\ep) | \leq C    | {\phi} |_{L^{p'} }    |   {\k}  |_{L^{p } }  .
$$

\noindent
\textbf{\underline{Analysis of $I_4$:}}

Recall that we need to estimate
\begin{align}\label{eq:aux_curvature_kernal_14}
I_4(\ep) &  =  \int \int_{|  s  - s' | \geq \ep  } - \overline{\phi}(s) \ok(s    ) \oT(s  ') \cdot \oN( s ) \frac{( \oga(s) -  \oga(s'    ))\cdot \oN(s)  }{ |\oga(s) -  \oga(s'   )|^{2+2\alpha } }   \, ds  \, ds ' .
\end{align}

By \eqref{eq:arc_length_d} and \eqref{eq:arc_length_M_a},
\begin{align*}
|I_4(\ep)| &  \leq C     \int \int  |\overline{\phi}(s)|    \mathcal{M}\overline{\k} (s') |s-s'|^{1+2(1 - \frac{1}{p}) -2 -2\alpha}   \, ds  \, ds ' .
\end{align*}
So $ I_4$ satisfies the same bound as $I_3$

\noindent
\textbf{\underline{Analysis of $I_5$:}}

Recall that we need to estimate
\begin{equation}\label{eq:aux_curvature_kernal_15}
\begin{aligned}
I_5(\ep)   =  - (2+2\alpha )\int& \int_{| s -    s' | \geq \ep  } \overline{\phi}(s)  \overline{\T}( s'  )\oN( s )  \frac{( \oga(s) -  \oga(s'  ))\cdot \oT(s)  }{ |\oga(s) -  \oga(s'  )|^{4+2\alpha } }  \\
&\qquad \times ( \oga(s) -  \oga(s'  ))\cdot (\oT(s) - \oT(s' ) ) \, ds  \, ds ' .
\end{aligned}
\end{equation}

By \eqref{NT1} and \eqref{eq:arc_length_M_e},
\begin{align*}
|I_5(\ep) | &  \leq C \int \int | \overline{\phi}(s) | \mathcal{M}\overline{\k} (s')  |s -s'|^{\beta -3-2\alpha +2+\beta }    \, ds  \, ds '\\
 &  \leq C       \int \int  |\overline{\phi}(s)|    \mathcal{M}\overline{\k} (s') |s-s'|^{1 - \frac{2}{p}) -2\alpha}   \, ds  \, ds '  .
\end{align*}

So $ I_5$ also satisfies the same bound as $I_3.$

\end{proof}

\subsection{Leading order dynamics of the curvature flow}

We put together results in this section to obtain the following characterization of the curvature flow for the $\alpha$-patch problem.
\begin{theorem}\label{prop:weak_k_flow}
Let $0 < \alpha < \frac{1}{2}$ and $p > \frac{1}{1 -2\alpha}$. Let $\Omega_t$ be a $W^{2,p}$ $\alpha$-patch on $[0,T]$. Suppose $ \g \in C([0,T];C^{1,\sigma}) $ with $\sigma=1-\frac{1}{p}-2\alpha,$ is the flow of $\p \Omega_t$  with the initial data $\g_0$. Let $v$ be the velocity field generated by the patch $\Omega_t$ parameterized by $\g$
 and  $g = |\p_x \g |$ be the metric associated with the flow $\g$.

Then the curvature flow $\k$ satisfies  for all $ \varphi \in C^\infty(\TT \times \RR )$
\begin{equation}
\begin{aligned}
&\int_{\TT} \k(\cdot, T) \phi(\cdot , T)\, g(\cdot,T ) \,dx -\int_{\TT} \k(\cdot, 0) \phi(\cdot ,0) g(\cdot,0)\, dx \\
&=    \int_0^T \int_{\TT} g \k \p_t \phi \,dx dt  - c_\alpha \int_0^T \int_{\TT} g^{1-2\alpha} \k \mathcal{L}_\alpha \phi \, dx dt \\
&\qquad \qquad + \int_0^T \mathcal{E}_t(\phi(\cdot, t))  \, dt
\end{aligned}
\end{equation}
where $c_\alpha>0$ is a universal constant and $\mathcal{E}_t:C^\infty(\TT   ) \to \RR$ is a distribution satisfying the bound
\begin{equation}
|\mathcal{E}_t (  \phi )| \leq C_{\alpha,\gamma} |\k|_{L^p} |\phi |_{B^{2\alpha -\sigma}_{p', 1 }}  \quad \text{for any $\phi \in C^\infty(\TT) $}
\end{equation}
uniformly in time where     $p'$ is the H\"older dual of $p$ and the constant $C_{\alpha,\gamma}$ depends on $\alpha$, $p$,   $\Om$, and $|\g|_{C^{1,\sigma}(\TT)}$.
\end{theorem}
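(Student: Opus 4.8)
The plan is to combine the three main structural results already established in this section: Lemma~\ref{lemma:kappa_better_formulation}, which gives the distributional identity \eqref{eq:kappa_better_formulation} satisfied by the curvature flow $\k$ of a $W^{2,p}$ $\alpha$-patch; Proposition~\ref{prop:dispersive_term}, which identifies the dispersive operator $\mathcal{L}_\alpha$ in the right-hand side of that identity together with an error functional $\mathcal{E}_t$ controlled by the $B^{2\alpha-\sigma}_{p',1}$ norm of the test function; and the existence/regularity statement Proposition~\ref{prop:kappa_existence}, which guarantees $\k \in L^\infty([0,T];L^p(\TT))$ so that all the integrals in time make sense. Concretely, I would start from \eqref{eq:kappa_better_formulation}, which reads
\begin{equation*}
\int_0^T \int_{\TT} \k \p_t \phi\, g\, dx\, dt + \int_0^T \int_{\TT} \p_s v\cdot \N\, \p_s \phi\, g\, dx\, dt = \int_{\TT}\k(\cdot,T)\phi(\cdot,T) g(\cdot,T)\, dx - \int_{\TT}\k(\cdot,0)\phi(\cdot,0) g(\cdot,0)\, dx,
\end{equation*}
and then substitute the pointwise-in-time identity \eqref{eq:lemma_dispersive_term_0} from Proposition~\ref{prop:dispersive_term} into the second integral on the left, applied with the test function $\phi(\cdot,t)$ for each fixed $t$. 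This replaces $\int_{\TT}\p_s v\cdot\N\,\p_s\phi\,g\,dx$ by $-c_\alpha\int_{\TT}g^{1-2\alpha}\k\,\mathcal{L}_\alpha\phi\,dx + \mathcal{E}_t(\phi(\cdot,t))$, and after moving the dispersive term to the right we obtain exactly the claimed formula.

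The routine but necessary technical point is measurability and integrability in $t$ of the map $t \mapsto \mathcal{E}_t(\phi(\cdot,t))$, so that $\int_0^T \mathcal{E}_t(\phi(\cdot,t))\,dt$ is well-defined. For this I would note that $\mathcal{E}_t(\phi(\cdot,t))$ is, by its definition \eqref{eq:aux_dispersive_term}, a sum of the two explicit integrals $\int_{\TT}\p_s v\cdot\N\,\p_s\phi\,g\,dx$ and $c_\alpha\int_{\TT}g^{1-2\alpha}\k\,\mathcal{L}_\alpha\phi\,dx$; the first depends measurably on $t$ through the continuity $\g\in C([0,T];C^{1,\sigma})$ and the intrinsic bound on $\p_s v$ from Lemma~\ref{lemma:estimate_v_on_boundary}, while the second is measurable because $\k\in L^\infty_t L^p_x$ and $g^{1-2\alpha}\mathcal{L}_\alpha\phi$ is continuous in $t$. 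Together with the uniform bound \eqref{eq:prop_dispersive_error_term}, namely $|\mathcal{E}_t(\phi)|\le C_{\alpha,\g}|\k|_{L^p}|\phi|_{B^{2\alpha-\sigma}_{p',1}}$ (uniform in $t$ because the constant depends only on $\alpha,p,\Omega$ and $\sup_t|\g(\cdot,t)|_{C^{1,\sigma}}$, the latter finite by Theorem~\ref{thm:flow_existence}), this makes $t\mapsto\mathcal{E}_t(\phi(\cdot,t))$ an $L^\infty([0,T])$ — in particular integrable — function. The error bound in the statement is then simply inherited from \eqref{eq:prop_dispersive_error_term} applied at each time.

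The only genuine subtlety — and what I expect to be the main point requiring care rather than a difficulty — is that Proposition~\ref{prop:dispersive_term} is stated for a fixed time $t\in[0,T]$ and a fixed $\phi\in C^\infty(\TT)$, whereas here $\phi(\cdot,t)$ varies with $t$; one must check that the identity \eqref{eq:lemma_dispersive_term_0} holds for the time slice of a space-time test function $\varphi\in C^\infty(\TT\times\RR)$ and that the substitution into the time-integrated identity \eqref{eq:kappa_better_formulation} is legitimate. This is immediate once integrability in $t$ is established, since \eqref{eq:lemma_dispersive_term_0} is a genuine pointwise-in-$t$ equality of real numbers: integrating both sides over $[0,T]$ and using Fubini (justified by the uniform-in-$t$ bounds just discussed) gives the result. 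Thus the proof is essentially an assembly of Lemma~\ref{lemma:kappa_better_formulation} and Proposition~\ref{prop:dispersive_term}, with the verification of time-integrability of $\mathcal{E}_t(\phi(\cdot,t))$ being the one place one must pause; I would present it in roughly that order and keep it short.
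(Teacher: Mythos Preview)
Your proposal is correct and follows exactly the same approach as the paper: invoke Proposition~\ref{prop:kappa_existence} for existence, apply Lemma~\ref{lemma:kappa_better_formulation} to get the distributional identity, then define $\mathcal{E}_t$ via \eqref{eq:aux_dispersive_term} and read off the bound from Proposition~\ref{prop:dispersive_term}. The paper's proof is in fact even terser than yours, omitting the measurability/integrability discussion you include; your extra care there is welcome but not strictly needed since the paper treats this assembly as routine.
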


\begin{proof}
By Proposition \ref{prop:kappa_existence}, the existence of the curvature flow $\k$ is guaranteed. Now thanks to Lemma \ref{lemma:kappa_better_formulation}, $\k$ satisfies for all $\phi \in C^\infty ( \TT \times \RR )$,
\begin{equation}
\begin{aligned}
&\int_{\TT} \k(\cdot, T) \phi(\cdot , T)\, g(\cdot,T ) \,dx - \int_{\TT} \k(\cdot, 0) \phi(\cdot ,0) g(\cdot,0)\, dx    \\
&=    \int_0^T \int_{\TT} \k   \p_t \phi  \,g dx dt  + \int_0^T \int_{\TT }     \p_s v\cdot \N \p_s \phi  \,g dx  dt .
\end{aligned}
\end{equation}

We then define the distribution $\mathcal{E}$ by
\begin{equation}
\mathcal{E}_t (\phi) : =  \int_{\TT}\p_s v\cdot \N \p_s \phi  \,g dx   +c_\alpha \int_{\TT}g^{1-2\alpha}  \k \mathcal{L}_\alpha \phi \, dx.
\end{equation}
The conclusion follows directly from Proposition \ref{prop:dispersive_term}.

\end{proof}

%%%%%%%%%%%%%%%%%%%%%%%%%%%%%%%%%%%%%%%%%%%%%%%%%%%%%%%
\section{Dispersive estimates for \texorpdfstring{$e^{t\mathcal{L}_\alpha}$}{eLt}}\label{sec:disper_estimates}
%%%%%%%%%%%%%%%%%%%%%%%%%%%%%%%%%%%%%%%%%%%%%%%%%%%%%%%

In this section, we derive useful dispersive estimates for the evolution group $e^{t\mathcal{L}_\alpha}$ generated by $\mathcal{L}_\alpha$ on $\TT$.

The results in this and the next sections are independent of the rest of the paper, with a focus on functional spaces and distributions.

As before, we fix $0<\alpha < \frac{1}{2}$ and identify the torus $\TT = (- \pi,\pi]$.

We first consider the Fourier multiplier $ e^{i \xi |\xi|^{2\alpha -1} t}$ on the whole line $\RR$. Later we will use the Poisson summation formula to transfer estimates to the periodic setting.

\subsection{Littlewood-Paley decomposition}

In this subsection, we introduce a Littlewood-Paley decomposition on $\TT$. This is needed for results in this and the next sections.

Throughout the paper, $ \mathcal{D}'$ denotes  the dual of $C^\infty(\TT)$  as the set of periodic distributions  and $\lambda_q := 2^q$ for any $q\in \RR$.

Given $f : \TT \to \RR$, its Fourier series coefficients are defined by
\begin{equation}
	\widehat{f}(k) = \int_{- \pi }^{\pi } f(x) e^{-ikx} \, dx \quad \text{for any $k\in \ZZ$}.
\end{equation}
The Fourier transform on $\RR$ is defined similarly.

\begin{definition}\label{def:LP}
Let even $\psi \in C^\infty_c(\RR) $ be such that $\psi =1 $ on $B_{\frac{3}{4}} (0)$ and $ \Supp \psi \subset B_{ 1 }(0)$. For any integer $q \geq -1$, define
\begin{equation}\label{aux57a}
\psi_q (\xi  ) =
\begin{cases}
 \psi( \xi \l_{q +1 }^{-1} )   - \psi( \xi   \l_{q }^{-1} )& \text{if $q \geq 0$} \\
\psi( \xi \l_{q+1 }^{-1} )  &\text{if $q = -1 $}
\end{cases}
\end{equation}
such that $\sum_{ q \geq -1} \psi_q( \xi ) =1 $.

 For each $f \in  \mathcal{D}'$, the Littlewood-Paley decomposition of $f$ is
$$
f = \sum_{q \geq -1} \Delta_q f, \quad \text{with $ \Delta_q f  : = \sum_{k\in \NN} \widehat{f}(k)   \psi_q(k)  e^{i k x}  $},
$$
where the equality holds in the sense of distributions.
\end{definition}
We will use the following facts:
\begin{align*}
\psi_q(\xi) &= \psi_0 (\l_q^{-1} \xi ) \quad \text{for any $q \in \NN^+$}\\
\Supp \psi_q & \subset \{ \xi \in \RR :  \frac{1}{2} \l_q  \leq | \xi | \leq  2\l_q  \}
\end{align*}

Later in the paper, we also use $\widetilde{\Delta}_q : =\Delta_{q-1 } +\Delta_{q   }+\Delta_{q+1 }$ which satisfies $ \widetilde{\Delta}_q  \Delta_{q   } = \Delta_{q   } $.

We will also use the Littlewood-Paley decomposition on $\RR$. The same set of cutoffs as in Definition \ref{def:LP} will also be used to define such decompositions on $\RR$. In particular if $f  \in L^1(\RR)$ then its $2\pi$-periodization $f^{per}: = \sum_{n\in \ZZ} f(x + 2\pi n)$ is a function on $\TT$ and it satisfies
\begin{equation}
\sum_{n\in \ZZ} \Delta_q f(x + 2\pi n ) = \Delta_q f^{per}(x),
\end{equation}
where the Littlewood-Paley projection $\Delta_q$ is acting in the $\RR$ setting on the left-hand side and in the $\TT$ setting on the right-hand side.

To streamline the proof, we use Besov spaces a few times and we recall their definitions below.
\begin{definition}\label{def:BesovLP}
For any $s\in \RR$, $1 \leq p,q \leq \infty $, the Besov space $B^{s}_{p,q}(\TT)$ is defined by
$$
|f|_{B^{s}_{p,q}} := \left| \lambda_q^s |\Delta_q f|_{L^p} \right|_{\ell^{q}(\NN)}, \quad \text{and} \quad  B^{s}_{p,q}(\TT): =\{ f\in \mathcal{D}' :|f|_{B^{s}_{p,q}}< \infty  \}
$$
where $\ell^{q}(\NN)$ denotes the $\ell^q$ norm for sequences.
\end{definition}

The following  Bernstein's inequality is well-known.

\begin{lemma}[Bernstein's inequality]\label{lemma:bernstein}
Let $1 \leq p \leq r \leq \infty$. For any $f \in \mathcal{D}'$ one has
\begin{equation}
|\Delta_q f |_{L^r(\TT) } \lesssim  \l_q^{\frac{1}{p}  - \frac{1}{r} }|\Delta_q f |_{L^p(\TT)}
\end{equation}
and  for any $0<\alpha < \frac{1}{2}$ the linear operator $\mathcal{L}_\alpha$ defined by \eqref{eq:def_L_a} satisfies
\begin{equation}
| \mathcal{L}_\alpha \Delta_q f   |_{L^p (\TT)} \lesssim  \l_q^{2\alpha}|\Delta_q f |_{L^p(\TT)}
\end{equation}
\end{lemma}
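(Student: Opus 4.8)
The plan is to prove both inequalities by the classical convolution-kernel argument, transferring scaling estimates from $\RR$ to $\TT$ via Poisson summation. Using the identity $\widetilde{\Delta}_q \Delta_q = \Delta_q$, I would write $\Delta_q f = \tfrac{1}{2\pi}\,\Phi_q * \Delta_q f$, convolution on $\TT$, where $\Phi_q(x) = \sum_{k\in\ZZ}\widetilde{\psi}_q(k)\,e^{ikx}$ and $\widetilde{\psi}_q = \psi_{q-1}+\psi_q+\psi_{q+1}$ is the multiplier of $\widetilde{\Delta}_q$. The key computation is that, by Poisson summation, $\Phi_q$ is a constant multiple of the $2\pi$-periodization of $\check{\widetilde{\psi}}_q$, the inverse Fourier transform on $\RR$ of $\widetilde{\psi}_q:\RR\to\RR$; and, by the exact dyadic scaling built into Definition \ref{def:LP}, for $q \ge 2$ one has $\widetilde{\psi}_q(\xi) = \Psi(\lambda_q^{-1}\xi)$ for a fixed $\Psi \in C^\infty_c(\RR)$ supported in an annulus away from the origin, so that $\check{\widetilde{\psi}}_q(y) = \lambda_q\,\check{\Psi}(\lambda_q y)$ with $\check{\Psi}\in\mathcal{S}(\RR)$.

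For the first inequality, apply Young's inequality on $\TT$ to get $|\Delta_q f|_{L^r} \lesssim |\Phi_q|_{L^s}\,|\Delta_q f|_{L^p}$ with $\tfrac1s = 1 + \tfrac1r - \tfrac1p \in [0,1]$, and then estimate $|\Phi_q|_{L^s(\TT)}$. Since $\check{\widetilde{\psi}}_q$ concentrates at scale $\lambda_q^{-1}\le 1$ and is Schwartz, the $L^s(\TT)$-norm of its periodization is controlled by $|\check{\widetilde{\psi}}_q|_{L^s(\RR)}$ plus a contribution from the translates by $2\pi n$, $n\neq0$, that decays like $\lambda_q^{-N}$ for any $N$; hence $|\Phi_q|_{L^s(\TT)} \lesssim \lambda_q^{1 - 1/s} = \lambda_q^{1/p - 1/r}$, which is exactly the claimed bound. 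The finitely many low values $q\in\{-1,0,1\}$ are handled separately: there $\lambda_q\sim 1$ and $\Delta_q$ is a fixed Fourier multiplier supported on a bounded set of modes, so the bound holds with an absolute constant.

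For the second inequality, write $\mathcal{L}_\alpha\Delta_q f = \mathcal{L}_\alpha\widetilde{\Delta}_q(\Delta_q f)$; the operator $\mathcal{L}_\alpha\widetilde{\Delta}_q$ has multiplier $i\xi|\xi|^{2\alpha-1}\widetilde{\psi}_q(\xi)$, which by the scaling above equals $\lambda_q^{2\alpha}N(\lambda_q^{-1}\xi)$ with $N(\eta) = i\eta|\eta|^{2\alpha-1}\Psi(\eta)$. Here it is essential that $\Psi$ vanishes near $\eta=0$, so $N\in C^\infty_c(\RR)$ and $\check{N}\in\mathcal{S}(\RR)$. Thus $\mathcal{L}_\alpha\widetilde{\Delta}_q$ is convolution on $\TT$ with a constant multiple of the periodization of $\lambda_q^{2\alpha}\lambda_q\check{N}(\lambda_q\,\cdot)$, whose $L^1(\TT)$-norm is $\lesssim \lambda_q^{2\alpha}|\check{N}|_{L^1(\RR)}$ (the dilation is $L^1$-invariant and the periodic tails are again negligible). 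Young's inequality with an $L^1$ kernel then gives $|\mathcal{L}_\alpha\Delta_q f|_{L^p(\TT)} \lesssim \lambda_q^{2\alpha}|\Delta_q f|_{L^p(\TT)}$, and the cases $q\in\{-1,0,1\}$ are trivial since there $\mathcal{L}_\alpha\Delta_q$ acts on finitely many modes and $\lambda_q^{2\alpha}\sim1$. The only genuinely delicate point in the whole argument is the passage from $\RR$ to $\TT$: one must verify that Poisson summation reproduces precisely the dyadic factors $\lambda_q^{1-1/s}$ and $\lambda_q^{2\alpha}$ with the $n\neq0$ translates contributing only lower-order terms, which follows from the rapid decay of $\check{\Psi}$ and $\check{N}$ together with the concentration of the kernels at scale $\lambda_q^{-1}\le 1$.
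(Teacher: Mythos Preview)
Your proposal is correct and follows essentially the same route as the paper: both statements are reduced to Young's inequality on $\TT$ after writing $\Delta_q f = \widetilde{\Delta}_q(\Delta_q f)$ as a convolution, and the relevant kernel norms are estimated by Poisson-summing the rescaled Schwartz functions from $\RR$. The paper dismisses the first inequality as classical and for the second computes the $L^1$ norm of $\mathcal{L}_\alpha\varphi_q$ exactly as you do, via the pointwise bound $|\widetilde\psi_q(x)|\lesssim \lambda_q^{1+2\alpha}\langle \lambda_q x\rangle^{-N}$ on the non-periodic kernel.
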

\begin{proof}
The first statement is classical and follows directly from Young's inequality. The second statement is surely known but we are not aware of a convenient reference.
In the whole space case, the bound follows from a scaling argument.

In the periodic case we first write $ \Delta_q f = \Delta_q f * \varphi_q$ where $\varphi_q \in C^\infty (\TT) $ is the convolution kernel for $\widetilde\Delta_q $. Then from Young's inequality again we have
$$
| \mathcal{L}_\alpha \Delta_q f   |_{L^p }  \leq | \mathcal{L}_\alpha \varphi_q   |_{L^1 }  |   \Delta_q f   |_{L^p } .
$$
We can use the Poissson summation to write
\begin{equation}\label{eq:aux_bernstein}
\begin{aligned}
 \mathcal{L}_\alpha \varphi_q & =\sum_{k \in \ZZ } \widehat{\varphi_q}(k) i|k|^{2\alpha -1 } ke^{ikx}  \\
&=\sum_{k \in \ZZ } \widehat{\varphi_0}(\l_q^{-1} k) i|k|^{2\alpha -1 } ke^{ikx} \\
& = \sum_{k \in \ZZ }\widetilde \psi_q (x+ 2\pi k)
\end{aligned}
\end{equation}
where for any $q \in \NN$ the non-periodic $\widetilde \psi_q :\RR  \to \RR $ is a Schwartz function and has Fourier transform $  \widehat{\varphi_0}(\l_q^{-1} \xi ) i|\xi |^{2\alpha -1 } \xi $.
Since we are on $\RR$ now, a standard scaling argument shows that $| \widetilde \psi_q(x)| \lesssim \l_q^{1+2\alpha} \langle \l_q x \rangle^{-100}$ where $\langle  x \rangle : = \sqrt{x^2 +1 } $ is the Japanese bracket.  Plugging this bound and summing up in \eqref{eq:aux_bernstein} give the desired estimate.
\end{proof}

\subsection{Moduli of smoothness of some functions}

In our illposedness proof, we need to consider the modulus of continuity of functions in various settings.

The next lemma concerning the moduli of smoothness of Besov functions is well-known, cf. \cite[Chapter 17]{MR3726909}. A proof is included for readers' convenience.
\begin{lemma}\label{lemma:modulus_Besov}
For any $ 0 < s < 1 $, $1 \leq  p \leq \infty$, there exist a constant $C(s,p)$ such that for any $ f \in B^{s}_{p,1}(\TT)$,
$$
 \int_{0}^1  \frac{1 }{h^{ 1+ s }}  \left[ \int_{\TT} | f(x +h) - f(x)|^p \,dx  \right]^\frac{1}{p }\,dh \leq C     |f |_{B^{s}_{p,1}} .
$$
\end{lemma}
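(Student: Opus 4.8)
The plan is to decompose $f$ via Littlewood-Paley and estimate the finite difference $f(\cdot+h)-f(\cdot)$ on each dyadic block in two complementary ways depending on whether the block frequency $\lambda_q$ is large or small compared with $1/h$. First I would write $f=\sum_{q\ge -1}\Delta_q f$ and, by Minkowski's inequality in $L^p(\TT)$, bound the $L^p$ norm of $f(\cdot+h)-f(\cdot)$ by $\sum_q \|\Delta_q f(\cdot+h)-\Delta_q f\|_{L^p}$. For the \emph{high-frequency} regime $\lambda_q h\ge 1$ I would use the trivial bound $\|\Delta_q f(\cdot+h)-\Delta_q f\|_{L^p}\le 2\|\Delta_q f\|_{L^p}$. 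For the \emph{low-frequency} regime $\lambda_q h<1$ I would use the smoothing of the Littlewood-Paley projector: since $\Delta_q f$ has frequency support in an annulus of size $\lambda_q$, the mean value theorem gives $\|\Delta_q f(\cdot+h)-\Delta_q f\|_{L^p}\le h\,\|\partial_x\Delta_q f\|_{L^p}\lesssim h\lambda_q\|\Delta_q f\|_{L^p}$ (the Bernstein-type inequality for the derivative, which is the companion of Lemma 4.16).

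Next I would plug these two estimates into the integral $\int_0^1 h^{-1-s}\big(\int_\TT |f(x+h)-f(x)|^p\,dx\big)^{1/p}\,dh$, interchange the (absolutely convergent) sum over $q$ with the $h$-integral by Tonelli, and compute the resulting one-dimensional integrals. For a fixed $q$, the contribution splits as
\[
\|\Delta_q f\|_{L^p}\Big( \lambda_q \int_{0<h<\lambda_q^{-1}} h^{-s}\,dh \;+\; \int_{\lambda_q^{-1}<h<1} h^{-1-s}\,dh \Big)\lesssim \|\Delta_q f\|_{L^p}\,\lambda_q^{s},
\]
where both integrals converge precisely because $0<s<1$: the first integral is $\lesssim \lambda_q^{-1}\cdot\lambda_q^{s-1}=\lambda_q^{s-2}$... wait, more carefully $\lambda_q\int_0^{\lambda_q^{-1}}h^{-s}dh = \lambda_q\cdot\frac{\lambda_q^{s-1}}{1-s}\sim\lambda_q^{s}$, and the second is $\int_{\lambda_q^{-1}}^1 h^{-1-s}dh\le \frac{\lambda_q^{s}}{s}$. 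Summing over $q\ge -1$ yields $\sum_q \lambda_q^{s}\|\Delta_q f\|_{L^p}=|f|_{B^{s}_{p,1}}$, which is exactly the claimed bound with a constant $C(s,p)$ depending only on $s$ and $p$ through the factors $\frac{1}{1-s}$ and $\frac{1}{s}$.

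The only genuinely delicate point is justifying the interchange of summation and integration and the pointwise manipulation of the series $f=\sum_q\Delta_q f$: one must ensure everything is finite a priori. Since $f\in B^s_{p,1}$, the series $\sum_q\|\Delta_q f\|_{L^p}\le \sum_q \lambda_q^{-s}\cdot\lambda_q^s\|\Delta_q f\|_{L^p}\lesssim |f|_{B^s_{p,1}}$ converges absolutely in $L^p(\TT)$, so $f\in L^p$ and the difference $f(\cdot+h)-f(\cdot)=\sum_q(\Delta_q f(\cdot+h)-\Delta_q f)$ converges in $L^p$ uniformly in $h$; combined with the nonnegativity of the integrand, Tonelli's theorem applies without further care. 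The derivative bound $\|\partial_x\Delta_q f\|_{L^p}\lesssim \lambda_q\|\Delta_q f\|_{L^p}$ follows exactly as in the proof of Lemma 4.16 (write $\Delta_q f=\Delta_q f*\varphi_q$ and differentiate the kernel, using Poisson summation in the periodic case), so I would simply cite that argument rather than reproduce it. Everything else is the routine integral computation above.
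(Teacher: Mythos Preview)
Your proposal is correct and follows essentially the same approach as the paper: decompose $f$ into Littlewood--Paley blocks, split the $h$-integral at $h\sim\lambda_q^{-1}$, use the mean-value/Bernstein estimate $\|\Delta_q f(\cdot+h)-\Delta_q f\|_{L^p}\lesssim h\lambda_q\|\Delta_q f\|_{L^p}$ for small $h$ and the trivial bound for large $h$, and sum. The paper phrases it as ``it suffices to prove the per-block estimate'' whereas you first apply Minkowski and then treat each block, but the computation is identical; your additional remarks on absolute convergence and Tonelli are a welcome clarification.
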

\begin{proof}
It suffices to prove $\int_{0}^1  \frac{1 }{h^{1+ s }} | \Delta_q f(\cdot + h ) -\Delta_q f(\cdot  ) |_{L^p}  \,dh \leq C    \lambda_q^s  | \Delta_q f |_{L^p } $.

We consider the split of the integral into two regions $ 0< h< \l_q^{-1}$ and $ \l_q^{-1}   < h< 1 $. For the region $ 0< h< \l_q^{-1}$, we have $| \Delta_ q f(x +h) -  \Delta_q f(x)| \leq \int_0^h | \nabla  \Delta_q f (x+ t )| \,dt  $ by the mean value theorem, and so by the Minkowski inequality,
\begin{align*}
 | \Delta_q f(\cdot + h ) -\Delta_q f(\cdot  ) |_{L^p} \leq h | \nabla \Delta_q f  |_{L^p} .
\end{align*}
From this estimate as well as the bound $ | \nabla  \Delta_q f  |_{L^p}  \lesssim \l_q  | \Delta_q f  |_{L^p}  $, it follows that
\begin{align*}
 &\int_{0<h<\l_q^{-1} }  \frac{1 }{h^{1+ s }}    | \Delta_q f(\cdot + h ) -\Delta_q f(\cdot  ) |_{L^p} \,dh
  \\
& \leq  \int_{0<h<\l_q^{-1} }  \frac{1 }{h^{1+ s }}  \left[ \int_{\TT} \Big|   \int_0^h |\nabla \Delta_q f(x+ t )| \,dt \Big|^p \,dx  \right]^\frac{1}{p }\,dh\\
& \leq C     |\Delta_q f |_{L^p } \l_q^{s }.
\end{align*}
For the region $ h> \l_q^{-1}$, we have
\begin{align*}
	\int_{ \l_q^{-1} <h < 1  }  \frac{1 }{h^{1+ s }}  | \Delta_q f(\cdot + h ) -\Delta_q f(\cdot  ) |_{L^p}  \,dh & \leq C     |\Delta_q f |_{L^p } \int_{ \l_q^{-1} <h < 1  }  h^{-1 - s } \,dh \\
& \leq C \lambda_q^{s} |\Delta_q f |_{L^p }.
\end{align*}

\end{proof}

Let us also recall that the classical H\"older spaces $C^{k,\beta}(\TT)$ can be characterized as
\begin{equation}\label{eq:holder_LP_condition}
f \in C^{k,\beta }(\TT) \Leftrightarrow f  \in B^{k+\beta}_{\infty,\infty} \Leftrightarrow \,\, \sup_q \l_q^{k+ \beta} |\Delta_q f|_\infty < \infty
\end{equation}
when $0< \beta <1$.

\subsection{The multiplier \texorpdfstring{$e^{i |\xi |^{2\alpha -1} \xi t}$}{} on \texorpdfstring{$\RR$}{R}}

We first derive estimates of oscillatory integrals related to the Littlewood-Paley decomposition of the multiplier $e^{i |\xi |^{2\alpha -1} \xi t}$.

\begin{lemma}[Stationary phase]\label{lemma:stationary_phase}
Let $ 0 < \alpha < \frac{1}{2}$. Let $\psi \in C^\infty_c(\RR)$ be even and such that $\Supp \psi \subset \{ \frac{1}{2} \leq |\xi | \leq 2\}$. Consider for $t \geq 0$ and $q \in \NN$ the oscillatory integral
\begin{equation}
I_q(x,t) =\int_{\RR} e^{i \l_q \varphi(x,\xi)} \psi(\xi) \, d \xi
\end{equation}
where the phase $ \varphi(x,\xi) : =  x \xi + \l_q^{2\alpha -1} |\xi|^{  2\alpha -1} \xi t $.

Then there exists constants $    C_\psi> c_\psi >0  $ depending only on $\psi$ and $\alpha$  such that for any $  q \in \NN$ with $\l_q^{2\alpha} t \geq 1$
\begin{equation}\label{eq:upper_stationary_phase}
|I_q (x , t)|\lesssim_{\alpha, \psi,n }
\begin{cases}
   t^{-1 /2}\l_q^{    -   \alpha }  \quad \text{for $ x \in  [- C_\psi \l_q^{ 2\alpha -1} t, -c_\psi \l_q^{ 2\alpha -1} t ] $} &\\
     \big\langle   \l_q x+ t \l_q^{ 2\alpha   }  \big\rangle^{-n}     \quad \text{otherwise} &
\end{cases}
\end{equation}
where $\langle  x \rangle : = \sqrt{x^2 +1 } $ is the Japanese bracket.

Moreover, for any $  q \in \NN$ with $\l_q^{2\alpha} t \geq 1$ and any $    x \in  [- C_\psi \l_q^{ 2\alpha -1} t, -c_\psi \l_q^{ 2\alpha -1} t ] $,
there exist constants $A_\alpha,$ $a_\alpha>0$ such that
\begin{equation}\label{eq:lower_stationary_phase}
\begin{aligned}
 |  I_q(x,t) | &\geq a_\alpha t^{-1 /2} \l_q^{  -   \alpha } \Big|\psi( \l_q^{-1} |\frac{x}{2\alpha t }|^{ \frac{1}{2\alpha - 1}})\cos(A_\alpha   t^{  \frac{ 1}{ 1 -2\alpha } }  | x |^{- \frac{2\alpha }{ 1 -2\alpha } } + \frac{\pi}{4})  \Big| \\
&\qquad   - o( \l_q^{- \alpha } t^{-\frac{1}{2}}) .
\end{aligned}
\end{equation}

\end{lemma}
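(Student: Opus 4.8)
The plan is to isolate the genuine large parameter inside the phase, then treat \eqref{eq:upper_stationary_phase} by a combination of van der Corput's lemma and non-stationary integration by parts, and \eqref{eq:lower_stationary_phase} by the sharp stationary-phase asymptotics. First I would rescale: writing $\l_q\varphi(x,\xi)=\Lambda\,\phi_\mu(\xi)$ with $\Lambda:=\l_q^{2\alpha}t\ge1$, $\mu:=\l_q^{1-2\alpha}x\,t^{-1}$, and $\phi_\mu(\xi):=\mu\xi+|\xi|^{2\alpha-1}\xi$, one has $I_q(x,t)=\int_{\RR}e^{i\Lambda\phi_\mu(\xi)}\psi(\xi)\,d\xi$, which is real since $\psi$ is even. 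The structural facts I would record: $\phi_\mu'(\xi)=\mu+2\alpha|\xi|^{2\alpha-1}\sign(\xi)$ and $\phi_\mu''(\xi)=2\alpha(2\alpha-1)|\xi|^{2\alpha-2}\sign(\xi)$; the second derivative is independent of $\mu$ and satisfies $|\phi_\mu''|\sim_\alpha1$ on $\Supp\psi\subset\{\tfrac12\le|\xi|\le2\}$; and $\phi_\mu'$ vanishes (on the positive axis) only at $\xi_\ast=(-\mu/2\alpha)^{1/(2\alpha-1)}$, present only for $\mu<0$, with $\xi_\ast\in[\tfrac12,2]$ exactly when $\mu\in[-C_\psi,-c_\psi]$ for constants $0<c_\psi<C_\psi$ depending only on $\alpha$ — equivalently when $x\in[-C_\psi\l_q^{2\alpha-1}t,-c_\psi\l_q^{2\alpha-1}t]$.

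For the upper bound I would split along this dichotomy. When $x$ is outside the above interval, $\phi_\mu$ has no critical point in the interior of $\Supp\psi$, and $\Lambda|\phi_\mu'(\xi)|\gtrsim|\l_q x+t\l_q^{2\alpha}|$ on $\Supp\psi$ away from the (at most one) edge zero of $\phi_\mu'$ — which, if present, lies where $\psi$ vanishes to infinite order, and where moreover $|\l_q x+t\l_q^{2\alpha}|\sim\Lambda$. Integrating by parts $n$ times with $L=(i\Lambda\phi_\mu')^{-1}\p_\xi$ (the higher $\xi$-derivatives of $\phi_\mu$ being $\mu$-independent and bounded on $\Supp\psi$, and the infinite-order vanishing of $\psi$ absorbing any edge zero of $\phi_\mu'$) gives $|I_q|\lesssim_n|\l_q x+t\l_q^{2\alpha}|^{-n}$; together with $|I_q|\le|\psi|_{L^1}$ this is the claimed $\langle\l_q x+t\l_q^{2\alpha}\rangle^{-n}$ decay. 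When $x$ lies in the interval, van der Corput's second-derivative estimate on $\Supp\psi$ (where $|\phi_\mu''|\gtrsim_\alpha1$) gives $|I_q|\lesssim_\alpha\Lambda^{-1/2}=t^{-1/2}\l_q^{-\alpha}$.

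For the lower bound I would work in the regime $x\in[-C_\psi\l_q^{2\alpha-1}t,-c_\psi\l_q^{2\alpha-1}t]$, where $\phi_\mu$ has the non-degenerate critical point $\xi_\ast(\mu)$ with $|\phi_\mu''(\xi_\ast)|=2\alpha(1-2\alpha)\xi_\ast^{2\alpha-2}$, together with its mirror point $-\xi_\ast$. The classical one-dimensional stationary-phase expansion with uniform remainder yields
\[
I_q(x,t)=2\sqrt{2\pi}\,\Lambda^{-1/2}|\phi_\mu''(\xi_\ast)|^{-1/2}\,\psi(\xi_\ast)\,\cos\!\big(\Lambda\phi_\mu(\xi_\ast)\pm\tfrac{\pi}{4}\big)+R,
\]
with $|R|\lesssim_\alpha\Lambda^{-1}$ uniformly in $\mu$, hence $R=o(\l_q^{-\alpha}t^{-1/2})$ as $\Lambda=\l_q^{2\alpha}t\to\infty$. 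Three elementary identifications finish the proof: (i) $\xi_\ast=\l_q^{-1}|x/(2\alpha t)|^{1/(2\alpha-1)}$, so $\psi(\xi_\ast)$ is exactly the factor appearing in \eqref{eq:lower_stationary_phase}; (ii) substituting $\xi_\ast^{2\alpha-1}=-\mu/2\alpha$ and simplifying exponents gives $\Lambda\phi_\mu(\xi_\ast)=\l_q\varphi(x,\xi_\ast)=A_\alpha\,t^{1/(1-2\alpha)}|x|^{-2\alpha/(1-2\alpha)}$ with $A_\alpha=\tfrac{1-2\alpha}{2\alpha}(2\alpha)^{1/(1-2\alpha)}>0$; (iii) whenever $\psi(\xi_\ast)\ne0$ one has $\xi_\ast\in[\tfrac12,2]$, so the prefactor $2\sqrt{2\pi}\,\Lambda^{-1/2}|\phi_\mu''(\xi_\ast)|^{-1/2}=2\sqrt{2\pi}(2\alpha(1-2\alpha))^{-1/2}\xi_\ast^{1-\alpha}\,t^{-1/2}\l_q^{-\alpha}$ is bounded below by $a_\alpha\,t^{-1/2}\l_q^{-\alpha}$ with $a_\alpha:=2\sqrt{2\pi}(2\alpha(1-2\alpha))^{-1/2}2^{-(1-\alpha)}>0$. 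Then $|I_q|\ge|\text{leading term}|-|R|$ is \eqref{eq:lower_stationary_phase} (the case $\psi(\xi_\ast)=0$ being trivial since the right-hand side is then nonpositive).

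The main obstacle is the uniformity of the remainder $R$, and more generally of both bounds, across the full parameter range — in particular at the parameter values where the critical point $\xi_\ast$ crosses the edge of $\Supp\psi$, where the leading term of the expansion degenerates and the crude stationary-phase estimate need not be uniformly $o(\Lambda^{-1/2})$. I would handle this by exploiting that $\psi$, being $C^\infty$ with support in the closed annulus, vanishes together with all its derivatives at $|\xi|=\tfrac12,2$: localizing near $\xi_\ast$ away from these endpoints one applies the clean interior stationary-phase expansion with uniform error (all relevant quantities — $\|\psi\|_{C^N}$, $\|\phi_\mu\|_{C^N(\Supp\psi)}$, $|\phi_\mu''(\xi_\ast)|^{-1}$, and the non-degeneracy ratio $|\phi_\mu'(\xi)|/|\xi-\xi_\ast|\gtrsim1$ — being uniformly controlled for $\mu$ in the relevant compact set), while the complementary piece, supported where $\psi$ vanishes to infinite order, is $O(\Lambda^{-N})$ by repeated integration by parts regardless of how small $\phi_\mu'$ becomes there. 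The remaining work — bookkeeping the van der Corput and stationary-phase constants, which depend only on $\alpha$ and finitely many seminorms of $\psi$ — is routine.
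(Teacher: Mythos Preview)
Your approach is essentially the paper's: the same rescaling to extract the large parameter $\Lambda=\l_q^{2\alpha}t$ and the reduced phase $\phi_\mu(\xi)=\mu\xi+|\xi|^{2\alpha-1}\xi$, stationary phase on the interval where $\xi_\ast$ hits the support, and repeated integration by parts outside. Your explicit identifications of $\xi_\ast$, $A_\alpha$, and $a_\alpha$ are correct and match what the paper obtains (though it leaves the constants implicit).

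The one substantive difference is how the two proofs handle the boundary of the stationary region. You take the tight interval $\xi_\ast\in[\tfrac12,2]$, which forces you to confront the case where $\xi_\ast$ sits just outside $\Supp\psi$; there $\phi_\mu'$ is not uniformly bounded below on $\Supp\psi$, and your claim $\Lambda|\phi_\mu'(\xi)|\gtrsim|\l_q x+t\l_q^{2\alpha}|$ fails uniformly as stated. Your fix via the infinite-order vanishing of $\psi$ at $|\xi|=\tfrac12,2$ does work, but requires the careful tracking you describe. The paper sidesteps this entirely by enlarging the ``stationary'' case to $|\xi_\ast|\in[\tfrac14,4]$ (a buffer around $\Supp\psi$): then in the complementary case one has $|\phi_\mu'|\gtrsim\langle\mu\rangle$ uniformly on $\Supp\psi$, and the integration-by-parts inductively controls $|\p_\xi^k\eta_n|\lesssim_{k,n}|\mu+2\alpha|\xi|^{2\alpha-1}|^{-n}$ without any edge analysis. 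Since the lemma lets you choose $c_\psi,C_\psi$, adopting the wider interval is the cleaner route and eliminates the obstacle you flag in your final paragraph. A minor further difference: you invoke van der Corput for the stationary-case upper bound, whereas the paper simply reads it off the stationary-phase asymptotic already needed for the lower bound; either is fine.
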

\begin{proof}

By the   rescaling $y = x t^{-1} \l_q^{1 -2\alpha }$ we will consider
\begin{equation}\label{eq:aux_stationary_phase_1}
I_q(y,t) =\int_{\RR} e^{i \l_q^{2\alpha} t \varphi(y,\xi)} \psi(\xi) \, d \xi
\end{equation}
with the new phase $  \varphi(y,\xi) = y \xi +   |\xi|^{  2\alpha -1} \xi   $. This transformation allows us to get rid of the extra dependencies in the phase function.

First of all, the phase $\xi \mapsto \varphi(y ,\xi)$ is smooth on the support of $\psi(\xi)$. Concerning stationary points defined by
$$
\p_\xi \varphi= y  +  2\alpha    |\xi|^{2\alpha -1}   =0,
$$
we have two solutions $\xi_{s\pm }(y )$ given by
\begin{align}\label{eq:aux_stationary_phase_2}
\xi_{s\pm }(y ) = \pm  ( -\frac{y }{2\alpha  })^{ \frac{1}{2\alpha -1} } .
\end{align}

Since the support of $\psi$ lies in $[1/2,2],$ we can consider two cases: $ \frac{1}{4} \leq |\xi_{s\pm }( y )| \leq 4 $ and $|\xi_{s\pm }( y )| \leq  \frac{1}{4} $ or $|\xi_{s\pm }( y )| \geq  4 $. We discuss these two cases below.

\noindent
 \textbf{Case 1: $ \frac{1}{4} \leq |\xi_{s\pm }(y)| \leq 4 $.}

By \eqref{eq:aux_stationary_phase_2}, we have $ y <0$ and $| y |\sim 1 $ with the specific values depending on $\alpha$. At the stationary points, we have the bound
\begin{align*}
 |\p_\xi^2 \varphi |  \sim    1 ,
\end{align*}
where the  constant depends on $\alpha$, and hence
\begin{align}\label{eq:aux_stationary_phase_3}
  \varphi(y ,\xi_{s\pm } ) &=  \pm A_\alpha   | y |^{- \frac{2\alpha }{ 1 -2\alpha }}  .
\end{align}
We substitute these estimates in the classical stationary phase formula (see for instance~\cite[Proposition 3, p.334]{MR1232192}):
\begin{align*}
I_q(y  ,t ) & =   \sum_{\pm}  \psi(\xi_{s\pm } ) e^{i \l_q^{2\alpha }t \varphi( y , \xi_{s\pm } )  \pm i  \frac{\pi}{4 } }\Big( \frac{1}{ \l_q^{2\alpha }t |\p_\xi^2 \varphi |} \Big)^{\frac{1}{2}} + o(\l_q^{ - \alpha }t^{ -\frac{1}{2}} )      \\
& =   2\Re \Big (   \psi(\xi_{s+ } ) e^{i \l_q^{2\alpha }t \varphi(y, \xi_{s+ } )  + i \frac{\pi}{4 } } \Big( \frac{1}{ \l_q^{2\alpha }t  |\p_\xi^2 \varphi |} \Big)^{\frac{1}{2}} \Big ) + o(\l_q^{ - \alpha }t^{ -\frac{1}{2}} )
\end{align*}
where $\Re $ denotes the real part of a complex number. We remark that the implicit constant in the error term depends on some higher order derivative bounds of $\psi$ and $\varphi$ which are independent of $x$, $t$, and $q$.

It follows that for some fixed $A_\alpha, a_\alpha >0$ and all $   y  \sim 1    $
\begin{equation}\label{eq:F_q_bound1}
\begin{aligned}
I_q(y  ,t )
= a_\alpha \psi(  |\frac{y}{2\alpha   }|^{ \frac{1}{2\alpha - 1}}) &t^{-1 /2}\l_q^{  -   \alpha }  \cos(A_\alpha  \l_q^{2\alpha }t  | y |^{- \frac{2\alpha }{ 1 -2\alpha }} + \frac{\pi}{4}) \\
&  \quad + o(\l_q^{ - \alpha }t^{ -\frac{1}{2}} )  .
\end{aligned}
\end{equation}

The upper bound \eqref{eq:upper_stationary_phase} and the lower bound \eqref{eq:lower_stationary_phase} immediately follow from \eqref{eq:F_q_bound1}.

\noindent
\textbf{Case 2 : $ \frac{1}{4} > |\xi_{s\pm }(y)| $ or  $|\xi_{s\pm }(y)|  > 4 $.}

In this case, the derivative of the phase does not vanish: $ \p_\xi \phi (y ,\xi )  = y +     2\alpha   |\xi|^{2\alpha -1}     \neq 0,$ and in fact $ |y +     2\alpha   |\xi|^{2\alpha -1}| \gtrsim 1 $ for all $\xi \in \Supp  \psi$ (with constants depending on $\alpha$). Let us define for $n \in \NN$ a sequence of bump functions inductively
\begin{equation}
\eta_n =
\begin{cases}
 \psi & \text{when $n =0$}\\
 \p_\xi \left(  \frac{\eta_{n-1}}{ \varphi'} \right) &\text{when $n \geq  1$}.
\end{cases}
\end{equation}

Repeated integration by parts gives
\begin{align}
I_q( y  ,t) &= \int_{\RR} (- i  \l_q^{  2 \alpha }t  )^{-1} e^{i  \l_q^{  2 \alpha }t  \varphi } \p_\xi \left(  \frac{\psi }{ \varphi'} \right) \, d \xi \nonumber \\
 & = \int_{\RR}  (-i  \l_q^{  2 \alpha }t  )^{-n} e^{i  \l_q^{  2 \alpha }t   \varphi } \eta_n d\xi .  \label{eq:aux_stationary_phase}
\end{align}

To reduce notation, let us write $z = y +       |\xi|^{2\alpha -1}     $.  We will prove by induction in $n $ the estimates
\begin{equation}\label{eq:aux_lemma:stationary_phase_1}
	| \p_\xi^k \eta_n  | \lesssim_{k,n} |z |^{-n }
\end{equation}
for all $k=0,1,2,\dots$ and all $n=0,1,2, \dots,$ on the support of $\psi$.
Here the implicit constant depends on $k,n$ and $\alpha$ but not on $t,x$ or $q$.

First, both $ |\p_\xi^k \varphi | \lesssim_k 1$ and $|  (\p_\xi \varphi)^{-1} | \lesssim |  y +      |\xi|^{2\alpha -1}   |^{-1} $ hold on $\Supp  \psi$. It follows that
 on $\Supp  \psi$
\begin{equation}
 |\p_\xi^k (\p_\xi \varphi)^{-1} | \lesssim_k
\begin{cases}
| z |^{-1}   & \quad \text{if $k =0$}\\
|z |^{-2}  & \quad \text{if $k \geq 1 $}.
\end{cases}
\end{equation}

When $ n=0$, we have $|\p_\xi^k \psi| \leq C  $ and \eqref{eq:aux_lemma:stationary_phase_1} holds. Assuming \eqref{eq:aux_lemma:stationary_phase_1} holds for some $n \in \NN$, at the level $n+1$ we have
\begin{align*}
\left| \p_\xi^k \eta_n  \right| & \leq    \left|\p_\xi^{k+1} \big( \frac{\eta_{n-1} }{\varphi' }   \big)  \right|  \\
& \lesssim_k \sum_{ 0 \leq  i \leq k+1 } \left|\p_\xi^{k+1 - i } \big(  \eta_{n-1}    \big)  \right| \left|\p_\xi^{i}  (\p_\xi \varphi )^{-1}      \right| \\
& \lesssim_{k,n} | z |^{-n+1}  \left(  |z  |^{  -1} + |z  |^{-2}  \right) \lesssim_{k,n} |z |^{-n }
\end{align*}
for all $\xi$ in the support of $\psi.$
We have thus proven \eqref{eq:aux_lemma:stationary_phase_1}.

Thus in the second case,
\begin{align*}
| I_q( y  ,t ) | &\lesssim_n (\l_q^{2\alpha } t)^{-n} \int_{\frac{1}{2} \leq |\xi | \leq 2} |y+  |\xi|^{2\alpha-1} |^{-n} \,d\xi.
\end{align*}
This translates to
\[ I_q(x,t) \lesssim_n   \langle  \frac{x}{t \l_q^{2\alpha -1 } } \rangle^{-n}  (\l_q^{2\alpha } t)^{-n} \lesssim_n \langle \l_q x + t \l_q^{2\alpha} \rangle^{-n} .    \]

\end{proof}

\subsection{Estimates for \texorpdfstring{$e^{t\mathcal{L}_\alpha}$}{} and Wainger's example}

We have the following brute-force estimate for $e^{t\mathcal{L}_\alpha}$.

\begin{lemma}\label{lemma:K_q}
Let $ 0 < \alpha < \frac{1}{2}$. Let $1 \leq p  \leq \infty$ and $ f\in L^p(\TT) $. For  any $t \in [0,1]$ and $q\in \NN$, there holds
\begin{equation}
\Big| \Delta_q   e^{t\mathcal{L}_\alpha}f   \Big|_{L^p (\TT)} \lesssim \max\{ 1,t^{ \frac{1}{2} } \lambda_q^{ \alpha} \}  | f|_{L^p (\TT) } .
\end{equation}
\end{lemma}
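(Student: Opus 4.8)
The strategy is to write the Littlewood–Paley block $\Delta_q e^{t\mathcal{L}_\alpha}f$ as a convolution on $\TT$ against a kernel $K_q(\cdot,t)$, bound $|K_q(\cdot,t)|_{L^1(\TT)}$ uniformly, and conclude by Young's inequality. So first I would set
\[
\Delta_q e^{t\mathcal{L}_\alpha} f = K_q(\cdot,t) * f, \qquad K_q(x,t) = \sum_{k\in\ZZ} \psi_q(k)\, e^{i k|k|^{2\alpha-1}t}\, e^{ikx},
\]
using $\widetilde\Delta_q\Delta_q = \Delta_q$ so that only $k \sim \lambda_q$ contribute. The point is that $|\Delta_q e^{t\mathcal{L}_\alpha}f|_{L^p} \le |K_q(\cdot,t)|_{L^1}\,|f|_{L^p}$, reducing everything to an $L^1$ bound on $K_q$ independent of $p$.

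Next I would pass from the periodic sum to the line via Poisson summation, exactly as in the proof of Lemma \ref{lemma:bernstein}: $K_q(x,t) = \sum_{n\in\ZZ} \widetilde K_q(x+2\pi n, t)$ where $\widetilde K_q$ is the inverse Fourier transform on $\RR$ of $\xi\mapsto \psi_q(\xi)e^{i\xi|\xi|^{2\alpha-1}t}$. Writing $\psi_q(\xi) = \psi_0(\lambda_q^{-1}\xi)$ and rescaling $\xi = \lambda_q\eta$, one gets $\widetilde K_q(x,t) = \lambda_q \, I_q(\lambda_q x, t)$ with $I_q$ exactly the oscillatory integral of Lemma \ref{lemma:stationary_phase} (with $\psi = \psi_0$, whose support lies in $\{1/2\le|\xi|\le 2\}$). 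Thus $|K_q(\cdot,t)|_{L^1(\TT)} \le |\widetilde K_q(\cdot,t)|_{L^1(\RR)} = \int_\RR |I_q(y,t)|\,dy$, and the whole lemma comes down to estimating this last integral.

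For the final estimate I would split into $\lambda_q^{2\alpha}t \le 1$ and $\lambda_q^{2\alpha}t \ge 1$. In the regime $\lambda_q^{2\alpha}t \le 1$ the phase has no relevant oscillation and the trivial bound $|I_q|_{L^1}\lesssim 1$ (e.g. from $|I_q(y,t)|\lesssim \langle y\rangle^{-2}$ by one integration by parts away from a bounded $y$-window, and $\lesssim 1$ on that window) gives the constant $1$. In the regime $\lambda_q^{2\alpha}t\ge 1$, I apply Lemma \ref{lemma:stationary_phase}: on the stationary-phase window $y \in [-C_\psi\lambda_q^{2\alpha-1}t, -c_\psi\lambda_q^{2\alpha-1}t]$ — wait, more precisely after the rescaling the window for $y=\lambda_q x$ has length $\sim \lambda_q^{2\alpha}t$ and there $|I_q|\lesssim t^{-1/2}\lambda_q^{-\alpha}$, contributing $\lesssim \lambda_q^{2\alpha}t\cdot t^{-1/2}\lambda_q^{-\alpha} = t^{1/2}\lambda_q^{\alpha}$ to $\int|I_q|$; off that window $|I_q(y,t)|\lesssim \langle y + t\lambda_q^{2\alpha}\rangle^{-2}$ is integrable with bound $O(1)\lesssim t^{1/2}\lambda_q^\alpha$. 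Hence $|K_q(\cdot,t)|_{L^1}\lesssim \max\{1, t^{1/2}\lambda_q^\alpha\}$ and the lemma follows. The main obstacle is bookkeeping the rescaling so that Lemma \ref{lemma:stationary_phase}'s windows and exponents line up correctly, and verifying that the tail bound $\langle \lambda_q x + t\lambda_q^{2\alpha}\rangle^{-n}$ (with, say, $n=2$) indeed integrates to $O(1)$ uniformly in $q,t$; everything else is routine.
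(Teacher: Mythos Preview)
Your approach is correct and, in the regime $t\lambda_q^{2\alpha}\ge 1$, essentially identical to the paper's: both pass to the non-periodic kernel via Poisson summation and invoke Lemma~\ref{lemma:stationary_phase} to bound the $L^1$ norm. The paper sums $\sum_n |K_q(\cdot+2\pi n)|_{L^1([-\pi,\pi])}$ directly (using that $\lambda_q^{2\alpha-1}t\le 1$ so the stationary-phase window meets only $O(1)$ periods), whereas you use the slightly slicker inequality $|K_q|_{L^1(\TT)}\le |\widetilde K_q|_{L^1(\RR)}$; both give $t^{1/2}\lambda_q^{\alpha}$. The one genuine difference is the regime $t\lambda_q^{2\alpha}\le 1$: the paper avoids kernel estimates entirely and instead expands $e^{t\mathcal{L}_\alpha}\Delta_q f=\sum_{n\ge 0}(t\mathcal{L}_\alpha)^n\Delta_q f/n!$ and applies Bernstein (Lemma~\ref{lemma:bernstein}) termwise to get $|\cdot|_{L^p}\lesssim e^{t\lambda_q^{2\alpha}}|f|_{L^p}\lesssim |f|_{L^p}$. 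Your non-stationary-phase argument for the kernel works equally well here and is arguably more uniform with the other case; the paper's series trick is shorter but only buys brevity.

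One bookkeeping point to clean up: the identity $\widetilde K_q(x,t)=\lambda_q I_q(\lambda_q x,t)$ is not consistent with the $I_q$ of Lemma~\ref{lemma:stationary_phase}; the correct relation is $\widetilde K_q(x,t)=\lambda_q I_q(x,t)$, so that $|\widetilde K_q|_{L^1(\RR)}=\lambda_q\int_\RR|I_q(x,t)|\,dx$. Your window-times-height arithmetic then reads $\lambda_q\cdot(\lambda_q^{2\alpha-1}t)\cdot(t^{-1/2}\lambda_q^{-\alpha})=t^{1/2}\lambda_q^{\alpha}$, and the tail is $\lambda_q\int\langle\lambda_q x+t\lambda_q^{2\alpha}\rangle^{-2}dx=O(1)$ after substituting $u=\lambda_q x+t\lambda_q^{2\alpha}$. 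The final bounds are unchanged.
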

\begin{proof}
We first show the bound when $t^{ \frac{1}{2} } \lambda_q^{ \alpha}  \leq 1$. In this case, we have
\begin{equation}\label{eq:aux_exponentialsum}
\Delta_q   e^{t\mathcal{L}_\alpha}f  =   e^{t\mathcal{L}_\alpha}\Delta_q f = \sum_{ n\geq 0} \frac{(t\mathcal{L}_\alpha )^n }{n!} \Delta_q f .
\end{equation}
Due to the compact Fourier support of $\Delta_q f $, this equality \eqref{eq:aux_exponentialsum} can be shown by looking at the Fourier side. By Lemma \ref{lemma:bernstein}, \eqref{eq:aux_exponentialsum}, and the fact $ t\l_q^{2\alpha} \leq 1 $, it follows that
\begin{equation}
\begin{aligned}
\Big| \Delta_q   e^{t\mathcal{L}_\alpha}f  \Big|_{L^p (\TT)} &  \leq    \sum_{ n\geq 0}    \frac{|  (t\mathcal{L}_\alpha )^n \Delta_q f  |_{L^p (\TT)}}{n!}  \\
& \lesssim  |f  |_{L^p (\TT)} \sum_{ n\geq 0} \frac{   (t\l_q^{2\alpha} )^n   }{n!} \\
& \lesssim  |f  |_{L^p (\TT)} .
\end{aligned}
\end{equation}

It remains to prove the estimate when $  t\l_q^{2\alpha} \geq 1  $.
%}
Consider $K_q:\RR \to \RR$ the (non-periodic) kernel of $\Delta_q e^{i|\xi|^{2\alpha - 1} \xi t}$ and denote by $k_q$ the periodic kernel $\Delta_q   e^{t\mathcal{L}_\alpha} $. The Poisson summation gives
\begin{equation}\label{eq:aux_poisson}
k_q(x) = \sum_{n \in \ZZ } \psi_q(n) e^{i(nx + |n|^{2\alpha -1} n t)} =  \sum_{n \in \ZZ } K_q(x + 2\pi n)
\end{equation}
where $\psi_q $ are the smooth cutoff functions as in Definition \ref{def:LP}.
So it suffices to show suitable estimates for the non-periodic $K_q$ and then take a summation.

The non-periodic kernel $K_q$ satisfies
\begin{equation}\label{eq:aux_poisson_1}
 K_q(x) = \int_{\RR} \psi_q (\xi ) e^{i(\xi x + |\xi|^{2\alpha -1} \xi t)} \,d\xi  ,
\end{equation}
and hence
\begin{equation}\label{eq:aux_poisson_2}
 K_q(x) =  \l_q \int_{\RR} \psi_0 ( \xi ) e^{i \l_q (\xi x + \l_q^{ 2\alpha -1 }|\xi|^{2\alpha -1} \xi t)} \,d\xi   ,
\end{equation}
where we note that $\Supp \psi_0 \subset \{ \frac{1}{2} \leq |\xi| \leq 2  \}$.
By Lemma \ref{lemma:stationary_phase} and $ \l_q^{2\alpha} t \geq 1$,   we have
\begin{enumerate}
\item  when $|x| \lesssim \l_q^{2\alpha -1}t $.
$$
|K_q(x)| \lesssim \l_q \l_q^{ -\alpha } t^{-\frac{1}{2}}
$$

\item when $
 \l_q^{2\alpha -1}t \lesssim |x|  $
$$
|K_q (x)| \lesssim  \l_q  \langle   \l_q x+ t \l_q^{ 2\alpha   }  \rangle^{-100}
$$

\end{enumerate}
Since $ \l_q^{1-2\alpha } t  \leq 1 $ due to $0\leq t\leq 1$, the number of intervals $ [ -\pi + 2n\pi, \pi + 2n\pi]$ where we have to use the first bound is bounded by some absolute constant. So, summation in $n$ over $\ZZ$ in \eqref{eq:aux_poisson}    gives
\begin{align*}
|k_q |_{L^1(\TT)} & \leq \sum_{n \in \ZZ  } |K_q (\cdot + 2\pi n) |_{L^1( [  -\pi,\pi ]  ) }  \\
& \lesssim  \l_q^{1-\alpha } t^{-\frac{1}{2}}  \l_q^{2\alpha -1}t + \l_q   \sum_{n \geq 1  }  \langle   \l_q n + t \l_q^{ 2\alpha   }  \rangle^{-100}   \\
& \lesssim t^{ \frac{1}{2} }  \l_q^{ \alpha}    .
\end{align*}

\end{proof}

The main building device for the proof of illposedness is the following example, which was considered by Waigner in \cite{MR182838} with $t =1$.

\begin{lemma}[Wainger's example] \label{lemma:bulding_f_pless2}
Let $0 < \alpha < \frac{1}{2}$. For any $ 1 \leq  p < 2$ and $\ep>0$, there exists a real-valued and even function $f \in L^p(\TT)$   such that the following holds.

For any  $ 0< t \leq 1 $ and any sufficiently large integer $ q  $  with $\l_q^{2\alpha} t \geq 1$,
\begin{equation}\label{eq:bulding_f_pless2}
	\Big| e^{t \mathcal{L}_\alpha } \Delta_q  f  \Big|_{L^p(\TT)} \geq   t^{ \frac{1}{ p }  - \frac{1}{2} } \lambda_q^{2\alpha(\frac{1}{ p }  - \frac{1}{2} )  - \ep}  .
\end{equation}

\end{lemma}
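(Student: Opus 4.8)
The plan is to construct $f$ explicitly as a lacunary-type sum tuned to the stationary-phase behavior of $K_q$ established in Lemma \ref{lemma:stationary_phase}, mimicking Wainger's construction but tracking the $t$-dependence uniformly. First I would choose a rapidly increasing sequence of frequencies $N_j$ (say $N_j = 2^{j^2}$ or similar, so the dyadic blocks are widely separated) and set
\begin{equation}\label{eq:wainger_ansatz}
f(x) = \sum_{j} a_j \, \cos(N_j x), \qquad a_j = N_j^{-1/p} j^{-2/p'},
\end{equation}
or a smooth variant $f = \sum_j a_j \varphi_{q_j}$ with $N_j \sim \lambda_{q_j}$, so that $f \in L^p(\TT)$ for $p<2$ by Littlewood--Paley (the $\ell^p$-type summability of the block norms forces $p<2$, which is exactly the constraint in the statement). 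Each block $\Delta_q f$ is then essentially a single term $a_j \cos(N_j x)$ when $q = q_j$, and zero otherwise; the small logarithmic correction $j^{-2/p'}$ is what is needed to make the series converge while losing only an $\epsilon$ in the exponent.

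Next I would estimate $\big| e^{t\mathcal L_\alpha} \Delta_{q_j} f \big|_{L^p}$ from below. Writing $e^{t\mathcal L_\alpha}\Delta_{q_j} f = k_{q_j} * (a_j \cos(N_j x))$ and using the Poisson summation identity \eqref{eq:aux_poisson} together with the lower bound \eqref{eq:lower_stationary_phase} of Lemma \ref{lemma:stationary_phase}, the convolution kernel $k_{q_j}$ has, on the interval of stationary phase $x \in [-C_\psi \lambda_{q_j}^{2\alpha-1}t, -c_\psi \lambda_{q_j}^{2\alpha-1}t]$ (which has length $\sim \lambda_{q_j}^{2\alpha-1} t$), amplitude $\sim \lambda_{q_j}^{1-\alpha} t^{-1/2}$ with an oscillatory factor $\cos(A_\alpha \lambda_{q_j}^{2\alpha} t |y|^{-2\alpha/(1-2\alpha)} + \pi/4)$; elsewhere it decays rapidly. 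Thus the $L^p$ norm of the kernel restricted to this region is of size
\begin{equation}\label{eq:kernel_Lp}
\big| k_{q_j} \big|_{L^p(\text{stationary region})} \gtrsim \lambda_{q_j}^{1-\alpha} t^{-1/2} \cdot \big( \lambda_{q_j}^{2\alpha-1} t \big)^{1/p} = t^{1/p - 1/2} \lambda_{q_j}^{2\alpha(1/p - 1/2)} \cdot \lambda_{q_j}^{?},
\end{equation}
and after keeping track of the $\lambda_{q_j}$-powers carefully one gets exactly $t^{1/p-1/2}\lambda_{q_j}^{2\alpha(1/p-1/2)}$ up to the $\lambda_{q_j}^{-\epsilon}$ loss coming from the logarithmic weights. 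The point is that convolution with a single cosine $\cos(N_j x)$ of matching frequency returns essentially the same $L^p$ lower bound (modulated); one must check that the oscillation in the kernel does not conspire to kill the $L^p$ norm, which follows because $\cos$ of a function with nonvanishing derivative spends a positive fraction of the interval at size $\gtrsim 1/2$ — this is the classical van der Corput / sublevel set estimate and should be invoked as a lemma.

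Then I would assemble the blocks: since the $N_j$ are lacunary with huge gaps, the Littlewood--Paley square function is dominated by a single block at each dyadic scale, so $\big| e^{t\mathcal L_\alpha}\Delta_q f\big|_{L^p}$ is comparable to the single-block estimate above for $q = q_j$, with no interference from other blocks. The main obstacle I anticipate is the uniformity of the lower bound in $t$: the stationary-phase asymptotics in Lemma \ref{lemma:stationary_phase} come with an error term $o(\lambda_q^{-\alpha} t^{-1/2})$ whose implicit decay rate must be controlled uniformly for all $t \in (0,1]$ with $\lambda_q^{2\alpha} t \geq 1$, and near the threshold $\lambda_q^{2\alpha} t \approx 1$ the asymptotic expansion is barely valid — one needs the leading term to genuinely dominate. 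This is handled by taking $q$ sufficiently large (depending on how close $\lambda_q^{2\alpha} t$ is to $1$) and by a careful non-stationary/stationary split, but it is the delicate part; the rest is bookkeeping of exponents. Finally, evenness and real-valuedness of $f$ are immediate from the cosine ansatz, and the claimed bound \eqref{eq:bulding_f_pless2} follows by choosing the logarithmic weights so the $\epsilon$-loss absorbs all the accumulated constants and lower-order factors.
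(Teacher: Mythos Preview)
Your cosine ansatz has a fatal flaw: the operator $e^{t\mathcal L_\alpha}$ acts on a single Fourier mode as a pure phase shift, so
\[
e^{t\mathcal L_\alpha}\bigl(a_j\cos(N_j x)\bigr)=a_j\cos\bigl(N_j x+N_j|N_j|^{2\alpha-1}t\bigr),
\]
whose $L^p(\TT)$ norm is a fixed constant times $a_j$, independent of $t$ and of $N_j$. With your choice $a_j=N_j^{-1/p}j^{-2/p'}$ this \emph{decays} in $q$, whereas the claimed lower bound $t^{1/p-1/2}\lambda_q^{2\alpha(1/p-1/2)-\ep}$ \emph{grows} in $q$ for $p<2$. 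The step where you say ``convolution with a single cosine of matching frequency returns essentially the same $L^p$ lower bound'' is exactly where the argument breaks: convolving the kernel $k_q$ with $\cos(N_j\cdot)$ collapses to a single Fourier coefficient of $k_q$, not the kernel itself. The $L^p$ norm of the kernel is irrelevant here.

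The mechanism in Wainger's example is not lacunarity but \emph{density} of modes in each block: the paper takes $\widehat f(n)\sim|n|^{-(1-1/p+\ep/2)}$, so $\Delta_q f$ contains $\sim\lambda_q$ modes of roughly equal amplitude. Then $e^{t\mathcal L_\alpha}\Delta_q f$ is, up to the power weight, precisely the oscillatory integral of Lemma~\ref{lemma:stationary_phase}, and the stationary-phase concentration on an interval of length $\sim\lambda_q^{2\alpha-1}t$ with amplitude $\sim\lambda_q^{1/p-\ep/2-\alpha}t^{-1/2}$ gives the lower bound directly. Your ``smooth variant $\varphi_{q_j}$'' would work if $\varphi_{q_j}$ were the Littlewood--Paley kernel itself (so that $e^{t\mathcal L_\alpha}\varphi_{q_j}=k_{q_j}$), but that is a different construction from the cosine one and you would then need to check that the resulting $f$ lies in $L^p$, which requires exactly the power-law coefficient choice the paper makes.
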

\begin{proof}

We first discuss this example in $\RR$ and then use the Poisson summation to transfer the result to $\TT$.

Fix an even cutoff function $\eta(\xi)$ such that $\eta(\xi)=1$ when $|\xi| \geq 2$ and $\eta(\xi) =0$ when $|\xi| \leq \frac{3}{2}$.  For small $\ep>0$, define the function $f_c :\RR \to \RR$ by its Fourier transform,
\begin{align}
\widehat{f_c}(\xi) = \eta(\xi) |\xi|^{-(1-\frac{1}{p} +\frac{\ep}{2})}.
\end{align}

Then we have the following properties for $f_c$:
\begin{itemize}
\item $f_c$ is real-valued and even.

\item $f_c  $ is continuous away from $x=0$, and $ |x^N f(x) | \to 0 $ as $x \to \infty$ for any $N\in \NN$.

\item $f_c \in L^p(\RR)  $.
\end{itemize}
The first follows directly from the definition, and the second and the third were proved in \cite{MR182838} by Wainger. These facts can also be proved by the Littewood-Paley decomposition used here.

We can use the Poisson summation to obtain the periodic example on $\TT$:
$$
  f(x): =\sum_{n \in  \ZZ} f_c(x + 2\pi n)
$$
whose Fourier series  is given by
\begin{align}
  f(x) = \sum_{n\neq 0} \eta(n) |n|^{-(1-\frac{1}{p} +\frac{\ep}{2}  )} e^{  i n x }.
\end{align}
It is clear that $f\in L^p(\TT),$ is also real-valued, and even, so we focus on the lower bound for $e^{t \mathcal{L}_\alpha } \Delta_q  f $.

Denote by $F (x)   $ the corresponding output from the multiplier on $\RR$, namely
$$
\widehat F (\xi )= e^{i |\xi|^{2\alpha -1} \xi t}   \eta(\xi) |\xi|^{-(1-\frac{1}{p} +\frac{\ep}{2})}.
$$
Consider the Littlewood-Paley projections $\Delta_q F (x)$,
and by the Poisson summation formula
$$
e^{t \mathcal{L}_\alpha } \Delta_q  f (x) =\sum_{n\in \ZZ } \Delta_q F(x +2\pi n)
$$
we can write
\begin{equation}\label{eq:aux_F_f_bounds}
\Big| e^{t \mathcal{L}_\alpha } \Delta_q  f  \Big|_{L^p(\TT)} \geq \Big|   \Delta_q  F  \Big|_{L^p([-\pi,\pi])} -\sum_{n\neq 0} \Big|   \Delta_q  F (\cdot +2\pi n) \Big|_{L^p([-\pi,\pi])} .
\end{equation}

We now derive estimates for $\Delta_q  F  $. Writing the phase $ \varphi(x,\xi) =  x \xi + \l_q^{2\alpha -1} |\xi|^{  2\alpha -1}\xi t $, a change of variable yields
\begin{align}\label{eq:aux_F_f_bounds_2}
\Delta_q F  (x) = \l_q^{\frac{1}{p} -\frac{\ep}{2} } \int_{\RR} e^{i \l_q \varphi(x,\xi)}  \widetilde{\psi}(\xi) \, d \xi
\end{align}
where $\widetilde{\psi} = \psi_0 (\xi)  | \xi|^{-(1-\frac{1}{p} +\frac{\ep}{2} ) }$ is a nontrivial bump function related to the cutoffs from the Littlewood-Paley decomposition.

As in the proof of Lemma \ref{lemma:K_q}, we can apply   Lemma \ref{lemma:stationary_phase} to \eqref{eq:aux_F_f_bounds_2}. Specifically, let $J_q = [-c_F \l_q^{ 2\alpha -1} t, -C_F \l_q^{ 2\alpha -1} t] $, $C_F >c_F >0,$ be an interval where we have
\begin{equation}\label{eq:aux_F_f_bounds_22}
|\Delta_q F (x)| \geq a_\alpha \l_q^{\frac{1}{p} -\frac{\ep}{2} } \Big( \l_q^{ -\alpha } t^{ \frac{1}{2}}  |\cos(A_\alpha   t^{  \frac{ 1}{ 1 -2\alpha } }  | x |^{- \frac{2\alpha }{ 1 -2\alpha } } + \frac{\pi}{4})|   - o( \l_q^{- \alpha } t^{-\frac{1}{2}})  \Big)
\end{equation}
For all sufficiently large $q$, $\l_q^{ 2\alpha -1} t$ can be sufficiently small by our assumption, so we may assume $J_q \subset [-\pi, \pi]$.

In this setting, we have the following outcomes of Lemma \ref{lemma:stationary_phase}.
\begin{enumerate}
\item   By \eqref{eq:aux_F_f_bounds_22} and a direct computation,
\begin{equation}\label{eq:aux_F_f_bounds_221}
| \Delta_q F  |_{L^p([-\pi, \pi ])} \gtrsim \l_q^{2\alpha (\frac{1}{p} - \frac{1}{2}) -\frac{\ep}{2} }   t^{\frac{1}{p}-\frac{1}{2}} .
\end{equation}

\item When $  x \not \in \RR\setminus [-\pi,\pi ] $,
\begin{equation}\label{eq:aux_F_f_bounds_222}
| \Delta_q F  (x)| \lesssim \l_q^{\frac{1}{p} -\frac{\ep}{2} }  \langle   \l_q x+ t \l_q^{ 2\alpha   }  \rangle^{-100}
\end{equation}

\end{enumerate}

For all sufficiently large $q$, we may use a factor of $\l_q^{- \frac{\ep}{2}}$ to absorb/enlarge the constant in \eqref{eq:aux_F_f_bounds_221} so that
\begin{equation}\label{eq:aux_F_f_bounds_3}
|\Delta_q F |_{L^p[-\pi , \pi]} \geq 2 t^{ \frac{1}{ p }  - \frac{1}{2} } \lambda_q^{2\alpha(\frac{1}{ p }  - \frac{1}{2} )  -  {\ep}  }.
\end{equation}
Similarly,  by \eqref{eq:aux_F_f_bounds_222},  for all sufficiently large $q$
\begin{equation}\label{eq:aux_F_f_bounds_4}
\sum_{n\neq 0} \Big|   \Delta_q  F (\cdot +2\pi n) \Big|_{L^p([-\pi,\pi])} \leq t^{ \frac{1}{ p }  - \frac{1}{2} } \lambda_q^{2\alpha(\frac{1}{ p }  - \frac{1}{2} )  -  {\ep}  }.
\end{equation}
The result follows by putting together \eqref{eq:aux_F_f_bounds_3} and \eqref{eq:aux_F_f_bounds_4} in \eqref{eq:aux_F_f_bounds}.

\end{proof}

%%%%%%%%%%%%%%%%%%%%%%%%%%%%%%%%%%%%%%%%%%%%%%%%%%%%%%%
\section{Illposedness of a dispersive equation}\label{sec:disper_eq_illposed}
%%%%%%%%%%%%%%%%%%%%%%%%%%%%%%%%%%%%%%%%%%%%%%%%%%%%%%%

In this section, we prove illposedness for a model equation that has the same structure as the curvature equation in Theorem \ref{prop:weak_k_flow} of the $\alpha$-patch.

\subsection{The model equation}
Recall that for any $0<\alpha< \frac{1}{2}$,  $\mathcal{L}_\alpha := \mathcal{H } (-\Delta)^{\alpha}$ denotes the composition of a Hilbert transform with a fractional Laplacian on the torus $\TT$. In other words, $\mathcal{L}_\alpha f (x) := \sum_{k\in \ZZ}  i k |k|^{2\alpha - 1} \hat{f}_k e^{i k x}$.

Let $T>0$. We consider the following distributional dispersive equation
\begin{equation}\label{eq:model_linear}
\begin{cases}
\p_t f =   \mathcal{L}_\alpha \overline{f} +  F_t & \\
f |_{t=0} = f_0 .&
\end{cases}
\end{equation}

In \eqref{eq:model_linear}, $f : \TT \times [0,T] \to \RR$ is the real-valued unknown, the mapping $ f \mapsto \overline{f}   $ is bounded in some space embedded in $L^1(\TT)$, and $F_t: C^\infty(\TT ) \to \RR$ is a given time-dependent distributional forcing.

We now specify what we mean by a solution of \eqref{eq:model_linear}.

\begin{definition}\label{def:wea_sol_model_eq}
Let $0< \alpha< \frac{1}{2}$. We say $f  \in L^\infty(0,T; L^1(\TT)) $ is a weak solution of \eqref{eq:model_linear} if all of the following are satisfied.
\begin{enumerate}
\item $ t \mapsto f(\cdot ,t) $ is continuous as a distribution on $\TT$, i.e.
\begin{equation}
t \mapsto \int_{\TT} f \varphi \, dx \quad \text{is continuous for any $\varphi \in C^\infty(\TT)$;}
\end{equation}

\item $f  $ solves \eqref{eq:model_linear} in the sense of distributions: for any $\varphi \in C^\infty(\TT \times \RR )$, there holds
\begin{equation}\label{eq:linear_weak}
\begin{aligned}
 \int_{\TT} f(\cdot, T) \varphi(\cdot, T) \, dx  -\int_{\TT} f_0  \varphi(\cdot,0) \,d x  = \int_{\TT \times [0,T]}& (f \p_t \varphi -    \overline{f} \mathcal{L}_\alpha  \varphi) \, dx dt  \\
&+ \int_0^T  F_t( \varphi(\cdot,t) )   \,dt
\end{aligned}
\end{equation}
where $F_t( \varphi) $ denotes the pairing for distributions.
\end{enumerate}

\end{definition}

\subsection{\texorpdfstring{$L^p$}{Lp} illposedness of the model equation}

Since we are mainly concerned with the $W^{2,p}$ $\alpha$-patches, we focus on the $L^p$ case. To streamline the presentation, given any $p \in [1,\infty]$ and $\delta>0$, we consider the following \textbf{$(L^p, \delta)$-assumptions}, \ref{assu:f_continuity} and \ref{assu:Forcing_bound}, on \eqref{eq:model_linear}.
\begin{figure}[!htb]
\begin{mdframed}[linewidth=1pt,skipabove=10pt,frametitle={$(L^p, \delta)$-assumptions:}]
\begin{enumerate}[label=(A{\arabic*})]

\item\label{assu:f_continuity} $ f \mapsto \overline{f}  $ is bounded in $L^p(\TT)$  with the estimate
\begin{equation}\label{eq:fbar_bound}
	|f(\cdot, t)  -\overline{f}(\cdot, t)|_{L^p(\TT)} \lesssim t^{\delta} \quad \text{uniformly in time for all $t \geq 0$ small;}
\end{equation}

\item\label{assu:Forcing_bound} the distributional forcing $F_t$ satisfies
\begin{equation}\label{eq:Forcing_bound}
|F_t(\varphi)| \lesssim |\varphi |_{B^{2\alpha -\delta}_{ p' ,1} }  \quad \text{for all $ \varphi \in C^\infty(\TT)$} .
\end{equation}
uniformly in time where $p'$ is the dual H\"older index of $p$.
\end{enumerate}
\end{mdframed}
\end{figure}

\hfill

 These two assumptions, \ref{assu:f_continuity} and \ref{assu:Forcing_bound}, roughly say that \eqref{eq:model_linear} is a small perturbation of the original equation $ \p_t f = \mathcal{L}_\alpha f$ in some suitable sense.

\begin{proposition}[Illposedness for $p<2$]\label{prop:model_p<2}
For any $0< \alpha< \frac{1}{2}$, $1\leq p  <2 $, and $\delta>0$, there exists an even initial data $f_0 \in L^p(\TT)$ such that the following holds.

Consider any model equation \eqref{eq:model_linear} satisfying the $(L^p,\delta)$-assumptions~\ref{assu:f_continuity}--\ref{assu:Forcing_bound}. If $f:\TT \times [0,T] \to \RR$
is  a weak solution of \eqref{eq:model_linear} with the initial data $ f_0  $ in the sense of Definition \ref{def:wea_sol_model_eq}, then it must  satisfy
\begin{equation}\label{eq:f_blowup_p<2}
	\sup_{t \in [0,T ]}  |f(\cdot, t)|_{L^p(\TT)} = \infty \quad \text{for any $T >0$.}
\end{equation}

\end{proposition}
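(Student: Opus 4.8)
The strategy is a proof by contradiction combined with a carefully designed initial datum coming from Wainger's example (Lemma \ref{lemma:bulding_f_pless2}). Suppose, for contradiction, that $f \in L^\infty(0,T;L^p(\TT))$ is a weak solution of \eqref{eq:model_linear} with $M := \sup_{t\in[0,T]} |f(\cdot,t)|_{L^p} < \infty$. We want to choose $f_0$ so that no such bounded solution can exist. The idea is to test the weak formulation \eqref{eq:linear_weak} against test functions built from the backward evolution $e^{(t_0-t)\mathcal{L}_\alpha}$ applied to a single Littlewood--Paley block, thereby implementing a Duhamel-type argument entirely at the distributional level. Concretely, fix a large frequency parameter $q$ and a time scale $t_0 = t_0(q) = c\lambda_q^{-2\alpha}$ (chosen so that $\lambda_q^{2\alpha} t_0 \sim 1$, the regime where the dispersive estimates of Lemma \ref{lemma:stationary_phase} and Lemma \ref{lemma:bulding_f_pless2} apply, but $t_0 \to 0$). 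I take as test function $\varphi_q(x,t) := e^{(t_0-t)\mathcal{L}_\alpha}\widetilde\Delta_q \phi_0(x)$ for a suitable fixed $\phi_0 \in L^{p'}$ paired with $\Delta_q f_0$; since $\mathcal{L}_\alpha$ is skew-adjoint and these are trigonometric polynomials, $\varphi_q$ is smooth, real-valued (after taking real parts), and solves $\p_t\varphi_q = -\mathcal{L}_\alpha\varphi_q$ on $[0,t_0]$.

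Plugging $\varphi_q$ into \eqref{eq:linear_weak} on the interval $[0,t_0]$ (using a cutoff in time to restrict to $[0,t_0]$, exactly as in the derivation of \eqref{eq:kappa_better_formulation}), the main term $\int f\p_t\varphi_q - \overline f\,\mathcal{L}_\alpha\varphi_q$ becomes $\int (f-\overline f)\mathcal{L}_\alpha\varphi_q$, and we arrive at an identity of the schematic form
\begin{equation*}
\Big|\int_{\TT} \Delta_q f_0 \, e^{t_0\mathcal{L}_\alpha}\phi_0 \, dx\Big| = \Big|\int_{\TT} f(\cdot,t_0)\varphi_q(\cdot,t_0)\,dx + \int_0^{t_0}\!\!\int_{\TT}(f-\overline f)\mathcal{L}_\alpha\varphi_q + \int_0^{t_0} F_t(\varphi_q(\cdot,t))\,dt\Big|.
\end{equation*}
The left side we make large: choosing $\phi_0$ dual to the worst block of $f_0$ (with $f_0$ the Wainger function of Lemma \ref{lemma:bulding_f_pless2}), duality gives $\gtrsim |e^{t_0\mathcal{L}_\alpha}\Delta_q f_0|_{L^p} \gtrsim t_0^{1/p-1/2}\lambda_q^{2\alpha(1/p-1/2)-\ep} |\phi_0|_{L^{p'}}^{-1}\cdot(\text{something})$; combined with $t_0\sim\lambda_q^{-2\alpha}$ this is a positive power of $\lambda_q$ times $|\phi_0|_{L^{p'}}$. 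For the right side I estimate each piece: (i) $|\int f(\cdot,t_0)\varphi_q(\cdot,t_0)| \le M |\varphi_q(\cdot,t_0)|_{L^{p'}} = M|\widetilde\Delta_q\phi_0|_{L^{p'}}$ by unitarity-type bounds (or Lemma \ref{lemma:K_q}); (ii) the error term $\int_0^{t_0}\int (f-\overline f)\mathcal{L}_\alpha\varphi_q$ is bounded using \eqref{eq:fbar_bound}, $|\mathcal{L}_\alpha\varphi_q|_{L^{p'}}\lesssim \lambda_q^{2\alpha} e^{(t_0-t)\mathcal{L}_\alpha}$-bound $\lesssim \lambda_q^{2\alpha}\max\{1,(t_0-t)^{1/2}\lambda_q^\alpha\}|\phi_0|_{L^{p'}}$ via Lemma \ref{lemma:bernstein} and Lemma \ref{lemma:K_q}, and $\int_0^{t_0} t^\delta\,dt \sim t_0^{1+\delta}$; (iii) the forcing term is bounded by \eqref{eq:Forcing_bound} and the Besov bound $|\varphi_q(\cdot,t)|_{B^{2\alpha-\delta}_{p',1}} \lesssim \lambda_q^{2\alpha-\delta}|\widetilde\Delta_q\phi_0|_{L^{p'}}$ (single-block Besov norms collapse), integrated over $[0,t_0]$. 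Each of (i)--(iii) carries a power of $\lambda_q$ strictly smaller than the gain on the left side — this is where the conditions $p<2$, $\delta>0$, and the choice $t_0\sim\lambda_q^{-2\alpha}$ enter — so letting $q\to\infty$ forces $M=\infty$, a contradiction.

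The main obstacle, and the step requiring the most care, is the bookkeeping of powers of $\lambda_q$: one must verify that the exponent $2\alpha(1/p-1/2)$ produced by Wainger's lower bound strictly dominates the exponents coming from the $M$-term, the $(f-\overline f)$-error, and the forcing, \emph{after} substituting $t_0\sim\lambda_q^{-2\alpha}$ and integrating in time. The $\delta$-losses in \eqref{eq:bulding_f_pless2}, \eqref{eq:fbar_bound}, \eqref{eq:Forcing_bound} must be absorbed, which is why the statement tolerates an arbitrary small $\delta$ and why $\ep$ in Lemma \ref{lemma:bulding_f_pless2} can be taken as small as needed. A secondary technical point is justifying that $\varphi_q$ (times a smooth temporal cutoff supported in a neighborhood of $[0,t_0]$, then a limiting argument as in Lemma \ref{lemma:kappa_better_formulation}) is an admissible test function and that the endpoint contributions at $t=0$ and $t=t_0$ are exactly the boundary terms written above; this uses the distributional continuity in time from Definition \ref{def:wea_sol_model_eq}(1). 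Everything else is routine harmonic analysis on $\TT$.
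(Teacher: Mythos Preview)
Your overall strategy matches the paper's: proof by contradiction, Wainger's $f_0$, backward-propagated Littlewood--Paley test functions $\varphi_q(x,t)=e^{-(t_0-t)\mathcal{L}_\alpha}\Delta_q\phi_q$, and a Duhamel-type identity splitting into a large ``left'' piece (involving $e^{t_0\mathcal{L}_\alpha}\Delta_q f_0$) and controllable ``right'' error pieces. The handling of the $(f-\overline f)$ term via \ref{assu:f_continuity} and of the forcing via \ref{assu:Forcing_bound} is also right in spirit.

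There is, however, a genuine gap in your choice of time scale, and it is exactly at the step you flagged as the main obstacle. You take $t_0=c\lambda_q^{-2\alpha}$ so that $\lambda_q^{2\alpha}t_0\sim 1$. But then Wainger's lower bound gives
\[
|e^{t_0\mathcal{L}_\alpha}\Delta_q f_0|_{L^p}\ \gtrsim\ t_0^{\frac1p-\frac12}\lambda_q^{2\alpha(\frac1p-\frac12)-\ep}
=(t_0\lambda_q^{2\alpha})^{\frac1p-\frac12}\lambda_q^{-\ep}\ \sim\ \lambda_q^{-\ep},
\]
which \emph{decays} as $q\to\infty$ (and you cannot set $\ep=0$ in Lemma~\ref{lemma:bulding_f_pless2}). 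So your ``left side'' does not blow up, and no contradiction results: the right side is indeed bounded by roughly $M+\lambda_q^{-2\alpha\delta}+\lambda_q^{-\delta}$, but the left side is only $\lambda_q^{-\ep}-M$, which is compatible with equality.

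The paper's fix is to let $t_0\lambda_q^{2\alpha}$ grow like a \emph{small positive power} of $\lambda_q$: specifically $\tau_q\lambda_q^{2\alpha}=\lambda_q^{\delta\alpha/4}$. Then the left side becomes $\lambda_q^{(\frac1p-\frac12)\frac{\delta\alpha}{4}-\ep}$, which blows up once $\ep<(\frac1p-\frac12)\frac{\delta\alpha}{4}$. The price is that the right-side errors now also grow: the brute-force bound (Lemma~\ref{lemma:K_q}) gives an extra factor $\tau_q^{1/2}\lambda_q^\alpha=(\tau_q\lambda_q^{2\alpha})^{1/2}$ in each $e^{-(\tau_q-t)\mathcal{L}_\alpha}$, so $R_q\lesssim(\tau_q\lambda_q^{2\alpha})^{3/2}\tau_q^{\delta}+(\tau_q\lambda_q^{2\alpha})^{3/2}\lambda_q^{-\delta}$. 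The whole point of the choice $\tau_q\lambda_q^{2\alpha}=\lambda_q^{\delta\alpha/4}$ is that this growth rate $(\lambda_q^{\delta\alpha/4})^{3/2}$ is still beaten by the decay factors $\tau_q^\delta\sim\lambda_q^{-2\alpha\delta+\delta^2\alpha/4}$ and $\lambda_q^{-\delta}$. In short, the time scale must be tuned so that $\tau_q\lambda_q^{2\alpha}\to\infty$ slowly enough that the $\delta$-gains from \ref{assu:f_continuity} and \ref{assu:Forcing_bound} still kill the right side; your choice $\tau_q\lambda_q^{2\alpha}\sim 1$ sits at the borderline where neither side wins.
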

\begin{proof}
We fix $1 \leq p<2$ and assume $0< \delta \leq 1$.
Before we choose the initial data, let us fix a small number $\ep =\ep(\alpha, p, \delta)>0$   such that
\begin{equation}\label{eq:aux_p<2_1}
0< \ep < \left( \frac{1}{p} - \frac{1}{2} \right) \frac{\delta \alpha }{4 } .
\end{equation}

Let us choose the initial data. Let the real-valued even function $f_0 \in L^p(\TT)$ be given by Lemma \ref{lemma:bulding_f_pless2} with $\ep$ replaced by $ \frac{\ep}{2} $   such that $ e^{t \mathcal{L}_\alpha } f_0 \not \in L^p(\TT)$ with the estimates
\begin{equation}\label{eq:aux_p<2_2}
	| e^{t \mathcal{L}_\alpha } \Delta_q f_0|_{L^p} \geq   t^{ \frac{1}{ p }  - \frac{1}{2} } \lambda_q^{2\alpha(\frac{1}{ p }  - \frac{1}{2} )  - \frac{\ep}{2}}
\end{equation}
for all sufficiently large $q\in \NN$  with $ t\l_q^{2 \alpha }  \geq 1 $.

We proceed with a proof by contradiction. Suppose there is a weak solution $f:\TT \times [0,T] \to \RR$ of \eqref{eq:model_linear} with the initial data $f_0 $ such that
\begin{equation}\label{eq:aux_p<2_3}
 \sup_{t \in [0,T] } |f(\cdot, t)|_{L^p(\TT)} \leq M
\end{equation}
for some   $ T > 0$  and $M>0$.

Next, we use a class of special test functions in weak formulation to exploit the illposedness of $e^{t \mathcal{L}_\alpha}$. For any $T>0  $, consider test functions of the form
\begin{equation}\label{eq:aux_p<2_4}
\varphi(x, t) = e^{-(T-t) \mathcal{L_\alpha }}\phi(x)
\end{equation}
with $\phi \in C^\infty (\TT)$ smooth.
Since these test functions \eqref{eq:aux_p<2_4} satisfy the equation $\p_t \varphi = \mathcal{L}_\alpha \varphi$ classically, we have the equality
\begin{equation}\label{eq:aux_p<2_5}
\p_t \varphi = \mathcal{L}_\alpha\varphi  = e^{-(T-t) \mathcal{L_\alpha }}  \mathcal{L}_\alpha \phi .
\end{equation}
From \eqref{eq:linear_weak} and \eqref{eq:aux_p<2_5} we deduce that the solution $f$ satisfies for any $\phi \in C^\infty(\TT)$ and $T>0$
\begin{equation}\label{eq:aux_p<2_6}
	\begin{aligned}
 & \int_{\TT} f(x, T) \phi(x) \, dx - \int_{\TT} f_0 (x )  e^{ - T \mathcal{L_\alpha }} \phi(x) \,d x  \\
& = \int_{\TT \times [0,T]} (f -\overline{f} )         e^{-(T-t) \mathcal{L_\alpha }}  \mathcal{L}_\alpha \phi  \, dx dt + \int_0^T  F_t ( e^{-(T-t) \mathcal{L_\alpha }}\phi    )   \,dt .
	\end{aligned}
\end{equation}

 We choose a family of test functions $ \phi_q \in C^\infty(\TT)$ and a sequence of times $\tau_q  \to 0$ for  $ q\in \NN$ as follows.
 \begin{enumerate}
 	\item First, we choose $ \tau_q \in (0, 1)$ such that
 	\begin{equation}\label{eq:def_tau_q}
 		 \tau_q \lambda_q^{2\alpha } =  \lambda_q^{\frac{ \delta \alpha  }{4}}
 	\end{equation}
 where we can assume $0< \delta \leq 1$. When $q$ is large, we have $\tau_q \leq T$ and $  \tau_q \l_q^{2 \alpha } \geq 1$.

 \item Second, by \eqref{eq:def_tau_q} and \eqref{eq:aux_p<2_2}, we can take a sequence of test functions $\phi_q \in C^\infty (\TT)$ such that
 \begin{equation}\label{eq:phi_duality_2}
 	\int_{\TT} \Delta_q \left[ e^{  \tau_q  \mathcal{L_\alpha }} f_0 \right] \phi_q \geq   \tau_q^{ \frac{1}{ p }  - \frac{1}{2} } \lambda_q^{2\alpha(\frac{1}{ p }  - \frac{1}{2} )  - \frac{\ep}{2}}
 \end{equation}
for all $q$ sufficiently large and
 \begin{equation}\label{eq:phi_duality_1}
| \phi_q|_{L^{p'}} = 1.
 \end{equation}
 The existence of such $\phi_q$'s follows from   the duality of $L^p$ norm: for any $ f\in L^p(\TT)$, one has
$$
| f |_{L^p(\TT)} =\sup_{ |g |_{L^{p'}} =1} \int_{\TT} g f.
$$

\item Since the sequence of test functions $\phi_q $ has Fourier support comparable  to  $\Delta_q$, by \eqref{eq:def_tau_q} and \eqref{eq:phi_duality_1} we have the brute-force estimate from Lemma \ref{lemma:K_q} and Lemma \ref{lemma:bernstein},
\begin{equation}\label{eq:aux_error_bruteforce}
	\sup_{ t\in [0, \tau_q] }|  e^{-(\tau_q- t) \mathcal{L}_\alpha } \mathcal{L}_\alpha \Delta_q  \phi_q  |_{L^{p'}} \lesssim \tau_q^{ \frac{1}{2} } \l_q^{3 \alpha } \qquad   \text{for all  $q \in \NN$}.
\end{equation}
Here the constant is independent of   $q$.

 \end{enumerate}

We are in a position to get a contradiction. Consider the formulation \eqref{eq:aux_p<2_6} with $  \Delta_q \phi_q$ and $ T= \tau_q$ for $q \in \NN $ large, namely
\begin{equation}\label{eq:aux_p<2_contradiction_1}
\begin{aligned}
& \underbrace{	   \int_{\TT} f(x, \tau_q) \Delta_q  \phi_q(x) \, dx   - \int_{\TT} f_0 (x )  e^{ - \tau_q \mathcal{L_\alpha }}  \Delta_q \phi_q (x) \,d x   }_{:= L_q} \\
	& =  \underbrace{ \int_{\TT \times [0,\tau_q]} (f -\overline{f} )         e^{-(\tau_q -t) \mathcal{L_\alpha }}  \mathcal{L}_\alpha \Delta_q \phi_q  \, dx dt  +    \int_0^{\tau_q }  F_t ( e^{-(\tau_q -t) \mathcal{L_\alpha }} \Delta_q \phi    )   \,dt  }_{:= R_q} .
\end{aligned}
\end{equation}
Here $L_q, R_q \in \RR $ are the sequences of real numbers corresponding to the left-hand and right-hand sides. We will eventually show that $L_q \neq R_q$ for some $q \in \NN$ large under the hypothesis \eqref{eq:aux_p<2_3}.

Starting with the left-hand side of \eqref{eq:aux_p<2_contradiction_1}, we use integration by parts, the H\"older inequality, and \eqref{eq:phi_duality_2} to derive a lower bound for $|L_q|$.
First, note that
\begin{equation}\label{eq:aux_p<2_contradiction_2}
  |L_q |  \geq   \int_{\TT}\Delta_q \big(  e^{  \tau_q \mathcal{L_\alpha }}   f_0   \big)     \phi_q (x) \,d x   - |f(\cdot, \tau_q)|_{L^{p}}  |\Delta_q  \phi_q  |_{L^{p'}} .
\end{equation}
 By \eqref{eq:phi_duality_2} and the hypothesis \eqref{eq:aux_p<2_3},  it follows that from \eqref{eq:aux_p<2_contradiction_2} that
\begin{align}\label{eq:aux_p<2_contradiction_L_q}
| L_q | \geq    \tau_q^{ \frac{1}{ p }  - \frac{1}{2} } \lambda_q^{2\alpha(\frac{1}{ p }  - \frac{1}{2} )  - \frac{\ep}{2} } - C   \quad \text{for all sufficiently large $q \in \NN$} .
\end{align}
However,  the exponent of the first term in \eqref{eq:aux_p<2_contradiction_L_q} is positive:
\begin{align}\label{eq:aux_p<2_contradiction_6}
\tau_q^{ \frac{1}{p} -\frac{1}{2}} \l_q^{ \frac{2\alpha}{p} -\alpha  -\frac{\ep}{2} } = ( \tau_q \l_q^{  {2\alpha}   } )^{ \frac{1}{p} -\frac{1}{2}}\l_q^{    -\frac{\ep}{2} } \geq \l_q^{(\frac{1}{p} -\frac{1}{2})\frac{\delta \alpha }{4}  -\frac{\ep}{2} }.
\end{align}
Since by \eqref{eq:aux_p<2_1}, we have $0< \ep < (\frac{1}{p } -\frac{1}{ 2})\frac{\delta \alpha }{4} $, the exponent is strictly positive, and hence $L_q$ blows up as $q \to \infty$.

Now let us consider the right-hand side value $R_q$. The goal is to derive an upper bound smaller than \eqref{eq:aux_p<2_contradiction_L_q}.  From \eqref{eq:aux_p<2_contradiction_1}, we  first use the H\"older inequality and \eqref{eq:phi_duality_1} to obtain that for any $q\in \NN$,
\begin{equation}\label{eq:aux_p<2_contradiction_3}
\begin{aligned}
|R_q |& \leq  \tau_q  \sup_{t\in [0,\tau_q]} |f(t) - \overline{f}(t) |_{L^p}        \sup_{t\in [0,\tau_q]}  \Big|  e^{-( \tau_q -t) \mathcal{L_\alpha }}  \mathcal{L}_\alpha \Delta_q \phi_q \Big|_{L^{p'}} \\
& \qquad \qquad + \tau_q \sup_{t\in [0,\tau_q]} \left|  F_t ( e^{-(\tau_q -t) \mathcal{L_\alpha }}\Delta_q \phi_q    ) \right|.
\end{aligned}
\end{equation}

For the first term in \eqref{eq:aux_p<2_contradiction_3} we use the assumption \ref{assu:f_continuity}  and \eqref{eq:aux_error_bruteforce}:
\begin{align}\label{eq:aux_p<2_contradiction_3a}
& \sup_{t\in [0,\tau_q]} |f(t) - \overline{f}(t) |_{L^p}        \sup_{t\in [0,\tau_q]}  \Big|  e^{-( \tau_q -t) \mathcal{L_\alpha }}  \mathcal{L}_\alpha \Delta_q \phi_q \Big|_{L^{p'}}  \nonumber \\
& \leq  C \tau_q^{\delta }  \tau_q^{\frac{1}{2}  }\l_q^{3\alpha}        \quad \text{for any sufficiently large $q \in \NN$}.
\end{align}
For the second term in \eqref{eq:aux_p<2_contradiction_3}, we first use the assumption \ref{assu:Forcing_bound}, the definition of Besov norms, and Lemma \ref{lemma:K_q}:
\begin{align}\label{eq:aux_p<2_contradiction_3b}
&\sup_{t\in [0,\tau_q]} \left|  F_t ( e^{-(T-t) \mathcal{L_\alpha }} \Delta_q \phi_q    )  \right| \nonumber\\
& \lesssim  \sup_{t\in [0,\tau_q]} |  e^{-(T-t) \mathcal{L_\alpha }} \Delta_q \phi_q |_{B^{2\alpha - \delta }_{p', 1}} \nonumber \\
& \lesssim \l_q^{2\alpha - \delta } \sup_{t\in [0,\tau_q]} |  e^{-(T-t) \mathcal{L_\alpha }} \Delta_q \phi_q |_{L^{p' }} \nonumber \\
& \leq  C \l_q^{2\alpha -\delta } \tau_q^{\frac{1}{2}  }  \l_q^{\alpha}        \quad \text{for any $q \in \NN$}.
\end{align}

It follows from \eqref{eq:aux_p<2_contradiction_3}, \eqref{eq:aux_p<2_contradiction_3a}, and \eqref{eq:aux_p<2_contradiction_3b} that
\begin{align}\label{eq:aux_p<2_contradiction_4}
| R_q | \leq  C \tau_q^{\frac{3}{2} +\delta }  \l_q^{3\alpha} + \tau_q^{\frac{3}{2}  }  \l_q^{3\alpha -\delta }         \quad \text{for any sufficiently large $q \in \NN$}.
\end{align}

Finally, we show that \eqref{eq:aux_p<2_contradiction_4} leads to a contradiction. Note that by \eqref{eq:def_tau_q}, we have $ \tau_q^\delta \leq \l_q^{- 2\alpha \delta + \frac{\alpha \delta}{4}}$. By \eqref{eq:aux_p<2_contradiction_4}, this implies the bound
\begin{align*}
 | R_q |  & \leq C (\tau_q \l_q^{2\alpha}  )^{\frac{3}{2}} \tau_q^{ \delta }  + C(\tau_q \l_q^{2\alpha}  )^{\frac{3}{2}} \l_q^{ -\delta } \\
& \leq C \l_q^{ \frac{\alpha \delta  }{2}  -2\alpha \delta +\frac{\alpha \delta  }{4} } +C \l_q^{ \frac{\alpha \delta  }{2}  -  \delta   }
\end{align*}
which is decaying when $q \to \infty$. In other words, $R_q$ is uniformly bounded for any $ q\in \NN $. This contradicts the fact that $ L_q$ blows up. So \eqref{eq:aux_p<2_3} can not hold and we are done.

\end{proof}

The next result is the extension of the $L^p$ illposedness to $p>2$. In general, the $L^p$ boundedness/unboundedness of a linear operator can be deduced from that of its adjoint operator. So one would expect the same equation is strongly ill-posed in $L^p$ for $p>2$. Here the situation is slightly more involved since we are dealing with time evolution, and the initial data needs to develop an ``instant blowup''.

We combine the usual duality approach with a patching argument to induce the desired blowups in the high-frequency.

\begin{proposition}[Illposedness for $p>2$]\label{prop:model_p>2}
For any $0< \alpha< \frac{1}{2}$, $2 < p  \leq \infty $, and $\delta>0$, there exists an even  initial data $f_0 \in L^p(\TT)$ such that the following holds.

Consider any model equation \eqref{eq:model_linear} satisfying the $(L^p,\delta)$-assumptions~\ref{assu:f_continuity}--\ref{assu:Forcing_bound}. If $f:\TT \times [0,T] \to \RR$
is  a weak solution of \eqref{eq:model_linear} with the initial data $ f_0  $ in the sense of Definition \ref{def:wea_sol_model_eq}, then it must  satisfy
\begin{equation}\label{eq:f_blowup}
	\sup_{t \in [0,T ]}  |f(\cdot, t)|_{L^p(\TT)} = \infty \quad \text{for any $T>0$.}
\end{equation}

\end{proposition}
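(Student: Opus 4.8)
The plan is to run a duality argument against the $p<2$ case already established in Proposition \ref{prop:model_p<2}, combined with a patching construction to force the blowup to be instantaneous. First I would observe that the (essentially) adjoint of the evolution $e^{t\mathcal{L}_\alpha}$ on $L^p$ for $p>2$ is the evolution $e^{-t\mathcal{L}_\alpha}$ on $L^{p'}$ with $p'<2$; since $\mathcal{L}_\alpha$ has an odd symbol $ik|k|^{2\alpha-1}$, the operator $e^{-t\mathcal{L}_\alpha}$ is (up to reflection $x\mapsto -x$) of the same type as $e^{t\mathcal{L}_\alpha}$, so Wainger's example (Lemma \ref{lemma:bulding_f_pless2}) and the stationary-phase bounds (Lemma \ref{lemma:stationary_phase}, Lemma \ref{lemma:K_q}) apply verbatim to $e^{-t\mathcal{L}_\alpha}$ in $L^{p'}$. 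Concretely: choose an even function $g_0\in L^{p'}(\TT)$, $1\le p'<2$, as in Lemma \ref{lemma:bulding_f_pless2} (with a small parameter $\ep$ tied to $\delta,\alpha,p$ exactly as in \eqref{eq:aux_p<2_1}), so that $|e^{-t\mathcal{L}_\alpha}\Delta_q g_0|_{L^{p'}}\gtrsim t^{1/p'-1/2}\lambda_q^{2\alpha(1/p'-1/2)-\ep/2}$ for $q$ large with $t\lambda_q^{2\alpha}\ge 1$. This quantity now \emph{decays} in $q$ since $1/p'-1/2<0$; that sign flip is exactly what makes the $p>2$ argument work by duality rather than directly.

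Next I would set up the contradiction. Suppose $f$ is a weak solution with $\sup_{t\in[0,T]}|f(\cdot,t)|_{L^p}\le M$. Using test functions of the form $\varphi(x,t)=e^{-(\tau-t)\mathcal{L}_\alpha}\phi(x)$ as in \eqref{eq:aux_p<2_4}–\eqref{eq:aux_p<2_6}, the weak formulation gives, for each smooth $\phi$ and each $\tau\in(0,T]$,
\begin{equation}\label{eq:aux_p>2_dual}
\int_{\TT} f(x,\tau)\phi(x)\,dx - \int_{\TT} f_0(x)\,e^{-\tau\mathcal{L}_\alpha}\phi(x)\,dx = \int_{\TT\times[0,\tau]}(f-\overline f)\,e^{-(\tau-t)\mathcal{L}_\alpha}\mathcal{L}_\alpha\phi\,dx\,dt + \int_0^\tau F_t(e^{-(\tau-t)\mathcal{L}_\alpha}\phi)\,dt.
\end{equation}
I now want to choose $\phi=\phi_q$ and $\tau=\tau_q\to 0$ so that the second term on the left is large while everything else is controlled. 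The natural choice, dual to the $p<2$ proof, is to take $\phi_q$ in the Fourier band $\Delta_q$ with $|\phi_q|_{L^{p}}=1$ realizing the duality with $e^{-\tau_q\mathcal{L}_\alpha}$-image of a frequency-localized piece of $g_0$: precisely, pick $\phi_q$ with $\int_{\TT}\Delta_q\!\big(e^{-\tau_q\mathcal{L}_\alpha}g_0\big)\phi_q \gtrsim \tau_q^{1/p'-1/2}\lambda_q^{2\alpha(1/p'-1/2)-\ep/2}$, but we actually want the roles swapped so that the \emph{initial data} is $f_0:=g_0$ (an $L^p$ function because $L^{p'}\supset$ the relevant Wainger function when... — here I must be careful). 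The cleaner route is: let $f_0$ be the Wainger example \emph{of $L^p$ type} obtained by mimicking Lemma \ref{lemma:bulding_f_pless2} but now pairing against $L^{p'}$ test functions, so that $\big|\int \Delta_q(e^{-\tau_q\mathcal{L}_\alpha}f_0)\,\psi\big|$ can be made $\gtrsim \tau_q^{1/p'-1/2}\lambda_q^{2\alpha(1/p'-1/2)-\ep/2}$ for a unit-$L^{p'}$ test function $\psi=\phi_q$; then in \eqref{eq:aux_p>2_dual} the term $\int f_0\,e^{-\tau_q\mathcal{L}_\alpha}\phi_q$ is the large one, forcing $\int f(x,\tau_q)\phi_q\,dx$ to be large, contradicting $|f(\cdot,\tau_q)|_{L^p}\le M$ and $|\phi_q|_{L^{p'}}=1$ — \emph{provided} the right-hand side of \eqref{eq:aux_p>2_dual} stays bounded. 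For the right-hand side I would reuse the brute-force bounds: by Lemma \ref{lemma:K_q} and Lemma \ref{lemma:bernstein}, $\sup_{t\le\tau_q}|e^{-(\tau_q-t)\mathcal{L}_\alpha}\mathcal{L}_\alpha\Delta_q\phi_q|_{L^{p'}}\lesssim \tau_q^{1/2}\lambda_q^{3\alpha}$ and $\sup_{t\le\tau_q}|F_t(e^{-(\tau_q-t)\mathcal{L}_\alpha}\Delta_q\phi_q)|\lesssim \lambda_q^{2\alpha-\delta}\tau_q^{1/2}\lambda_q^{\alpha}$, so with $\tau_q$ chosen by $\tau_q\lambda_q^{2\alpha}=\lambda_q^{\alpha\delta/4}$ (as in \eqref{eq:def_tau_q}) and assumption \ref{assu:f_continuity} giving a factor $\tau_q^\delta$, the right-hand side is $\lesssim \tau_q^{3/2+\delta}\lambda_q^{3\alpha}+\tau_q^{3/2}\lambda_q^{3\alpha-\delta}\to 0$, exactly as in Proposition \ref{prop:model_p<2}.

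The one genuinely new ingredient — and the step I expect to be the main obstacle — is handling the case $p=\infty$ (where $p'=1$) and, more subtly, making the ``instant blowup'' survive: because the exponent $2\alpha(1/p'-1/2)-\ep/2$ is \emph{negative} when $p'<2$, the single-band lower bound decays in $q$, so one cannot get a contradiction from a single $q$. This is where the patching argument enters. I would build $f_0 = \sum_{j} c_j\,f_0^{(q_j)}$ as a lacunary superposition of many frequency-localized Wainger blocks $f_0^{(q_j)}$ along a sparse sequence $q_j\to\infty$, with coefficients $c_j$ chosen so that $\sum_j c_j f_0^{(q_j)}\in L^p(\TT)$ (using the near-orthogonality of the blocks in $L^p$, $p>2$, via a Littlewood–Paley / Khintchine square-function estimate) while for each $j$ the pairing $\big|\int \Delta_{q_j}(e^{-\tau_{q_j}\mathcal{L}_\alpha}f_0)\phi_{q_j}\big|\gtrsim c_j\,\tau_{q_j}^{1/p'-1/2}\lambda_{q_j}^{2\alpha(1/p'-1/2)-\ep/2}$; by taking $c_j$ to decay just slowly enough (e.g. $c_j\sim (\text{number of blocks})^{-1/p}$ truncated appropriately, or $c_j$ mildly increasing along a very sparse $q_j$) the product $c_j\lambda_{q_j}^{2\alpha(1/p'-1/2)-\ep/2}\cdot\lambda_{q_j}^{(1/p'-1/2)\alpha\delta/4}$ — wait, with $\tau_{q_j}\lambda_{q_j}^{2\alpha}=\lambda_{q_j}^{\alpha\delta/4}$ the large term is $c_j(\tau_{q_j}\lambda_{q_j}^{2\alpha})^{1/p'-1/2}\lambda_{q_j}^{-\ep/2}=c_j\lambda_{q_j}^{(1/p'-1/2)\alpha\delta/4-\ep/2}$, still negative exponent times $c_j$ — so the cleanest fix is instead to \emph{keep the blowup for a fixed frequency but at arbitrarily small times}: since $\tau_q\to 0$, having $|\int f(x,\tau_q)\phi_q\,dx|$ bounded below by an absolute positive constant for infinitely many $q$ already contradicts $\sup_{t\in[0,T]}|f(\cdot,t)|_{L^p}\le M$ only if that lower bound does not decay; hence one must choose $f_0$ so that the relevant pairing is bounded below by a \emph{constant} (not decaying in $q$), which forces the coefficients $c_j$ to grow like $\lambda_{q_j}^{(1/2-1/p')\alpha\delta/4+\ep/2}$, and summability in $L^p$ then requires the $q_j$ to be lacunary enough and uses the $p>2$ square-function bound $\big\|\sum_j c_j f_0^{(q_j)}\big\|_{L^p}\lesssim \big\|(\sum_j |c_j|^2|f_0^{(q_j)}|^2)^{1/2}\big\|_{L^p}$ together with the uniform-in-$j$ bound $\|f_0^{(q_j)}\|_{L^p}\lesssim 1$ from the Wainger construction — this is the delicate bookkeeping, and balancing ``$f_0\in L^p$'' against ``constant-order lower bound at time $\tau_q\to0$'' is the crux. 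Once $f_0$ is built, the contradiction is obtained exactly as above: $L_q:=\int f(\tau_q)\phi_q - \int f_0\,e^{-\tau_q\mathcal{L}_\alpha}\phi_q$ has $|\int f_0\,e^{-\tau_q\mathcal{L}_\alpha}\phi_q|\gtrsim 1$ (constant) while $|\int f(\tau_q)\phi_q|\le M$ and $R_q\to 0$, so $L_q\not\to 0$ forces $|\int f_0 e^{-\tau_q\mathcal{L}_\alpha}\phi_q|$ and hence $|f(\tau_q)|_{L^p}$ to be unbounded along the sequence, which is the assertion \eqref{eq:f_blowup}. I would write the $p=\infty$ endpoint by replacing $L^1$ duality with testing against Dirac-type approximations, noting that all the stationary-phase and Bernstein bounds used are uniform in $p'$ down to $p'=1$.
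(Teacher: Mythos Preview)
Your overall framework is right --- use duality, insert test functions of the form $e^{-(\tau-t)\mathcal{L}_\alpha}\phi$ into the weak formulation, choose $\phi$ frequency-localized and related to the Wainger example in $L^{p'}$, and build $f_0$ by patching frequency blocks --- but there is a sign error that derails the rest of the argument. For $p>2$ one has $p'<2$, hence $1/p' - 1/2 > 0$, not $<0$. So the Wainger lower bound
\[
|e^{t\mathcal{L}_\alpha}\Delta_q\phi|_{L^{p'}} \;\gtrsim\; t^{\,1/p'-1/2}\,\lambda_q^{\,2\alpha(1/p'-1/2)-\ep/2}
\;=\;(t\lambda_q^{2\alpha})^{1/p'-1/2}\,\lambda_q^{-\ep/2}
\]
\emph{grows} in $q$ once $t\lambda_q^{2\alpha}\ge 1$, exactly as in the $p<2$ proof. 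All of your subsequent worries --- decaying lower bounds, growing coefficients $c_j$, square-function summability in $L^p$ --- stem from this miscalculation and are unnecessary.

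With the correct sign, the paper's construction is much simpler than what you propose. The single Wainger function $\phi\in L^{p'}$ plays the role of the \emph{test function} (used via $\Delta_q\phi$), while the \emph{initial data} is $f_0=\sum_{q\in 2\NN} h_q$, where each $h_q$ is an even, frequency-localized near-extremizer of the $L^p$--$L^{p'}$ pairing against $e^{\tau_q\mathcal{L}_\alpha}\Delta_q\phi$, normalized so that $|h_q|_{L^p}\le \lambda_q^{-\ep/2}$. The slight decay makes the series converge in $L^p$ with no square-function argument needed; the lacunarity ($q\in 2\NN$) is only there so that $\Delta_q\phi$ sees exactly the $h_q$ piece of $f_0$. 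The lower bound on $L_q$ is then $\gtrsim \lambda_q^{(1/p'-1/2)\alpha\delta/4 - \ep}\to\infty$, the right-hand side $R_q\to 0$ by the same estimates you wrote, and the contradiction follows. The patching is needed not to fight decay, but simply because one needs every high frequency present in $f_0$ to get blowup at each $\tau_q\to 0$.
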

\begin{proof}

First of all, let us assume $0<\delta \leq 1$ and fix $\ep =\ep(\alpha, p, \delta)>0$   such that
\begin{equation}\label{eq:small_ep_pbig2}
	0< \ep < \left( \frac{1}{2} - \frac{1}{p} \right) \frac{\delta \alpha }{8}.
\end{equation}

Since $p>2$, its H\"older dual $p' \in [1 , 2)$, for this $\ep$ we apply Lemma \ref{lemma:bulding_f_pless2} to obtain $ \phi \in L^{p'}$ such that
\begin{equation}\label{eq:aux_p>2_contradiction_1}
	|e^{ t \mathcal{L}_\alpha  } \Delta_q  \phi   |_{L^{p'}} \geq  t^{ \frac{1}{2} -\frac{1}{p}} \l_q^{   2\alpha (\frac{1}{2}  - \frac{1}{p})  -  \frac{\ep }{2} }
\end{equation}
for all sufficiently large $q$ with $ t\l_q^{2 \alpha }  \geq 1 $ .
In contrast to Proposition \ref{prop:model_p<2}, we will use this $\phi$ as a test function.

We take a sequence of times $\tau_q \in (0,T]$ for  such that
\begin{equation}\label{eq:def_tau_q_2}
	\tau_q \lambda_q^{2\alpha } =  \lambda_q^{\frac{ \alpha \delta   }{4}} .
\end{equation}
With the sequence $\tau_q$ chosen, we now construct the initial data. As in the previous proposition, the duality of $L^p$ norms implies that there exist a sequence of smooth functions $ h_q \in C^\infty (\TT)$, $q \in \NN$ such that
\begin{enumerate}
\item $h_q$ is even and $h_q$ has frequency support near $\lambda_q$;

	\item For any sufficiently large $q\in \NN$, $h_q$ satisfies $| h_q  |_{L^p} \leq \l_{q}^{-  \frac{\ep }{2}  }$ and
	\begin{equation}\label{eq:g_q_blowup}
		\int h_q e^{\tau_q \mathcal{L}_\alpha  } \Delta_q \phi  \geq         \tau_q^{ \frac{1}{2} -\frac{1}{p}}  \lambda_q^{2\alpha(\frac{1}{ 2 }  - \frac{1}{  p } )  -  \ep} .
	\end{equation}

\end{enumerate}
We can be sure to find such even $h_q$ due to the fact that any odd part does not contribute to the integral \eqref{eq:g_q_blowup} since $e^{\tau_q \mathcal{L}_\alpha  } \Delta_q \phi  $ is even,
and so can be discarded.

Now our initial data is
\begin{equation}\label{eq:aux_p>2_intial}
f_0 := \sum_{q\in 2\NN} h_q  .
\end{equation}
We take $q\in 2\NN$ to isolate the contribution from each frequency band $\l_q$ and any sufficiently lacunary sequence will work. It is clear that $f_0 $ is even and in $  L^p(\TT)$ since $|h_q |_{L^p} \leq \l_{q}^{- \frac{\ep }{2} }$. We note that when $p = \infty$, $f_0$ is in fact continuous due to this epsilon decay.

We proceed with a proof by contradiction. Suppose there is a weak solution $f :\TT \times [0,T ] \to \RR$ with the initial data $f_0 \in L^p(\TT)$ \eqref{eq:aux_p>2_intial} such that
\begin{equation}\label{eq:aux_p>2_assumption}
 \sup_{t \in [0,T] } |f(\cdot, t)|_{L^p(\TT)} \leq M
\end{equation}
for some   $ T > 0$  and $M>0$.

Now we take a look again at the weak formulation \eqref{eq:aux_p<2_6} used in the proof Proposition \ref{prop:model_p<2}. The weak solution $f$ satisfies for any  $\psi \in C^\infty (\TT)$  the following equation
\begin{equation}\label{eq:aux_p>2_contradiction_2}
\begin{aligned}
&			  \int_{\TT} f(x, T) \psi(x) \, dx -  \int_{\TT} f_0 (x )  e^{ - T \mathcal{L_\alpha }} \psi(x) \,d x  \\
& = \int_{\TT \times [0,T]} (f -\overline{f} )         e^{-(T-t) \mathcal{L_\alpha }}  \mathcal{L}_\alpha \psi  \, dx dt \\
&  \qquad +  \int_0^T F_t( e^{-(T-t) \mathcal{L_\alpha }}\psi ) \, dt.
\end{aligned}
\end{equation}

For all sufficiently large $q\in 2\NN$ consider the test functions
\begin{equation}\label{eq:aux_p>2_contradiction_3}
 \psi =  \Delta_q \phi \in  C^\infty (\TT)
\end{equation}
 over time intervals $[0,\tau_q]$ where $ \phi$ is as in \eqref{eq:aux_p>2_contradiction_1} and \eqref{eq:g_q_blowup}.

Since the frequency supports of the summands in $f_0 =\sum h_q$ lie in separated annuli, using \eqref{eq:aux_p>2_contradiction_3} in \eqref{eq:aux_p>2_contradiction_2} gives
\begin{equation}\label{eq:aux_p>2_contradiction_4}
\begin{aligned}
&  \underbrace{	\int_{\TT} f(x, \tau_q) \Delta_q   \phi(x) \, dx -\int_{\TT} h_q (x )  e^{ - \tau_q \mathcal{L_\alpha }} \Delta_q  \phi (x) \,d x    }_{:= L_q }\\
	& = \underbrace{\int_{\TT \times [0,\tau_q]} (f -\overline{f} )         e^{-(\tau_q -t) \mathcal{L_\alpha }}  \mathcal{L}_\alpha \Delta_q  \phi \, dx dt +   \int_0^{\tau_q} F_t( e^{-(\tau_q -t) \mathcal{L}_\alpha  } \Delta_q \phi  )\, dt}_{:= R_q} .\\
\end{aligned}
\end{equation}
As before, $L_q, R_q \in \RR $ are the sequences of real numbers corresponding to the left-hand and right-hand sides. We will show that $L_q \neq R_q$ for some sufficiently large $q \in 2\NN$ under the hypothesis \eqref{eq:aux_p>2_assumption}.

Let us start with $L_q$, the left-hand side of \eqref{eq:aux_p>2_contradiction_4}. Since $ \phi \in L^{p'}$, $1 \leq p'<2$ and $h_q$ satisfies \eqref{eq:g_q_blowup},  by \eqref{eq:aux_p>2_assumption}  we use H\"older to obtain that for all large $q \in 2\NN$
\begin{equation}
\begin{aligned} \label{eq:contradiction_eq_2}
|L_q| & \geq  \tau_q^{ \frac{1}{2} -\frac{1}{p}} \l_q^{ 2\alpha (\frac{1}{2} - \frac{1}{p})  -   \ep} - |f(\cdot,\tau_q) |_{L^p} | \Delta_q \phi|_{L^{p'} }  \\
& \geq  \tau_q^{ \frac{1}{2} -\frac{1}{p}} \l_q^{ 2\alpha (\frac{1}{2} - \frac{1}{p})  -   \ep} -  C  .
\end{aligned}
\end{equation}
Similarly to the proof of Proposition \ref{prop:model_p<2}, the exponent of the  first term is positive: by \eqref{eq:def_tau_q_2},
\begin{equation}\label{eq:aux_p>2_contradiction_5}
\tau_q^{ \frac{1}{2} -\frac{1}{p}} \l_q^{ 2\alpha (\frac{1}{2} - \frac{1}{p})  -   \ep}  = [\tau_q \l_q^{ 2\alpha } ]^{  \frac{1}{2} -\frac{1}{p} }      \l_q^{-\ep} =  \l_q^{  {\frac{ \alpha \delta   }{4}}(\frac{1}{2} -\frac{1}{p} )  - \ep }
\end{equation}
where ${\frac{ \alpha \delta   }{4}}(\frac{1}{2} -\frac{1}{p} )  - \ep>0 $  thanks to \eqref{eq:small_ep_pbig2}. Hence $L_q$ blows up as $q \to \infty$.

Finally we consider $R_q$, the right-hand side of \eqref{eq:aux_p>2_contradiction_4}. We will show the right-hand side $R_q$ remains uniformly bounded. By H\"older's inequality again,
\begin{equation}\label{eq:aux_p>2_contradiction_6}
\begin{aligned}
|R_q|  \leq  &  \tau_q  \sup_{t\in [0 ,\tau_q]}  |f -\overline{f} |_{L^p}  \sup_{t\in [0 ,\tau_q]} | e^{-(\tau_q -t) \mathcal{L_\alpha }}  \mathcal{L}_\alpha \Delta_q  \phi  |_{L^{p'}}  \\
 &\qquad  \qquad  + \tau_q  \sup_{t\in [0 ,\tau_q]} | F_t( e^{-(\tau_q -t) \mathcal{L}_\alpha  } \Delta_q \phi  )   |    .
\end{aligned}
\end{equation}
For the first term in \eqref{eq:aux_p>2_contradiction_6}, by the time-continuity of $f(t) -\overline{f}(t)$ in the assumption \ref{assu:f_continuity} and the bound by Lemma \ref{lemma:K_q} and Lemma \ref{lemma:bernstein}
\begin{equation}\label{eq:aux_p>2_contradiction_7}
\sup_{t\in [0 ,\tau_q]}  | e^{-(\tau_q -t) \mathcal{L_\alpha }}  \mathcal{L}_\alpha \Delta_q  \phi |_{L^{p'}}  \lesssim \tau_q^{ \frac{1}{2}} \l_q^{3\alpha },
\end{equation}
we have
\begin{equation}\label{eq:aux_p>2_contradiction_8}
\tau_q \sup_{t\in [0 ,\tau_q]}  |f -\overline{f} |_{L^p}  \sup_{t\in [0 ,\tau_q]} | e^{-(\tau_q -t) \mathcal{L_\alpha }}  \mathcal{L}_\alpha \Delta_q  \phi  |_{L^{p'}}  \lesssim    \big[ \tau_q\l_q^{2\alpha }  \big]^{ \frac{3}{2}   } \tau_q^{   \delta } .
\end{equation}
For the second term in \eqref{eq:aux_p>2_contradiction_6}, by the assumption \ref{assu:Forcing_bound} and Lemma \ref{lemma:K_q} again,
\begin{align}
& \tau_q \sup_{t\in [0 ,\tau_q]} | F_t( e^{-(\tau_q -t) \mathcal{L}_\alpha  } \Delta_q \phi  )   |   \nonumber \\
&   \lesssim  \tau_q \sup_{t\in [0 ,\tau_q]} |  e^{-(\tau_q -t) \mathcal{L}_\alpha  } \Delta_q \phi      |_{B^{2\alpha -\delta }_{p',1}} \nonumber  \\
& \lesssim  \tau_q \l_q^{2\alpha -\delta }  \sup_{t\in [0 ,\tau_q]} |  e^{-(\tau_q -t) \mathcal{L}_\alpha  } \Delta_q \phi      |_{L^{ p' }}  \nonumber \\
& \lesssim   \big[ \tau_q\l_q^{2\alpha }  \big]^{ \frac{3}{2}   } \l_q^{   - \delta }.\label{eq:aux_p>2_contradiction_9}
\end{align}
Combining \eqref{eq:aux_p>2_contradiction_8} and \eqref{eq:aux_p>2_contradiction_9}  in \eqref{eq:aux_p>2_contradiction_6}, we have
\begin{equation}\label{eq:aux_p>2_contradiction_10}
|R_q |\leq C \big[ \tau_q\l_q^{2\alpha }  \big]^{ \frac{3}{2}   } \tau_q^{   \delta } + C \big[ \tau_q\l_q^{2\alpha }  \big]^{ \frac{3}{2}   } \l_q^{-   \delta }
\end{equation}

Thanks to \eqref{eq:def_tau_q_2}, we have $ \tau_q^\delta \leq \l_q^{- 2\alpha \delta + \frac{\alpha \delta}{4}}$. It then follows that
$$
|R_q |\lesssim    \l_q^{ \frac{\alpha \delta  }{2}  - 2\alpha \delta + \frac{\alpha \delta }{4}} + \l_q^{\frac{\alpha \delta }{2}  - \delta }  \to 0
$$
as $q \to \infty$. So $R_q$ is uniformly bounded, a contradiction to the fact that $L_q$ blows up.

\end{proof}

From Lemma \ref{lemma:bernstein}, it is easy to see the initial data in Proposition \ref{prop:model_p<2} and Proposition \ref{prop:model_p>2} are stable under smooth perturbations. For later application to the curvature equation, we state a combined corollary suitable in that setting.

\begin{corollary}\label{cor:model_p_neq_2}
For any $0< \alpha< \frac{1}{2}$, $  p  \neq 2 $, and $\delta>0$,  there exists an even and zero-mean function  $f_{\sharp} \in L^p(\TT)$ such that the following holds.

Consider any model equation \eqref{eq:model_linear} satisfying the $(p,\delta)$-assumptions~\ref{assu:f_continuity}--\ref{assu:Forcing_bound}.

For any $f_g \in C^\infty(\TT)$ and any $\ep>0$, if  $f$ is a weak solution of \eqref{eq:model_linear} with the initial data $ f_0 = \ep f_{\sharp}  + f_g$ in the sense of Definition \ref{def:wea_sol_model_eq}, then $f$ must satisfy
\begin{equation}
	\sup_{t \in [0,T ]}  |f(\cdot, t)|_{L^p(\TT)} = \infty \quad \text{for any $T>0$.}
\end{equation}

\end{corollary}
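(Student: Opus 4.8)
The plan is to deduce this corollary directly from Proposition~\ref{prop:model_p<2} and Proposition~\ref{prop:model_p>2}, exploiting the fact that the contradiction arguments in those proofs are \emph{effectively linear in the initial data}: the only place $f_0$ enters the weak formulation \eqref{eq:aux_p<2_6} (resp.\ \eqref{eq:aux_p>2_contradiction_2}) is through the pairing $\int_\TT f_0\, e^{-\tau_q\mathcal L_\alpha}(\text{test function})\,dx$, which is linear in $f_0$; replacing $f_0$ by $\ep f_\sharp + f_g$ with $f_g\in C^\infty(\TT)$ and $\ep>0$ fixed therefore only perturbs this pairing by a harmless multiplicative constant $\ep$ plus a negligible smooth contribution. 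Concretely, I would take $f_\sharp$ to be precisely the even datum $f_0$ constructed in the proof of Proposition~\ref{prop:model_p<2} when $1\le p<2$, and the one constructed in Proposition~\ref{prop:model_p>2} when $2<p\le\infty$. In both cases $f_\sharp$ is even, lies in $L^p(\TT)$, and has Fourier series supported away from the zero mode (for the Wainger function $\widehat{f_\sharp}(0)=0$; in the case $p>2$ one has $f_\sharp=\sum_{q\in2\NN}h_q$ with each $h_q$ frequency-localized near $\lambda_q$), so $f_\sharp$ is automatically zero-mean.

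Next I would argue by contradiction exactly as in the relevant proposition. Suppose $f$ is a weak solution of \eqref{eq:model_linear} (satisfying \ref{assu:f_continuity}--\ref{assu:Forcing_bound}) with initial data $f_0=\ep f_\sharp+f_g$ and $\sup_{t\in[0,T]}|f(\cdot,t)|_{L^p}\le M<\infty$. I would keep the same times $\tau_q$ (defined by $\tau_q\lambda_q^{2\alpha}=\lambda_q^{\alpha\delta/4}$) and the same test functions as in the corresponding proof: $\Delta_q\phi_q$ over $[0,\tau_q]$ in the case $p<2$, and $\Delta_q\phi$ over $[0,\tau_q]$ in the case $p>2$. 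This splits the weak identity into $L_q=R_q$. The right-hand side $R_q$ is bounded uniformly in $q$ \emph{verbatim} as in the proposition, since that estimate uses only \ref{assu:f_continuity}, \ref{assu:Forcing_bound}, and Lemma~\ref{lemma:K_q}, and never sees the initial data.

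It remains to show $|L_q|\to\infty$. Using that $\mathcal L_\alpha$ is skew-adjoint and $\Delta_q$ self-adjoint, the initial-data pairing equals
\begin{equation}
\ep\int_\TT \Delta_q\!\big(e^{\tau_q\mathcal L_\alpha}f_\sharp\big)\,\psi_q\,dx\;+\;\int_\TT \Delta_q\!\big(e^{\tau_q\mathcal L_\alpha}f_g\big)\,\psi_q\,dx,
\end{equation}
where $\psi_q$ is the relevant test function ($\phi_q$ normalized in $L^{p'}$, resp.\ $\phi$). The first term is $\ep$ times exactly the quantity shown in the proposition to blow up (with positive exponent $(\tfrac1p-\tfrac12)\tfrac{\delta\alpha}{4}-\ep$, resp.\ $\tfrac{\delta\alpha}{4}(\tfrac12-\tfrac1p)-\ep$, by \eqref{eq:g_q_blowup} and the construction of $\phi_q$/$\phi$), so the fixed factor $\ep$ does not affect divergence. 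The second term is negligible: since $f_g\in C^\infty(\TT)$ one has $|\Delta_q f_g|_{L^r}\lesssim_N\lambda_q^{-N}$ for every $N$ and every $r\in[1,\infty]$, so by Lemma~\ref{lemma:K_q} (and $\tau_q\le1$ for large $q$) $\big|\Delta_q(e^{\tau_q\mathcal L_\alpha}f_g)\big|_{L^p}\lesssim \max\{1,\tau_q^{1/2}\lambda_q^\alpha\}\,|\Delta_q f_g|_{L^p}\lesssim_N\lambda_q^{-N}$, and pairing against the normalized $\psi_q$ gives a contribution tending to $0$. Together with $|f(\cdot,\tau_q)|_{L^p}\le M$ this yields $|L_q|\gtrsim \ep\,\lambda_q^{c}-C\to\infty$, contradicting the boundedness of $R_q$; hence no such $f$ exists. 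The only mildly delicate point is the cross-frequency bookkeeping in the case $p>2$: for $q\in2\NN$ one must note that only the summand $h_q$ of $f_\sharp$ (none of the others, nor $f_g$) survives at leading order in $\int f_0\,e^{-\tau_q\mathcal L_\alpha}\Delta_q\phi$, which is immediate from the separation of frequency supports, as in the proof of Proposition~\ref{prop:model_p>2}. Stability under smooth perturbations having already been anticipated via Lemma~\ref{lemma:bernstein}, no new ideas are needed.
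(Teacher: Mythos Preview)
Your proposal is correct and follows exactly the approach the paper intends: the paper only remarks (just before the corollary) that ``from Lemma~\ref{lemma:bernstein}, it is easy to see the initial data in Proposition~\ref{prop:model_p<2} and Proposition~\ref{prop:model_p>2} are stable under smooth perturbations,'' and leaves the details implicit. Your argument is a faithful and accurate unpacking of that sentence---taking $f_\sharp$ to be the datum from the relevant proposition, observing that the $R_q$ estimates never see the initial data, and handling the $f_g$ contribution via $|\widetilde\Delta_q f_g|_{L^p}\lesssim_N\lambda_q^{-N}$ combined with Lemma~\ref{lemma:K_q}.
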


\subsection{\texorpdfstring{$C^\beta$}{Cb} illposedness of the model equation}

In the last part of this section, we consider the model equation in $C^{\beta}$ H\"older spaces. The goal is to show that the illposedness holds for any $ 0 < \beta \leq 1$ since $\beta=0$ corresponds to the $L^\infty $ case that was covered before.  We use the duality approach with the patching argument in the $L^p$, $p>2$ case to prove the $C^\beta$ illposedness.

Given any $\beta \in (0 , 1]$ and $\delta>0$, we consider the \textbf{$(C^\beta, \delta)$-assumptions}, \ref{assu:f_continuity_Cbeta} and \ref{assu:Forcing_bound_Cbeta}, on \eqref{eq:model_linear}.  Similar to $L^p$ cases, these two assumptions roughly say that \eqref{eq:model_linear} is a small perturbation of the original equation $ \p_t f = \mathcal{L}_\alpha f$ in the $C^\beta$ scale.

\begin{figure}[!htb]
\begin{mdframed}[linewidth=1pt,skipabove=10pt,frametitle={$(C^\beta, \delta)$-assumptions:}]
\begin{enumerate}[label=(B{\arabic*})]

\item\label{assu:f_continuity_Cbeta} $ f \mapsto \overline{f}  $ is bounded in $C^\beta(\TT)$  with the estimate
\begin{equation}\label{eq:fbar_bound_Cbeta}
	|f(\cdot, t)  -\overline{f}(\cdot, t)|_{C^{0, \beta}(\TT)} \lesssim t^{\delta} \quad \text{uniformly in time for all $t \geq 0$ small;}
\end{equation}

\item\label{assu:Forcing_bound_Cbeta} the distributional forcing $F_t$   satisfies the bound
\begin{equation}\label{eq:Forcing_bound_Cbeta}
\begin{aligned}
| F_t  |_{L^\infty B^{\beta-2\alpha+\delta}_{\infty,\infty} }< \infty .
\end{aligned}
\end{equation}

\end{enumerate}
\end{mdframed}
\end{figure}

\hfill

\begin{proposition}[Illposedness for $C^\beta$]\label{prop:model_Cbeta}
For any $0< \alpha< \frac{1}{2}$, $0 < \beta   \leq 1 $, and $\delta>0$, there exists an even initial data $f_0 \in C^{  \beta}(\TT)$ such that the following holds.

Consider any model equation \eqref{eq:model_linear} satisfying the $(C^\beta,\delta)$-assumptions~\ref{assu:f_continuity_Cbeta}--\ref{assu:Forcing_bound_Cbeta}. If $f$
is  a weak solution of \eqref{eq:model_linear} with the initial data $ f_0  $ in the sense of Definition \ref{def:wea_sol_model_eq}, then it must  satisfy
\begin{equation}\label{eq:f_blowup_Cbeta}
	\sup_{t \in [0,T ]}  |f(\cdot, t)|_{C^\beta(\TT)} = \infty \quad \text{for any $T>0$.}
\end{equation}

\end{proposition}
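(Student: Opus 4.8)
The plan is to mirror the structure of the proof of Proposition \ref{prop:model_p>2} (the $L^p$, $p>2$ case), replacing the $L^p$ duality by the $C^\beta$–$\dot B^{-\beta}_{1,1}$–type duality at the level of single Littlewood–Paley blocks. First I fix $0<\delta\le 1$ and a small $\ep=\ep(\alpha,\beta,\delta)>0$ with $\ep<\frac{\beta\delta\alpha}{8}$ (or an analogous explicit smallness condition that makes the final exponent positive). Since $\beta>0$, I want to produce blowup of the $C^\beta$ norm, i.e.\ of $\sup_q \lambda_q^\beta|\Delta_q f|_{L^\infty}$, so I test against frequency-localized functions dual to $L^\infty$. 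Concretely, I apply Lemma \ref{lemma:bulding_f_pless2} with $p'=1$: there is $\phi\in L^1(\TT)$ such that for all large $q$ with $t\lambda_q^{2\alpha}\ge 1$,
\begin{equation*}
|e^{t\mathcal{L}_\alpha}\Delta_q\phi|_{L^1}\ge t^{1/2}\lambda_q^{\alpha-\ep/2}.
\end{equation*}
This $\phi$ will be used as a test function. I then pick times $\tau_q\in(0,T]$ with $\tau_q\lambda_q^{2\alpha}=\lambda_q^{\alpha\delta/4}$ (so $\tau_q\lambda_q^{2\alpha}\ge1$ and $\tau_q\to0$), and I build the initial data as a lacunary sum $f_0=\sum_{q\in2\NN}h_q$ where each $h_q\in C^\infty(\TT)$ is even, frequency-localized near $\lambda_q$, normalized so that $|h_q|_{C^\beta}\le\lambda_q^{-\ep/2}$ (hence $f_0\in C^\beta$ by the lacunary Littlewood–Paley characterization \eqref{eq:holder_LP_condition}), and chosen by $C^\beta$–duality so that $\int h_q\,e^{\tau_q\mathcal{L}_\alpha}\Delta_q\phi\ \ge\ \lambda_q^{-\beta}\,\tau_q^{1/2}\lambda_q^{\alpha-\ep}$ for all large $q$. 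The existence of such $h_q$ comes from pairing $\Delta_q$ of the (nontrivial, since $\int\Delta_q[e^{\tau_q\mathcal{L}_\alpha}\phi]\ne0$ in general — one takes $\Delta_q$ localized away from $0$) function $e^{\tau_q\mathcal{L}_\alpha}\Delta_q\phi$ with a suitably scaled bump of $C^\beta$-norm $\sim\lambda_q^{-\beta}$; keeping only its even part costs nothing because $e^{\tau_q\mathcal{L}_\alpha}\Delta_q\phi$ is even.

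Next, assuming for contradiction that a weak solution $f$ exists with $\sup_{t\in[0,T]}|f(\cdot,t)|_{C^\beta}\le M$, I run the test functions $\psi=\Delta_q\phi$ in the weak formulation \eqref{eq:linear_weak} over $[0,\tau_q]$, using that $\psi$ solves $\partial_t\varphi=\mathcal{L}_\alpha\varphi$ exactly when $\varphi(\cdot,t)=e^{-(\tau_q-t)\mathcal{L}_\alpha}\Delta_q\phi$. Since the annuli of the $h_q$'s are separated, this isolates one block and gives $L_q=R_q$ with
\begin{equation*}
L_q=\int_\TT f(x,\tau_q)\Delta_q\phi\,dx-\int_\TT h_q\,e^{-\tau_q\mathcal{L}_\alpha}\Delta_q\phi\,dx .
\end{equation*}
Wait — I need the correct sign: the weak formulation reads $\int f(T)\varphi(T)-\int f_0\varphi(0)=\int(f\partial_t\varphi-\bar f\mathcal{L}_\alpha\varphi)+\int F_t(\varphi)$, so with $\varphi(\cdot,0)=e^{-\tau_q\mathcal{L}_\alpha}\Delta_q\phi$ the second left-hand term is $\int h_q\,e^{-\tau_q\mathcal{L}_\alpha}\Delta_q\phi$, which I do \emph{not} want; instead I use the adjoint identity exactly as in \eqref{eq:aux_p>2_contradiction_2}, i.e.\ $\int h_q\,e^{-\tau_q\mathcal{L}_\alpha}\Delta_q\phi=\int(e^{\tau_q\mathcal{L}_\alpha}h_q)\Delta_q\phi$; but what I actually want is $\int h_q\,e^{\tau_q\mathcal{L}_\alpha}\Delta_q\phi$, which requires taking $\varphi(\cdot,0)=e^{-(\text{total})\mathcal{L}_\alpha}\phi$ so that $\varphi(\cdot,\tau_q)=\phi$ and the \emph{$T$-endpoint} term becomes $\int f(\tau_q)\Delta_q\phi$ while the $0$-endpoint becomes $\int h_q\,e^{-\tau_q\mathcal{L}_\alpha}\Delta_q\phi$, and then I rewrite the latter via the adjoint of $\mathcal{L}_\alpha$ (a skew-symmetric operator) as $-\int(e^{\tau_q\mathcal{L}_\alpha}h_q)\Delta_q\phi$ up to the boundedness accounted for in $R_q$. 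I follow verbatim the bookkeeping of Proposition \ref{prop:model_p>2}, so that
\begin{equation*}
|L_q|\ \ge\ \lambda_q^{-\beta}\,\tau_q^{1/2}\lambda_q^{\alpha-\ep}-|f(\cdot,\tau_q)|_{C^\beta}\,|\Delta_q\phi|_{\dot B^{-\beta}_{1,1}}\ \ge\ \lambda_q^{\alpha\delta/8-\ep}-CM,
\end{equation*}
using $\tau_q^{1/2}\lambda_q^\alpha=(\tau_q\lambda_q^{2\alpha})^{1/2}=\lambda_q^{\alpha\delta/8}$ and $|\Delta_q\phi|_{\dot B^{-\beta}_{1,1}}\lesssim\lambda_q^{-\beta}|\Delta_q\phi|_{L^1}\lesssim\lambda_q^{-\beta}|\phi|_{L^1}$; the exponent $\alpha\delta/8-\ep>0$ by choice of $\ep$, so $L_q\to\infty$.

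For $R_q$ I estimate the two terms exactly as before. The first term is bounded by $\tau_q\,\sup_t|f-\bar f|_{C^\beta}\cdot\sup_t|e^{-(\tau_q-t)\mathcal{L}_\alpha}\mathcal{L}_\alpha\Delta_q\phi|_{\dot B^{-\beta}_{1,1}}$; using assumption \ref{assu:f_continuity_Cbeta} ($\lesssim\tau_q^\delta$), Bernstein (Lemma \ref{lemma:bernstein}) to pay $\lambda_q^{2\alpha}$ for $\mathcal{L}_\alpha$ and $\lambda_q^{-\beta}$ for the negative-order Besov norm, and Lemma \ref{lemma:K_q} to bound $\sup_t|e^{-(\tau_q-t)\mathcal{L}_\alpha}\Delta_q\phi|_{L^1}\lesssim\tau_q^{1/2}\lambda_q^\alpha$, this is $\lesssim\tau_q^{1+\delta}\lambda_q^{3\alpha-\beta}\lesssim(\tau_q\lambda_q^{2\alpha})^{3/2}\tau_q^\delta\lambda_q^{-\beta}$. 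The forcing term is controlled by assumption \ref{assu:Forcing_bound_Cbeta}: $\tau_q\sup_t|e^{-(\tau_q-t)\mathcal{L}_\alpha}\Delta_q\phi|_{B^{2\alpha-\beta-\delta}_{1,1}}\lesssim\tau_q\lambda_q^{2\alpha-\beta-\delta}\sup_t|e^{-(\tau_q-t)\mathcal{L}_\alpha}\Delta_q\phi|_{L^1}\lesssim(\tau_q\lambda_q^{2\alpha})^{3/2}\lambda_q^{-\beta-\delta}$ by Lemma \ref{lemma:K_q} again. Since $\tau_q^\delta\le\lambda_q^{-2\alpha\delta+\alpha\delta/4}$ and $(\tau_q\lambda_q^{2\alpha})^{3/2}=\lambda_q^{3\alpha\delta/8}$, both pieces of $R_q$ are $\lambda_q^{(\text{negative})}\to0$; thus $R_q$ is uniformly bounded, contradicting $L_q\to\infty$. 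The main obstacle I anticipate is getting the duality normalization of $h_q$ precisely right in the $C^\beta$ scale — namely ensuring one can choose an even, frequency-$\lambda_q$-localized smooth $h_q$ with $|h_q|_{C^\beta}\lesssim\lambda_q^{-\ep/2}$ and the lower bound \eqref{eq:g_q_blowup}-analogue simultaneously; this is where the factor $\lambda_q^{-\beta}$ enters and must be tracked carefully, but it is a routine consequence of the fact that on a single dyadic annulus the $C^\beta$ and weighted $L^\infty$ norms are comparable up to $\lambda_q^\beta$, together with $L^\infty$–$L^1$ duality applied blockwise. Everything else is a direct transcription of the $L^p$, $p>2$ argument with $p'=1$.
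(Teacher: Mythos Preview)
Your approach is correct and essentially the same as the paper's: build the initial data as a lacunary sum, use the $L^1$ Wainger function $\phi$ from Lemma~\ref{lemma:bulding_f_pless2} as a test function, and exploit the $L^\infty$--$L^1$ duality on single Littlewood--Paley blocks to reach a contradiction via $L_q \ne R_q$.

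The only difference is bookkeeping. The paper puts the $\lambda_q^{-\beta}$ weight explicitly into the initial data, $f_0=\sum_{q\in 2\NN}\lambda_q^{-\beta}h_q$ with $|h_q|_{L^\infty}\le\lambda_q^{-\ep/2}$, and symmetrically multiplies the test function by $\lambda_q^{\beta}$, taking $\zeta=\lambda_q^\beta\Delta_q\phi$. This produces $L_q$ and $R_q$ that are directly comparable without any stray $\lambda_q^{-\beta}$ factors (so $|L_q|\ge\lambda_q^{\alpha\delta/8-\ep}-C$ literally diverges). In your version the $\lambda_q^{-\beta}$ sits inside the $C^\beta$/$\dot B^{-\beta}_{1,1}$ norms, and it does \emph{not} cancel in your displayed lower bound: you should have $|L_q|\ge\lambda_q^{-\beta}(\lambda_q^{\alpha\delta/8-\ep}-CM)$, not $\lambda_q^{\alpha\delta/8-\ep}-CM$. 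This is harmless --- the same $\lambda_q^{-\beta}$ appears uniformly in your $R_q$ estimates, so the contradiction survives after dividing through --- but your sentence ``$L_q\to\infty$'' is not literally true as written. The paper's normalization sidesteps this. Also, your smallness condition $\ep<\beta\delta\alpha/8$ carries an unnecessary $\beta$; $\ep<\delta\alpha/16$ (as the paper uses) suffices.
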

\begin{proof}
First of all, let us  fix $\ep =\ep(\alpha, \delta)>0$   such that
\begin{equation}\label{eq:small_ep_Cbeta}
	0< \ep <  \frac{\delta \alpha }{16}
\end{equation}
and take a sequence of times $\tau_q \in (0,T]$ for  such that
\begin{equation}\label{eq:def_tau_Cbeta}
	\tau_q \lambda_q^{2\alpha } =  \lambda_q^{\frac{ \alpha \delta   }{4}} .
\end{equation}
For this $\ep>0$  and $p=1$  we apply Lemma \ref{lemma:bulding_f_pless2} to obtain the even real-valued function $ \phi \in L^{1}(\TT)$ such that
\begin{equation}\label{eq:aux_Cbeta_contradiction_1}
	|e^{ \tau_q  \mathcal{L}_\alpha  } \Delta_q  \phi   |_{L^{1 }} \geq  \tau_q^{ \frac{1}{2} }   \l_q^{ \alpha     -  \frac{\ep }{2} } \quad \text{for all sufficiently large $q \in \NN $}.
\end{equation}
As in the proof of Proposition \ref{prop:model_p>2}, we will use this $\phi$ as a test function.

Once again, the duality of $L^1$ norm implies that there exist a sequence $ h_q \in C^\infty (\TT)$, $q \in \NN$ such that
\begin{enumerate}
\item $h_q$ is even and  its frequency support is contained in that of $\Delta_q \phi$;

	\item For any sufficiently large $q\in \NN$, $h_q$ satisfies $| h_q  |_{L^\infty } \leq \l_{q}^{-  \frac{\ep }{2}  }$ and
	\begin{equation}\label{eq:g_q_blowup_Cbeta}
		\int h_q e^{\tau_q \mathcal{L}_\alpha  } \Delta_q \phi  \geq       \tau_q^{ \frac{1}{2}  }  \lambda_q^{ \alpha   -  \ep} .
	\end{equation}

\end{enumerate}
Here the evenness of $h_q$ follows from the same reasoning as in Proposition \ref{prop:model_p>2}.

Our initial data is
\begin{equation}\label{eq:aux_Cbeta_intial}
f_0 := \sum_{q\in 2\NN} \l_q^{-\beta } h_q  .
\end{equation}
We take $q\in 2\NN$ to isolate the contribution from each frequency band $\l_q$. It is clear that $f_0 $ is even and in $  C^\beta(\TT)$ since $|h_q |_{L^\infty } \leq \l_{q}^{- \frac{\ep }{2} }$  thanks to the characterization of H\"older continuous function \eqref{eq:holder_LP_condition}.

Suppose there is a weak solution $f$ with the initial data $f_0 $ \eqref{eq:aux_Cbeta_intial} such that
\begin{equation}\label{eq:aux_Cbeta_assumption}
 \sup_{t \in [0,T] } |f(\cdot, t)|_{C^{0, \beta}(\TT)} \leq M
\end{equation}
for some   $ T > 0$  and $M>0$.

Then as before this weak solution $f$ must satisfy for any $\zeta \in C^\infty(\TT)$,
\begin{equation}\label{eq:aux_Cbeta_contradiction_2}
\begin{aligned}
&		\int_{\TT} f(x, T) \zeta(x) \, dx - \int_{\TT} f_0 (x )  e^{ - T \mathcal{L_\alpha }} \zeta(x) \,d x   \\
& = \int_{\TT \times [0,T]} (f -\overline{f} )         e^{-(T-t) \mathcal{L_\alpha }}  \mathcal{L}_\alpha \zeta  \, dx dt \\
&  \qquad + \int_0^T F_t( e^{-(T-t) \mathcal{L_\alpha }}\zeta ) \, dt.
\end{aligned}
\end{equation}

For all sufficiently large $q\in 2\NN$ consider the test functions
\begin{equation}\label{eq:aux_Cbeta_contradiction_3}
 \zeta =  \l_q^{\beta }\Delta_q \phi \in   C^\infty (\TT)
\end{equation}
 over time intervals $[0,\tau_q]$ where $ \phi$ is as in \eqref{eq:aux_Cbeta_contradiction_1} and \eqref{eq:g_q_blowup_Cbeta}.

Due to the frequency supports of the summands defining $f_0 = \sum \l_q^{-\beta } h_q$ belonging to disjoint annuli, using \eqref{eq:aux_Cbeta_contradiction_3} in \eqref{eq:aux_Cbeta_contradiction_2} gives
\begin{equation}\label{eq:aux_Cbeta_contradiction_4}
\begin{aligned}
&  \underbrace{  \l_q^{\beta }\int_{\TT} f(x, \tau_q) \Delta_q   \phi(x) \, dx -  \int_{\TT} h_q (x )  e^{ - \tau_q \mathcal{L_\alpha }} \Delta_q  \phi (x) \,d x   }_{:= L_q }\\
	& = \l_q^{\beta } \underbrace{\int_{\TT \times [0,\tau_q]} (f -\overline{f} )         e^{-(\tau_q -t) \mathcal{L_\alpha }}  \mathcal{L}_\alpha \Delta_q  \phi \, dx dt + \int_0^{\tau_q} F_t( \l_q^{\beta } e^{-(\tau_q -t) \mathcal{L}_\alpha  } \Delta_q \phi  )\, dt}_{:= R_q} .\\
\end{aligned}
\end{equation}
As before, we will show that $L_q \neq R_q$ for some sufficiently large $q \in \NN$ under the hypothesis \eqref{eq:aux_Cbeta_assumption}.

Let us start with $L_q$, the left-hand side of \eqref{eq:aux_Cbeta_contradiction_4}. Since $ \phi \in L^{1}$,  and $h_q$ satisfies \eqref{eq:g_q_blowup_Cbeta}, we use H\"older to obtain that for all large $q \in 2\NN$
\begin{equation}
\begin{aligned} \label{eq:contradiction_eq_Cbeta}
|L_q| & \geq  \tau_q^{ \frac{1}{2} } \l_q^{  \alpha   -   \ep} -   \l_q^{\beta}| \widetilde\Delta_q  f(\cdot,\tau_q) |_{L^\infty} | \Delta_q \phi|_{L^{1} }  .
\end{aligned}
\end{equation}

By the assumption \eqref{eq:aux_Cbeta_assumption} and the fact $\phi \in L^1(\TT)$, we have
\begin{equation}\label{eq:contradiction_eq_Cbeta_1}
|L_q| \geq  \tau_q^{ \frac{1}{2}  } \l_q^{  \alpha    -   \ep} -  C  .
\end{equation}
As in Proposition \ref{prop:model_p<2}, the exponent of the term is positive: by \eqref{eq:def_tau_Cbeta},
\begin{equation}
\tau_q^{ \frac{1}{2}  } \l_q^{  \alpha    -   \ep}  = [\tau_q \l_q^{ 2\alpha } ]^{  \frac{1}{2}  }      \l_q^{-\ep} =  \l_q^{  {\frac{ \alpha \delta   }{8}}   - \ep }
\end{equation}
where $ \frac{ \alpha \delta   }{8}    - \ep>0 $  thanks to \eqref{eq:small_ep_Cbeta}. Hence $L_q$ blows up as $q \to \infty$.

Finally, we consider $R_q$, the right-hand side of \eqref{eq:aux_Cbeta_contradiction_4}. We will show that the right-hand side remains uniformly bounded.

By H\"older's inequality again,
\begin{equation}\label{eq:contradiction_eq_Cbeta_2}
\begin{aligned}
|R_q | \leq  &  \tau_q  \sup_{t\in [0 ,\tau_q]}    \l_q^{\beta }  |   \widetilde\Delta_q f -  \widetilde\Delta_q \overline{f} |_{L^\infty}     | e^{-(\tau_q -t) \mathcal{L_\alpha }}  \mathcal{L}_\alpha \Delta_q  \phi  |_{L^{1}}  \\
 &\qquad  \qquad  +  \tau_q  \sup_{t\in [0 ,\tau_q]} | F_t( \l_q^{\beta } e^{ -(\tau_q -t) \mathcal{L}_\alpha  } \Delta_q \phi  ) |        .
\end{aligned}
\end{equation}

For the first term in \eqref{eq:contradiction_eq_Cbeta_2}, by the   assumption \ref{assu:f_continuity_Cbeta} and the following bound by Lemma \ref{lemma:K_q}  and $\phi \in L^1(\TT)$:
\begin{equation}\label{eq:contradiction_eq_Cbeta_3}
\sup_{t\in [0 ,\tau_q]}  | e^{-(\tau_q -t) \mathcal{L_\alpha }}  \mathcal{L}_\alpha \Delta_q  \phi |_{L^{1}}  \lesssim \tau_q^{ \frac{1}{2}} \l_q^{3\alpha },
\end{equation}
we have
\begin{align}
& \sup_{t\in [0 ,\tau_q]}     \l_q^{\beta }  |   \widetilde\Delta_q  f -     \widetilde\Delta_q   \overline{f} |_{L^\infty}      | e^{-(\tau_q -t) \mathcal{L_\alpha }}  \mathcal{L}_\alpha \Delta_q  \phi  |_{L^{1}}  \nonumber \\
& \lesssim    \sup_{t\in [0 ,\tau_q]}  |f (t)-\overline{f} (t) |_{C^{0, \beta} }  \sup_{t\in [0 ,\tau_q]} | e^{-(\tau_q -t) \mathcal{L_\alpha }}  \mathcal{L}_\alpha \Delta_q  \phi |_{L^{1}}  \nonumber  \\
& \leq  \tau_q ^{ \frac{1}{2}    } \l_q^{3\alpha } \tau_q^{   \delta } . \label{eq:contradiction_eq_Cbeta_4}
\end{align}
For the second term in \eqref{eq:contradiction_eq_Cbeta_2}, by  \eqref{eq:Forcing_bound_Cbeta} and the distributional pairing
\begin{align*}
\sup_{t\in [0 ,\tau_q]} | F_t( \l_q^{\beta } e^{ -(\tau_q-t)   \mathcal{L}_\alpha  } \Delta_q \phi  ) |     &  \lesssim \l_q^{2\alpha- \delta}  | F_t |_{ L^\infty B^{ \beta - 2\alpha+\delta }_{\infty,\infty } } \sup_{t\in [0 ,\tau_q]} \left|
e^{t  \mathcal{L}_\alpha  }  \Delta_q \phi       \right|_{L^1}.
\end{align*}
It follows from the   assumption \ref{assu:Forcing_bound_Cbeta} and Lemma \ref{lemma:K_q} that
\begin{align}\label{eq:contradiction_eq_Cbeta_5}
\sup_{t\in [0 ,\tau_q]} | F_t( \l_q^{\beta } e^{\tau_q \mathcal{L}_\alpha  } \Delta_q \phi  ) |     &   \lesssim     \tau_q^{\frac{1}{2} } \l_q^{3 \alpha  -\delta }.
\end{align}

Combining \eqref{eq:contradiction_eq_Cbeta_2}, \eqref{eq:contradiction_eq_Cbeta_4}, and \eqref{eq:contradiction_eq_Cbeta_5} we have the estimate for $R_q$:
\begin{equation}
 |R_q| \leq C \left[ \tau_q \l_q^{2\alpha } \right] ^{ \frac{3}{2}    }  \tau_q^{   \delta }    + C \left[ \tau_q \l_q^{2\alpha } \right] ^{ \frac{3}{2}    } \l_q^{ -\delta}.
\end{equation}

Due to \eqref{eq:def_tau_Cbeta}, we have $ \tau_q^\delta \leq \l_q^{- 2\alpha \delta + \frac{\alpha \delta}{4}}$. It then follows that
\begin{align*}
& |R_q| \lesssim  \l_q^{ \frac{\alpha \delta  }{2}  - 2\alpha \delta + \frac{\alpha \delta }{4}} +  \l_q^{ \frac{\alpha \delta  }{2}  -   \delta   }  \to 0
\end{align*}
as $q \to \infty$. So $R_q$ is uniformly bounded, a contradiction to the fact that $L_q$ blows up.

\end{proof}

As in the $L^p$ cases, the initial data in Proposition \ref{prop:model_Cbeta} is also stable under smooth perturbation. We state a corollary suitable for later application to the curvature equation.

\begin{corollary}\label{cor:model_Cbeta}
For any $0< \alpha< \frac{1}{2}$, $  0 \leq \beta  \leq 1 $, and $\delta>0$,  there exists  an even and zero-mean function $f_{\sharp} \in C^\beta(\TT)$ such that the following holds.

Consider any model equation \eqref{eq:model_linear} satisfying the $(C^\beta ,\delta)$-assumptions~\ref{assu:f_continuity_Cbeta}--\ref{assu:Forcing_bound_Cbeta}.

For any $f_g \in C^\infty(\TT)$ and any $\ep>0$, if  $f$ is a weak solution of \eqref{eq:model_linear} with the initial data $ f_0 = \ep f_{\sharp}  + f_g$ in the sense of Definition \ref{def:wea_sol_model_eq}, then $f$ must satisfy
\begin{equation}
	\sup_{t \in [0,T ]}  |f(\cdot, t)|_{C^{0, \beta}(\TT)} = \infty \quad \text{for any $T>0$.}
\end{equation}

\end{corollary}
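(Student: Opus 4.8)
The plan is to derive Corollary~\ref{cor:model_Cbeta} from Proposition~\ref{prop:model_Cbeta} (and, when $\beta=0$, from the $p=\infty$ case recorded in Corollary~\ref{cor:model_p_neq_2}) simply by inspecting the proof of Proposition~\ref{prop:model_Cbeta} and observing that it is unaffected by a fixed rescaling and a smooth perturbation of the initial datum. First I would take $f_\sharp$ to be \emph{precisely} the datum $f_0=\sum_{q\in 2\NN}\lambda_q^{-\beta}h_q$ constructed there for the given $\alpha,\beta,\delta$. Since each $h_q$ is even with $\widehat{h_q}$ supported in the annulus $\{\lambda_q/2\le|k|\le 2\lambda_q\}$, the function $f_\sharp$ is automatically even and has zero mean, as required.

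Next, given $f_g\in C^\infty(\TT)$ and $\ep>0$, I would argue by contradiction, supposing $f$ is a weak solution of \eqref{eq:model_linear} satisfying the $(C^\beta,\delta)$-assumptions \ref{assu:f_continuity_Cbeta}--\ref{assu:Forcing_bound_Cbeta} with datum $f_0=\ep f_\sharp+f_g$ and $\sup_{t\in[0,T]}|f(\cdot,t)|_{C^\beta}\le M$. Feeding the same test functions $\zeta=\lambda_q^\beta\Delta_q\phi$ and time scales $\tau_q$ used in that proof into the weak formulation \eqref{eq:aux_Cbeta_contradiction_2}, the right-hand quantity $R_q$ is estimated verbatim, since that bound only used the $(C^\beta,\delta)$-assumptions, Lemma~\ref{lemma:K_q}, and $\phi\in L^1$ and never touched the initial datum; hence $R_q\to0$. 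On the left-hand side, the summands of $f_\sharp$ lie in disjoint frequency annuli, which as before isolates the single term $\ep\int_\TT h_q\,e^{-\tau_q\mathcal{L}_\alpha}\Delta_q\phi$ coming from $\ep f_\sharp$; by \eqref{eq:g_q_blowup_Cbeta} and \eqref{eq:def_tau_Cbeta} this is $\gtrsim\ep\,\lambda_q^{c_0}$ for a fixed exponent $c_0=c_0(\alpha,\delta)>0$, which still diverges because $\ep>0$ is independent of $q$. The additional contribution of the smooth part, $\lambda_q^\beta\int_\TT f_g\,e^{-\tau_q\mathcal{L}_\alpha}\Delta_q\phi=\lambda_q^\beta\int_\TT(\widetilde\Delta_q f_g)\,e^{-\tau_q\mathcal{L}_\alpha}\Delta_q\phi$, is $O_N(\lambda_q^{-N})$ for every $N$ by Bernstein's inequality (Lemma~\ref{lemma:bernstein}), the faster-than-polynomial decay of $\widehat{f_g}$, and Lemma~\ref{lemma:K_q}; and the term coming from $f$ itself is bounded by $M|\phi|_{L^1}$ since $|\widetilde\Delta_q f(\cdot,\tau_q)|_{L^\infty}\le M\lambda_q^{-\beta}$. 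Thus $L_q\to\infty$ while $R_q$ remains bounded, the desired contradiction; the case $\beta=0$ follows by running the identical argument with the $p=\infty$ datum of Corollary~\ref{cor:model_p_neq_2}.

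I do not anticipate a genuine obstacle here: this is the routine ``persistence of the counterexample under perturbations'' step. The only points requiring care are that the smooth perturbation $f_g$ --- which need not be even --- cannot cancel the blow-up, which is guaranteed by the $\lambda_q$-frequency localization of the test functions together with the rapid decay of $\widehat{f_g}$, exactly as in the remark preceding Corollary~\ref{cor:model_p_neq_2}; and that the fixed constant $\ep$ never gets absorbed into the $q$-dependent estimates, which is immediate since it multiplies the leading term of $L_q$ directly.
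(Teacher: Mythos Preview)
Your proposal is correct and matches the paper's intended approach: the paper does not write out a proof of this corollary, merely remarking that ``the initial data in Proposition~\ref{prop:model_Cbeta} is also stable under smooth perturbation'' and pointing to Lemma~\ref{lemma:bernstein}, which is exactly the mechanism you spell out (frequency localization kills the $f_g$ contribution, and the fixed $\ep$ multiplies the divergent term). Your handling of the endpoint $\beta=0$ via the $p=\infty$ case is also the natural reading, and the zero-mean and evenness of $f_\sharp$ follow from the frequency support and evenness of the $h_q$ as you note.
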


%%%%%%%%%%%%%%%%%%%%%%%%%%%%%%%%%%%%%%%%%%%%%%%%%%%%%%%
\section{Proof of Sobolev illposedness}\label{sec:proof_illposed}
%%%%%%%%%%%%%%%%%%%%%%%%%%%%%%%%%%%%%%%%%%%%%%%%%%%%%%%
In this section, we put together the two main ingredients, the curvature flow with its leading order dynamics and illposedness for the model equation, to complete the proof of the main theorem.

\subsection{Smooth bending of  curves}

To show illposedness for the $\alpha$-patch problem, we need to take the initial data $\k$ for the curvature equation in the form of our examples in Section \ref{sec:disper_eq_illposed}.
It is not immediately obvious that we can do so, even if we assume $\int \k ds =2\pi$, as the endpoints of the arc could mismatch.

The lemma below shows that after suitable symmetric bending by adding two smooth bump functions to the existing curvature,
an arc becomes a closed curve.

\begin{lemma}\label{lemma:curve_bending}
Let $\k_{\sharp} \in L^p(\TT)$, $1 \leq p \leq \infty,$ even and such that $\int \k_{\sharp}\,ds= 0 $. Then there exists a simple closed  curve $\g$ with length $2\pi$ of class $W^{2,p}$ such that its curvature $\k$ can be decomposed as
\begin{equation}\label{eq:curve_bending}
\k = \ep \k_{\sharp} + \k_{g}
\end{equation}
for some small $\ep>0$ and $ \k_{g} \in C^\infty(\TT)$.
\end{lemma}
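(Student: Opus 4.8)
The idea is to build a curve by prescribing its tangent angle $\theta(s) = \int_0^s \k(\sigma)\,d\sigma$ and then choosing the smooth correction $\k_g$ so that the resulting closed-curve conditions are met. Recall that a curve of length $2\pi$ with arc-length parametrization $\g$ and curvature $\k$ is closed (and simple, for $\ep$ small) precisely when $\int_0^{2\pi}\k\,ds = 2\pi$ and $\int_0^{2\pi}(\cos\theta(s),\sin\theta(s))\,ds = 0$, where $\theta(s) = \theta(0) + \int_0^s\k$. First I would set $\k = \ep\k_\sharp + \k_g$ with $\k_g = 1 + \mu(s)$ where $\mu\in C^\infty(\TT)$ is a small, even correction to be determined; the constant $1$ guarantees the total-turning condition $\int\k\,ds = 2\pi$ as soon as $\int\mu\,ds = 0$ (using $\int\k_\sharp\,ds = 0$).

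The main work is the closure condition $\int_0^{2\pi}e^{i\theta(s)}\,ds = 0$ (identifying $\RR^2 \cong \CC$). I would exploit symmetry: take $\k_\sharp$ even about $s=\pi$ (after a translation, the hypothesis "even" on $\TT = \RR/2\pi\ZZ$ means even about $0$, hence also about $\pi$), and require $\mu$ even about $\pi$ as well. Choosing $\theta(0)$ appropriately (e.g. $\theta(0) = -\pi/2$ so that the curve is symmetric about a vertical axis), the evenness of $\k$ about $\pi$ forces the imaginary part of $\int_0^{2\pi}e^{i\theta}\,ds$ to vanish automatically. It then remains to kill the single real scalar $R(\mu) := \mathrm{Re}\int_0^{2\pi}e^{i\theta(s)}\,ds$. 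When $\ep = 0$ and $\mu = 0$ we have $\k \equiv 1$, so $\theta(s) = s - \pi/2$ and the curve is the unit circle, for which $R = 0$; so the base configuration already closes up. To handle $\ep > 0$, I would introduce a one-parameter family $\mu = \lambda\chi$ for a fixed even (about $\pi$) bump function $\chi$ with $\int\chi\,ds = 0$, and apply the implicit function theorem to solve $R(\ep,\lambda) = 0$ for $\lambda = \lambda(\ep)$ near $(\ep,\lambda) = (0,0)$: one checks $\partial_\lambda R|_{(0,0)} \ne 0$ by an explicit computation with the circle (the derivative is $\int_0^{2\pi}\big(\int_0^s\chi\big)\sin(s-\pi/2)\,ds$, which is nonzero for a suitable choice of $\chi$, e.g. $\chi = \cos s$ up to normalization). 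This yields, for all small $\ep$, a smooth $\k_g = 1 + \lambda(\ep)\chi$ with $\k = \ep\k_\sharp + \k_g$ closing into a $W^{2,p}$ curve of length $2\pi$.

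Finally, I would check simplicity: at $\ep = 0$, $\lambda = 0$ the curve is the unit circle, which is embedded with a definite lower bound on $\dist(\g(s),\g(s'))/|s-s'|_{\TT}$; since $\g$ depends continuously (in $C^1$, say) on $\ep$ through the above construction — $\theta$ depends on $\ep$ only through the $L^1$-to-$C^0$-stable integral $\int_0^s\k$, and $\g(s) = \int_0^s e^{i\theta}$ — the curve remains a simple closed curve for $\ep$ sufficiently small. The one point requiring a little care is that $\k_\sharp$ lies only in $L^p$, not $C^0$, so "$C^1$ continuity of $\g$ in $\ep$" should be read as: $\theta$ is continuous in $\ep$ uniformly on $\TT$ (immediate from $|\theta_\ep - \theta_0|_{L^\infty} \le \ep|\k_\sharp|_{L^1}$), which suffices for both the implicit function theorem computation and the simplicity argument. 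The expected main obstacle is merely bookkeeping the symmetry so that only one scalar equation survives; once that reduction is in place, the implicit function theorem does the rest, and the regularity claim $\g \in W^{2,p}$ is automatic since $\k = \theta' \in L^p$ and $\g'' = \k\,\g^\perp{}'$... more precisely $\g'' $ is a bounded-coefficient combination of $\k$, hence in $L^p$.
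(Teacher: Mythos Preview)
Your approach—perturbing from the unit circle via $\k_g = 1 + \lambda\chi$ and solving one scalar closure equation by the implicit function theorem—is sound and genuinely different from the paper's. The paper instead fixes a small $\ep$ so that the arc with curvature $\ep\k_\sharp$ is nearly straight, then adds two symmetric smooth bumps $\phi(\cdot - h) + \phi(\cdot + h)$ with $\int\phi = \pi$ (so the total turning is $2\pi$) and slides the bump position $h$: the signed horizontal gap between the two endpoints changes sign as $h$ varies from near the tips to near the center, so the intermediate value theorem produces a closing $h^*$. The paper's argument is purely geometric and uses only continuity; yours is more analytic, produces a curve $C^1$-close to the unit circle, and makes the simplicity check especially clean.

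There is, however, a slip in your symmetry bookkeeping that makes the argument as written vacuous. With $\theta(0) = -\tfrac{\pi}{2}$ and $\k$ even, one has $\theta(\pi) = \theta(0) + \int_0^\pi \k = -\tfrac{\pi}{2} + \pi = \tfrac{\pi}{2}$ (since $\int_0^\pi \k_\sharp = \int_0^\pi \chi = 0$ for even mean-zero functions on $\TT$), and the symmetry $\theta(\pi+t) + \theta(\pi-t) = 2\theta(\pi)$ then forces $\int_0^{2\pi} e^{i\theta}\,ds = e^{i\pi/2}\cdot(\text{real}) = i\cdot(\text{real})$. So it is the \emph{real} part that vanishes identically; your $R := \mathrm{Re}\int e^{i\theta}$ is zero for all $(\ep,\lambda)$, and the IFT step says nothing. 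The fix is simply to set $R := \mathrm{Im}\int_0^{2\pi} e^{i\theta}\,ds$ (or, equivalently, take $\theta(0) = 0$ and keep the real part). With that correction, $\partial_\lambda R|_{(0,0)} = \int_0^{2\pi}\sin s\cdot\big(\int_0^s\chi\big)\,ds = \pi \ne 0$ for $\chi = \cos s$, and the rest of your argument goes through.
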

\begin{proof}

Let $ \g_*: [-\pi, \pi ] \to \RR^2 $ be the even arc with curvature $\ep \k_{\sharp}$ and centered at the origin: $\gamma_*(0)=0.$
We fix $\ep>0$ sufficiently small (depending on $\k_\sharp$) so that the unit tangent of $\g_*$ only varies by at most $ 1000^{-1}$.

We define a continuum of even arcs $\g(\cdot, h)$ centered at the origin via the curvature formula
\begin{equation}\label{eq:aux_curve_bending}
\k_{h}(s) =  \ep \k_\sharp(s) + \phi(s - h ) +  \phi(s + h )
\end{equation}
where $\phi \geq 0$ is a fixed smooth bump function with  $\Supp \phi \subset [- \pi/8 , \pi/8 ]$ and $\int \phi = \pi  $.

Since $\int \k_h = 2\pi $, \eqref{eq:aux_curve_bending} defines a family of even arcs with length $2\pi$ and two end-points facing each other. As in Figure \ref{fig:bending}, the two bumps $\phi(s - h )$ and  $\phi(s + h ) $ correspond to two symmetric bends of $180^{\circ} $ and the parameter $h$ determines the positions of the bends.

For $h = \pi  - \pi/8$, the two endpoints of $\g(\cdot, h)$ are far apart; For $h =   \pi/4 $, the two endpoints of  $\g(\cdot, h)$ have reversed their relative position. If we define a quantity measuring the signed horizontal distance of the two endpoints, then this function must attain zero for some $h^* \in [ \pi/4, 7 \pi/8 ]$ by continuity.

The resulting arc $s \mapsto \g( s ,h^*)$ is a simple closed curve by symmetry--- $\g( -\pi ,h^*)$ and $\g( \pi ,h^*)$ are the same point and the tangent vectors also agree. In other words, $s \mapsto \g( s ,h^*)$ extends to a $2\pi$-periodic function on $\RR$ with $W^{2,p}$ regularity.

Finally \eqref{eq:curve_bending} follows from \eqref{eq:aux_curve_bending}.

\begin{figure}[ht]

\tikzset{every picture/.style={line width=0.75pt}} %set default line width to 0.75pt

\begin{tikzpicture}[x=1pt,y=1pt,yscale=-1,xscale=1]
%uncomment if require: \path (0,261); %set diagram left start at 0, and has height of 261

%  Y axis 1
\draw [line width=0.4]    (50,120) -- (50,70) ;
\draw [shift={(50,68)}, rotate = 90] [color={rgb, 255:red, 0; green, 0; blue, 0 }  ][line width=0.4]    (4.37,-1.32) .. controls (2.78,-0.56) and (1.32,-0.12) .. (0,0) .. controls (1.32,0.12) and (2.78,0.56) .. (4.37,1.32)   ;
%Curve Line 1
\draw [color={rgb, 255:red, 144; green, 19; blue, 254 }  ,draw opacity=1 ]   (0,111.6) .. controls (30,109.6) and (70,109.6) .. (100,111.6) ;

%  Y axis 2
\draw [line width=0.4]    (250,120) -- (250,70) ;
\draw [shift={(250,68)}, rotate = 90] [color={rgb, 255:red, 0; green, 0; blue, 0 }  ][line width=0.75]    (4.37,-1.32) .. controls (2.78,-0.56) and (1.32,-0.12) .. (0,0) .. controls (1.32,0.12) and (2.78,0.56) .. (4.37,1.32)   ;
%Curve Line 2
\draw [shift={(47,0)}, color={rgb, 255:red, 144; green, 19; blue, 254 }  ,draw opacity=1 ]   (171.03,105.51) .. controls (166.71,105.64) and (162.03,105.01) .. (160.53,107.17) .. controls (159.03,109.34) and (161.29,111.9) .. (167.58,111.54) .. controls (173.86,111.17) and (230.86,110.67) .. (237.17,111.43) .. controls (243.48,112.19) and (247.03,109.01) .. (245.86,106.84) .. controls (244.7,104.67) and (240.53,105.67) .. (235.35,105.37) ;

% Y axis 3
\draw [line width=0.4]    (50,200) -- (50,150) ;
\draw [shift={(50,148)}, rotate = 90] [color={rgb, 255:red, 0; green, 0; blue, 0 }  ][line width=0.5]    (4.37,-1.32) .. controls (2.78,-0.56) and (1.32,-0.12) .. (0,0) .. controls (1.32,0.12) and (2.78,0.56) .. (4.37,1.32)   ;
%Curve Line 3
\draw [shift={(-40,10)}, color={rgb, 255:red, 144; green, 19; blue, 254 }  ,draw opacity=1 ]   (74.09,173.34) .. controls (65.03,173.17) and (61.09,173.34) .. (59.26,175.17) .. controls (57.42,177.01) and (57.59,180.17) .. (62.09,179.67) .. controls (66.59,179.17) and (112.42,179.51) .. (116.76,180.01) .. controls (121.09,180.51) and (122.42,177.51) .. (120.2,175.34) .. controls (117.97,173.17) and (112.62,173.63) .. (104.79,173.46) ;

% Y axis 4
\draw [line width=0.4]    (250,200) -- (250,150) ;
\draw [shift={(250,148)}, rotate = 90] [color={rgb, 255:red, 0; green, 0; blue, 0 }  ][line width=0.5]    (4.37,-1.32) .. controls (2.78,-0.56) and (1.32,-0.12) .. (0,0) .. controls (1.32,0.12) and (2.78,0.56) .. (4.37,1.32)   ;
%Curve Line 4
\draw  [shift={(50,15)}, color={rgb, 255:red, 144; green, 19; blue, 254 }  ,draw opacity=1 ] (177.41,173.17) .. controls (174.91,175.01) and (178.08,177.67) .. (182.58,177.17) .. controls (187.08,176.67) and (212.58,177.01) .. (216.41,177.67) .. controls (220.24,178.34) and (225.3,176.51) .. (223.08,174.34) .. controls (220.85,172.17) and (211.74,172.34) .. (200.08,172.17) .. controls (188.41,172.01) and (179.91,171.34) .. (177.41,173.17) -- cycle ;

\end{tikzpicture}
\caption{Bending the arc $\g$ with curvature $\ep \k_\sharp$. The top right and the bottom two are the arcs $s \mapsto \g(s, h )$ with the two bends moving from near the endpoints to the center.}
\label{fig:bending}
\end{figure}
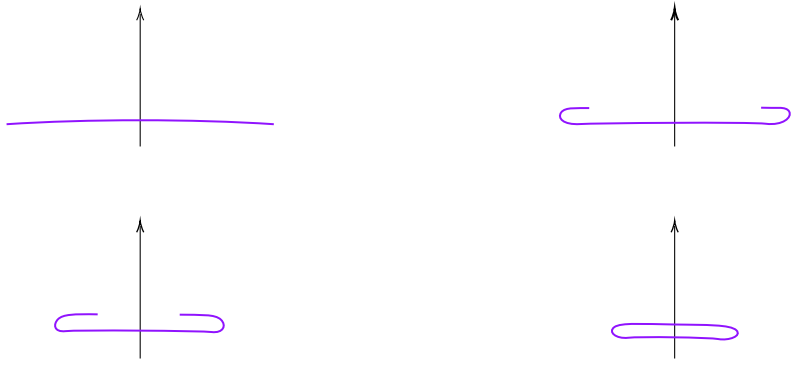

\end{proof}

\subsection{Illposedness in Sobolev spaces}

\begin{proof}[Proof of Theorem~\ref{thm:main_sobolev}]

Let us first choose the initial data. Given $p\neq 2$ and $0< \alpha < \frac{1}{2}  -\frac{1}{2 p} $, we fix
\begin{equation}\label{eq:aux_final_proof_delta}
\delta = \min \{ 1-\frac{1}{p} -2\alpha, 2\alpha \}.
\end{equation}
Since $\delta>0$, we let $\k_\sharp \in L^p(\TT)$ be given by Corollary \ref{cor:model_p_neq_2} with the parameters $\alpha,p$ and $\delta$ above.
Note that Corollary \ref{cor:model_Cbeta} states that this initial data fails to produce any $L^p$ weak solution to any system \eqref{eq:model_linear} satisfying the $(L^p,\delta)$ assumptions.

Then take the $W^{2,p}$ initial data $\Omega_0$ as the interior of a $W^{2,p}$ simple closed curve $\g_0$ that is arc-length parameterized with length $2\pi$ given by Lemma \ref{lemma:curve_bending} whose curvature is given by
\begin{equation}\label{eq:aux_final_proof_k}
\k_0  = \ep \k_{\sharp} + \k_{g}
\end{equation}
for some $\ep>0$ where the bad part $\k_{\sharp} \in L^p(\TT) $ is from Corollary \ref{cor:model_p_neq_2} and  the good part $\k_{g} \in C^\infty(\TT)$.

With the initial data chosen $\Omega_0$, we prove by contradiction. Suppose   there exists an $\alpha$-patch $\Om_t$ on some $[0,T]$ and $M >0 $ such that
\begin{equation}\label{eq:aux_final_proof_0}
\sup_{t \in [ 0, T ]  } \| \Om_t \|_{W^{2,p }} \leq M .
\end{equation}

Then we apply Theorem \ref{thm:flow_existence} with the $\alpha$-patch $\Om_t$ on $[0,T]$ and with the initial parameterization $\g_0 $ above to obtain a unique flow $\g \in C([0,T];C^{1,\sigma}(\TT))$ with the initial data $\g_0$ and $\sigma = 1- \frac{1}{p}  -2\alpha >0$.

Next, by \eqref{eq:aux_final_proof_0}, Theorem \ref{prop:weak_k_flow} shows that the curvature of $\p \Om_t$ under the $C^{1,\sigma}$ parameterization $\g$ satisfies the regularity
\begin{equation}\label{eq:aux_final_proof_1_0}
\k \in L^\infty( [0,T]; L^p(\TT)),
\end{equation}
and the equation
\begin{equation}\label{eq:aux_final_proof_1}
\begin{aligned}
&\int_{\TT} \k(\cdot, T) \phi(\cdot , T)\, g(\cdot,T ) \,dx -\int_{\TT} \k(\cdot, 0) \phi(\cdot ,0) g(\cdot,0)\, dx  \\
&  = \int_{\TT} g \k \p_t \phi \,dx dt  - c_\alpha \int_{\TT} g^{1-2\alpha} \k \mathcal{L}_\alpha \phi \, dx dt +\int_{0}^T \mathcal{E}_t (\phi) \, dt
\end{aligned}
\end{equation}
with  $\mathcal{E}_t $ satisfying
\begin{equation}\label{eq:aux_final_proof_2}
|\mathcal{E}_t(\phi)| \lesssim  |\phi|_{  B^{2\alpha -\sigma}_{  p' , 1}} \quad \text{for all $\phi \in C^\infty(\TT  ) $}
\end{equation}
uniformly on $t\in[0,T]$.

We now put \eqref{eq:aux_final_proof_1} into the setting considered in Section \ref{sec:disper_eq_illposed}. Let $f = \k g$ and define the mapping $ f \mapsto \overline{f} = g^{-2\alpha} f$. Let the distributional forcing $F_t = \mathcal{E}_t$ in the above. Then \eqref{eq:aux_final_proof_1} is  the weak formulation for the equation
\begin{equation}\label{eq:aux_final_proof_3}
\begin{cases}
\p_t f - c_\alpha \mathcal{L}_\alpha \overline{f} = F_t &\\
f|_{t =0} = f_0.
\end{cases}
\end{equation}
By rescaling the time, we can assume $c_\alpha =1$ as in Definition \ref{def:wea_sol_model_eq}.

To obtain the final contradiction, we need to verify the $(L^p,\delta)$ assumption for Corollary \ref{cor:model_p_neq_2}. Let us consider the first condition \ref{assu:f_continuity}. By H\"older's inequality, we have the estimate
\begin{align}\label{eq:aux_final_proof_4}
| f (\cdot, t) - \overline{f}(\cdot ,t )|_{L^p(\TT)} \lesssim |1 -g^{-2\alpha}  |_{L^\infty(\TT)} | f (\cdot, t) |_{L^p(\TT)} .
\end{align}
As explained in Section \ref{subsec:curvature_flow_1},  the flow equation for $\g$ implies the evolution of the metric $g:\TT\times [0,T] \to  \RR^+$
\begin{equation}\label{eq:aux_final_proof_5}
\begin{cases}
\p_t g = g \p_s v \cdot \T   &\\
g |_{t =0} = 1
\end{cases}
\end{equation}
where we have used that the initial data is arc-length parameterized. By Lemma \ref{lemma:estimate_v_on_boundary} and the regularity $g \in C([0,T]; C^{\sigma}(\TT))$ for $\sigma = 1- \frac{1}{p}  -2\alpha >0$ implies that in \eqref{eq:aux_final_proof_4} we have
\begin{align}
| f (\cdot, t) - \overline{f}(\cdot ,t )|_{L^p(\TT)} \lesssim   t  \leq t^{ \delta} .
\end{align}
So we have verified the first condition \ref{assu:f_continuity}.

The second condition \ref{assu:Forcing_bound} follows directly from \eqref{eq:aux_final_proof_2} by \eqref{eq:aux_final_proof_delta} and the fact   $ \delta  \leq 1$. So Corollary \ref{cor:model_p_neq_2} implies that
\begin{equation}\label{eq:aux_final_proof_6}
\sup_{t \in [0,\delta]} |f(t)|_{L^p(\TT)} =\sup_{t \in [0,\delta]} |g^{-2\alpha} \k (t) |_{L^p(\TT)} =  \infty
\end{equation}
which is a contradiction to \eqref{eq:aux_final_proof_1_0}.

\end{proof}

%%%%%%%%%%%%%%%%%%%%%%%%%%%%%%%%%%%%%%%%%%%%%%%%%%%%%%%
\section{Proof of H\"older illposedness}\label{sec:proof_holder}
%%%%%%%%%%%%%%%%%%%%%%%%%%%%%%%%%%%%%%%%%%%%%%%%%%%%%%%

In the last section, we consider the $C^{2,\beta}$ $\alpha$-patches. To finish the proof of illposedness, we need to strengthen a few main ingredients in this setting.

\subsection{Improved estimates in the \texorpdfstring{$C^{2,\beta}$}{C2b} case}

The $C^{2,\beta}$ H\"older case requires a higher regularity of the flow $\g$. To this end, we have the following improved analog of Lemma \ref{lemma:estimate_v_on_boundary} in the tangential direction.
\begin{lemma}\label{lemma:estimate_v_Cbeta}
Let $0< \alpha< \frac{1}{2}$ and $ 0 \leq \beta \leq 1 $. Suppose  $\Omega $ is a $C^{2,\beta}$ bounded domain and $v$ is the velocity field of $\Omega $ according to \eqref{eq:aSQG_BS}.

Then the velocity field satisfies
\begin{equation}\label{eq:estimate_v_Cbeta}
\big|   \p_s v \cdot \T   \big|_{C^{0,1}} \leq C(\alpha, \beta , \Omega)
\end{equation}
where $|\cdot |_{C^{0,1}}$ is the Lipschitz norm.
\end{lemma}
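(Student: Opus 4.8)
Here is how I would prove Lemma~\ref{lemma:estimate_v_Cbeta}.

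\medskip

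The plan is to sharpen the analysis of Lemma~\ref{lemma:estimate_v_on_boundary} in the tangential direction, exploiting that the degenerate dispersion (of order $2\alpha$, the obstruction to better-than-$L^2$ control) lives in the \emph{normal} component $\p_s v\cdot\N$, while in the tangential component the corresponding singular contributions are odd and carry $s$-independent (or slowly varying) coefficients, so they cancel. Since a $C^{2,\beta}$ domain is in particular $W^{2,p}$ for every finite $p$, hence for some $p>\tfrac1{1-2\alpha}$, Lemma~\ref{lemma:estimate_v_on_boundary} applies: for an arc-length parametrization $\g$ of $\p\Om$ with $L=|\p\Om|$, after translating the integration variable,
\[
\p_s v\cdot\T(s)=P.V.\!\int_{|s'|\le L/2}K_{s'}(s)\,ds',\qquad K_{s'}(s):=\big(\T(s+s')\cdot\T(s)\big)\,\frac{(\g(s)-\g(s+s'))\cdot\T(s)}{|\g(s)-\g(s+s')|^{2+2\alpha}}.
\]
As $\g\in C^{2,\beta}$ we have $\T,\N\in C^{1,\beta}$, $\T'=-\k\N$, $\N'=\k\T$ with $\k\in C^\beta$; all constants below depend only on $\alpha,\beta,|\g|_{C^{2,\beta}}$.

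\medskip

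\textbf{Step 1: expansion of the kernel.} Using $\T(s+s')-\T(s)=-s'\k(s)\N(s)+O(|s'|^{1+\beta})$ (obtained by integrating $\T'=-\k\N\in C^\beta$), the orthogonality $\T\cdot\N=0$, and $|\T|\equiv1$ i.e.\ $\T\cdot\T'=0$, one gets, uniformly in $s$,
\[
\T(s+s')\cdot\T(s)=1-\tfrac12 s'^2\k(s)^2+O(|s'|^{2+\beta}),\quad(\g(s)-\g(s+s'))\cdot\T(s)=-s'+O(|s'|^{2+\beta}),\quad|\g(s)-\g(s+s')|^2=s'^2\big(1+O(|s'|^{1+\beta})\big).
\]
Hence, setting $\widetilde E_{s'}(s):=K_{s'}(s)+\sign(s')|s'|^{-1-2\alpha}$ (that is, $K_{s'}$ with its $s$-independent leading singular term removed),
\[
K_{s'}(s)=-\sign(s')|s'|^{-1-2\alpha}+\widetilde E_{s'}(s),\qquad |\widetilde E_{s'}(s)|\lesssim|s'|^{\beta-2\alpha},
\]
uniformly in $s$ and in $0<|s'|\le L/2$. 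Since the subtracted term is odd and the domain symmetric, its principal value vanishes, so $\p_s v\cdot\T(s)=\int_{|s'|\le L/2}\widetilde E_{s'}(s)\,ds'$, an absolutely convergent integral because $\beta\ge0$ and $\alpha<\tfrac12$ force $\beta-2\alpha>-1$; in particular $|\p_s v\cdot\T|_{L^\infty}\le C$.

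\medskip

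\textbf{Step 2: differentiating under the integral.} For fixed $s'\ne0$, $s\mapsto K_{s'}(s)$ is $C^1$, and each $s$-derivative of the building blocks gains an extra power of $|s'|$: writing $\T(s+s')\cdot\T(s)=1-\tfrac12|\T(s+s')-\T(s)|^2$ gives $\p_s(\T(s+s')\cdot\T(s))=-(\T(s+s')-\T(s))\cdot(\T'(s+s')-\T'(s))=O(|s'|)\cdot O(|s'|^\beta)=O(|s'|^{1+\beta})$ — here only \emph{differences} of $\T,\N\in C^{1,\beta}$ appear, no derivative of $\k$; similarly $\p_s((\g(s)-\g(s+s'))\cdot\T(s))=-\int_0^{s'}\p_s(\T(s+\tau)\cdot\T(s))\,d\tau=O(|s'|^{2+\beta})$; and, writing $|\g(s)-\g(s+s')|^2=\int_0^{s'}\!\int_0^{s'}\T(s+\tau_1)\cdot\T(s+\tau_2)\,d\tau_1d\tau_2$ and differentiating, $\p_s|\g(s)-\g(s+s')|^2=2\int_0^{s'}\!\int_0^{s'}\big[(\tau_2-\tau_1)\k(s)^2+O(|s'|^{1+\beta})\big]d\tau_1d\tau_2=O(|s'|^{3+\beta})$, the main term vanishing by antisymmetry. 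Feeding these into the quotient rule yields $|\p_s K_{s'}(s)|\lesssim|s'|^{\beta-2\alpha}$ uniformly. Since $\p_s\widetilde E_{s'}=\p_s K_{s'}$ and both $\widetilde E_{s'}$ and $\p_s K_{s'}$ are dominated by the integrable function $|s'|^{\beta-2\alpha}$ uniformly in $s$, one differentiates under the integral: $\p_s v\cdot\T\in C^1$ with $|\p_s(\p_s v\cdot\T)|_{L^\infty}\le C$, which is \eqref{eq:estimate_v_Cbeta}. (For $\beta=0$ the same argument runs with the uniform modulus of continuity of $\k$ in place of $|s'|^\beta$, still enough since $2\alpha<1$.)

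\medskip

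\textbf{Main obstacle.} Everything of substance is in Steps~1--2: a naive size count on $K_{s'}$ and $\p_s K_{s'}$ only reproduces the $C^{0,1-2\alpha}$ regularity (and, when $\beta<2\alpha$, a strictly worse one) of $\p_s v$ from Lemma~\ref{lemma:estimate_v_on_boundary}. One must track the three geometric cancellations — $\T\perp\N$, $\T\perp\T'$, and the antisymmetry $\int_0^{s'}\!\int_0^{s'}(\tau_2-\tau_1)\,d\tau_1d\tau_2=0$ — and, crucially, verify that they survive one $s$-differentiation precisely because $\k\in C^\beta$ rather than merely $\k\in L^p$; this is exactly where the hypothesis $\Om\in C^{2,\beta}$ is used and where the tangential and normal components of $\p_s v$ genuinely part ways.
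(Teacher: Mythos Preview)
Your proof is correct and takes a somewhat different route from the paper's. The paper mirrors the template of Lemma~\ref{lemma:estimate_v_on_boundary}: it writes the finite difference $\Delta_\delta[\p_s v\cdot\T]=I_1+I_2$ with $I_1$ over $|s'|\le 2\delta$ and $I_2$ over $|s'|\ge 2\delta$; for $I_1$ it expands the integrand as $-\sign(s')|s'|^{-1-2\alpha}+O(|s'|^{1-2\alpha})$ (using only the $C^2=W^{2,\infty}$ estimates of Lemma~\ref{lemma:arc_length_estimates}), so oddness kills the singular part and the remainder gives $|I_1|\lesssim\delta^{2-2\alpha}\le\delta$; for $I_2$ it computes $\p_s K_{s'}$ term by term and bounds it by $|s'|^{-2\alpha}$, whence $|I_2|\lesssim\delta\int_{2\delta}^{L/2}|s'|^{-2\alpha}ds'\lesssim\delta$. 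You instead subtract the $s$-independent odd singularity once and for all and then differentiate under the integral, obtaining the sharper bound $|\p_s K_{s'}|\lesssim|s'|^{\beta-2\alpha}$ via the identity $\p_s[\T(s+\tau)\cdot\T(s)]=-(\T(s+\tau)-\T(s))\cdot(\T'(s+\tau)-\T'(s))$ and its analogues for the other factors; this packages the tangential cancellations very cleanly and in fact yields $\p_s v\cdot\T\in C^1$, slightly more than the Lipschitz statement. The underlying geometry is the same in both arguments; the paper's near/far split stays parallel to the $W^{2,p}$ proof and never tracks $\beta$, while your route is more direct here, isolates where $\k\in C^\beta$ enters, and avoids the split at the price of justifying differentiation under a singular integral. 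A minor remark: several of your Step~1 remainders (e.g.\ for $(\g(s)-\g(s+s'))\cdot\T(s)$ and $|\g(s)-\g(s+s')|^2$) are $O(|s'|^3)$ and $s'^2(1+O(|s'|^2))$ already from $C^2$, so your $O(|s'|^{2+\beta})$ and $O(|s'|^{1+\beta})$ are valid but not tight --- this does not affect the argument.
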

\begin{proof}
In view of Lemma \ref{lemma:estimate_v_on_boundary}, we focus on the $C^{0,1}$   continuity using the formula
\begin{align*}
\p_s v (s)  & =     P.V.  \int_{ \g    } \T (s'+s )   \frac{   (\g(s  )-\g( s'+s )) \cdot \T(s ) }{  |\g(s )-\g(s'+s )|^{2  +2\alpha }}  \,ds' .
\end{align*}

For any $\delta >0$, we denote by $\Delta_\delta f(s) =f(s+\delta) -f(s)  $   and consider the split
\begin{align*}
 \Delta_\delta [ \p_s v   \cdot \T ]  & = (\p_s v  \cdot \T) (s  + \delta) - (\p_s v \cdot \T )  (s) = I_1 + I_2
\end{align*}
where
\begin{align*}
I_1 =  \lim_{\ep\to 0+} &\int_{ \ep< |  s' |    \leq 2\delta } \T ( s+  \delta+  s'  ) \cdot \T ( s+  \delta  ) \\
& \qquad \times \frac{   (\g( s +\delta  )-\g(   s+  \delta+  s' )) \cdot \T( s +\delta ) }{  |\g( s+\delta )-\g(   s+  \delta+  s' )|^{2  +2\alpha }}    \,d   s' \\
&\quad - \lim_{\ep\to 0+} \int_{ \ep< |  s' | \leq 2\delta   }\T (s'+s )  \cdot \T ( s  ) \frac{   (\g(s  )-\g( s'+s )) \cdot \T(s ) }{  |\g(s )-\g(s'+s )|^{2  +2\alpha }}  \,ds'
\end{align*}
and
\begin{align*}
I_2 =    \int_{   |  s'| \geq 2\delta   }  \Delta_\delta \left[ \T (s'+s ) \cdot \T ( s  )  \frac{   (\g(s  )-\g( s'+s )) \cdot \T(s ) }{  |\g(s )-\g(s'+s )|^{2  +2\alpha }}    \right] \,ds'.
\end{align*}
We will show the estimates $|I_1|,|I_2|  \lesssim \delta  $.

\noindent
\textbf{\underline{Estimate of $I_1$:}}

By the estimates   \eqref{NT1}, \eqref{eq:arc_length_f}  and \eqref{eq:arc_length_linear} with $p=\infty$ in Section \ref{subsec:curves_estimates}, for any $s,s' \in \RR$
\begin{equation}\label{eq:aux_estimate_v_on_boundary_Cbeta}
\begin{aligned}
 \T(s' ) \cdot \T (s) &= 1   + O(| s  - s' |^{2 } ) \\
 (\g(s      )-\g(  s' )) \cdot \T(s ) &= (s  - s' ) + O(|s  - s'|^{ 3  } ) \\
\frac{|s  - s' |^{2  +2\alpha } }{|\g(s)-\g(  s')|^{2  +2\alpha }}   &  =1 + O(|s  - s'|^{     2   }  ) .
\end{aligned}
\end{equation}

Using \eqref{eq:aux_estimate_v_on_boundary_Cbeta}, we have
\begin{align*}
I_1 = \lim_{\ep\to 0+} & \left( \int_{ \ep< |  s' |    \leq 2\delta }    \frac{ -   s'   }{  |  s'  |^{2  +2\alpha }} \,ds'  + \int_{ \ep< |  s' |    \leq 2\delta }  O(| s'|^{1- 2\alpha  })  \,d   s' \right. \\
&\quad -\left. \int_{ \ep< |  s' | \leq 2\delta   }    \frac{   - s'   }{  |  s'  |^{2  +2\alpha }}\,ds'   + \int_{ \ep< |  s' |    \leq 2\delta }  O(| s'|^{1- 2\alpha  })  \,ds'\right).
\end{align*}

The first terms in the above two lines vanish by oddness, and we have
\begin{align*}
|I_1| & \lesssim     \int_{  | s'| \leq 2\delta   }   |    s' |^{    1 - 2\alpha     }      \,ds' \lesssim \delta^{2-2\alpha} \lesssim \delta.
\end{align*}

\noindent
\textbf{\underline{Estimate of $I_2$:}}

In this regime, we first consider the finite difference of the integrand
\begin{align}\label{eq:aux_proofholder_Cbeta}
 \Delta_\delta \left[ \T (s'+s )\cdot \T(s )   \frac{   (\g(s  )-\g( s'+s )) \cdot \T(s ) }{  |\g(s )-\g(s'+s )|^{2  +2\alpha }}    \right] .
\end{align}
To reduce notation, for each fixed $|s'| > 0$, let us denote by $K_{s'}(s)$ the function
$$
s \mapsto  K_{s'}(s):=  \T (s'+s ) \cdot \T(s )   \frac{   (\g(s  )-\g( s'+s )) \cdot \T(s ) }{  |\g(s )-\g(s'+s )|^{2  +2\alpha }} .
$$
From the regularity $\T, \N \in W^{1,p}$, we know that $ K_{s'}$ is weakly differentiable with $L^p$ derivative:
\begin{equation}
\begin{aligned}
\p_s K_{s'} (s) = & - \k(s'+s )\N (s'+s )\cdot \T(s )    \frac{   (\g(s  )-\g( s'+s )) \cdot \T(s ) }{  |\g(s )-\g(s'+s )|^{2  +2\alpha }}  \\
& -\k( s )\T (s'+s )\cdot \N(s )    \frac{   (\g(s  )-\g( s'+s )) \cdot \T(s ) }{  |\g(s )-\g(s'+s )|^{2  +2\alpha }} \\
& + \T (s'+s ) \cdot \T(s )     \frac{   (\T(s  )-\T( s'+s )) \cdot \T(s ) }{  |\g(s )-\g(s'+s )|^{2  +2\alpha }} \\
& -\k(s) \T (s'+s ) \cdot \T(s )     \frac{   (\g(s  )-\g( s'+s )) \cdot \N(s ) }{  |\g(s )-\g(s'+s )|^{2  +2\alpha }}\\
& -(2+2\alpha )\T (s'+s )  \cdot \T(s )    \frac{   (\g(s  )-\g( s'+s )) \cdot \T(s )     }{  |\g(s )-\g(s'+s )|^{4  +2\alpha }}\\
& \qquad \times (\g(s  )-\g( s'+s )) \cdot (\T(s )  -\T( s'+s  ) ).
\end{aligned}
\end{equation}
Since $ \g$ is $C^{2,\beta}$, using the   $W^{2,\infty}$ estimates in Section \ref{subsec:curves_estimates}, it is straightforward to show that $K_{s'}' $ satisfies the point-wise estimate
\begin{equation}\label{eq:aux_K'_mean_value_0_Cbeta}
|\p_s K_{s'} (s)| \lesssim       |s'|^{ -2\alpha }  .
\end{equation}
  So by the fundamental theorem of calculus for $W^{1,p} $ functions and the bound \eqref{eq:aux_K'_mean_value_0_Cbeta}, we have
\begin{equation}\label{eq:aux_K'_mean_value_Cbeta}
\begin{aligned}
|K_{s'}(s+\delta) - K_{s'}(s ) |  & \leq \int_{s}^{s+\delta}  |\p_s K_{s'} (\tau )|\, d\tau \\
& \lesssim  |s'|^{ -2 \alpha } \delta .
\end{aligned}
\end{equation}

Now we apply the estimate \eqref{eq:aux_K'_mean_value_Cbeta} for $K_{s'}(s)$ to $I_2$, obtaining
\begin{align*}
| I_2| \leq     \int_{ \frac{L}{2}  \geq |  s'| \geq 2\delta   }  \Delta_\delta \left[  K_{s'}(s) \right] \,ds' \lesssim \int_{    \frac{L}{2}  \geq |  s'| \geq 2\delta   }   |s'|^{ -2 \alpha } \delta \, ds' \lesssim \delta .
\end{align*}

\end{proof}

\subsection{The flow of  \texorpdfstring{$C^{2,\beta}$}{C2b} \texorpdfstring{$\alpha$}{a}-patches}

Based on the improved regularity in the tangential component of $\p_s v$, we have the improved wellposedness of the flow of $C^2$ $\alpha$-patches.
\begin{theorem}\label{thm:flow_existence_Cbeta}
Let $0< \alpha< \frac{1}{2}$ and $ 0 \leq \beta \leq 1 $. Suppose  $\Omega_t$ is a $C^{2,\beta}$ $\alpha$-patch with the initial data $\Omega_0$ and $v =K_\alpha  *  \chi_{\Omega_t}:\RR^2\times [0,T] \to \RR^2$ is the velocity field of $\Omega_t$.

For any $C^{2,\beta}$ parametrization of $ \p \Omega_0$, $\g_0\in C^{2,\beta}(\TT) $,  the Cauchy problem for $\gamma: \TT \times [0,T] \to \RR^2$
\begin{equation}
\begin{cases}
\p_t \gamma(x,t) = v(\gamma(x,t) , t) &\\
\gamma |_{t=0} = \gamma_0&
\end{cases}
\end{equation}
has a unique solution $\gamma \in  C([0,T]; C^{1,1 -2\alpha }(\TT)) \cap L^\infty([0,T]; C^{1,1   }(\TT))$.
\end{theorem}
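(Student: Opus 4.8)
The plan is to bootstrap from the already-established $W^{2,p}$ theory with $p=\infty$, and then upgrade the spatial regularity of the parametrization from $C^{1,1-2\alpha}$ to $C^{1,1}$ by exploiting the extra derivative gained in the tangential direction in Lemma \ref{lemma:estimate_v_Cbeta}.

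First, observe that a $C^{2,\beta}$ bounded domain is in particular a $W^{2,\infty}$ domain, with $\|\Om\|_{W^{2,\infty}}\le \|\Om\|_{C^{2,\beta}}$, so a $C^{2,\beta}$ $\alpha$-patch is a $W^{2,\infty}$ $\alpha$-patch in the sense of Definition \ref{def:asqg_patch}; moreover $\infty>\frac{1}{1-2\alpha}$ and $\g_0\in C^{2,\beta}(\TT)\subset W^{2,\infty}(\TT)$. Hence Theorem \ref{thm:flow_existence} applies and already yields a \emph{unique} solution $\g\in C([0,T];C^{1,\sigma}(\TT))$ with $\sigma=1-2\alpha$. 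This gives existence, uniqueness, and the first half $C([0,T];C^{1,1-2\alpha}(\TT))$ of the asserted regularity; it remains only to prove the uniform bound $\sup_{t\in[0,T]}|\g(\cdot,t)|_{C^{1,1}(\TT)}<\infty$.

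For the uniform $C^{1,1}$ estimate, recall from Section \ref{subsec:curvature_flow_1} that the metric $g=|\p_x\g|\in C([0,T];C^{\sigma}(\TT))$ is bounded above and below uniformly in time and solves \eqref{eq:aux_eq_g}, i.e.\ $\p_t g=g\,(\p_s v\cdot\T)$ with $g|_{t=0}=|\g_0'|$. Writing $m=\log g$ we get $\p_t m(x,t)=(\p_s v\cdot\T)(x,t)=\overline{(\p_s v\cdot\T)}(\ell(x,t),t)$, where $\ell(x,t)=\int_0^x g(y,t)\,dy$ and the bar denotes the arc-length representative. Integrating in $t$, subtracting the values at two points $x,y$, and using (i) that $|\g_0'|$ is Lipschitz since $\g_0\in C^{1,1}$, (ii) that $|\ell(x,t)-\ell(y,t)|\le \|g(\cdot,t)\|_{L^\infty}|x-y|$ with $\|g\|_{L^\infty}$ bounded uniformly, and (iii) the uniform-in-time Lipschitz bound $|\p_s v\cdot\T|_{C^{0,1}}\le C(\alpha,\beta)\sup_t\|\Om_t\|_{C^{2,\beta}}$ from Lemma \ref{lemma:estimate_v_Cbeta}, a Gronwall argument shows $x\mapsto m(x,t)$ is uniformly Lipschitz on $[0,T]$. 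Consequently $g(\cdot,t)$ is uniformly Lipschitz in $x$, so $x\mapsto\ell(x,t)$ and its inverse are $C^{1,1}$ diffeomorphisms with uniformly bounded $C^{1,1}$ norms. Finally, since $\Om_t$ is a $C^{2,\beta}$ $\alpha$-patch, its arc-length parametrization has uniformly bounded $C^{2,\beta}$ norm, so the arc-length tangent $\oT(\cdot,t)\in C^{1,\beta}(\TT)$ is uniformly Lipschitz; from $\g(x,t)=\overline{\g}(\ell(x,t),t)$ we obtain $\p_x\g(x,t)=g(x,t)\,\oT(\ell(x,t),t)$, a product of two bounded, uniformly Lipschitz factors (the second being a composition of uniformly Lipschitz maps), hence uniformly Lipschitz in $x$. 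This gives the desired bound.

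\textbf{Main obstacle.} The delicate point is the bootstrap between the two regularity notions: the Lagrangian flow $\g$ is only $C^{1,1-2\alpha}$, so one cannot differentiate the contour equation twice directly, and all second-order information must be routed through the intrinsic arc-length parametrization of $\p\Om_t$ (which is $C^{2,\beta}$ by hypothesis) via the $C^{1,1}$ change of variables $\ell$ built from the metric. What makes this work is precisely that Lemma \ref{lemma:estimate_v_Cbeta} upgrades $\p_s v\cdot\T$ from $C^{\sigma}$ to Lipschitz (whereas the full $\p_s v$ is only $C^{\sigma}$), which is exactly the regularity needed for $g$ to be Lipschitz rather than merely Hölder. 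One should also take care, as in Step 1 of the proof of Theorem \ref{thm:flow_existence}, to promote the a.e.-defined quantities appearing in \eqref{eq:aux_eq_g} to everywhere-defined Lipschitz objects, which is routine once the a.e.\ Lipschitz bound on $g$ is in hand.
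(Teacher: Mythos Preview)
Your proposal is correct and follows essentially the same route as the paper: apply Theorem~\ref{thm:flow_existence} with $p=\infty$ to get existence, uniqueness, and the $C([0,T];C^{1,1-2\alpha})$ regularity, then upgrade the metric $g=|\p_x\g|$ to $L^\infty_t C^{0,1}$ by combining the evolution equation \eqref{eq:aux_eq_g} with the Lipschitz bound on $\p_s v\cdot\T$ from Lemma~\ref{lemma:estimate_v_Cbeta} and a Gronwall argument. Your use of $m=\log g$ and the explicit decomposition $\p_x\g=g\,\oT\circ\ell$ are minor elaborations of what the paper compresses into ``a standard Gronwall-type calculation'' together with the observation that a $C^{1,1}$ reparametrization of a $C^{2,\beta}$ curve is $C^{1,1}$.
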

\begin{proof}
By Theorem \ref{thm:flow_existence} with $p =\infty$, we have the existence of the flow $\g \in  C([0,T]; C^{1,1 -2\alpha }(\TT))$. Since $x \mapsto \g(x,t)$ is a parameterization of $C^{2,\beta}$ boundary $\p \Om$, it suffices to show that the arc-length $g  = |\p_x \g| \in L^\infty([0,T]; C^{0,1}(\TT)) $.

As discussed in Section \ref{subsec:curvature_flow_1}, the evolution of the metric $g:\TT\times [0,T] \to  \RR^+$ satisfies
\begin{equation}\label{eq:aux_arclength_Cbeta}
\begin{cases}
\p_t g = g \p_s v \cdot \T   &\\
g |_{t =0} = |\p_x \g_0|.
\end{cases}
\end{equation}
Since $s \mapsto \p_s v\cdot \T$ is $C^{0,1}$ in the arc-length variable by Lemma \ref{lemma:estimate_v_Cbeta}, we have
$$
  \p_s v \cdot \T(x,t)  \in L^\infty([0,T]; C^{0,1}(\TT))
$$ in the Lagrangian label as well. Then a standard Gronwall-type calculation for \eqref{eq:aux_arclength_Cbeta} implies that $g  \in L^\infty([0,T]; C^{0,1}(\TT)). $

\end{proof}

\subsection{The curvature flow of  \texorpdfstring{$C^{2,\beta}$}{C2b} \texorpdfstring{$\alpha$}{a}-patches}

The last improvement we need is the curvature dynamics in the weak formulation. Compared to the $W^{2,p}$ cases, we need to show the error terms are functions with some H\"older regularity.

\begin{proposition}\label{prop:dispersive_term_c2}
Let $0 < \alpha < \frac{1}{2}$ and $ 0 \leq \beta \leq 1$. Suppose $\gamma \in  C([0,T]; C^{1,1 -2\alpha }(\TT)) \cap L^\infty([0,T]; C^{1,1   }(\TT))$ is the flow of  a $C^{2,\beta}$ $\alpha$-patch $\Omega_t$ with the initial data $\g_0 \in C^{2,\beta } ( \TT)  $. Let $v$ be the velocity field generated by the patch $\Omega_t$ parameterized by $\g$ and  $g = |\p_x \g |$ be the metric associated with the flow $\g$.

Then for any $t \in [0,T]$ and any $\phi \in C^\infty(\TT)$,
\begin{equation}\label{eq:lemma_dispersive_term_0_Cbeta}
\int_{\TT} \p_s v \cdot \N \p_s\phi \,ds = -c_\alpha \int_{\TT}g^{1-2\alpha}  \k \mathcal{L}_\alpha \phi \, dx +\int_{\TT}   R \phi \, dx
\end{equation}
where $c_\alpha>0$ is a universal constant and $R: \TT \times [0,T] \to \RR$  satisfies
\begin{equation}\label{eq:prop_dispersive_error_term_Cbeta}
|R|_{L^\infty C^{0,\rho }} \leq C_{  \Omega,T}
\end{equation}
where $\rho = \min\{ 1-2\alpha,\beta\}>0 $ and the constant $C_{ \Omega,T} <\infty$ depends on $\alpha$, $\beta$, $\Om$, and $ T $.
\end{proposition}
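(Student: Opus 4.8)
\textbf{Proof plan for Proposition \ref{prop:dispersive_term_c2}.}

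The plan is to mirror the proof of Proposition \ref{prop:dispersive_term} almost verbatim, but to upgrade each error bound from a distributional estimate (pairing against $\phi$ in a Besov norm) to a pointwise-in-$x$ H\"older estimate on the remainder function $R$. The starting point is identical: pass to the arc-length variable $s=\ell(x)=\int_0^x g(\tau)\,d\tau$, write the left-hand side of \eqref{eq:lemma_dispersive_term_0_Cbeta} as a double integral, truncate the inner integral to $|s'|\ge\ep$, integrate by parts in $s$, and expand the $s$-derivative of the kernel into the same five pieces $K_1,\dots,K_5$ as in \eqref{eq:aux_def_K_1}--\eqref{eq:aux_def_K_5}. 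As before, $K_1$ splits into $I_{11}$ (giving the main dispersive term $-c_\alpha\int g^{1-2\alpha}\k\,\mathcal{L}_\alpha\phi$ after invoking Lemma \ref{lemma:local_formula_La}) plus genuine error terms, and $I_2,\dots,I_5$ are all errors. The key structural difference is that now $\k\in C^{0,\beta}$ rather than merely $L^p$, and the tangential velocity derivative $\p_s v\cdot\T$ is Lipschitz by Lemma \ref{lemma:estimate_v_Cbeta}, while $\g\in C^{1,1}$ in arc-length (uniformly in time) by Theorem \ref{thm:flow_existence_Cbeta}; so the curve estimates of Lemma \ref{lemma:arc_length_estimates} hold with $\beta=1$ wherever the $C^{1,1}$ regularity is used, and with the genuine $\beta$ of the curvature wherever $\k$ enters.

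The core of the argument is then to identify $R$ explicitly as the sum of the remainder kernels (after the $\ep\to0$ limit and reparametrization back to the Lagrangian label $x$), and to show each summand defines a function of $x$ bounded in $C^{0,\rho}$ with $\rho=\min\{1-2\alpha,\beta\}$. For the error part of $I_{11}$ this is the term $\int\int(\phi(x)-\phi(y))\k(y)|x-y|^{-1-2\alpha+\sigma}g(x)g(y)\,dx\,dy$-type expression; since $\k\in C^{0,\beta}$ the convolution-type kernel $|x-y|^{-1-2\alpha+\rho}$ is integrable (because $1-2\alpha-\rho\ge 1-2\alpha-\beta$ and, more to the point, $-1-2\alpha+\rho>-1$ is false in general so one must instead keep the $\phi(x)-\phi(y)$ difference and use that $\phi\in C^\infty$ to gain one power, leaving $|x-y|^{-2\alpha+\rho}$ which is integrable), and the resulting $R$-contribution is a kernel operator with kernel H\"older-continuous in $x$ of order $\rho$ acting on the bounded density $\k$; standard Schur/kernel estimates give $|R|_{C^{0,\rho}}\lesssim|\k|_{C^{0,\beta}}$. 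For $I_{12},I_2$ one uses the structure $\ok(s')\cdot(\text{odd kernel}+O(|s-s'|^{1-2\alpha}))$; the odd part cancels and the remainder is, after reparametrization, an integral operator with a weakly singular, $C^{0,\rho}$-in-$x$ kernel applied to $\k$, hence again bounded in $C^{0,\rho}$. For $I_2$ one also uses that $\p_s v\cdot\T$ is Lipschitz (Lemma \ref{lemma:estimate_v_Cbeta}) so that $\int\overline\phi(s)\ok(s)\p_s\overline v\cdot\oT(s)\,ds$ corresponds to $R=-\k\,(\p_s v\cdot\T)$ which lies in $C^{0,\rho}$ since $\k\in C^{0,\beta}$ and $\p_s v\cdot\T\in C^{0,1}$. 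For $I_3,I_4,I_5$ the maximal-function estimates \eqref{eq:arc_length_M_a}--\eqref{eq:arc_length_M_e} (now with $\beta$ replaced where appropriate) show the kernels are absolutely integrable uniformly in $x$ with a gain $|s-s'|^{\rho}$ available in the $x$-modulus, so these contribute bounded $C^{0,\rho}$ functions of $x$ directly; here one must be mildly careful that the maximal function of $\k$ is merely in $L^\infty$ (since $\k\in L^\infty$) and that the H\"older modulus in $x$ is extracted from the smooth factor $\overline\phi(s)$ and from the kernel's own H\"older dependence on the base point, not from $\k$.

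The main obstacle I anticipate is the bookkeeping in the error part of $I_{11}$: one must show that the ``extra'' piece of \eqref{eq:aux_curvature_kernal_3} — the term $\int\int(\phi(x)-\phi(y))\k(y)|x-y|^{-1-2\alpha+\sigma}$ with $\sigma$ now equal to $1-2\alpha$ (the arc-length regularity of the $C^{1,1}$ flow) — genuinely produces a $C^{0,\rho}$ function of $x$ rather than merely a bounded linear functional, which requires re-doing the computation so that $\phi$ is only hit by the kernel operator (integrating by parts or writing the difference $\phi(x)-\phi(y)$ and peeling off one derivative) and then verifying that the resulting kernel $k(x,y)$ satisfies $|k(x,y)|\lesssim|x-y|^{-1+\rho}$ and $|k(x+h,y)-k(x,y)|\lesssim|h|^\rho|x-y|^{-1}$ in the relevant range, after which the bound $|R(x)|_{C^{0,\rho}}\lesssim|\k|_{L^\infty}$ follows from a standard singular-integral-with-H\"older-kernel lemma. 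A secondary subtlety is ensuring all constants depend only on $\alpha,\beta,\Omega$ and $T$ and not on $|\g|_{C^{1,1-2\alpha}}$ alone — one invokes the uniform-in-time $C^{1,1}$ bound from Theorem \ref{thm:flow_existence_Cbeta} and the uniform $W^{2,\infty}$ (i.e. $C^{2,\beta}$) control of $\Omega_t$ coming from the standing hypothesis that $\Omega_t$ is a $C^{2,\beta}$ $\alpha$-patch, exactly as in the $W^{2,p}$ proof. Once $R$ is assembled from these pieces the identity \eqref{eq:lemma_dispersive_term_0_Cbeta} and the bound \eqref{eq:prop_dispersive_error_term_Cbeta} follow by collecting terms, completing the proof.
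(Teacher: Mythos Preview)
Your overall plan is correct and follows the paper's structure: pass to arc-length, integrate by parts, expand into the same five kernels $K_1,\dots,K_5$, and show each error piece defines a $C^{0,\rho}$ function of the Lagrangian label. The treatment of $I_2$ via Lemma \ref{lemma:estimate_v_Cbeta} is exactly what the paper does. Two points differ.

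First, you mis-identify $\sigma$. The flow lies in $L^\infty_t C^{1,1}$ by Theorem \ref{thm:flow_existence_Cbeta}, so in the pointwise-in-time kernel expansions of Lemma \ref{lemma:curve_estimates} you may take $\sigma=1$, not $1-2\alpha$. With $\sigma=1$ the remainder kernel in the $I_1$ analysis is already $O(|x-y|^{-2\alpha})$, which is integrable, and your worry that ``$-1-2\alpha+\rho>-1$ is false in general'' disappears. Relatedly, the paper does \emph{not} reuse the $I_{11}/I_{12}$ split from the $W^{2,p}$ proof. That split was tailored to pairing the error against $|\phi|_{B^{2\alpha-\sigma}_{p',1}}$, and as you correctly sense, reorganizing the $\phi(x)-\phi(y)$ structure into $\int R(x)\phi(x)\,dx$ is awkward. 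Instead the paper keeps $\phi(x)$ intact and defines an explicit remainder kernel
\[
Q_1(x,y)=\T(y)\!\cdot\!\T(x)\,\frac{(\g(x)-\g(y))\cdot\T(x)}{|\g(x)-\g(y)|^{2+2\alpha}}\,g(x)g(y)-|g(y)|^{1-2\alpha}\sum_{n\in\ZZ}\frac{x-y-2\pi n}{|x-y-2\pi n|^{2+2\alpha}},
\]
so that after Lemma \ref{lemma:local_formula_La} the residual from $I_1$ is exactly $\int\phi(x)R_1(x)\,dx$ with $R_1(x)=\int\k(y)Q_1(x,y)\,dy$. One then proves $R_1\in C^{0,1-2\alpha}$ by the standard near/far split using $|Q_1(x,y)|\lesssim|x-y|^{-2\alpha}$ and $|Q_1(x+h,y)-Q_1(x,y)|\lesssim h|x-y|^{-1-2\alpha}$ for $|h|\le\tfrac12|x-y|$. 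This is precisely the ``singular-integral-with-H\"older-kernel'' step you allude to at the end, applied directly and cleanly rather than after the detour you anticipate.

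Finally, for $I_3,I_4,I_5$ the paper does not use maximal-function bounds: since $\k\in L^\infty$, the pointwise estimates of Lemma \ref{lemma:arc_length_estimates} with $p=\infty$ already give integrands of order $|s-s'|^{1-2\alpha}$, and the $C^{0,\rho}$ regularity of each resulting function of $s$ is obtained by the same near/far split (for the far piece, the fundamental theorem of calculus on the $C^1$-in-$s$ integrand with derivative bounded by $|s-s'|^{-2\alpha}$). Your route would work but is more circuitous.
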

\begin{proof}

As in the proof of Proposition \ref{prop:dispersive_term}, let us consider the (time-dependent) arc-length variable $s : = \ell(\xi) = \int_0^{\xi} g(\tau )\, d\tau $ and define the arc-length counterparts by the formula $\overline{f}(\ell(\xi)) = f(\xi) $, i.e. $\overline{f}(s) : = f (\ell^{-1}(s))$ for $f \in \{ \g,\T,\N,\k,v \}$.

In the new variable, by Lemma \ref{lemma:estimate_v_on_boundary} the left-hand side of \eqref{eq:lemma_dispersive_term_0_Cbeta} reads
 $$
\int  \p_s \overline{v} \cdot \overline{\N} \p_s\overline{\phi} \,ds = \int  \p_s\overline{\phi} \int \overline{\T}(s') \overline{\N}(s) \frac{( \oga(s) -  \oga(s'))\cdot \oT(s)  }{ |\oga(s) -  \oga(s')|^{2+2\alpha } }\, ds'  \, ds  .
$$

By the periodicity and the Fubini theorem, we can reparameterize to obtain
\begin{equation}\label{eq:aux_dispersive_term_0_Cbeta}
\begin{aligned}
& \int  \p_s \overline{v} \cdot \overline{\N} \p_s\overline{\phi} \,ds \\
& = \int  \int  \p_s\overline{\phi}(s  )\overline{\T}(s' + s) \overline{\N}(s  ) \frac{( \oga(s ) -  \oga(s' + s ))\cdot \oT(s   )  }{ |\oga(s   ) -  \oga(s' + s )|^{2+2\alpha } }\,  ds  \, ds' .
\end{aligned}
\end{equation}
As in Proposition \ref{prop:dispersive_term}, we consider the approximations,
\begin{equation}\label{eq:aux_dispersive_term_1_Cbeta}
\lim_{\ep \to 0^+}  \int_{|  s' | \geq \ep }  \int  \p_s\overline{\phi}(s  )  \overline{\T}(s' + s) \overline{\N}(s  ) \frac{( \oga(s ) -  \oga(s' + s ))\cdot \oT(s   )  }{ |\oga(s   ) -  \oga(s' + s )|^{2+2\alpha } }\,  ds  \, ds'.
\end{equation}

To integrate by parts in the $s $ variable, notice that when $s'  \neq 0$, the function $s  \mapsto  \overline{\T}(s' + s) \overline{\N}(s  ) \frac{( \oga(s ) -  \oga(s' + s ))\cdot \oT(s   )  }{ |\oga(s   ) -  \oga(s' + s )|^{2+2\alpha } }$ is at least $W^{1,p}$ whose derivative  is the sum of
\begin{align}
& \underbrace{- \ok(s'+ s) \oN(s'+ s ) \oN( s ) \frac{( \oga(s) -  \oga(s'  + s ))\cdot \oT(s)  }{ |\oga(s) -  \oga(s' + s )|^{2+2\alpha } } }_{:= K_1(s,s')} \label{eq:aux_def_K_1_Cbeta}
\end{align}
\begin{align}
&  \underbrace{+ \ok(s  )  \overline{\T}( s'+ s  ) \oT( s ) \frac{ ( \oga(s ) -  \oga(s' + s ))\cdot \oT(s   )  }{ |\oga(s) -  \oga(s' + s  )|^{2+2\alpha }}}_{:= K_2(s,s')}  \label{eq:aux_def_K_2_Cbeta}
\end{align}
\begin{align}
& \underbrace{ \overline{\T}(s' + s) \overline{\N}(s  ) \frac{( \oT(s ) -  \oT(s' + s ))\cdot \oT(s   )  }{ |\oga(s   ) -  \oga(s' + s )|^{2+2\alpha } }   }_{:= K_3(s,s')} \label{eq:aux_def_K_3_Cbeta}
\end{align}
\begin{align}
&  \underbrace{ -  \ok(s  )  \overline{\T}( s'+ s  ) \oN( s ) \frac{ ( \oga(s ) -  \oga(s' + s ))\cdot \oN(s   )  }{ |\oga(s) -  \oga(s' + s  )|^{2+2\alpha }}}_{:= K_4(s,s')}  \label{eq:aux_def_K_4_Cbeta}
\end{align}
\begin{align}
&  - (2+2\alpha)\overline{\T}( s'+ s  ) \oN( s )  \frac{( \oga(s) -  \oga(s'+s ))\cdot \oT(s)  }{ |\oga(s) -  \oga(s'+s )|^{4+2\alpha } }  \nonumber\\
& \underbrace{\qquad\qquad \times  ( \oga(s) -  \oga(s'+s ))\cdot (\oT(s) - \oT(s'+s ) ) .\,\, }_{:= K_5(s,s')} \label{eq:aux_def_K_5_Cbeta}
\end{align}
We then integrate by parts in \eqref{eq:aux_dispersive_term_1_Cbeta} using the derivatives \eqref{eq:aux_def_K_1_Cbeta}--\eqref{eq:aux_def_K_5_Cbeta} and then reparameterize back to obtain
\begin{equation}
\begin{aligned}\label{eq:aux_dispersive_term_2_Cbeta}
 & \int  \p_s \overline{v} \cdot \overline{\N} \p_s\overline{\phi} \,ds    =  \lim_{\ep \to 0^+} \sum_{i} \int_{| s'| \geq \ep  } \int    \overline{\phi}(s)  K_i(s,s') \, ds  \, ds' \\
& = \lim_{\ep \to 0^+} \sum_{1 \leq n \leq 5} I_n(\ep),
\end{aligned}
\end{equation}
where each integral $I_j(\ep)$ corresponds to the kernels $K_j$ from \eqref{eq:aux_def_K_1_Cbeta}--\eqref{eq:aux_def_K_5_Cbeta}.
\begin{align}
I_1 (\ep) & =  -\int \int_{|   s -s' | \geq \ep  } \overline{\phi}(s) \ok(s') \oN(s  ') \cdot \oN( s ) \frac{( \oga(s) -  \oga(s'    ))\cdot \oT(s)  }{ |\oga(s) -  \oga(s'   )|^{2+2\alpha } }   \, ds  \, ds '
\end{align}
\begin{align}
I_2(\ep) & =  \int \int_{|   s -s' | \geq \ep  } \overline{\phi}(s)  \ok(s ) \oT(s  ') \cdot \oT( s ) \frac{( \oga(s) -  \oga(s'    ))\cdot \oT(s)  }{ |\oga(s) -  \oga(s'   )|^{2+2\alpha } }   \, ds  \, ds '
\end{align}
\begin{align}
I_3(\ep) &  =  \int \int_{|   s -s' | \geq \ep  } \overline{\phi}(s)  \oT(s  ') \cdot \oN( s ) \frac{( \oT(s) -  \oT(s'    ))\cdot \oT(s)  }{ |\oga(s) -  \oga(s'   )|^{2+2\alpha } }   \, ds  \, ds '
\end{align}
\begin{align}
I_4(\ep) &  =  \int \int_{|   s -s' | \geq \ep  } - \overline{\phi}(s) \ok(s    ) \oT(s  ') \oN( s ) \frac{( \oga(s) -  \oga(s'    ))\cdot \oN(s)  }{ |\oga(s) -  \oga(s'   )|^{2+2\alpha } }   \, ds  \, ds '
\end{align}
\begin{equation}
\begin{aligned}
I_5(\ep)  & =  - (2+2\alpha )\int \int_{|   s -s' | \geq \ep  }  \overline{\phi}(s)  \overline{\T}( s'  )\oN( s )  \frac{( \oga(s) -  \oga(s'  ))\cdot \oT(s)  }{ |\oga(s) -  \oga(s'  )|^{4+2\alpha } }   \\
&\qquad  \qquad \qquad  \qquad  \qquad  \times ( \oga(s) -  \oga(s'  ))\cdot (\oT(s) - \oT(s' ) ) \, ds  \, ds ' .
\end{aligned}
\end{equation}

In the steps below, we will show each term on the right-hand side of \eqref{eq:aux_dispersive_term_2_Cbeta} is compatible with the thesis \eqref{eq:lemma_dispersive_term_0_Cbeta} and \eqref{eq:prop_dispersive_error_term_Cbeta}.
Specifically, we will show $I_1$ leads to the main linear term with a remainder of $C^{1-2\alpha}$ regularity while the rest of $I_i$ contribute to errors with either $C^{\beta}$ or $C^{1-2\alpha }$ regularity.

\noindent
\textbf{\underline{Analysis of $I_1$:}}

Let us first analyze $I_{1}$. We switch back to the Lagrangian labels via $s \mapsto x =\ell^{-1}(s)$ and $s' \mapsto y = \ell^{-1}(s')$ in the integral, obtaining
\begin{align*}
I_{1}(\ep) :   & =  -\int \int_{|   \ell (x)  -\ell (y) | \geq \ep  }  {\phi}(x )   \k(y ) \N( y) \cdot \N( x )\\
& \qquad\qquad \qquad \times  \frac{( \g(x ) -  \g(y    ))\cdot \T(x)  }{ |\g(x) -  \g( y   )|^{2+2\alpha } } g(x) g(y)  \, dx  \, d y  .
\end{align*}
Let us consider the variant $ \widetilde{I}_{1}(\ep)$
\begin{equation}\label{eq:aux_curvature_kernal_I_1_3_Cbeta}
\begin{aligned}
\widetilde{I}_{1}(\ep) :   & =  -\int \int_{ g(y)|    x   - y  | \geq \ep  }  {\phi}(x )   \k(y ) \N( y) \cdot \N( x )\\
& \qquad\qquad \qquad \times  \frac{( \g(x ) -  \g(y    ))\cdot \T(x)  }{ |\g(x) -  \g( y   )|^{2+2\alpha } } g(x) g(y)  \, dx  \, d y  .
\end{aligned}
\end{equation}
We claim $\lim_{\ep \to 0^+}  \widetilde{I}_{1}(\ep) = \lim_{\ep \to 0^+} {I}_{1}(\ep)  $ and both limits exist by an argument similar to Lemma \ref{lemma:estimate_v_on_boundary} due to $\phi \in C^\infty(\TT)$ .

Since $x \mapsto \ell  (x)$ is a $C^{1, 1}$ diffeomorphism, the fundamental theorem of calculus implies that for each $y\in \TT$, the set $S_\ep(y)$ of the symmetric difference
\begin{equation}\label{eq:aux_curvature_kernal_I_1_4}
 S_\ep(y): =  \{x \in \TT : | \ell(x) -\ell(y) | \geq \ep \} \Delta \{x \in \TT : g(y) |   x  - y  | \geq \ep \}
\end{equation}
has Lebesgue measure $|S_\ep| \lesssim \ep^{2 } $. Hence by the absolute value, the difference between $  \widetilde{I}_{1}( \ep) $ and ${I}_{1}$ satisfies
\begin{align}
& \left|  \widetilde{I}_{1}( \ep) -  {I}_{1}( \ep)  \right| \nonumber \\
& \lesssim \int \int_{S_\ep(y) } | {\phi}(x )    | | \k( y )|  |x-y|^{-1-2\alpha }  \, dx  \, d y \label{eq:aux_curvature_kernal_I_1_5a}
\end{align}
Since $\phi $ is smooth, the term \eqref{eq:aux_curvature_kernal_I_1_5a} vanishes at $\ep \to 0$ due to the small measure of the set $S_\ep(y)$ for all $\alpha< \frac{1}{2}$:
\begin{align*}
&\int \int_{S_\ep(y) } | {\phi}(x )    | | \k( y )|  |x-y|^{-1-2\alpha }  \, dx  \, d y \\
& \lesssim |\k|_{L^\infty} |  \phi |_{L^\infty} \ep^{-1 -2\alpha } \sup_{ y\in \TT} | S_\ep(y) | \\
& \lesssim |\k|_{L^\infty} | \phi |_{L^\infty} \ep^{1  -2\alpha} \to 0
\end{align*}
where we have used that $|x-y| \sim \ep $ on $S_\ep(y)$ in the first inequality. So we can conclude that    $\left|  \widetilde{I}_{1}( \ep) -  {I}_{1}( \ep)  \right| \to 0 $ as $\ep \rightarrow 0.$

So now we proceed to estimate
\begin{equation}\label{eq:aux_curvature_kernal_I_1_alter_Cbeta}
\begin{aligned}
\lim_{\ep \to 0^+} \widetilde{I}_1(\ep)
& =  -\int  P.V.\int   {\phi}(x )   \k(y ) \N( y) \cdot \N( x )\\
& \qquad\qquad \qquad \times  \frac{( \g(x ) -  \g(y    ))\cdot \T(x)  }{ |\g(x) -  \g( y   )|^{2+2\alpha } } g(x) g(y)  \, dx  \, d y  .
\end{aligned}
\end{equation}

Since the flow $\g \in  L^\infty_t C^{1, 1}$, by Lemma \ref{lemma:curve_estimates} we define a kernel $Q_{1}  : \TT \times \TT \to \RR$
\begin{equation}\label{eq:aux_curvature_kernal_2_Cbeta}
\begin{aligned}
Q_{1 }(x,y ) = &{\T}( y   ) \cdot {\T}(x ) \frac{( \g (x ) -  \g ( y   ))\cdot \T(x  )  }{ |\g ( x ) -  \g ( y   )|^{2+2\alpha } } g(x ) g( y  ) \\
&  -  |g( y )|^{1- 2\alpha}  \sum_{n \in \ZZ}\frac{ x - y-2\pi n }{ | x -y -2\pi n |^{2+2\alpha}}      .
\end{aligned}
\end{equation}
so that the periodic kernel $Q_{1}(x,y ) : \TT \times \TT \to \RR$  is jointly continuous away from $x \neq y$ with the estimate
\begin{equation}\label{eq:aux_curvature_kernal_Q1a}
\begin{aligned}
|Q_{1}(x,y )| &  \lesssim   |x  - y |^{ -2\alpha  }
\end{aligned}
\end{equation}
where we note that the exponent $ -2\alpha   > -1$. In what follows we will also use the fact that for any $0<h \leq \frac{|x-y|}{2}$
\begin{equation}\label{eq:aux_curvature_kernal_Q1b}
	\begin{aligned}
		|Q_{1}(x+h,y ) - Q_{1}(x,y ) | &  \lesssim  h |x  - y |^{ -1-2\alpha  }
	\end{aligned}
\end{equation}
which can be proved by considering the derivative of \eqref{eq:aux_curvature_kernal_2_Cbeta} using the $C^{2,\beta}$  regularity  of $\Om_t $ and $C^{1,1}$ regularity of $\g$.

It follows from   \eqref{eq:aux_curvature_kernal_2_Cbeta} that
\begin{equation}
\begin{aligned}\label{eq:aux_curvature_kernal_3_Cbeta}
 \widetilde{I}_{1}( \ep)  & = -\int \k( y )   |g( y )|^{1-  2\alpha}   \int_{g(y) | x  - y| \geq \ep  }     {\phi}(x)       \sum_{ \substack{m \in \ZZ   }}        \frac{ x  - y+ m }{ | x  -  y+m |^{2+2\alpha}}  \,  d x   \,  d y    \\
& \qquad -  \int \int     {\phi}( x )  \k( y  )   Q_1(x,y)   \,   d x  \, d y    .
\end{aligned}
\end{equation}

The first term in \eqref{eq:aux_curvature_kernal_3_Cbeta} is the main term, corresponding to the first term on the right-hand side of \eqref{eq:lemma_dispersive_term_0_Cbeta}. Indeed,  by Lemma \ref{lemma:local_formula_La}, there holds
\begin{align}\label{eq:aux_curvature_kernal_4_Cbeta}
& \lim_{\ep \to 0^+ }   \int_{ \substack{x \in \TT  \\ g(y) | x  - y| \geq \ep   }  }     {\phi}(x)       \sum_{ \substack{m \in \ZZ   }}        \frac{ x  - y+ m }{ | x  -  y+m |^{2+2\alpha}}  \,  d x  \nonumber  \\
&  = P.V. \int_{\RR }     {\phi}(x)       \frac{ x  - y }{ | x  - y|^{2+2\alpha}}  \,  d x  \nonumber \\
& = c_\alpha  \mathcal{L}_\alpha \phi( y ) .
\end{align}
Therefore,  by \eqref{eq:aux_curvature_kernal_3_Cbeta} and \eqref{eq:aux_curvature_kernal_4_Cbeta} we have
\begin{equation}\label{eq:aux_curvature_kernal_5_Cbeta}
\begin{aligned}
\lim_{\ep \to 0^+}  \widetilde{I}_{1}( \ep)  & = -c_\alpha \int \k( y )   |g( y )|^{1-  2\alpha}   \mathcal{L}_\alpha  {\phi}(y)        \,  d y    \\
& \qquad +   \int {\phi}( x )    \int      \k( y  )   Q_1(x,y)   \,   d y   \,  d x   .
\end{aligned}
\end{equation}

Since $ -2\alpha > -1$ in \eqref{eq:aux_curvature_kernal_Q1a}, the second integral in \eqref{eq:aux_curvature_kernal_5_Cbeta} is well-defined. We just need to show
\begin{equation}\label{eq:aux_curvature_kernal_R1_final_Cbeta}
R_1(x):=      \int_{\TT}       \k( y  )    Q_1(x,y)     \, d y \in C^{ \rho }(\TT) .
\end{equation}

We   repeat the standard splitting argument. Consider $h>0$ and the finite difference
\begin{equation}\label{eq:aux_curvature_kernal_R1_1}
\begin{aligned}
\Delta_h \left[ R_1(x) \right]
& =  \int_{|x -y| \leq  2 h } \k( y  )   Q_1(x+h,y)      \, d y  -\int_{|x- y | \leq  2 h } \k( y  )    Q_1(x,y)    \, d y   \\
&    + \int_{|x-y| \geq  2 h } \k( y  )   Q_1(x+h,y)      \, d y  -     \int_{|x-y| \geq  2 h } \k( y  )    Q_1(x,y)      \, d y
\end{aligned}
\end{equation}
In the nearby region $|x-y| \leq 2  h$, using \eqref{eq:aux_curvature_kernal_Q1a} and the fact $\k \in C^{\beta}(\TT)$   we can bound the terms individually by their absolute values and obtain
\begin{equation}
\begin{aligned}\label{eq:aux_curvature_kernal_R1_near}
&\Big|	\int_{|x -y| \leq  2 h } \k( y  )   Q_1(x+h,y)      \, d y  -\int_{|x - y| \leq  2 h } \k( y  )    Q_1(x,y)    \, d y   \Big| \\
& \lesssim     \int_{| x - y| \leq   2 h }     | x   -y   |^{   - 2\alpha   } \\
 & \lesssim h^{1 -2\alpha     } .
\end{aligned}
\end{equation}

Next, for the far-field region $|x-y| \geq 2 h $, by \eqref{eq:aux_curvature_kernal_Q1b} we have
\begin{align*}
& \Big| \int_{|x -y| \geq  2 h } \k( y  )   [ Q_1(x+h,y)      \, d y  -        Q_1(x,y)    ]  \, d y  \Big| \\
& \lesssim   h\int_{|x  -y| \geq  2 h } |\k( y  )  |  |x-y|^{-1 -2\alpha} \, d y  \Big| .
\end{align*}
Since $\k \in C^{\beta}(\TT)$, integrating the above we have
\begin{align}\label{eq:aux_curvature_kernal_R1_far}
  \Big| \int_{|x -y| \geq  2 h } \k( y  )   [ Q_1(x+h,y)      \, d y  -        Q_1(x,y)    ]  \, d y  \Big|
  \lesssim  h^{1-2\alpha }.
\end{align}
Collecting the estimates \eqref{eq:aux_curvature_kernal_R1_near} and \eqref{eq:aux_curvature_kernal_R1_far},  we see that
$$
\left| \int      \k( y  )  \Delta_h \left[  Q_1(x,y)  \right]    \, d y  \right| \lesssim h^{1-2\alpha } ,
$$
so  the $C^{1-2\alpha}$ H\"older regularity \eqref{eq:aux_curvature_kernal_R1_final_Cbeta} holds and $I_1$ is compatible with the conclusion \eqref{eq:lemma_dispersive_term_0_Cbeta}.

\noindent
\textbf{\underline{Analysis of $I_2$:}}

Recall that we need to show
\begin{align}\label{eq:aux_curvature_kernal_8_Cbeta}
\lim_{\ep \to 0^+}I_2(\ep) &  = \lim_{\ep \to 0^+} \int \int_{|   s  - s' | \geq \ep  }  \overline{\phi}(s) \ok(s    ) \oT(s ' ) \oT( s ) \frac{( \oga(s) -  \oga(s'    ))\cdot \oT(s)  }{ |\oga(s) -  \oga(s'   )|^{2+2\alpha } }   \, ds  \, ds'
\end{align}
defines a $C^{ \rho} (\TT)$ function in the original Lagrangian label.

Observe that by Fubini theorem and Lemma \ref{lemma:estimate_v_on_boundary}
\begin{equation}\label{eq:aux_curvature_kernal_9_Cbeta}
\begin{aligned}
& \lim_{\ep \to 0^+} I_2(\ep) =\lim_{\ep \to 0^+}    \int  \overline{\phi}(s) \ok(s    )  \int_{| s- s'| \geq \ep  }  \oT(s ' ) \oT( s ) \frac{( \oga(s) -  \oga(s'    ))\cdot \oT(s)  }{ |\oga(s) -  \oga(s'   )|^{2+2\alpha } }   \,  ds' \, ds \\
&  = \int  \overline{\phi}(s)  \ok(s    )  \p_s \overline{v} \cdot \oT (s ) \, ds  .
\end{aligned}
\end{equation}
Writing in terms of the Lagrangian label,
\begin{equation}\label{eq:aux_curvature_kernal_I_2}
 \lim_{\ep \to 0^+} I_2(\ep)= \int_\TT   {\phi}(x)  \k (x)   \p_s {v} \cdot \T  (x)  g(x)\, dx.
\end{equation}

Since $ \k \in C^{\beta}$ and $g \in C^{0,1}$, to show \eqref{eq:aux_curvature_kernal_I_2} is compatible with \eqref{eq:prop_dispersive_error_term_Cbeta}, it suffices to show $  \p_s {v} \cdot \T  $ is $C^{ \rho} (\TT)$. Since $\ell$ is a $C^{1,1}$ diffeomorphism
 and $\rho = \min \{ 1-2\alpha ,\beta \}$, this follows from  Lemma \ref{lemma:estimate_v_on_boundary} which shows $C^\sigma$ H\"older continuity of $\p_s \overline{v} \cdot \oT,$ and $\sigma = 1-2\alpha$ for $p=\infty.$

\noindent
\textbf{\underline{Analysis of $I_3$:}}

We need to show the distribution
\begin{align}\label{eq:aux_curvature_kernal_11_Cbeta}
\lim_{\ep \to 0^+} I_3(\ep) &  =  \lim_{\ep \to 0^+} \int \int_{|  s  -    s' | \geq \ep  } \overline{\phi}(s)  \oT(s  ') \cdot \oN( s ) \frac{( \oT(s) -  \oT(s'    ))\cdot \oT(s)  }{ |\oga(s) -  \oga(s'   )|^{2+2\alpha } }   \, ds '  \, ds
\end{align}
defines a $C^{\rho}(\TT)$ function in the original Lagrangian label.

By the reasoning similar to the treatment of $I_2$, we only need to show
\begin{equation}\label{eq:aux_curvature_kernal_I3}
s\mapsto  \int   \oT(s  ') \cdot \oN( s ) \frac{( \oT(s) -  \oT(s'    ))\cdot \oT(s)  }{ |\oga(s) -  \oga(s'   )|^{2+2\alpha } }   \, ds ' \quad \text{is $C^{\rho}$ continuous.}
\end{equation}
Let $h>0$ and $\Delta_h$ be the finite difference for the $s$ variable. As before, consider the split
\begin{equation}\label{eq:aux_curvature_kernal_I3_2}
\begin{aligned}
& \Delta_h  \left[   \int   \oT(s  ') \cdot \oN( s ) \frac{( \oT(s) -  \oT(s'    ))\cdot \oT(s)  }{ |\oga(s) -  \oga(s'   )|^{2+2\alpha } }   \, ds '  \right]   \\
&  =  \int_{|s-s'| \leq 2h }   \Delta_h  \left[ \oT(s  ') \cdot \oN( s ) \frac{( \oT(s) -  \oT(s'    ))\cdot \oT(s)  }{ |\oga(s) -  \oga(s'   )|^{2+2\alpha } }  \right] \, ds '  \\
& \qquad + \int_{|s-s'| \geq 2h }   \Delta_h  \left[ \oT(s  ') \cdot \oN( s ) \frac{( \oT(s) -  \oT(s'    ))\cdot \oT(s)  }{ |\oga(s) -  \oga(s'   )|^{2+2\alpha } }  \right] \, ds '  .
\end{aligned}
\end{equation}
The integral in region $|s- s'| \leq 2 h$ can be bounded by its absolute value:
\begin{align}\label{eq:aux_curvature_kernal_I3_3}
& \left|  \int_{|s-s'| \leq 2h }   \Delta_h  \left[ \oT(s  ') \cdot \oN( s ) \frac{( \oT(s) -  \oT(s'    ))\cdot \oT(s)  }{ |\oga(s) -  \oga(s'   )|^{2+2\alpha } }  \right] \, ds ' \right| \nonumber  \\
& \lesssim   \int_{|s-s'| \leq 2h }   |s+ h-s'|^{1 -2\alpha}   \, ds '  + \int_{|s-s'| \leq 2h }   |s -s'|^{1 -2\alpha}   \, ds '  \nonumber \\
&    \lesssim h^{2 -2\alpha } .
\end{align}

Next, we consider the integral in the region $|s -s'| \geq 2h$. Notice that the integrand $\oT(s  ') \cdot \oN( s ) \frac{( \oT(s) -  \oT(s'    ))\cdot \oT(s)  }{ |\oga(s) -  \oga(s'   )|^{2+2\alpha } } $ has the derivative:
\begin{equation}\label{eq:aux_curvature_kernal_I3_4}
\begin{aligned}
& \p_s \left(  \oT(s  ') \cdot \oN( s ) \frac{( \oT(s) -  \oT(s'    ))\cdot \oT(s)  }{ |\oga(s) -  \oga(s'   )|^{2+2\alpha } }  \right)  \\
& =  \k(s) \oT(s  ') \cdot \oT( s ) \frac{( \oT(s) -  \oT(s'    ))\cdot \oT(s)  }{ |\oga(s) -  \oga(s'   )|^{2+2\alpha } } \\
& \qquad +   \oT(s  ') \cdot \oN( s ) \frac{( \oT(s) -  \oT(s'    ))\cdot \oN(s)  }{ |\oga(s) -  \oga(s'   )|^{2+2\alpha } } \\
& \qquad + (2+2\alpha) \oT(s  ') \cdot \oN( s ) \frac{( \oT(s) -  \oT(s'    ))\cdot \oT(s)   (\oga(s) -  \oga(s'   ))\cdot \oT(s)}{ |\oga(s) -  \oga(s'   )|^{4+2\alpha } } .
\end{aligned}
\end{equation}
Due to the $C^{2,\beta}$ regularity of $\p \Om$, it is easy to see this derivative is bounded up to a constant by $ |s-s'|^{-2\alpha}$. Then by the fundamental theorem of calculus,
\begin{equation}\label{eq:aux_curvature_kernal_I3_5}
\begin{aligned}
& \left| \int_{|s-s'| \geq 2h }   \Delta_h  \left[ \oT(s  ') \cdot \oN( s ) \frac{( \oT(s) -  \oT(s'    ))\cdot \oT(s)  }{ |\oga(s) -  \oga(s'   )|^{2+2\alpha } }  \right] \, ds '  \right|  \\
& \lesssim \int_{|s-s'| \geq 2 h}  \int_{s}^{s+h }| \tau - s' |^{-2\alpha } \, d \tau \, ds ' \\
& \lesssim  h \int_{|s-s'| \geq 2 h}   | s - s' |^{-2\alpha }   \, ds '\\
& \lesssim h .
\end{aligned}
\end{equation}

It follows from \eqref{eq:aux_curvature_kernal_I3_2} and \eqref{eq:aux_curvature_kernal_I3_5} that
\begin{equation}
\begin{aligned}
& \Delta_h  \left[   \int   \oT(s  ') \cdot \oN( s ) \frac{( \oT(s) -  \oT(s'    ))\cdot \oT(s)  }{ |\oga(s) -  \oga(s'   )|^{2+2\alpha } }   \, ds '  \right] \\
&\lesssim h^{2-2\alpha } + h \leq h
\end{aligned}
\end{equation}
which is consistent with \eqref{eq:aux_curvature_kernal_I3}.

\noindent
\textbf{\underline{Analysis of $I_4$:}}

Recall that we need to estimate   the distribution
\begin{equation}
\begin{aligned}\label{eq:aux_curvature_kernal_14_Cbeta}
& \lim_{\ep \to 0+}I_4(\ep) \\
&  =  - \lim_{\ep \to 0+} \int \overline{\phi}(s) \ok(s    ) \int_{|  s  - s' | \geq \ep  }   \oT(s  ') \oN( s ) \frac{( \oga(s) -  \oga(s'    ))\cdot \oN(s)  }{ |\oga(s) -  \oga(s'   )|^{2+2\alpha } }    \, ds '\, ds  .
\end{aligned}
\end{equation}

By the same reasoning as in the case of $I_3$, we only need to show
\begin{equation}\label{eq:aux_curvature_kernal_I4}
s\mapsto  \int  \oT(s  ') \oN( s ) \frac{( \oga(s) -  \oga(s'    ))\cdot \oN(s)  }{ |\oga(s) -  \oga(s'   )|^{2+2\alpha } }    \, ds ' \quad \text{is $C^{\rho}$ continuous.}
\end{equation}
Let us consider again its finite difference
\begin{equation}\label{eq:aux_curvature_kernal_I4_1}
\begin{aligned}
& \Delta_h \left[ \int  \oT(s  ') \oN( s ) \frac{( \oga(s) -  \oga(s'    ))\cdot \oN(s)  }{ |\oga(s) -  \oga(s'   )|^{2+2\alpha } }    \, ds '  \right]\\
& = \int_{|s-s'| \leq 2h }  \Delta_h \left[\oT(s  ') \oN( s ) \frac{( \oga(s) -  \oga(s'    ))\cdot \oN(s)  }{ |\oga(s) -  \oga(s'   )|^{2+2\alpha } }  \right]  \, ds '  \\
& \qquad + \int_{|s-s'| \geq 2h }  \Delta_h \left[\oT(s  ') \oN( s ) \frac{( \oga(s) -  \oga(s'    ))\cdot \oN(s)  }{ |\oga(s) -  \oga(s'   )|^{2+2\alpha } }  \right]  \, ds '  .
\end{aligned}
\end{equation}

For the integral in the region $|s-s'| \leq 2h $, we again bound by the absolute value:
\begin{equation}\label{eq:aux_curvature_kernal_I4_2}
\begin{aligned}
& \left|   \int_{|s-s'| \leq 2h }  \Delta_h \left[\oT(s  ') \oN( s ) \frac{( \oga(s) -  \oga(s'    ))\cdot \oN(s)  }{ |\oga(s) -  \oga(s'   )|^{2+2\alpha } }  \right]  \, ds '  \right| \\
& \lesssim  \int_{|s-s'| \leq 2h }   |s+h -s'|^{1-2\alpha }  \, ds'  + \int_{|s-s'| \leq 2h }   |s  -s'|^{1-2\alpha }  \, ds' \\
& \lesssim h^{2-2\alpha} .
\end{aligned}
\end{equation}

For the integral in the region $|s-s'| \geq 2h $, similarly to $I_3,$ we use the fundamental theorem of calculus to obtain:
\begin{equation}\label{eq:aux_curvature_kernal_I4_3}
\begin{aligned}
& \left|   \int_{|s-s'| \geq 2h }  \Delta_h \left[\oT(s  ') \oN( s ) \frac{( \oga(s) -  \oga(s'    ))\cdot \oN(s)  }{ |\oga(s) -  \oga(s'   )|^{2+2\alpha } }  \right]  \, ds '  \right| \\
& \lesssim  \int_{|s-s'| \geq 2h } \int_{ s}^{s+h}  | \tau -s'|^{ -2\alpha }  \, d \tau \, ds'   \\
& \lesssim h  .
\end{aligned}
\end{equation}

\noindent
\textbf{\underline{Analysis of $I_5$:}}

Recall that we need to estimate
\begin{equation}
\begin{aligned}\label{eq:aux_curvature_kernal_15_Cbeta}
& \lim_{\ep \to 0+} I_5(\ep)  \\
& =  - (2+2\alpha ) \lim_{\ep \to 0+} \int  \int_{| s -    s' | \geq \ep  } \overline{\phi}(s)  \overline{\T}( s'  )\oN( s )  \frac{( \oga(s) -  \oga(s'  ))\cdot \oT(s)  }{ |\oga(s) -  \oga(s'  )|^{4+2\alpha } }  \\
&\qquad \qquad \qquad  \times ( \oga(s) -  \oga(s'  ) )\cdot (\oT(s) - \oT(s' ) )  \, ds ' \, ds .
\end{aligned}
\end{equation}

Again, it suffices to show the $C^{\rho}$ continuity of
\begin{equation}\label{eq:aux_curvature_kernal_I5}
\begin{aligned}
s\mapsto
 \int  & \overline{\T}( s'  )\oN( s )  \frac{( \oga(s) -  \oga(s'  ))\cdot \oT(s)  }{ |\oga(s) -  \oga(s'  )|^{4+2\alpha } }  \\
&\qquad   \times ( \oga(s) -  \oga(s'  ) )\cdot (\oT(s) - \oT(s' ) )  \, ds ' .
\end{aligned}
\end{equation}
The corresponding finite difference in the region $|s-s'| \leq 2h$ can be bounded as follows,
\begin{equation}\label{eq:aux_curvature_kernal_I5_1}
\begin{aligned}
&  \Bigg|   \int_{ |s-s'| \leq 2h  }  \Delta_h \bigg[ \overline{\T}( s'  )\oN( s )  \frac{( \oga(s) -  \oga(s'  ))\cdot \oT(s)  }{ |\oga(s) -  \oga(s'  )|^{4+2\alpha } }  \\
&\qquad  \qquad \times ( \oga(s) -  \oga(s'  ) )\cdot (\oT(s) - \oT(s' ) ) \bigg]  \, ds '   \Bigg| \\
& \lesssim  \int_{ |s-s'| \leq 2h  } \left( |s+ h -s'|^{ 1 -2\alpha } + |s -s'|^{ 1 -2\alpha } \right) \, ds'  \lesssim h^{2-2\alpha } .
\end{aligned}
\end{equation}
For the region $|s-s'| \geq 2h$ we use the fundamental theorem of calculus:
\begin{equation}\label{eq:aux_curvature_kernal_I5_2}
\begin{aligned}
&  \Bigg|   \int_{ |s-s'| \geq 2h  }  \Delta_h \bigg[ \overline{\T}( s'  )\oN( s )  \frac{( \oga(s) -  \oga(s'  ))\cdot \oT(s)  }{ |\oga(s) -  \oga(s'  )|^{4+2\alpha } }   \\
&\qquad \qquad  \times ( \oga(s) -  \oga(s'  ) )\cdot (\oT(s) - \oT(s' ) ) \bigg]  \, ds '   \Bigg| \\
& \lesssim  \int_{ |s-s'| \leq 2h  }   \int_s^{s+h} |\tau -s'|^{ -2\alpha} \, d \tau \, ds'  \lesssim h .
\end{aligned}
\end{equation}

\end{proof}

\subsection{Proof of Theorem \ref{thm:main_Holder}}

With all the strengthened ingredients, we finish the proof in the  $C^{2,\beta}$ case.

\begin{proof}[Proof of Theorem~\ref{thm:main_Holder}]

Let us first choose the initial data. Given  $0 \leq \beta < 1$ and $0< \alpha < \frac{1}{2}  $, we fix
\begin{equation}\label{eq:aux_final_proof_delta_Cbeta}
 \delta = \min\{ 2\alpha , \frac{1-\beta}{2}  \}  .
\end{equation}

Since $\delta>0$, we let $\k_\sharp \in C^\beta(\TT)$ be given by Corollary \ref{cor:model_Cbeta} with the parameters $\alpha,\beta$ and $\delta$ above. As in the $W^{2,p}$ case, by Corollary \ref{cor:model_Cbeta}, this initial data fails to produce any $C^\beta$ weak solution to any system \eqref{eq:model_linear} satisfying the $(C^\beta,\delta)$ assumptions.

As before, we  take the $C^{2,\beta}$ initial data $\Omega_0$ as the interior of a $C^{2,\beta}$ simple closed curve $\g_0 \in C^{2,\beta}(\TT)$ that is arc-length parameterized with length $2\pi$ given by Lemma \ref{lemma:curve_bending} whose curvature is given by
\begin{equation}\label{eq:aux_final_proof_k_Cbeta}
\k_0  = \ep \k_{\sharp} + \k_{g}
\end{equation}
for some $\ep>0$ where the bad part $\k_{\sharp} \in C^\beta(\TT) $ is from Corollary \ref{cor:model_Cbeta} and  the good part $\k_{g} \in C^\infty(\TT)$.

With the initial data chosen $\Omega_0$, we prove by contradiction. Suppose  there exists an $\alpha$-patch $\Om_t$ on some $[0,T]$ and $M >0 $ such that
\begin{equation}\label{eq:aux_final_proof_0_Cbeta}
\sup_{t \in [ 0, T ]  } \| \Om_t \|_{C^{2, \beta }} \leq M .
\end{equation}

Then we apply Theorem \ref{thm:flow_existence_Cbeta} with the $\alpha$-patch $\Om_t$ on $[0,T]$ and with the initial parameterization $\g_0 $ above to obtain a unique flow $\g \in C([0,T];C^{1,1-2 \alpha }(\TT)) \cap L^\infty([0,T];C^{1,1}(\TT))$ with the initial data $\g_0$.

Next,  arguing similarly to Lemma \ref{lemma:geo_quant_rough_gamma} using  \eqref{eq:aux_final_proof_0_Cbeta} show that the curvature of $\p \Om_t$ under the $ C([0,T];C^{1,\sigma}(\TT)) \cap L^\infty([0,T];C^{1,1}(\TT))$ parameterization $\g$ satisfies the regularity
\begin{equation}\label{eq:aux_final_proof_2b_Cbeta}
 \k \in L^\infty([0,T];C^{0,\beta}(\TT)) .
\end{equation}
 Furthermore, a direct analog of the Theorem \ref{prop:weak_k_flow} using estimates of Proposition \ref{prop:dispersive_term_c2} instead of those of Proposition \ref{prop:dispersive_term}, shows the curvature   $\k :\TT\times [0,T] \to \RR$ solves the equation
\begin{equation}\label{eq:aux_final_proof_1_Cbeta}
\begin{aligned}
&\int_{\TT} \k(\cdot, T) \phi(\cdot , T)\, g(\cdot,T ) \,dx -\int_{\TT} \k(\cdot, 0) \phi(\cdot ,0) g(\cdot,0)\, dx  \\
&  = \int_{\TT \times [0,T]} \left(g \k \p_t \phi \,dx dt  - c_\alpha   g^{1-2\alpha} \k \mathcal{L}_\alpha \phi   +  R  \phi  \right)\, dx \, dt
\end{aligned}
\end{equation}
for any test functions $ \phi \in C^\infty(\TT \times \RR)$ with  $R:\TT \times [0,T] \to \RR $ satisfying
\begin{equation}\label{eq:aux_final_proof_2_Cbeta}
| R |_{L^\infty C^{0,  \rho  }} <\infty
\end{equation}
where $ \rho  = \min \{\beta ,1 -2\alpha   \}$ .

We will derive a contradiction using \eqref{eq:aux_final_proof_2b_Cbeta}--\eqref{eq:aux_final_proof_2_Cbeta}.

As before, if we let $f = \k g$, define the mapping $ f \mapsto \overline{f} = g^{-2\alpha} f$  and the distributional forcing $F_t (\phi) :=  \int_{\TT} R \phi  \, dx $ in the above, then \eqref{eq:aux_final_proof_1_Cbeta} is exactly the weak formulation for the equation
\begin{equation}\label{eq:aux_final_proof_3_Cbeta}
\begin{cases}
\p_t f - c_\alpha \mathcal{L}_\alpha \overline{ f  } = F_t &\\
f|_{t =0} = f_0.
\end{cases}
\end{equation}
By rescaling the time, we can assume $c_\alpha =1$ as in Definition \ref{def:wea_sol_model_eq}.

To obtain the final contradiction, we need to verify the $(C^\beta,\delta)$ assumptions for \eqref{eq:aux_final_proof_3_Cbeta} in Corollary \ref{cor:model_Cbeta}. The second condition \eqref{eq:Forcing_bound_Cbeta} in assumption \ref{assu:Forcing_bound_Cbeta} follows directly from \eqref{eq:aux_final_proof_2_Cbeta}:
since $  \rho  =\min \{\beta ,1 -2\alpha   \} \geq \beta -2\alpha +\delta$ by the assumptions and \eqref{eq:aux_final_proof_delta_Cbeta}, we have the embedding $C^{ \sigma } (\TT)\subset B^{\beta -2\alpha +\delta}_{\infty, \infty }(\TT) $.

Let us consider the first condition \eqref{eq:fbar_bound_Cbeta} in the assumption \ref{assu:f_continuity_Cbeta}. By the definition of H\"older norms, we have the estimate
\begin{equation}
\begin{aligned}\label{eq:aux_final_proof_4_Cbeta}
& | f (\cdot, t) - \overline{f}(\cdot ,t )|_{C^{0, \beta}(\TT)} \\
&\lesssim |1 -g^{-2\alpha}  |_{L^\infty(\TT)} | f (\cdot, t) |_{C^{0, \beta}(\TT)} + |1 -g^{-2\alpha}  |_{C^{0, \beta}(\TT)} | f (\cdot, t) |_{L^\infty(\TT)}.
\end{aligned}
\end{equation}

As explained in Section \ref{subsec:curvature_flow_1},  the flow equation for $\g$ implies the evolution of the metric $g:\TT\times [0,T] \to  \RR^+$
\begin{equation}\label{eq:aux_final_proof_5_Cbeta}
\begin{cases}
\p_t g = g \p_s v \cdot \T   &\\
g |_{t =0} = 1,
\end{cases}
\end{equation}
where we have used that the initial data $\g_0$ is arc-length parameterized. The regularity $g \in C([0,T]; C^{1}(\TT))$  implies that in \eqref{eq:aux_final_proof_4_Cbeta} we have
\begin{align}
| f (\cdot, t) - \overline{f}(\cdot ,t )|_{C^{0, \beta}(\TT)} \lesssim t  \leq t^{ \delta} .
\end{align}
So we have also verified the condition \eqref{eq:fbar_bound_Cbeta} in assumption \ref{assu:f_continuity_Cbeta}.

By Corollary \ref{cor:model_Cbeta} we must have that
\begin{equation}\label{eq:aux_final_proof_6_Cbeta}
\sup_{t \in [0,\delta]} |f(t)|_{C^{0, \beta}(\TT)} =\sup_{t \in [0,\delta]} |g^{-2\alpha} \k (t) |_{C^{0, \beta}(\TT)} =  \infty
\end{equation}
which is a contradiction to \eqref{eq:aux_final_proof_2b_Cbeta}.

\end{proof}

\bibliographystyle{alpha}
\bibliography{asqg_patch}

\newcommand{\etalchar}[1]{$^{#1}$}
\begin{thebibliography}{GSPSY21}

\bibitem[ACEK22]{2206.05861}
David~M. Ambrose, Elaine Cozzi, Daniel Erickson, and James~P. Kelliher.
\newblock Existence of {S}olutions to {F}luid {E}quations in {H}ölder and
  {U}niformly {L}ocal {S}obolev {S}paces, 2022.

\bibitem[ACM19]{MR3904158}
Giovanni Alberti, Gianluca Crippa, and Anna~L. Mazzucato.
\newblock Exponential self-similar mixing by incompressible flows.
\newblock {\em J. Amer. Math. Soc.}, 32(2):445--490, 2019.

\bibitem[BC93]{MR1207667}
A.~L. Bertozzi and P.~Constantin.
\newblock Global regularity for vortex patches.
\newblock {\em Comm. Math. Phys.}, 152(1):19--28, 1993.

\bibitem[BCDL21]{2003.00539}
Elia Bru\'{e}, Maria Colombo, and Camillo De~Lellis.
\newblock Positive solutions of transport equations and classical nonuniqueness
  of characteristic curves.
\newblock {\em Arch. Ration. Mech. Anal.}, 240(2):1055--1090, 2021.

\bibitem[BCGS23]{2310.15963}
Massimiliano Berti, Scipio Cuccagna, Francisco Gancedo, and Stefano Scrobogna.
\newblock Paralinearization and extended lifespan for solutions of the $ \alpha
  $-sqg sharp front equation.
\newblock {\em preprint}, 2023.

\bibitem[Ber91]{MR2686124}
Andrea~Louise Bertozzi.
\newblock {\em Existence, uniqueness, and a characterization of solutions to
  the contour dynamics equation}.
\newblock ProQuest LLC, Ann Arbor, MI, 1991.
\newblock Thesis (Ph.D.)--Princeton University.

\bibitem[BL15a]{MR3359050}
Jean Bourgain and Dong Li.
\newblock Strong ill-posedness of the incompressible {E}uler equation in
  borderline {S}obolev spaces.
\newblock {\em Invent. Math.}, 201(1):97--157, 2015.

\bibitem[BL15b]{MR3320889}
Jean Bourgain and Dong Li.
\newblock Strong illposedness of the incompressible {E}uler equation in integer
  {$C^m$} spaces.
\newblock {\em Geom. Funct. Anal.}, 25(1):1--86, 2015.

\bibitem[BSV19]{MR3987721}
Tristan Buckmaster, Steve Shkoller, and Vlad Vicol.
\newblock Nonuniqueness of weak solutions to the {SQG} equation.
\newblock {\em Comm. Pure Appl. Math.}, 72(9):1809--1874, 2019.

\bibitem[But89]{doi:10.1063/1.857353}
Thomas~F. Buttke.
\newblock The observation of singularities in the boundary of patches of
  constant vorticity.
\newblock {\em Physics of Fluids A: Fluid Dynamics}, 1(7):1283--1285, 1989.

\bibitem[CC21]{1812.06817}
Laura Caravenna and Gianluca Crippa.
\newblock A directional {L}ipschitz extension lemma, with applications to
  uniqueness and {L}agrangianity for the continuity equation.
\newblock {\em Comm. Partial Differential Equations}, 46(8):1488--1520, 2021.

\bibitem[CCC{\etalchar{+}}12]{MR2928091}
Dongho Chae, Peter Constantin, Diego C\'{o}rdoba, Francisco Gancedo, and
  Jiahong Wu.
\newblock Generalized surface quasi-geostrophic equations with singular
  velocities.
\newblock {\em Comm. Pure Appl. Math.}, 65(8):1037--1066, 2012.

\bibitem[CCG18]{MR3748149}
Antonio C\'{o}rdoba, Diego C\'{o}rdoba, and Francisco Gancedo.
\newblock Uniqueness for {SQG} patch solutions.
\newblock {\em Trans. Amer. Math. Soc. Ser. B}, 5:1--31, 2018.

\bibitem[CCGS16]{MR3462104}
Angel Castro, Diego C\'{o}rdoba, and Javier G\'{o}mez-Serrano.
\newblock Uniformly rotating analytic global patch solutions for active
  scalars.
\newblock {\em Ann. PDE}, 2(1):Art. 1, 34, 2016.

\bibitem[CCW11]{MR2835862}
Dongho Chae, Peter Constantin, and Jiahong Wu.
\newblock Inviscid models generalizing the two-dimensional {E}uler and the
  surface quasi-geostrophic equations.
\newblock {\em Arch. Ration. Mech. Anal.}, 202(1):35--62, 2011.

\bibitem[CFMR05]{MR2141918}
Diego C\'{o}rdoba, Marco~A. Fontelos, Ana~M. Mancho, and Jose~L. Rodrigo.
\newblock Evidence of singularities for a family of contour dynamics equations.
\newblock {\em Proc. Natl. Acad. Sci. USA}, 102(17):5949--5952, 2005.

\bibitem[CGSI19]{MR3961297}
Diego C\'{o}rdoba, Javier G\'{o}mez-Serrano, and Alexandru~D. Ionescu.
\newblock Global solutions for the generalized {SQG} patch equation.
\newblock {\em Arch. Ration. Mech. Anal.}, 233(3):1211--1251, 2019.

\bibitem[Che93]{MR1235440}
Jean-Yves Chemin.
\newblock Persistance de structures g\'{e}om\'{e}triques dans les fluides
  incompressibles bidimensionnels.
\newblock {\em Ann. Sci. \'{E}cole Norm. Sup. (4)}, 26(4):517--542, 1993.

\bibitem[CL21]{2004.09538}
Alexey Cheskidov and Xiaoyutao Luo.
\newblock Nonuniqueness of weak solutions for the transport equation at
  critical space regularity.
\newblock {\em Ann. PDE}, 7(1):Paper No. 2, 45, 2021.

\bibitem[CL23]{2204.08950}
Alexey Cheskidov and Xiaoyutao Luo.
\newblock Extreme temporal intermittency in the linear sobolev transport:
  almost smooth nonunique solutions.
\newblock {\em Analysis \& PDE}, to appear, 2023.

\bibitem[CMT94]{MR1304437}
Peter Constantin, Andrew~J. Majda, and Esteban Tabak.
\newblock Formation of strong fronts in the {$2$}-{D} quasigeostrophic thermal
  active scalar.
\newblock {\em Nonlinearity}, 7(6):1495--1533, 1994.

\bibitem[CMZ21]{2107.07463}
Diego C\'ordoba and Luis Mart\'inez-Zoroa.
\newblock Non existence and strong ill-posedness in ${C}^k$ and sobolev spaces
  for {SQG}.
\newblock {\em preprint}, 2021.

\bibitem[CMZ22]{2207.14385}
Diego C\'ordoba and Luis Mart\'inez-Zoroa.
\newblock Non-existence and strong ill-posedness in ${C}^{k,\beta}$ for the
  generalized {S}urface {Q}uasi-geostrophic equation.
\newblock {\em preprint}, 2022.

\bibitem[CW99]{CW}
Peter Constantin and Jiahong Wu.
\newblock Behavior of solutions of 2{D} quasi-geostrophic equations.
\newblock {\em SIAM J. Math. Anal.}, 30(5):937--948, 1999.

\bibitem[Dan97]{MR1452164}
Rapha\"{e}l Danchin.
\newblock \'{E}volution temporelle d'une poche de tourbillon singuli\`ere.
\newblock {\em Comm. Partial Differential Equations}, 22(5-6):685--721, 1997.

\bibitem[Dan00]{MR1809342}
Rapha\"{e}l Danchin.
\newblock \'{E}volution d'une singularit\'{e} de type cusp dans une poche de
  tourbillon.
\newblock {\em Rev. Mat. Iberoamericana}, 16(2):281--329, 2000.

\bibitem[dlHHH16]{MR3466846}
Francisco de~la Hoz, Zineb Hassainia, and Taoufik Hmidi.
\newblock Doubly connected {V}-states for the generalized surface
  quasi-geostrophic equations.
\newblock {\em Arch. Ration. Mech. Anal.}, 220(3):1209--1281, 2016.

\bibitem[DM90]{MR1050012}
D.~G. Dritschel and M.~E. McIntyre.
\newblock Does contour dynamics go singular?
\newblock {\em Phys. Fluids A}, 2(5):748--753, 1990.

\bibitem[EJ17]{MR3625192}
Tarek~Mohamed Elgindi and In-Jee Jeong.
\newblock Ill-posedness for the incompressible {E}uler equations in critical
  {S}obolev spaces.
\newblock {\em Ann. PDE}, 3(1):Paper No. 7, 19, 2017.

\bibitem[EJ23]{MR4537304}
Tarek~M. Elgindi and In-Jee Jeong.
\newblock On singular vortex patches, {I}: {W}ell-posedness issues.
\newblock {\em Mem. Amer. Math. Soc.}, 283(1400):v+89, 2023.

\bibitem[EM20]{MR4065655}
Tarek~M. Elgindi and Nader Masmoudi.
\newblock {$L^\infty$} ill-posedness for a class of equations arising in
  hydrodynamics.
\newblock {\em Arch. Ration. Mech. Anal.}, 235(3):1979--2025, 2020.

\bibitem[Gan08]{MR2397460}
Francisco Gancedo.
\newblock Existence for the {$\alpha$}-patch model and the {QG} sharp front in
  {S}obolev spaces.
\newblock {\em Adv. Math.}, 217(6):2569--2598, 2008.

\bibitem[GNP21]{2105.10982}
Francisco Gancedo, Huy~Q. Nguyen, and Neel Patel.
\newblock Well-posedness for {SQG} sharp fronts with unbounded curvature.
\newblock {\em preprint}, 2021.

\bibitem[GP21]{MR4235799}
Francisco Gancedo and Neel Patel.
\newblock On the local existence and blow-up for generalized {SQG} patches.
\newblock {\em Ann. PDE}, 7(1):Paper No. 4, 63, 2021.

\bibitem[GS14]{MR3181769}
Francisco Gancedo and Robert~M. Strain.
\newblock Absence of splash singularities for surface quasi-geostrophic sharp
  fronts and the {M}uskat problem.
\newblock {\em Proc. Natl. Acad. Sci. USA}, 111(2):635--639, 2014.

\bibitem[GS19]{MR3880855}
Javier G\'{o}mez-Serrano.
\newblock On the existence of stationary patches.
\newblock {\em Adv. Math.}, 343:110--140, 2019.

\bibitem[GSPSY21]{MR4312192}
Javier G\'{o}mez-Serrano, Jaemin Park, Jia Shi, and Yao Yao.
\newblock Symmetry in stationary and uniformly rotating solutions of active
  scalar equations.
\newblock {\em Duke Math. J.}, 170(13):2957--3038, 2021.

\bibitem[HHM21]{2110.08615}
Zineb Hassainia, Taoufik Hmidi, and Nader Masmoudi.
\newblock {KAM} theory for active scalar equations.
\newblock {\em preprint}, 2021.

\bibitem[Hir59]{MR104973}
I.~I. Hirschman, Jr.
\newblock On multiplier transformations.
\newblock {\em Duke Math. J.}, 26:221--242, 1959.

\bibitem[HK21]{10.1215/00127094-2020-0064}
Siming He and Alexander Kiselev.
\newblock {Small-scale creation for solutions of the SQG equation}.
\newblock {\em Duke Mathematical Journal}, 170(5):1027 -- 1041, 2021.

\bibitem[HMV13]{MR3054601}
Taoufik Hmidi, Joan Mateu, and Joan Verdera.
\newblock Boundary regularity of rotating vortex patches.
\newblock {\em Arch. Ration. Mech. Anal.}, 209(1):171--208, 2013.

\bibitem[HPGS95]{Held}
Isaac~M. Held, Raymond~T. Pierrehumbert, Stephen~T. Garner, and Kyle~L.
  Swanson.
\newblock Surface quasi-geostrophic dynamics.
\newblock {\em J. Fluid Mech.}, 282:1--20, 1995.

\bibitem[HSZ21]{MR4379142}
John~K. Hunter, Jingyang Shu, and Qingtian Zhang.
\newblock Global solutions of a surface quasigeostrophic front equation.
\newblock {\em Pure Appl. Anal.}, 3(3):403--472, 2021.

\bibitem[HSZ24]{MR4762611}
John~K. Hunter, Jingyang Shu, and Qingtian Zhang.
\newblock Global solutions for a family of {GSQG} front equations.
\newblock {\em Discrete Contin. Dyn. Syst.}, 44(9):2598--2640, 2024.

\bibitem[Hö33]{Holder1933}
E.~Hölder.
\newblock Über die unbeschränkte fortsetzbarkeit einer stetigen ebenen
  bewegung in einer unbegrenzten inkompressiblen flüssigkeit.
\newblock {\em Mathematische Zeitschrift}, 37:727--738, 1933.

\bibitem[JK21]{2107.07739}
In-Jee Jeong and Junha Kim.
\newblock Strong illposedness for {SQG} in critical {S}obolev spaces.
\newblock {\em preprint}, 2021.

\bibitem[JZ21]{2112.00191}
Junekey Jeon and Andrej Zlatoš.
\newblock An improved regularity criterion and absence of splash-like
  singularities for g-sqg patches.
\newblock {\em preprint}, 2021.

\bibitem[KJ22]{KIM2022109673}
Junha Kim and In-Jee Jeong.
\newblock A simple ill-posedness proof for incompressible euler equations in
  critical {S}obolev spaces.
\newblock {\em Journal of Functional Analysis}, 283(10):109673, 2022.

\bibitem[KL21]{asqg_nosplash}
Alexander Kiselev and Xiaoyutao Luo.
\newblock On nonexistence of splash singularities for the $\alpha$-{SQG}
  patches.
\newblock {\em preprint}, 2021.

\bibitem[KL22]{c2euler}
Alexander Kiselev and Xiaoyutao Luo.
\newblock Illposedness of ${C}^2$ vortex patches.
\newblock {\em preprint}, 2022.

\bibitem[KRYZ16]{MR3549626}
Alexander Kiselev, Lenya Ryzhik, Yao Yao, and Andrej Zlato\v{s}.
\newblock Finite time singularity for the modified {SQG} patch equation.
\newblock {\em Ann. of Math. (2)}, 184(3):909--948, 2016.

\bibitem[Kv14]{MR3245016}
Alexander Kiselev and Vladimir \v{S}ver\'{a}k.
\newblock Small scale creation for solutions of the incompressible
  two-dimensional {E}uler equation.
\newblock {\em Ann. of Math. (2)}, 180(3):1205--1220, 2014.

\bibitem[KYZ17]{MR3666567}
Alexander Kiselev, Yao Yao, and Andrej Zlato\v{s}.
\newblock Local regularity for the modified {SQG} patch equation.
\newblock {\em Comm. Pure Appl. Math.}, 70(7):1253--1315, 2017.

\bibitem[Leo17]{MR3726909}
Giovanni Leoni.
\newblock {\em A first course in {S}obolev spaces}, volume 181 of {\em Graduate
  Studies in Mathematics}.
\newblock American Mathematical Society, Providence, RI, second edition, 2017.

\bibitem[Maj03]{Majda}
Andrew Majda.
\newblock {\em Introduction to {PDE}s and waves for the atmosphere and ocean},
  volume~9 of {\em Courant Lecture Notes in Mathematics}.
\newblock New York University, Courant Institute of Mathematical Sciences, New
  York; American Mathematical Society, Providence, RI, 2003.

\bibitem[Man15]{MANCHO2015152}
Ana~M. Mancho.
\newblock Numerical studies on the self-similar collapse of the
  $\alpha$-patches problem.
\newblock {\em Communications in Nonlinear Science and Numerical Simulation},
  26(1):152--166, 2015.

\bibitem[Mar08]{MR2357424}
Fabien Marchand.
\newblock Existence and regularity of weak solutions to the quasi-geostrophic
  equations in the spaces {$L^p$} or {$\dot H^{-1/2}$}.
\newblock {\em Comm. Math. Phys.}, 277(1):45--67, 2008.

\bibitem[MMS24]{2402.06364}
Riccardo Montalto, Federico Murgante, and Stefano Scrobogna.
\newblock Quadratic lifespan for the sublinear $\alpha$-sqg sharp front
  problem.
\newblock {\em preprint}, 2024.

\bibitem[Ped92]{pedlosky1992geophysical}
J.~Pedlosky.
\newblock {\em Geophysical Fluid Dynamics}.
\newblock Springer study edition. Springer New York, 1992.

\bibitem[Rad22]{MR4509831}
R\u{a}zvan-Octavian Radu.
\newblock Existence and regularity for vortex patch solutions of the 2{D}
  {E}uler equations.
\newblock {\em Indiana Univ. Math. J.}, 71(5):2195--2230, 2022.

\bibitem[Res95]{MR2716577}
Serge~G. Resnick.
\newblock {\em Dynamical problems in non-linear advective partial differential
  equations}.
\newblock ProQuest LLC, Ann Arbor, MI, 1995.
\newblock Thesis (Ph.D.)--The University of Chicago.

\bibitem[Rod05]{MR2142632}
Jos\'{e}~Luis Rodrigo.
\newblock On the evolution of sharp fronts for the quasi-geostrophic equation.
\newblock {\em Comm. Pure Appl. Math.}, 58(6):821--866, 2005.

\bibitem[SD14]{SD1}
R.~K. Scott and D.~G. Dritschel.
\newblock Numerical simulation of a self-similar cascade of filament
  instabilities in the surface quasigeostrophic system.
\newblock {\em Phys. Rev. Lett.}, 112:144505, Apr 2014.

\bibitem[SD19]{SD2}
R.~K. Scott and D.~G. Dritschel.
\newblock Scale-invariant singularity of the surface quasigeostrophic patch.
\newblock {\em Journal of Fluid Mechanics}, 863:R2, 2019.

\bibitem[Ser94]{MR1270072}
Philippe Serfati.
\newblock Une preuve directe d'existence globale des vortex patches {$2$}{D}.
\newblock {\em C. R. Acad. Sci. Paris S\'{e}r. I Math.}, 318(6):515--518, 1994.

\bibitem[Ste70]{MR0290095}
Elias~M. Stein.
\newblock {\em Singular integrals and differentiability properties of
  functions}.
\newblock Princeton Mathematical Series, No. 30. Princeton University Press,
  Princeton, N.J., 1970.

\bibitem[Ste93]{MR1232192}
Elias~M. Stein.
\newblock {\em Harmonic analysis: real-variable methods, orthogonality, and
  oscillatory integrals}, volume~43 of {\em Princeton Mathematical Series}.
\newblock Princeton University Press, Princeton, NJ, 1993.
\newblock With the assistance of Timothy S. Murphy, Monographs in Harmonic
  Analysis, III.

\bibitem[Ver21]{verdera2021regularity}
Joan Verdera.
\newblock The regularity of the boundary of vortex patches revisited.
\newblock {\em preprint}, 2021.

\bibitem[Wai65]{MR182838}
Stephen Wainger.
\newblock Special trigonometric series in {$k$}-dimensions.
\newblock {\em Mem. Amer. Math. Soc.}, 59:102, 1965.

\bibitem[Wol33]{Wolibner1933}
W.~Wolibner.
\newblock Un theorème sur l'existence du mouvement plan d'un fluide parfait,
  homogene, incompressible, pendant un temps infiniment long.
\newblock {\em Mathematische Zeitschrift}, 37:698--726, 1933.

\bibitem[Wu16]{MR3409894}
Sijue Wu.
\newblock Wellposedness and singularities of the water wave equations.
\newblock In {\em Lectures on the theory of water waves}, volume 426 of {\em
  London Math. Soc. Lecture Note Ser.}, pages 171--202. Cambridge Univ. Press,
  Cambridge, 2016.

\bibitem[Yud62]{MR0163529}
V.~I. Yudovich.
\newblock The flow of a perfect, incompressible liquid through a given region.
\newblock {\em Soviet Physics Dokl.}, 7:789--791, 1962.

\bibitem[Yud63]{Yudth}
V.~I. Yudovich.
\newblock Non-stationary flows of an ideal incompressible fluid.
\newblock {\em \v{Z}. Vy\v{c}isl. Mat i Mat. Fiz.}, 3:1032--1066, 1963.

\end{thebibliography}
%%%\nocite{*}

\end{document}